\numberwithin{equation}{section}
\tikzset{sgplattice/.style={inner sep=1pt,norm/.style={red!50!blue},char/.style={blue!50!black},
  lin/.style={black!50}},cnj/.style={black!50,yshift=-2.5pt,left=-1pt of #1,scale=0.5,fill=white}}
\DeclareFontFamily{U}{mathb}{\hyphenchar\font45}
\DeclareFontShape{U}{mathb}{m}{n}{
      <5> <6> <7> <8> <9> <10> gen * mathb
      <10.95> mathb10 <12> <14.4> <17.28> <20.74> <24.88> mathb12
      }{}
\DeclareSymbolFont{mathb}{U}{mathb}{m}{n}
\DeclareMathSymbol{\righttoleftarrow}{3}{mathb}{"FD}
\theoremstyle{plain}
\newtheorem{prop}{Proposition}[section]
\newtheorem{theo}[prop]{Theorem}
\newtheorem{coro}[prop]{Corollary}
\newtheorem{lemm}[prop]{Lemma}
\theoremstyle{definition}
\newtheorem{rema}[prop]{Remark}
\def\cC{{\mathcal C}}
\def\cF{{\mathcal F}}
\def\cG{{\mathcal G}}
\def\cI{{\mathcal I}}
\def\cL{{\mathcal L}}
\def\cM{{\mathcal M}}
\def\cN{{\mathcal N}}
\def\cO{{\mathcal O}}
\def\cP{{\mathcal P}}
\def\sA{{\mathsf A}}
\def\fA{{\mathfrak A}}
\def\fD{{\mathfrak D}}
\def\fS{{\mathfrak S}}
\def\mult{{\mathrm{mult}}}
\def\fS{{\mathfrak S}}
\def\bP{{\mathbb P}}
\def\bQ{{\mathbb Q}}
\def\bZ{{\mathbb Z}}
\def\bC{{\mathbb C}}
\def\F{{\mathbb F}}
\newcommand{\mls}{\mathcal M}
\def\bF{{\mathbb F}}
\def\codim{\mathrm{codim}}
\def\Pic{\mathrm{Pic}}
\def\Aut{\mathrm{Aut}}
\def\SL{\mathsf{SL}}
\def\Bir{\mathrm{Bir}}
\def\lim{\mathrm{lim}}
\def\Cr{\mathrm{Cr}}
\def\Sing{\mathrm{Sing}}
\def\Cl{\mathrm{Cl}}
\newcommand{\dq}{\mathbb Q}
\newcommand{\pp}{\mathbb P}
\newcommand{\aq}{{\mathfrak A_4}}
\newcommand{\ac}{{\mathfrak A_5}}
\newcommand{\dc}{\mathbb C}
\newcommand{\dz}{\mathbb Z}
\newcommand{\pl}{\mathbb P^1}
\newcommand{\scinq}{{\mathfrak S_5}}
\newcommand{\st}{{\mathfrak S_3}}
\newcommand{\pic}{\mathrm{Pic}}
\newcommand{\sing}{\mathrm{Sing}}
\newcommand{\p}{\mathbb P}
\newcommand{\rk}{\mathrm{rk}}
\newcommand{\f}{\mathbb F}
\newcommand{\na}{\mathrm{Cr}}
\begin{document}
\title[quadric threefolds]{$\mathfrak A_5$-equivariant geometry of quadric threefolds}

\author[A. Pinardin]{Antoine Pinardin}
\address{Department of Mathematics, University of Edinburgh, UK}

\email{antoine.pinardin@gmail.com}

\author[Zh. Zhang]{Zhijia Zhang}

\address{
Courant Institute,
  251 Mercer Street,
  New York, NY 10012, USA
}

\email{zz1753@nyu.edu}

\date{\today}

\begin{abstract}
We classify $G$-Mori fibre spaces equivariantly birational to smooth quadric threefolds with fixed-point free actions of the alternating group $G=\mathfrak A_5$. We deduce that such quadric threefolds are $G$-solid and the $G$-actions on them are not projectively linearizable.
\end{abstract}

\maketitle

\section{Introduction}
\label{sect:intro}
One of the central problems in birational geometry is to understand the finite subgroups of the Cremona group $\Cr_n(\bC)$, the group of birational automorphisms of $\bP^n$ over $\bC$. A classification of finite subgroups of $\Cr_2(\bC)$ was obtained in \cite{DI}, but it is far from complete in dimension 3 or higher. A closely related problem is to identify {\em projectively linearizable} subgroups of $\Cr_n(\bC)$, that is, subgroups conjugate to a subgroup induced by a biregular action on $\bP^n$. This has been recently settled in dimension 2 by \cite{PSY}, and is attracting considerable attention in dimension 3, see, e.g. \cite{CTZ,CTZcub,CMTZ,CMTZ3,CTT}. A particularly interesting class of groups is finite simple non-abelian subgroups of $\Cr_3(\bC)$. They have been classified in \cite{ProSimple}. The possibilities are:
$$
\fA_5,\quad \fA_6,\quad \fA_7,\quad \mathsf{PSL}_2(\bF_7),\quad \SL_2(\bF_{8}), \quad\mathsf{PSp}_4(\F_3).
$$

Among them, the most fundamental one in group theory is the alternating group $\fA_5$, the smallest non-abelian simple group. It plays a significant role in birational geometry. There are only three embeddings of $\ac$ in $\na_2(\dc)$, up to conjugation. For their descriptions, see \cite{cheltsov2009two}, \cite{DI} or \cite{bannai2007note}. In contrast, Krylov \cite{krylov2020families} shows that there are infinitely many conjugacy classes of $\fA_5$ in $\Cr_3(\bC)$. Obtaining a classification of all such conjugacy classes is thus a difficult task and remains open. Indeed, there is a wealth of rational threefolds carrying an $\fA_5$-symmetry: the Segre cubic, the Igusa and Burkhardt quartic, the quintic del Pezzo threefold, etc. It is a natural question to ask about conjugation of the corresponding $\fA_5$-actions in $\Cr_3(\bC)$. In the last two decades, this has been extensively studied \cite{Ch-Burk,Avilov-note}, and a book \cite{CS} was written by Cheltsov and Shramov on this topic.  However, the $\ac$-equivariant geometry for one of the simplest Fano threefolds, smooth quadric threefolds, had not been addressed. In this paper, we fill this gap, answering the questions of projective linearizability and solidity.

Throughout, we work over $\bC$ and $G$ is the alternating group $\fA_5$ of order 60, unless otherwise specified. We restrict ourselves to smooth quadric threefolds $X\subset\bP^4$ carrying generically free actions of $G$ such that $X^G\ne\emptyset$, since otherwise a projection from a $G$-fixed point on $X$ yields a $G$-equivariantly birational map $X\dashrightarrow\bP^3$. The arising $G$-actions on $\bP^3$ also have fixed points. 

Over a non-algebraically closed field, a smooth quadric hypersurface is rational if and only if it has a rational point. Surprisingly, projective linearizability of group actions on quadrics is a more intricate problem, see, e.g., \cite{btquad}. Many obstructions naturally vanish on quadrics, including group cohomology \cite{BogPro,KT-Cremona} and the dual complex of \cite{Esser}, see \cite[Section 2]{CTZcub} for an overview of known obstructions. Non-linearizable actions on quadric threefolds have been found using the Burnside formalism \cite[Example 9.2]{TYZ-3} and birational rigidity \cite{ChS-5,CSZ}. However, the first is not applicable to our case.

From representation theory, we know that any fixed-point free $G$-action on a smooth quadric threefold $X$ is isomorphic to one of the following two cases, which we refer to as the {\em standard} and {\em nonstandard} actions:
\begin{enumerate}
    \item {\em standard action}: 
    $$
    X=X_1=\left\{x_1^2+x_2^2+x_3^2+x_4^2+x_5^2=0\right\}\subset\bP^4_{x_1,\dots,x_5}
    $$
    with the $G$-action generated by
    \begin{align}\label{eqn:A51act}
        (\mathbf{x})\mapsto (x_2,x_1,x_4,x_3,x_5),\quad
         (\mathbf{x})\mapsto (x_5,x_1,x_2,x_3,x_4).
    \end{align}
    \item {\em nonstandard action}:
     \begin{align}\label{eqn:quadric2}
             X=X_2=\left\{\sum_{1\leq i\leq j\leq 5}x_ix_j=0\right\}\subset\bP^4_{x_1,\dots,x_5}
     \end{align}
    with the $G$-action generated by
    \begin{align}\label{eqn:A52act}
       (\mathbf x)&\mapsto (x_4,x_1,x_5,x_2,-x_1-x_2-x_3-x_4-x_5),  \notag\\
(\mathbf x)&\mapsto (x_4,-x_1-x_2-x_3-x_4-x_5,x_1,x_3,x_2). 
    \end{align}
\end{enumerate}

Our goal is to find all $G$-Mori fibre spaces that are $G$-equivariantly birational to $X_1$ and $X_2$ respectively, using classical techniques from birational rigidity, based on the celebrated Noether--Fano inequalities. The same has been carried out in \cite{CSZ} to show the non-linearizability of the $\fS_5$-action on $X_1$ via the $\fS_5$-permutations on coordinates. Our work generalizes their arguments to other actions.

These quadrics are $G$-equivariantly birational to certain singular cubic threefolds. By \cite[Section 6]{CTZcub}, up to isomorphism, there exists a unique cubic threefold $Y_1$ with $5\sA_1$-singularities and invariant under the $G$-action given by \eqref{eqn:A51act}. 
  By \cite[Lemma 8.3]{CMTZ}, there is a unique cubic threefold $Y_2$ with $5\sA_2$-singularities and invariant under the $G$-action given by \eqref{eqn:A52act}. See Section~\ref{sect:A5map} for explicit equations of $Y_1$ and $Y_2$. 
 

\

Our main results are the following:
\begin{theo}\label{theo:main1}
    The only $G$-Mori fibre spaces that are $G$-equivariantly birational to the quadric threefold $X_1$ are $X_1$ and the cubic threefold $Y_1$.
\end{theo}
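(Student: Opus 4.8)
\emph{Proof proposal.} The plan is to run the $G$-equivariant Sarkisov program starting from $X_1$, in the spirit of \cite{CSZ}. First, record the relevant data: $X_1$ is a smooth $G$-Fano threefold with $\rk\Pic^G(X_1)=1$, $-K_{X_1}=\cO_{X_1}(3)$ and $H^3=2$, where $H$ is the hyperplane class; and, by the construction recalled above, $Y_1$ is likewise a $G$-Fano threefold (terminal, and --- granting $G\bQ$-factoriality --- with $\rk\Pic^G(Y_1)=1$), joined to $X_1$ by an explicit elementary $G$-Sarkisov link. Now let $\chi\colon X_1\dashrightarrow X'$ be an arbitrary $G$-birational map to a $G$-Mori fibre space $\pi\colon X'\to S$. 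If $\chi$ is a $G$-isomorphism there is nothing to prove, so suppose it is not. Choose a $G$-invariant mobile linear system $\cH$ on $X'$ --- the pull-back under $\pi$ of an ample system if $\dim S>0$, and a general sub-system of $|{-}\ell K_{X'}|$ otherwise --- let $\cM=\chi_*^{-1}\cH$ be its strict transform on $X_1$, and write $\cM\equiv-\mu K_{X_1}$. By the $G$-equivariant Noether--Fano inequality the pair $(X_1,\tfrac1\mu\cM)$ is not canonical, hence it has a $G$-invariant non-canonical centre $Z\subset X_1$, which is either a $G$-orbit of points or a $G$-irreducible curve.

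Locating $Z$ is the heart of the matter, and here the representation theory of $\fA_5$ organises the bookkeeping: the action \eqref{eqn:A51act} is the permutation action of $\fA_5\subset\fS_5$ on $\bP(\mathbf 1\oplus\mathbf 4)$, so I can list all $G$-orbits of small length and all $G$-invariant curves of low degree that meet $X_1$. Since the only $G$-subrepresentations of $\mathbf 1\oplus\mathbf 4$ are $\mathbf 1$ and $\mathbf 4$, there is no $G$-invariant line, plane, conic, or rational normal curve in the ambient $\bP^4$, so the candidates have a restricted shape: the two $G$-orbits of five points with stabiliser $\fA_4$, the two $G$-orbits of ten points with stabiliser $\fS_3$, certain short orbits of lines and of conics, a $G$-orbit of $\fA_4$-invariant conics, and $G$-invariant curves cut out by invariant hypersurfaces such as $X_1\cap Y_1$. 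The numerics then cut this down: non-canonicity along a curve $C$ forces $\mult_C\cM>\mu$, so two general members of $\cM$ meet in an effective $1$-cycle of degree $9\mu^2H^3=18\mu^2$ containing the $G$-orbit of $C$ with coefficient $>\mu^2$, bounding $\deg C\le 17$; non-canonicity at a $G$-orbit of points is controlled, via the $4\mu^2$-inequality of Corti--Pukhlikov at each point of the orbit, by the same degree estimate. I would then go through the resulting finite list and exclude every candidate except the distinguished centre $\Sigma$ --- in each case by producing a global estimate forcing $\mult_C\cM\le\mu$, or by checking that the relevant local inequality fails, so that $(X_1,\tfrac1\mu\cM)$ is in fact canonical there --- and the elementary link initiated at $\Sigma$ is precisely $X_1\dashrightarrow Y_1$.

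This link strictly decreases the Sarkisov degree and converts $\cM$ into a mobile system on $Y_1$, so the argument restarts there: I would redo the classification of $G$-orbits and $G$-curves on the nodal cubic $Y_1$ --- now with the $G$-orbit of five nodes, and the ten lines joining pairs of nodes, among the candidates --- and show that its only available maximal centre is the one whose link is the inverse map $Y_1\dashrightarrow X_1$. Hence the only $G$-Mori fibre spaces reachable from $X_1$ by a chain of $G$-Sarkisov links are $X_1$ and $Y_1$; in particular a $G$-Mori fibre space over a positive-dimensional base (a $G$-conic bundle or $G$-del Pezzo fibration) would likewise produce a non-canonical pair on $X_1$ and hence appear along this chain, so no such fibration is $G$-birational to $X_1$. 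By descending induction on the Sarkisov degree, $\chi$ factors as a composite of links between $X_1$ and $Y_1$ and terminates at a $G$-isomorphism, whence $X'\cong_G X_1$ or $X'\cong_G Y_1$.

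The main obstacle is this case analysis of potential maximal centres on $X_1$ and on $Y_1$: one must genuinely carry out the enumeration and, for every candidate other than the two distinguished ones, produce the exclusion --- either a clean numerical obstruction, or, in the borderline cases, an explicit local computation of the singularities of the pair, for instance by restricting $\cM$ to a general member of $|H|$ or $|2H|$ through the centre, or via the method of hypertangent divisors. A secondary difficulty is running the equivariant Noether--Fano and Sarkisov machinery over the singular Fano $Y_1$: one needs the nodes to be mild enough that $G\bQ$-factoriality and $\rk\Pic^G(Y_1)=1$ hold, and that the $G$-equivariant divisorial extractions along the way, together with the intermediate flops, are under control.
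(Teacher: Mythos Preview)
Your proposal is correct and follows essentially the same approach as the paper: reduce Theorem~\ref{theo:main1} via the Noether--Fano inequality to the two assertions that $(X_1,\lambda\cM_{X_1})$ is canonical away from the two length-$5$ orbits $\Sigma_5\cup\Sigma_5'$ and that $(Y_1,\mu\cM_{Y_1})$ is canonical away from $\Sing(Y_1)$, and then invoke the Sarkisov framework of \cite[Section~3]{CSZ}. For the case analysis you leave open, the paper's concrete exclusion tools are restriction to the unique $G$-invariant hyperplane section (handled via its $G$-equivariant $\alpha$-invariant), restriction to a pencil of $G$-invariant K3 surfaces for curves not lying in that section, and Nadel vanishing to bound the length of any $0$-dimensional non-klt locus --- rather than hypertangent divisors.
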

\begin{theo}\label{theo:main2}
    The only $G$-Mori fibre spaces that are $G$-equivariantly birational to the quadric threefold $X_2$ are $X_2$ and the cubic threefold $Y_2$.
\end{theo}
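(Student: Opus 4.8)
The plan is to carry out a $G$-equivariant Sarkisov-type analysis: start with an arbitrary $G$-Mori fibre space $X'\to S'$ equivariantly birational to $X_2$, choose a sufficiently general $G$-invariant linear system $\cM'$ on $X'$ pulled back from $|{-}K_{X_2}|$ (or its multiples) under the given birational map, and run the $G$-minimal model program / Sarkisov program on the pair $(X_2,\cM)$, where $\cM$ is the strict transform on $X_2$. The Noether--Fano inequalities tell us that if the birational map is not an isomorphism of Mori fibre spaces, then the pair $(X_2,\lambda\cM)$ is not canonical for the $\lambda$ with $K_{X_2}+\lambda\cM\equiv 0$, so there is a $G$-orbit of centres of non-canonical singularities. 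The bulk of the argument is to enumerate these possible centres and, for each, either derive a contradiction (excluding it) or identify the resulting Sarkisov link, showing that the only link leads to the cubic threefold $Y_2$.

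I would organize the exclusion as follows. First, use the $G=\fA_5$-action on $X_2$ to classify $G$-orbits of points and $G$-invariant (possibly reducible) curves of low degree on $X_2\subset\bP^4$; the nonstandard action on $X_2$ given by \eqref{eqn:A52act} has a specific orbit structure — no fixed points (by hypothesis $X^G=\emptyset$ would be the degenerate case, but here we are in the fixed-point-free regime so there IS a fixed point; more precisely one uses that the smallest orbits have size $5,6,10,12,\dots$ coming from the subgroup lattice of $\fA_5$), and one lists the short orbits and the invariant curves (lines, conics, the singular locus of $Y_2$, twisted cubics, elliptic curves of low degree). Second, for each such centre $Z$, bound the multiplicity $\mult_Z\cM$ using intersection theory on $X_2$: intersecting $\cM$ with invariant curves through $Z$ and with the $G$-orbit of $Z$ gives numerical inequalities of the form $\sum_{p\in \text{orbit}} \mult_p\cM \le (\cM^2\cdot C)$ for suitable $C$, which for large orbits forces $\mult_p\cM<1$, contradicting non-canonicity at a point; for curves one uses $\mult_C\cM \le$ (degree bound) and the $4n^2$-type inequality. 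This eliminates all centres except the one corresponding to the $5\sA_2$-points, i.e. $\Sing(Y_2)$ pulled back, and its mirror on $Y_2$.

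The decisive step is then to show that the surviving centre genuinely produces a Sarkisov link to $Y_2$ and that $Y_2$ itself is $G$-birationally rigid relative to $X_2$ — equivalently, running the same analysis starting from $Y_2$ leaves only the link back to $X_2$. Here I would use the explicit $G$-equivariant birational map $X_2\dashrightarrow Y_2$ from Section~\ref{sect:A5map} (a projection/blow-up construction at the $5\sA_2$ points, analogous to the $5\sA_1$ case and to \cite{CSZ,CMTZ}), verify it is a genuine Sarkisov link of the appropriate type, and check that on $Y_2$ the only $G$-orbits of non-canonical centres are those reversing this link. Finally, I would note that $S'$ cannot be positive-dimensional: a $G$-del Pezzo fibration or conic bundle structure birational to $X_2$ would have to be excluded, which follows because $X_2$ (and $Y_2$) carry no $G$-invariant pencils with the requisite properties — the relevant Picard group computation gives $\Pic(X_2)^G$ and $\Pic(Y_2)^G$ of rank $1$, and the Sarkisov links out of rank-one $G$-Fanos are exactly the ones enumerated.

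The main obstacle I anticipate is the multiplicity bookkeeping at the "bad" centres — especially $G$-orbits of curves (conics, twisted cubics) and the singular points themselves, where the pair is already singular: near the $\sA_2$ singularities one must work on a resolution and track discrepancies carefully, and the numerical inequalities there are tight, so a naive bound will not suffice and one needs the finer local analysis (e.g. via the techniques of Corti, Pukhlikov, or the "hypertangent" divisors adapted to the $\fA_5$-symmetry) to push through the exclusion. The second delicate point is ruling out fibre-type Mori fibre spaces, which requires knowing that no $G$-equivariant map from $X_2$ contracts a divisor onto a curve or surface in a way compatible with $\fA_5$; this is where the representation-theoretic constraints on $\fA_5$-orbits (no invariant hyperplane sections of low degree, the structure of $H^0(X_2,\cO(k))$ as a $G$-module) do the work.
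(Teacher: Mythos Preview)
Your overall framework is right---Noether--Fano plus a classification of $G$-centres on both $X_2$ and $Y_2$---but there is a genuine gap in your exclusion step. You assert that intersection-theory bounds will ``eliminate all centres except the one corresponding to the $5\sA_2$-points''. In the nonstandard action this is false: there are $G$-invariant irreducible rational curves $C_4,\,C_4'$ of degree $4$ (the singular loci of the two non-normal members of the $G$-invariant K3 pencil) and curves $C_8,\,C_8'$ of degree $8$, and possibly smooth curves of degree $10$ and genus $6$, which \emph{can} be centres of non-canonical singularities of $(X_2,\lambda\cM)$. No multiplicity or $4n^2$-type inequality rules them out, because the degree bound $\deg(C)<18$ is comfortably satisfied. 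What the paper does instead is show that the Sarkisov link centred at each such curve is a $G$-equivariant birational \emph{self}-map of $X_2$ (an explicit involution for $C_4,\,C_4'$; a type~II link back to a $G$-isomorphic quadric for $C_8,\,C_8',\,C_{10}$, via a big-and-nef check on $-K_{\widetilde X}$ and a look-up in the classification of weak Fano links). One then composes with these self-maps to \emph{untwist} $\cM$: each application strictly lowers the integer $n$ with $\cM\subset|\cO_X(n)|$, so after finitely many steps the pair becomes canonical away from $\Sigma_5\cup\Sigma_5'$. This untwisting argument is the missing idea; without it you cannot reduce to the length-$5$ orbits.

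Two smaller points. First, your orbit-size list ($5,6,10,12,\ldots$) is for the standard action; in the nonstandard action on $X_2$ the short orbits have lengths $5,5,12,12$ only (Lemma~\ref{lemm:2a5orbit}), and there is no $G$-invariant hyperplane section, which changes the shape of the curve classification substantially---one must work inside the pencil of $G$-invariant K3 surfaces in $|\cO_X(3)|$. Second, the final step relating $\Sigma_5$ on $X_2$ to $\Sing(Y_2)$ is not merely ``identify the link'': one needs the numerical dichotomy that $(X_2,\lambda\cM)$ non-canonical at $\Sigma_5$ forces $\mathrm{mult}_{\Sigma_5}\cM>2/\lambda$ (via the irreducible $\fA_4$-action on the tangent space), while $(Y_2,\mu\cM_Y)$ non-canonical at the $\sA_2$-points forces a competing inequality via $\alpha_{\fA_4}(\bP(1,1,2))\ge\tfrac12$; these two inequalities are mutually exclusive under the Cremona transform, which is how one concludes that at least one of the two pairs is canonical.
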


There is also a nonstandard $G'=\fS_5$-action on $X_2$, generated by \eqref{eqn:A52act} and the involution
$$
(\mathbf x)\mapsto (x_3,x_4,x_1,x_2,-x_1-x_2-x_3-x_4-x_5).
$$
We prove:
\begin{theo}\label{theo:main3}
    The only $G'$-Mori fibre spaces that are $G'$-equivariantly birational to the quadric threefold $X_2$ are $X_2$ and the cubic threefold $Y_2$.
\end{theo}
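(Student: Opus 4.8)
The plan is to reduce Theorem~\ref{theo:main3} to Theorem~\ref{theo:main2} by leveraging the inclusion $G=\fA_5\subset G'=\fS_5$. Since any $G'$-Mori fibre space is in particular a $G$-variety, and any $G'$-equivariant birational map is a fortiori $G$-equivariant, every $G'$-Mori fibre space $G'$-birational to $X_2$ is $G$-equivariantly birational to $X_2$. However, one cannot simply invoke Theorem~\ref{theo:main2} directly, because the underlying $G$-Mori fibre space structure of a $G'$-Mori fibre space need not itself be a $G$-Mori fibre space (the relative Picard rank can jump when we forget part of the symmetry). So the first step is to show that the list of $G'$-Mori fibre spaces $G'$-birational to $X_2$ is contained in a short explicit list obtained by running the Noether--Fano / birational rigidity machinery directly at the level of $G'$, exactly as in the proof of Theorem~\ref{theo:main2}, but keeping track of the extra involution.

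Concretely, I would proceed as follows. First, record the $G'$-equivariant geometry of $X_2$: the Picard group is $\bZ$ with trivial $G'$-action (so $X_2$ is a $G'$-Fano of Picard rank one), hence $X_2$ itself is a $G'$-Mori fibre space, and the elementary links out of it are governed by $G'$-invariant linear systems $\cM\subset|-\tfrac12 K_{X_2}|$ or multiples thereof. Second, analyze the possible $G'$-Sarkisov links: using the Noether--Fano inequality, a non-trivial link is initiated by extracting a $G'$-invariant centre (a $G'$-orbit of points or curves) of sufficiently small degree on which the maximal singularity is supported. The key input is the classification of $G'=\fS_5$-orbits of low degree on $X_2$ — points, lines, conics, twisted cubics — together with their stabilizers; here the nonstandard $\fS_5$-action makes this orbit bookkeeping somewhat different from the standard permutation action, and this is where the explicit generators in \eqref{eqn:A52act} and the extra involution must be used. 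Third, show that the only centre that actually produces a valid $G'$-link is the one corresponding to the $5\sA_2$-points, and that blowing it up (or the appropriate weighted analogue) together with the contraction on the other side yields precisely the cubic threefold $Y_2$ with its $G'$-action; all other candidate centres are excluded either because they are not $G'$-invariant, or because the resulting variety fails the Noether--Fano inequality / is not a Mori fibre space, or because the link is not $G'$-equivariant.

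The main obstacle will be step two: verifying that the $G'$-orbits potentially carrying a maximal centre are genuinely limited to the $5\sA_2$-locus, i.e., excluding all the spurious low-degree $G'$-orbits. Because $\fS_5$ has more irreducible representations and more conjugacy classes of subgroups than $\fA_5$, there are more potential orbit types to rule out than in the $\fA_5$ case of Theorem~\ref{theo:main2}, and one must check for each that either no such orbit exists on $X_2$ in the nonstandard model, or that the associated linear system violates the relevant inequality. I expect this to be handled by a case analysis on the stabilizer subgroups of $\fS_5$ (cyclic of orders up to $6$, $V_4$, $S_3$, $D_5$, $F_{20}$, $S_4$, $A_5$) and their fixed loci on $X_2$, mirroring the $\fA_5$ analysis but with the extra involution cutting down the fixed loci.

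Finally, I would assemble the pieces: combining the $G'$-rigidity analysis with the identification of the exceptional link target as $Y_2$ (whose $G'$-action is the one compatible with \eqref{eqn:A52act} and the extra involution, and whose uniqueness as a $5\sA_2$-cubic follows from \cite[Lemma 8.3]{CMTZ} together with the observation that the $\fS_5$-structure extends the $\fA_5$-one), we conclude that $X_2$ and $Y_2$ exhaust the $G'$-Mori fibre spaces in the $G'$-birational class. A short consistency check is that this list is a sublist of the $G$-list from Theorem~\ref{theo:main2}, as it must be.
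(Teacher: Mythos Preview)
Your overall strategy --- run the Noether--Fano/Sarkisov analysis directly for $G'=\fS_5$ --- is sound and would eventually succeed, but you have the key simplification backwards. Since $G=\fA_5\subset G'$, every $G'$-invariant mobile linear system on $X_2$ (or on $Y_2$) is in particular $G$-invariant, so any non-canonical center of $(X_2,\lambda\cM_{X_2})$ for a $G'$-invariant $\cM_{X_2}$ already appears on the short list established in Proposition~\ref{prop:main3}: the two length-5 orbits $\Sigma_5,\Sigma_5'$, the four explicit curves $C_4,C_4',C_8,C_8'$, and possibly a degree-10 curve. Enlarging the group makes the problem \emph{easier}, not harder --- there are \emph{fewer} potential centers to exclude, not more, contrary to your expectation. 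The paper exploits this directly: the extra involution $\iota$ swaps $C_4\leftrightarrow C_4'$ and $C_8\leftrightarrow C_8'$, so a $G'$-irreducible curve center must be the union $C_4+C_4'$ or $C_8+C_8'$ (and a degree-10 curve not fixed by $\iota$ would yield a $G'$-orbit of degree 20, already past the bound 18 of Remark~\ref{rema:degreebound1}). The only genuinely new work is a short intersection computation (Lemma~\ref{lemm:C8+C8'iscanonical}) excluding $C_8+C_8'$, together with the observation that the Sarkisov link centered at $C_4+C_4'$ is a $G'$-birational involution of $X_2$; after that, Proposition~\ref{prop: canonical linear system on quadrics} and Theorem~\ref{theorem:quadric-cubic-technical} go through verbatim. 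On $Y_2$, Proposition~\ref{prop:main4} holds for $G'$ a fortiori. What you record as a final ``consistency check'' --- that the $G'$-list is a sublist of the $G$-list --- is in fact the organizing principle of the argument, not an afterthought.

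Two smaller points. Your reference to ``the $5\sA_2$-points'' as the center on $X_2$ conflates $X_2$ with $Y_2$: on the quadric $X_2$ the length-5 orbits consist of smooth points, and it is the Cremona map blowing these up that lands on the $5\sA_2$-cubic $Y_2$. And your proposed case analysis over all stabilizer subgroups of $\fS_5$ is unnecessary: the relevant low-degree $G'$-orbits and $G'$-invariant curves are automatically among those already classified for $\fA_5$ in Section~\ref{sect:a5quad2}.
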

A $G$-variety is called $G$-solid if it is not $G$-equivariantly birational to a $G$-Mori fibre space over a positive dimensional base. Our results together with \cite[Theorem 3.1]{CSZ} imply that:

\begin{coro}\label{coro:lin}
  Let $G=\fA_5$ or $\fS_5$, and $X$ a smooth quadric threefold carrying a generically free $G$-action. Then the following are equivalent 
  \begin{itemize}
      \item $G$ does not fix any point on $X$,
      \item the $G$-action on $X$ is not projectively linearizable,
      \item $X$ is $G$-solid.
  \end{itemize}
\end{coro}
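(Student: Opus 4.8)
The plan is to derive Corollary~\ref{coro:lin} by proving a cycle of implications, relying on Theorems~\ref{theo:main1}, \ref{theo:main2}, \ref{theo:main3} and the representation-theoretic trichotomy recalled before them.

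\textbf{Step 1: Reduce to the two model actions.} First I would recall that, as stated in the introduction, any generically free $G$-action (for $G=\fA_5$ or $\fS_5$) on a smooth quadric threefold $X$ is isomorphic either to the standard action on $X_1$, or to the nonstandard $\fA_5$- or $\fS_5$-action on $X_2$, \emph{provided} the action is fixed-point free; and if $X^G\neq\emptyset$ then projection from a fixed point gives a $G$-birational map $X\dashrightarrow\bP^3$, so the action is projectively linearizable and $X$ is visibly \emph{not} $G$-solid (it fibres over $\bP^1$ after blowing up). This disposes of all implications in the ``fixed point'' case at once: $G$ fixes a point $\Rightarrow$ the action is linearizable, and $\Rightarrow$ $X$ is not $G$-solid. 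So each of the three bulleted conditions fails simultaneously when $X^G\neq\emptyset$, and it remains to treat the fixed-point free case, where $X$ is one of $X_1$, $X_2$.

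\textbf{Step 2: Fixed-point free $\Rightarrow$ not linearizable.} Here I would argue that $\bP^3$ with any generically free $G$-action carries a $G$-fixed point: for $G=\fA_5$ this is classical — the four-dimensional representations of $\fA_5$ and $\fS_5$ and their duals all have an eigenvector that is $G$-invariant in $\bP^3$, or more robustly, a projective linear action of $\fA_5$ on $\bP^3$ decomposes the underlying $4$-dimensional space into irreducibles, and since $\fA_5$ has irreducible representations of dimensions $1,3,3,4,5$, any $4$-dimensional representation is either $4$ (irreducible, no invariant point but—) or $1\oplus 3$ (giving an invariant point). Actually the clean statement is \cite[Theorem 3.1]{CSZ} quoted in the corollary's hypotheses: I would simply invoke it to conclude that if $X\cong\bP^3$ $G$-equivariantly then $X^G\neq\emptyset$, contrapositive of which gives: fixed-point free $\Rightarrow$ not $G$-birational to $\bP^3$ $\Rightarrow$ not projectively linearizable. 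Conversely, Step 1 already gave: not fixed-point free $\Rightarrow$ linearizable. Hence the first two bullets are equivalent.

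\textbf{Step 3: Fixed-point free $\Rightarrow$ $G$-solid, via the Main Theorems.} This is where Theorems~\ref{theo:main1}--\ref{theo:main3} do the work. By definition $X$ is $G$-solid iff it is not $G$-birational to any $G$-Mori fibre space over a base of positive dimension. The Main Theorems say that the \emph{only} $G$-MFS $G$-birational to $X_1$ (resp. $X_2$, for both $\fA_5$ and $\fS_5$) are $X_1$ and $Y_1$ (resp. $X_2$ and $Y_2$); since both $X_i$ and $Y_i$ are Fano threefolds, i.e.\ MFS over a point, none of them fibres over a positive-dimensional base, so $X_i$ is $G$-solid. Combined with Step 1 (not fixed-point free $\Rightarrow$ the blow-up of a fixed point maps to $\bP^3$ and hence $X$ admits conic bundle / del Pezzo fibration structures, so is \emph{not} $G$-solid), we get: fixed-point free $\iff$ $G$-solid. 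Chaining the three equivalences closes the cycle and proves the corollary.

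\textbf{Main obstacle.} The genuinely substantive input is entirely contained in Theorems~\ref{theo:main1}--\ref{theo:main3}, whose proofs via Noether--Fano inequalities and the analysis of $G$-invariant mobile linear systems on $X_1$, $X_2$ form the technical heart of the paper; granting those, the corollary itself is a short formal deduction. The only point requiring a little care is making the ``$X^G\neq\emptyset\Rightarrow$ not $G$-solid'' direction precise — one must exhibit an explicit $G$-MFS structure over a positive-dimensional base, which is immediate from the $G$-birational map $X\dashrightarrow\bP^3$ followed by, e.g., a $G$-equivariant linear projection $\bP^3\dashrightarrow\bP^1$ or $\bP^2$ through a $G$-fixed point or $G$-invariant line, resolved to a genuine fibration; I would spell that out in one sentence.
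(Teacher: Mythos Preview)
Your Step~2 contains a genuine gap. The representation-theoretic claim that every $\fA_5$- or $\fS_5$-action on $\bP^3$ has a fixed point is false: the irreducible $4$-dimensional representation $V_4$ of $\fA_5$ (and likewise the standard $4$-dimensional irreducible of $\fS_5$) gives a faithful action on $\bP^3=\bP(V_4)$ with no fixed point, as you yourself notice with the trailing ``but---''. Moreover, even if the claim were true, it would only show that $X$ is not $G$-\emph{isomorphic} to $\bP^3$; it says nothing about $G$-\emph{birationality}, which is what projective linearizability means. Finally, \cite[Theorem~3.1]{CSZ} is not a statement about fixed points on $\bP^3$: it is the analogue of Theorems~\ref{theo:main1}--\ref{theo:main3} for the \emph{standard} $\fS_5$-action on $X_1$, classifying the $\fS_5$-Mori fibre spaces birational to that quadric. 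It belongs in Step~3 (to cover the one case the present paper does not treat), not in Step~2.

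The correct argument for non-linearizability is simpler and follows directly from the full strength of the classification you already invoke in Step~3. Theorems~\ref{theo:main1}--\ref{theo:main3} together with \cite[Theorem~3.1]{CSZ} assert that the \emph{only} $G$-Mori fibre spaces $G$-birational to $X$ are $X$ itself and the singular cubic $Y$. Now $\bP^3$ with any $G$-action is itself a $G$-Mori fibre space (it is a terminal Fano threefold with $\mathrm{rk}\,\Cl^G(\bP^3)=1$), and it is isomorphic neither to a smooth quadric threefold nor to a singular cubic threefold. Hence $\bP^3$ does not appear in the list, so $X$ is not $G$-birational to $\bP^3$; the action is not projectively linearizable. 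With this correction, your Steps~1 and~3 are fine and the overall deduction matches the paper's.
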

Note that all such actions on quadric threefolds are known to be stably linearizable by \cite[Theorem 4.1]{CTZ-uni}.

Here's the roadmap of the paper: in Section~\ref{sect:gen}, we recall basic tools from birational geometry. In Section \ref{sect:A5map}, we present facts about $\fA_5$-equivariant geometry of quadrics. In Sections~\ref{sect:a5quad1} -- \ref{sect:a5cub2}, we prove technical results on singularities of certain log pairs on quadric and cubic threefolds. In Section \ref{sect:theproof}, we prove that these technical results imply Theorem~\ref{theo:main1} and Theorem~\ref{theo:main2}, and 
derive a proof of 
Theorem~\ref{theo:main3}.

\medskip

\noindent
{\bf Acknowledgments:} 
The authors are grateful to Ivan Cheltsov for his careful guidance and detailed feedback on a first draft of the manuscript, to Yuri Tschinkel for his interest and comments, and to Joseph Malbon for helpful discussions. Part of the paper was completed during the semester-long program \textit{Morlet Chair} at CIRM, Luminy. The authors are thankful for its hospitality.

\section{Preliminaries}
\label{sect:gen}

Let $X$ be a projective variety with at most klt singularities and $G$ a finite subgroup of $\Aut(X)$. We use the language of the (equivariant) minimal model program; see, e.g., \cite[I.2]{CheltsovShramov}. Throughout the paper, a log pair $(X,\cM_X)$ refers to a pair consisting of $X$ with a non-empty mobile $G$-invariant linear system $\cM_X$ on $X$ consisting of $\bQ$-Cartier divisors. Let $\pi:\widetilde X\to X$ be a resolution of singularities, and 
$$
\cM^{\widetilde{X}}:=\pi^*(K_X+\cM_X)-K_{\widetilde X}.
$$
For a prime divisor $D\subset\mathrm{Supp}(\cM^{\widetilde{X}})$, the {\em log discrepancy} of the log pair $(X,\cM_X)$ at $D$ is defined as the rational number
$$
a(X,\cM_X;D)=1-\mathrm{mult}_D(\cM^{\widetilde{X}}).
$$

Let $p\in X$ be a point. We say that $(X,\cM_X)$ is {\em canonical (resp. log-canonical, klt)} at $p$ if for any prime divisor $E$ on $\widetilde X$ such that $p\in\pi(E)$, we have $a(X,\cM_X; E)\geq 1$ (resp. $\geq 0$, $>0$).
The {\em non-canonical (resp. non-log-canonical, non-klt) locus} of $(X,\cM_X)$ is the union of points where  $(X,\cM_X)$ is not canonical (resp. not log-canonical, not klt).

An irreducible subvariety $Z\subset X$ is said to be a {\em center of non-canonical (resp. non-log-canonical, non-klt) singularities} of $(X,\cM_X)$ if 
there exists a resolution $\pi:\widetilde X\to X$ and 
 a prime divisor $E$ on $\widetilde{X}$ such that $\pi(E)=Z$ and $a(X,\cM_X;E)<1$ (resp. $<0$, $\leq 0$). For simplicity, we also refer to it as {\em a non-canonical (resp. non-log-canonical, non-klt) center}.

The cornerstone of birational rigidity is the classical Noether--Fano inequality, which reveals a close connection between canonical singularities of log pairs and the existence of birational maps between Mori fibre spaces.  We recall it in the equivariant setting, as is in \cite[Theorem 3.2.6]{CS}.

\begin{theo}\label{theo:nfi}
Let $X$ be a Fano variety with terminal singularities, $G$ a finite subgroup of $\Aut(X)$ such that $\mathrm{rk}(\Cl^G(X))=1$. Assume that there exists a $G$-equivariantly birational map $\chi: X\dashrightarrow V$, where $V$ is a variety with a generically free $G$-action such that one of the following  holds:
\begin{enumerate}
    \item either $V$ is also a Fano variety with terminal singularities such that $\mathrm{rk}(\Cl^G(V))=1$;
    \item  or there exists a $G$-equivariant morphism $V\to Z$ with connected fibres such that its general fibre is a Fano variety, and $Z$ is a normal projective variety with $\dim(V)>\dim(Z)>0$. 
\end{enumerate}
In the former case, let $\cM_X$ be the strict transform on $X$ of the linear system $|-nK_V|$ for $n\gg 0$.
In the latter case, let $\cM_X$ be the strict transform on $X$ of the linear system $|H_V|$, where $H_V$ is the pullback on $V$ of a very ample divisor on $Z$ whose class in $\mathrm{Pic}(Z)$ is $G$-invariant. Let $\lambda\in\dq$ be such that $\lambda\mls_X\sim_\dq-K_X$.
Then, if $\chi$ is not biregular, the log pair $(X,\lambda\mls_X)$ has non-canonical singularities.
\end{theo}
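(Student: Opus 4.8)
The plan is to argue by contraposition: assume the log pair $(X,\lambda\cM_X)$ is canonical and deduce that $\chi$ is biregular. In case (2) this will in fact be a contradiction, since a Fano variety with $G$-invariant class group of rank one cannot be isomorphic to a Mori fibre space over a base of positive dimension; so the contrapositive assumption there must already be untenable.

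First I would replace $\chi$ by a common $G$-equivariant resolution: a smooth projective $G$-variety $W$ with $G$-equivariant birational morphisms $p\colon W\to X$ and $q\colon W\to V$ realising $\chi=q\circ p^{-1}$. Let $\cH_V$ be the linear system defining $\cM_X$ — so $\cH_V=|-nK_V|$ with $n\gg0$ in case (1) and $\cH_V=|H_V|$ in case (2) — and let $\cH_W$ be the strict transform of $\cH_V$ on $W$; since $\cM_X$ is the strict transform of $\cH_V$ under $\chi$, the mobile system $\cH_W$ is also the strict transform of $\cM_X$. Writing
$$
p^*\cM_X=\cH_W+\Lambda,\qquad K_W=p^*K_X+M,\qquad K_W=q^*K_V+M',
$$
with $\Lambda,M\ge0$ supported on $p$-exceptional primes, $M'\ge0$ supported on $q$-exceptional primes, and $M$, $M'$ having strictly positive coefficient along every such prime because $X$ and $V$ are terminal, and using that $\cH_W$ is mobile and that $K_X+\lambda\cM_X\sim_\bQ0$, one obtains
$$
K_W+\lambda\cH_W=p^*(K_X+\lambda\cM_X)+(M-\lambda\Lambda)\sim_\bQ M-\lambda\Lambda .
$$
The hypothesis that $(X,\lambda\cM_X)$ is canonical is exactly the assertion that $\Theta:=M-\lambda\Lambda$ is effective (and it is $p$-exceptional). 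This single inequality drives the rest.

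In case (2), I would push $K_W+\lambda\cH_W\sim_\bQ\Theta$ forward by $q$; since $q_*K_W=K_V$ and $q_*\cH_W=\cH_V$ this gives $K_V+\lambda\cH_V\sim_\bQ q_*\Theta\ge0$. Restricting to a general fibre $F$ of $V\to Z$, the class of $\cH_V$ restricts to $0$ (it is pulled back from $Z$) while $K_V|_F=K_F$ by adjunction, so $K_F\sim_\bQ(q_*\Theta)|_F\ge0$; but $F$ is a positive-dimensional Fano variety, so $K_F$ is not $\bQ$-effective — contradiction. In case (1), taking $n\gg0$ makes $-nK_V$ very ample, so $\cH_V$ is base-point free and its strict transform is its pullback, $\cH_W=q^*\cH_V$. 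Two pushforwards now pin down $\lambda$. Pushing $K_W+\lambda\cH_W\sim_\bQ\Theta\ge0$ forward by $q$ and using $\cH_V\sim_\bQ-nK_V$ gives $(1-n\lambda)K_V\sim_\bQ q_*\Theta\ge0$, and since $-K_V$ is ample — an anti-ample class cannot be $\bQ$-linearly equivalent to an effective divisor — this forces $\lambda\ge1/n$. On the other hand, $\cH_W=q^*\cH_V$ together with $K_W=q^*K_V+M'$ gives $K_W+\tfrac1n\cH_W\sim_\bQ M'\ge0$, which pushed forward by $p$ and combined with $\cM_X\sim_\bQ-\tfrac1\lambda K_X$ yields $\bigl(1-\tfrac1{n\lambda}\bigr)K_X\sim_\bQ p_*M'\ge0$; ampleness of $-K_X$ then forces $\lambda\le1/n$. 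Hence $\lambda=1/n$, and with it $q_*\Theta=0$ and $p_*M'=0$, so that $\Theta$ and $M'$ are effective, exceptional over both $X$ and $V$, and $\bQ$-linearly equivalent; the negativity lemma gives $\Theta=M'$, whence $p^*(K_X+\lambda\cM_X)=q^*(K_V+\lambda\cH_V)$. From here the standard concluding step of the Noether--Fano method — a negativity-lemma comparison on $W$ using that $\Cl^G(X)$ and $\Cl^G(V)$ both have rank one — shows that $\chi$ neither contracts nor extracts a divisor, hence is an isomorphism in codimension one, and therefore (both $X$ and $V$ being $G$-$\bQ$-factorial of Picard rank one) an isomorphism.

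The main obstacle is the part of case (1) just sketched: the two numerical inequalities that force $\lambda=1/n$ have to be arranged with care, in particular one must take $n$ large enough that $|-nK_V|$ is genuinely base-point free so that its strict transform on $W$ is exactly its pullback and no spurious $q$-exceptional terms appear; and the final implication, that $\lambda=1/n$ together with canonicity of $(X,\lambda\cM_X)$ makes $\chi$ biregular, is the one place where equivariant $\bQ$-factoriality and Picard rank one on both $X$ and $V$ are genuinely used. Case (2), by contrast, drops out immediately from the single inequality $\Theta\ge0$. The whole argument is automatically $G$-equivariant, since $W$, the linear systems and every divisor involved are $G$-invariant, so it runs verbatim with $\Cl$ replaced throughout by $\Cl^G$.
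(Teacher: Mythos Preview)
The paper does not supply its own proof of this theorem: it is quoted verbatim as the equivariant Noether--Fano inequality from \cite[Theorem~3.2.6]{CS}, so there is nothing in the paper to compare against. Your argument is the standard proof of that result and is essentially correct. Case~(2) is handled exactly as one expects, and in case~(1) the two pushforward inequalities pinning $\lambda=1/n$ are set up correctly; your observation that one needs $n$ large enough for $|-nK_V|$ to be base-point free (so that $\cH_W=q^*\cH_V$ with no spurious $q$-exceptional correction) is precisely the point.

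One remark on the ``standard concluding step'' you gesture at. From $\Theta=M'$ you immediately get that every $q$-exceptional prime is $p$-exceptional (since $M'$ has strictly positive coefficient along each $q$-exceptional prime and $\Theta$ is $p$-exceptional), hence $\chi$ contracts no divisor. The reverse inclusion, that every $p$-exceptional prime is $q$-exceptional, does \emph{not} follow from $\Theta=M'$ alone --- a $p$-exceptional prime $E$ with $a(E;X)=\lambda\cdot\mathrm{mult}_E\Lambda$ would have coefficient zero in $\Theta$ and could a priori dominate a divisor on $V$. This is where $\mathrm{rk}\,\Cl^G(V)=1$ is genuinely used: if such an $E$ existed with $D'=q(E)$ a divisor on $V$, then the $G$-orbit of $D'$ would satisfy $\overline{D'}\sim_{\bQ}-cK_V$ for some $c>0$, and pushing $q^*\overline{D'}$ forward by $p$ (using that $q$-exceptional primes are $p$-exceptional and $p_*M'=0$) gives $0\sim_{\bQ}-cK_X$, a contradiction. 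You allude to this (``using that $\Cl^G(X)$ and $\Cl^G(V)$ both have rank one''), but it is worth making explicit, since it is the one place where the rank-one hypothesis on $V$ actually does work. After that, your final sentence --- iso in codimension one plus $G$-$\bQ$-factoriality and rank one on both sides forces an isomorphism --- is the usual finish.
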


 The $\alpha$-invariant is a number associated to $X$ which corresponds to the {\em global log-canonical threshold} introduced in \cite{Tian} in a different language. When $X$ is a Fano variety with $\dim(X)\geq 2$, the {\em $G$-equivariant $\alpha$-invariant} of $X$ is the number
$$
\alpha_G(X)=\mathrm{sup}\left\{\lambda\in\mathbb{Q}\ \left|\ \aligned
&\text{the
 pair}\ \left(X, \lambda D\right)\ \text{is log-canonical for any }\\
&\text{$G$-invariant effective $\mathbb{Q}$-divisor}\ D\sim_{\mathbb{Q}} -K_{X}\\
\endaligned\right.\right\}.
$$
We compute this invariant for some $G$-surfaces in Sections~\ref{sect:a5quad1} and~\ref{sect:a5cub1}.  We will use a few results about singularities.

\begin{theo}[{\cite[Theorem 2.1]{Puk4n2}}]\label{theo:4n2}
    Let $X$ be a threefold and $\cM_X$ a non-empty mobile linear system on $X$. If a smooth point $p\in X$ is a non-canonical center of the pair $(X,\lambda\cM_X)$ for some positive rational number $\lambda$, and $D_1, D_2$ are two general elements in $\cM_X$, then
    $$
    \mult_p(D_1\cdot D_2)>\frac{4}{\lambda^2}.
    $$
\end{theo}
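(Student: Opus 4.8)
The plan is to reduce the three-dimensional assertion to its two-dimensional analogue by cutting with a general surface section, and then to prove that analogue by resolving the singularity of the resulting surface pair and applying a Cauchy--Schwarz estimate on its resolution graph. To begin, write $\nu=\mult_p\cM_X$ for the multiplicity at $p$ of a general member of $\cM_X$. The blow-up $f\colon\widetilde X\to X$ of the point $p$ has exceptional divisor $E_0\cong\bP^2$ with $a(X,\lambda\cM_X;E_0)=3-\lambda\nu$, so if $\nu>2/\lambda$ the elementary inequality $\mult_p(D_1\cdot D_2)\ge\mult_pD_1\cdot\mult_pD_2$ --- valid because the general members $D_1,D_2$ have no common component and meet properly at $p$ --- already gives $\mult_p(D_1\cdot D_2)\ge\nu^2>4/\lambda^2$. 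We may therefore assume $\nu\le 2/\lambda$, so that the non-canonical center lying over $p$ is ``deeper'' than the first blow-up.

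Now cut down to a surface. Choose a general surface $S\subset X$ through $p$ inside a sufficiently ample linear system, so that $S$ is smooth near $p$ and $\Lambda:=\cM_X|_S$ is a mobile linear system on $S$. Since $p$ is a non-canonical \emph{center} of $(X,\lambda\cM_X)$, there is a prime divisor $E$ over $X$ with center $\{p\}$ and $a(X,\lambda\cM_X;E)<1$. Because $S$ passes through $p$, its pull-back vanishes along $E$, so $\mult_E S\ge 1$; hence $a(X,S+\lambda\cM_X;E)=a(X,\lambda\cM_X;E)-\mult_E S<0$, i.e.\ $(X,S+\lambda\cM_X)$ is not log canonical at $p$. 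By inversion of adjunction --- classical here, as $S$ is a smooth Cartier divisor in the smooth threefold $X$ --- the pair $(S,\lambda\Lambda)$ is not log canonical at $p$. Moreover, since $D_1\cdot D_2$ is a curve and $S$ is general through $p$, one has $\mult_p\bigl((D_1|_S)\cdot(D_2|_S)\bigr)=\mult_p(D_1\cdot D_2)$. It therefore suffices to prove the surface statement: \emph{if $\Lambda$ is a mobile linear system on a smooth surface $S$ and $p$ is a non-log-canonical center of $(S,\lambda\Lambda)$, then $\mult_p(C_1\cdot C_2)>4/\lambda^2$ for general $C_1,C_2\in\Lambda$.}

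For the surface statement, resolve the singularity of $(S,\lambda\Lambda)$ at $p$ by a tower of point blow-ups $g\colon T_N\to\cdots\to T_0=S$, with exceptional curves $F_1,\dots,F_N$, successive centers $B_0=p,B_1,\dots,B_{N-1}$ lying on the $F_j$'s, and ``maximal singularity'' $F_N$; write $\nu_i$ for the multiplicity of the strict transform of $\Lambda$ at $B_{i-1}$, and let $p_{Ni}$ be the number of paths to $F_N$ in the associated oriented graph. Each point blow-up contributes log discrepancy $1$, so the log discrepancy of $F_N$ over $S$ equals $1+\sum_i p_{Ni}$, and non-log-canonicity reads $\lambda\sum_i p_{Ni}\nu_i>1+\sum_i p_{Ni}$. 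On the other hand, the classical computation of the local intersection number through infinitely near points gives $\mult_p(C_1\cdot C_2)=\sum_w m_w(C_1)m_w(C_2)\ge\sum_{i=1}^{N}\nu_i^{2}$, since $B_0,\dots,B_{N-1}$ are among the infinitely near points of $p$ and general members have multiplicity $\nu_i$ at $B_{i-1}$. Combining these with Cauchy--Schwarz, $\bigl(\sum_i p_{Ni}\nu_i\bigr)^{2}\le\bigl(\sum_i p_{Ni}^{2}\bigr)\bigl(\sum_i\nu_i^{2}\bigr)$, reduces everything to the combinatorial inequality $\bigl(1+\sum_i p_{Ni}\bigr)^{2}\ge 4\sum_i p_{Ni}^{2}$ for the graphs that arise from iterated point blow-ups. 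In the chain case $p_{Ni}\in\{0,1\}$ and this is exactly $(N-1)^{2}\ge 0$; in general one uses that a point of a smooth surface lies on at most two exceptional curves, so the $p_{Ni}$ grow no faster than Fibonacci numbers, and the inequality still holds with room to spare.

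I expect the main obstacle to be the ``going up'' step of the second paragraph: guaranteeing that the non-canonical \emph{center} --- a point --- really produces a non-log-canonical obstruction \emph{at $p$} on the general surface section, with exactly one unit of log discrepancy lost. This is precisely where the hypothesis that $p$ be a non-canonical center (rather than only that $(X,\lambda\cM_X)$ be non-canonical at $p$) is used essentially. A secondary point is the purely combinatorial inequality for branching resolution graphs in the third paragraph. An alternative to the entire argument is Pukhlikov's direct treatment on $X$: blow up $p$, handle the case where the non-canonical center on $\widetilde X$ is a curve in $E_0\cong\bP^2$ by the two-dimensional estimate on $E_0$, and otherwise recurse on the point; there the delicate part is exactly that ``curve case''.
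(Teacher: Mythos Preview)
Your reduction in the second paragraph---cut with a general smooth surface $S$ through $p$, use inversion of adjunction to pass from ``$p$ is a non-canonical center of $(X,\lambda\cM_X)$'' to ``$(S,\lambda\cM_X|_S)$ is not log canonical at $p$'', and then appeal to the surface statement---is exactly the argument the paper records: it does not prove Theorem~\ref{theo:4n2} independently but says it is a corollary of Corti's surface inequality (Theorem~\ref{theo:cortiineq}) together with inversion of adjunction. Your treatment of that reduction, including the identification $\mult_p\bigl((D_1|_S)\cdot(D_2|_S)\bigr)=\mult_p(D_1\cdot D_2)$ for general $S$, is correct; the preliminary case $\nu>2/\lambda$ in your first paragraph is harmless but not needed for this route.

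Where you go beyond the paper is in the third paragraph, sketching a proof of Theorem~\ref{theo:cortiineq} itself via the resolution tower and Cauchy--Schwarz. The framework is sound and the discrepancy bookkeeping is right, but the combinatorial inequality $\bigl(1+\sum_i p_{Ni}\bigr)^{2}\ge 4\sum_i p_{Ni}^{2}$ for the graphs that occur is the genuine content of that approach and is not established by the Fibonacci remark: one needs an honest induction exploiting the specific structure of surface blow-up towers (every vertex $j\ge 2$ carries the arrow $j\to j-1$ and at most one further arrow $j\to i$ with $i<j-1$, with adjacency constraints on $i$). Since the paper is content to cite Corti, you may simply drop the third paragraph; if you prefer to keep it, either supply that induction or replace the graph argument by Corti's original direct multiplicity computation.
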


Theorem~\ref{theo:4n2} is essentially a corollary of the following theorem due to Corti and the inversion of adjunction; see also \cite[Section 2.5]{CheltsovShramov}.

\begin{theo}[{\cite[Theorem 3.1]{corti1}}]\label{theo:cortiineq}
Let $S$ be a surface and $\cM_S$ a non-empty mobile linear system on $S$. If a smooth point $p\in S$ is a non-log-canonical center of the pair $(S,\lambda\cM_S)$ for some positive rational number $\lambda$, and $D_1, D_2$ are two general elements in $\cM_S$, then
    $$
    \mult_p(D_1\cdot D_2)>\frac{4}{\lambda^2}.
    $$
\end{theo}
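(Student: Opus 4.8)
The plan is to reduce to a purely local statement on a smooth surface germ, extract the non-log-canonical valuation through a chain of point blow-ups, and close with a combinatorial inequality. Since the assertion is local at $p$, we may take $(S,p)$ to be a germ of a smooth surface, with $D_1,D_2$ general members of the mobile system $\cM_S$. First blow up $p$: let $\sigma\colon S'\to S$ be this blow-up with exceptional curve $E$, set $m=\mult_p(\cM_S)$, and let $\cM'$ be the strict transform, so that
$$
K_{S'}+\lambda\cM'+(\lambda m-1)E=\sigma^{*}(K_S+\lambda\cM_S).
$$
If $\lambda m>2$ we are done, because $\mult_p(D_1\cdot D_2)\ge\mult_p(D_1)\,\mult_p(D_2)=m^{2}>4/\lambda^{2}$. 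Otherwise $\lambda m\le 2$, the coefficient of $E$ is $\le 1$, and the failure of log-canonicity propagates: $(S',\lambda\cM'+(\lambda m-1)E)$ is not log-canonical at some point $p'\in E$.

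Iterating this produces a chain of infinitely near points $p=p_0,\,p_1\in E_1,\,\dots,\,p_k\in E_k$ and a prime divisor $E=E_{k+1}$ over $p$ with $a(S,\lambda\cM_S;E)<0$, i.e. $\lambda\,\mathrm{ord}_E(\cM_S)>a(S;E)=1+\mathrm{ord}_E(K_{S'/S})$. Writing $m_j=\mult_{p_j}(\cM_S^{(j)})$ for the multiplicities of the successive strict transforms, the shape of the blow-up tree determines positive integers $c_0,\dots,c_k$ (with $c_k=1$) such that $\mathrm{ord}_E(\cM_S)=\sum_{j=0}^{k}c_j m_j$ and $a(S;E)=1+\sum_{j=0}^{k}c_j$; the two are computed from the same recursion on the tree, seeded respectively by the $m_j$ and by $1$. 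Hence the non-log-canonical condition reads $\lambda\sum_j c_j m_j>1+\sum_j c_j$. Separately, the Noether formula expressing the intersection multiplicity of two plane curves as a sum of products of multiplicities over $p$ and its infinitely near points, together with the fact that $D_1,D_2$ are general members of a mobile system whose base locus contains the $p_j$, gives $\mult_p(D_1\cdot D_2)\ge\sum_{j=0}^{k}m_j^{2}$.

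Combining these with Cauchy--Schwarz, $\sum_j m_j^{2}\ge\bigl(\sum_j c_j m_j\bigr)^{2}\big/\sum_j c_j^{2}$, yields
$$
\mult_p(D_1\cdot D_2)\ >\ \frac{\bigl(1+\sum_j c_j\bigr)^{2}}{\lambda^{2}\sum_j c_j^{2}},
$$
so the theorem follows once one establishes the combinatorial inequality $\bigl(1+\sum_j c_j\bigr)^{2}\ge 4\sum_j c_j^{2}$ for every tuple $(c_j)$ arising from a blow-up tree. For an unbranched chain all $c_j=1$ and this is $(k+2)^{2}\ge 4(k+1)$; centering a blow-up at a node $E_a\cap E_b$ enlarges some $c_j$ but simultaneously lengthens the chain, and one checks the inequality persists. (It is essential here that $(c_j)$ is realizable by a tree: the estimate is false for arbitrary positive-integer tuples.)

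I expect this combinatorial step to be the main obstacle: one must control, across all shapes of the blow-up graph, how the discrepancy weight $\sum_j c_j$ dominates the self-intersection weight $\sum_j c_j^{2}$, and the branched configurations require analysing the structure of the tree rather than a one-line bound. The cleanest way to bypass an ad hoc case analysis is to prove instead a two-parameter strengthening, allowing an auxiliary smooth curve $C$ through $p$ of coefficient $\le 1$: a lower bound for $\mult_p(D_1\cdot D_2)$ valid whenever $(S,\lambda\cM_S+\mu C)$ is not log-canonical at $p$, expressed in terms of $\mu$ and $(C\cdot\cM_S)_p$. After each blow-up the exceptional curve $E$ can be absorbed into the role of $C$, so the induction on the number of blow-ups then closes by itself without residual combinatorics; this is essentially Corti's original argument.
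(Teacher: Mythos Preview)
The paper does not prove this theorem; it is quoted verbatim from Corti's paper as a black box, so there is no in-paper argument to compare your sketch against.

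Your outline is correct. The identities $v(\cM_S)=\sum_j c_j m_j$ and $A(v)=1+\sum_j c_j$ for the extracted valuation, together with the Noether bound $\mult_p(D_1\cdot D_2)\ge\sum_j m_j^2$, are standard, and the Cauchy--Schwarz step is legitimate. You are also right that the residual inequality $\bigl(1+\sum_j c_j\bigr)^2\ge 4\sum_j c_j^2$ is true for every weight tuple realised by a divisorial valuation on a smooth surface germ (it is sharp at the single blow-up $c=(1)$ and remains with small margin along the satellite chains, e.g.\ $c=(2,1,1)$ gives $25\ge 24$), but proving it in general forces one to exploit the proximity structure of the tree and is not a one-line estimate. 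Your closing paragraph---strengthen the statement by allowing an auxiliary smooth curve $C$ with coefficient $\le 1$, then absorb each new exceptional into the role of $C$ and induct on the length of the tower---is precisely Corti's original proof. So the two routes you describe are not really alternatives: the clean way to establish the combinatorial inequality is to run Corti's induction, and once you do that you have the theorem directly without passing through Cauchy--Schwarz.
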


In many situations, Theorem~\ref{theo:4n2} gives us a desired bound. However, in certain situations (e.g., in Propositions~\ref{prop:ptcase2} and~\ref{prop:points2A5Y}), a sharper result is needed:

\begin{theo}[{\cite{demailly-pham}}]\label{theo:demailly}
Let $S$ be a smooth surface, $p\in S$ a point, and $\cM_S$ a non-empty mobile linear system on $S$. Assume that $p$ is a non-log-canonical center of the log pair $(S,\lambda\cM_S)$ with some positive rational number $\lambda$. Let
$
m=\mult_p(\cM_S).
$ Then for two general elements $D_1,D_2\in \cM_S$, we have 
    $$
   \mult_p(D_1\cdot D_2)>\frac{m^2}{\lambda^2(m-1)}.
    $$   
\end{theo}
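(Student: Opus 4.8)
The plan is to derive the inequality from the surface ($n=2$) case of the Demailly--Pham sharp lower bound for the log canonical threshold, whose proof I would organize combinatorially on a log resolution. It suffices to prove the non‑strict version when $\lambda=\mathrm{lct}_p(S,\cM_S)$: for then, whenever $(S,\lambda'\cM_S)$ fails to be log canonical at $p$ one has $\lambda'>\mathrm{lct}_p(S,\cM_S)$, and $\mult_p(D_1\cdot D_2)\ge \tfrac{m^2}{\mathrm{lct}_p(S,\cM_S)^2(m-1)}>\tfrac{m^2}{\lambda'^2(m-1)}$. Fixing such a $\lambda$, take a log resolution $\pi\colon\widetilde S\to S$ of $(S,\cM_S)$ that factors as a chain of blow‑ups of (infinitely near) points $p=p_1,p_2,\dots,p_N$, and record for each $i$ the multiplicity $m_i=\mult_{p_i}(\cM_S)$ of the strict transform of a general member, the valuation $\nu_i=\mathrm{ord}_{E_i}(\cM_S)$, and the log discrepancy $a_i$ of the exceptional $E_i$. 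These satisfy $a_1=2$, $\nu_1=m_1=m$, and the recursions $a_i=2+\sum_{j\in P_i}(a_j-1)$, $\nu_i=m_i+\sum_{j\in P_i}\nu_j$ over the ``parent'' indices $P_i$ of $p_i$; arranging the strict transform of $\cM_S$ to be base point free over $p$, we also obtain the Noether‑type identity $\mult_p(D_1\cdot D_2)=\sum_{i=1}^N m_i^2$, while non‑log‑canonicity at $p$ provides an index $k$ with $\lambda\nu_k\ge a_k$.

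If $k=1$, i.e. $\lambda m\ge 2$, the estimate is immediate: blowing up $p$ and using $E_1^2=-1$, $\widetilde D_i\cdot E_1=m$ gives $\mult_p(D_1\cdot D_2)=m^2+(\widetilde D_1\cdot\widetilde D_2)_{\mathrm{over}\ p}\ge m^2$, which already dominates the required bound in this range. The substance is the ``deep'' case, where the threshold is computed by an infinitely near point of depth $\ge 2$. Here I would induct on $N$ (equivalently on the depth $k$): blowing up $p$ produces the pair $(S',\lambda\widetilde\cM_S+(\lambda m-1)E_1)$ on the smooth surface $S'=\mathrm{Bl}_pS$, non‑log‑canonical at some $q\in E_1$; since $\lambda m-1<1$ here, the component $E_1$ alone cannot cause this, so $q$ must be an infinitely near base point of $\cM_S$. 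Applying the inductive statement to this lower pair — which forces one to prove the result for pairs $(S,\lambda\cM_S+\Delta)$ with $\Delta$ a fixed $\bQ$‑divisor of coefficients $<1$ carried along the resolution — and feeding the outcome back through the recursions together with a Cauchy--Schwarz estimate along each branch of the resolution tree recovers the bound on $\sum_i m_i^2$. An alternative to the induction is to degenerate $\cM_S$ to a monomial linear system: the threshold can only drop while $m$ and $\mult_p(D_1\cdot D_2)$ are unchanged, and for a monomial system the required relation between the threshold, the order and the colength is a direct computation on the Newton polygon.

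Finally, with the Demailly--Pham inequality $\mathrm{lct}_p(S,\cM_S)\ge e_0/e_1+e_1/e_2$ (where $e_0=1$, $e_1=\mult_p(\cM_S)$, $e_2=\mult_p(D_1\cdot D_2)$) in hand, the failure of log canonicity of $(S,\lambda\cM_S)$ at $p$ says, after rescaling by $\lambda$, that $1>\tfrac{1}{\lambda m}+\tfrac{\lambda m}{\lambda^2\,\mult_p(D_1\cdot D_2)}$; solving this quadratic inequality for $\mult_p(D_1\cdot D_2)$ yields the asserted strict lower bound. The main obstacle is the deep case of the combinatorial core: controlling $\sum_i m_i^2$ against the threshold through the branchings of the resolution tree with the sharp constant — exactly where one needs the full strength of (the surface case of) Demailly--Pham rather than the coarser Corti estimate of Theorem~\ref{theo:cortiineq}.
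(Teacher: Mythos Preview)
The paper does not supply a proof of this statement; it is quoted from \cite{demailly-pham} and used as a black box. Your plan to deduce it from the surface case of the Demailly--Pham lower bound $\mathrm{lct}_p(\cM_S)\ge \tfrac{1}{e_1}+\tfrac{e_1}{e_2}$ (with $e_1=\mult_p(\cM_S)=m$ and $e_2=\mult_p(D_1\cdot D_2)$) is the natural route, and the resolution/Noether--formula outline you give for that bound is along standard lines.

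The gap is in the last step. From $\lambda>\mathrm{lct}_p(\cM_S)\ge \tfrac{1}{m}+\tfrac{m}{e_2}$ one solves to obtain
\[
\mult_p(D_1\cdot D_2)\;>\;\frac{m^2}{\lambda m-1},
\]
not $\tfrac{m^2}{\lambda^2(m-1)}$; your own displayed inequality $1>\tfrac{1}{\lambda m}+\tfrac{\lambda m}{\lambda^2\,\mult_p(D_1\cdot D_2)}$ already simplifies to the former. The two expressions coincide only when $\lambda=1$, and the version you set out to prove is false for general $\lambda$: take $\cM_S$ the pencil spanned by $x^3$ and $y^3$ at the origin of $\bC^2$, so $m=3$, $\mult_p(D_1\cdot D_2)=9$, and $\mathrm{lct}=\tfrac{2}{3}$; for $\lambda$ just above $\tfrac{2}{3}$ the asserted bound would require $9>\tfrac{9}{2\lambda^2}$, i.e.\ $\lambda>\tfrac{1}{\sqrt{2}}$, which fails. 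So the ``non-strict version at $\lambda=\mathrm{lct}$'' you aim for is not even true.

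What is happening is that the stated formula $\tfrac{m^2}{\lambda^2(m-1)}$ is correct provided one reads $m=\mult_p(\lambda\cM_S)$ rather than $\mult_p(\cM_S)$, and this is exactly how the result is invoked in Propositions~\ref{prop:ptcase2} and~\ref{prop:points2A5Y}, where the multiplicity is always taken of the already rescaled system (so effectively $\lambda=1$ there and the two formulas agree). With that reading your derivation goes through; as written, the concluding algebraic claim does not follow.
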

We will also use the following technical observation.

\begin{rema}[{\cite[Remark 3.6]{CSZ}}]
\label{remark:Ziquan}
Let $X$ be a threefold with terminal singularities, $p\in X$ a smooth point,
$\cM_X$ a mobile linear system, and $\lambda\in\bQ_{>0}$. 
If $p$ is a non-canonical center of the log pair $(X,\cM_X)$,
then $p$ is a non-log-canonical center of the log pair
$
(X,\frac32\cM_X)$.
\end{rema}

The Nadel vanishing theorem will give us bounds of the size of $0$-dimensional non-canonical centers.

\begin{theo}[{\cite[Theorem 9.4.8]{lazarsfeld2003positivity}}]\label{theo:nadel}
    Let $X$ be a projective variety with at most klt singularities, $D$ an effective $\dq$-divisor on $X$,  $L$ a Cartier divisor such that $K_X+D+A\sim_\bQ L$ for some ample divisor $A$, and $\mathcal I(X,D)$ the multiplier ideal sheaf of $D$. Then
    $$
        H^i(X,\mathcal O_X(L)\otimes\mathcal I(X,D))=0\text{ for }i\geq 1.
    $$
\end{theo}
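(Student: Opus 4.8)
The plan is to reduce Nadel vanishing to the Kawamata--Viehweg vanishing theorem on a log resolution, following the classical argument. First I would fix a log resolution $\mu\colon X'\to X$ of the pair $(X,D)$; this exists, and since $X$ has klt (hence $\bQ$-Gorenstein, rational) singularities the relative canonical divisor $K_{X'/X}:=K_{X'}-\mu^*K_X$ is a well-defined $\bQ$-divisor, and the multiplier ideal is computed by
$$\mathcal I(X,D)=\mu_*\mathcal O_{X'}\bigl(\lceil K_{X'/X}-\mu^*D\rceil\bigr),$$
independently of the choice of resolution. The crucial input at this stage is \emph{local vanishing}: $R^j\mu_*\mathcal O_{X'}(\lceil K_{X'/X}-\mu^*D\rceil)=0$ for all $j>0$, which is a relative Kawamata--Viehweg (Grauert--Riemenschneider type) statement, and is where the klt hypothesis is genuinely used.

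Next, since $L$ is Cartier the sheaf $\mathcal O_X(L)$ is a line bundle, so the projection formula gives $\mu_*\mathcal O_{X'}(N)\cong\mathcal O_X(L)\otimes\mathcal I(X,D)$ together with $R^j\mu_*\mathcal O_{X'}(N)=0$ for $j>0$, where $N:=\mu^*L+\lceil K_{X'/X}-\mu^*D\rceil$. The Leray spectral sequence then yields isomorphisms
$$H^i\bigl(X,\mathcal O_X(L)\otimes\mathcal I(X,D)\bigr)\cong H^i\bigl(X',\mathcal O_{X'}(N)\bigr)\quad\text{for all }i,$$
so it suffices to prove $H^i(X',\mathcal O_{X'}(N))=0$ for $i\ge 1$ on the smooth projective variety $X'$.

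To do this I would identify the divisor class of $N$. Write $\Theta:=\lceil K_{X'/X}-\mu^*D\rceil-(K_{X'/X}-\mu^*D)$ for the ``round-up defect''; it is an effective $\bQ$-divisor with coefficients in $[0,1)$ and simple normal crossing support, this last point being exactly the log resolution condition. Using $K_X+D+A\sim_\bQ L$ one computes
$$N=\mu^*(L-K_X-D)+K_{X'}+\Theta\sim_\bQ K_{X'}+\mu^*A+\Theta,$$
where $\mu^*A$ is nef (pullback of an ample divisor) and big (pullback of a big divisor along a proper birational morphism). Now the Kawamata--Viehweg vanishing theorem, in the form: on a smooth projective variety, an integral divisor which is $\bQ$-linearly equivalent to $K_{X'}+P+\Theta$ with $P$ nef and big and $\Theta$ a simple-normal-crossing $\bQ$-divisor with coefficients in $[0,1)$ has vanishing higher cohomology, gives $H^i(X',\mathcal O_{X'}(N))=0$ for $i\ge 1$, which completes the proof.

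The step I expect to be the main obstacle is the round-up/round-down bookkeeping: one must track $\Theta$ carefully so that the integral divisor appearing in the multiplier ideal lands precisely in the shape ``$K_{X'}+(\text{nef and big})+(\text{SNC boundary with coefficients in }[0,1))$'' demanded by Kawamata--Viehweg, and one needs the local vanishing theorem to descend the vanishing from $X'$ back to $X$. If $X$ were smooth this descent step would be unnecessary; in the klt case it relies essentially on resolution-independence of $\mathcal I(X,D)$ together with local vanishing.
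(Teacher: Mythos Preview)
The paper does not give its own proof of this statement: it is quoted as a black box from Lazarsfeld's book (Theorem 9.4.8) and used as a tool throughout. Your argument is the standard reduction of Nadel vanishing to Kawamata--Viehweg on a log resolution via local vanishing and the Leray spectral sequence, and it is correct; this is essentially the proof one finds in the cited reference.
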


Lastly, we introduce some terminology. For a $G$-invariant subvariety $Z\subset X$, we say that $Z$ is {\em $G$-irreducible} if $G$ acts transitively on the irreducible components of $Z$. Let $H$ be a general hyperplane section on $X$. We denote by $|nH-Z|$ the linear system consisting of degree $n$ hyperplane sections on $X$ passing through $Z$. Often, we refer to this as the linear system $|nH-Z|$, although $Z$ is not necessarily a divisor. 

\section{$\ac$-actions on quadric threefolds
}\label{sect:A5map}
Let $X$ be a~smooth quadric threefold carrying a generically free regular action of $G=\fA_5$. Assume that there exists a $G$-orbit $\Sigma$ of 5 points in general position in $X$. Up to a change of variables, we may also assume that the five points are five coordinate points of $\pp^4$. Consider the standard Cremona transformation on $\bP^4$
$$
    \chi\colon(x_1,x_2,x_3,x_4,x_5)\mapsto(\frac{1}{x_1},\frac{1}{x_2},\frac{1}{x_3},\frac{1}{x_4},\frac{1}{x_5}).
$$
The restriction of $\chi$ to $X$ is a $G$-equivariantly birational map. The image $\chi(X)$ is a singular cubic threefold. We say that $\chi$ is the Cremona map associated with $\Sigma$. More descriptions of $\chi$ can be found in \cite{Avilov2016, Avilov2018,CSZ}.\\

Assume that $X^G\ne \emptyset$. From representation theory, there are two possibilities for the $G$-action on the ambient $\bP^4$:
\begin{itemize}
    \item {\em the standard action}: $\bP^4=\bP(\mathbf 1\oplus V_4)$, where $V_4$ is the unique irreducible 4-dimensional representation of $G$,
    \item {\em the nonstandard action}: $\bP^4=\bP(V_5)$, where $V_5$ is the unique irreducible 5-dimensional representation of $G$.
\end{itemize}
\subsection{The standard action}
Under the standard $G$-action on $\bP^4$, up to change of variables, we may assume that $X$ is given by 
$$
\left\{x_1^2+x_2^2+x_3^2+x_4^2+x_5^2=0\right\}\subset\bP^4_{x_1,\dots,x_5}
$$
and the $G$-action is given by $\fA_5$-permutations of 5 coordinates. 
There are two $G$-orbits of length 5, denoted by $\Sigma_5$ and $\Sigma_5'$. Let 
$$
Y_1=\chi_1(X),\quad Y_2=\chi_1'(X)
$$
where $\chi_1$ and $\chi_1'$ are the Cremona maps associated with $\Sigma_5$ and $\Sigma_5'$.
One can check by direct computation that $Y_1$ and $Y_1'$ are cubic threefolds with $5\sA_1$-singularities. By \cite[Section 6]{CTZcub}, such cubics with $\fA_5$-actions are unique up to isomorphism. In particular, we may assume that $Y_1=Y_1'=Y$ where $Y\subset\bP^4$ is given by 
\begin{multline*}
\{x_1x_2x_3+x_1x_2x_4+x_1x_2x_5+x_1x_3x_4+x_1x_3x_5+x_1x_4x_5+\\+x_2x_3x_4+x_2x_3x_5+x_2x_4x_5+x_3x_4x_5=0\}\subset\bP^4_{x_1,\ldots,x_5}
\end{multline*}
 and the $G$-action is still given by permutations of coordinates.

\subsection{The nonstandard action of $\ac$}
Up to isomorphism, we may assume that the $G$-action is as in \eqref{eqn:A52act}. There is a unique $G$-invariant quadric $X\subset\bP^4$, and it is given by the equation \eqref{eqn:quadric2}. There are also two $G$-orbits of length $5$ in $X$. Let $\chi_2$ and $\chi_2'$ be the birational maps associated with them respectively, and 
$$
Y_2=\chi_2(X),\quad  Y_2'=\chi_2'(X
).
$$ 
One can check that $Y_2$ and $Y_2'$ are cubic threefolds with $5\sA_2$-singularities. Such cubic threefolds with $\fA_5$-actions are unique up to isomorphism by \cite[Lemma 8.3]{CMTZ}. Thus, we may assume that $Y=Y_2=Y_2'$ where $Y$ is given by 
$$
Y=\{(8-3\zeta_6)f_1+7f_2=0\}\subset\bP^4,
$$
for

\begin{multline*}
    f_1=x_1^2x_2 + x_1x_2^2 + 2x_1x_2x_3 + x_2^2x_3 + x_2x_3^2 + 2x_2x_3x_4 + x_3^2x_4 + 
        x_3x_4^2 +\\+ x_1^2x_5 + 2x_1x_2x_5 + 2x_1x_4x_5 + 2x_3x_4x_5 + x_4^2x_5 + 
        x_1x_5^2 + x_4x_5^2,
\end{multline*}
\begin{multline*}
    f_2=x_1^2x_3 + x_1x_3^2 + x_1^2x_4 + 2x_1x_2x_4 + x_2^2x_4 + 2x_1x_3x_4 + x_1x_4^2 + 
        x_2x_4^2 + \\+x_2^2x_5 + 2x_1x_3x_5 + 2x_2x_3x_5 + x_3^2x_5 + 2x_2x_4x_5 + 
        x_2x_5^2 + x_3x_5^2,
\end{multline*}
with the same $G$-action given by \eqref{eqn:A52act}.

\section{The standard $\fA_5$-action on the quadric threefold}\label{sect:a5quad1}
Throughout this section, $X$ is the quadric given by 
$$
\left\{x_1^2+x_2^2+x_3^2+x_4^2+x_5^2=0\right\}\subset\bP^4_{x_1,\ldots,x_5}.
$$
Consider the $G$-action on $X$ given by natural $\fA_5$-permutations of the coordinates.   We denote by $\Sigma_5$ and $\Sigma_5'$  two $G$-orbits of length five on $X$. The aim of this section is to prove the following proposition.

\begin{prop}\label{prop:main1}
    Let $\mls_X$ be a non-empty mobile $G$-invariant linear system on $X$, and $\lambda\in\dq$ such that $\lambda\mls_X\sim_\dq-K_X$. Then the log pair $(X,\lambda\mls_X)$ is canonical away from $\Sigma_5\cup\Sigma_5'$.
\end{prop}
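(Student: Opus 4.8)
The plan is to run the standard Noether–Fano-type argument: assuming $(X,\lambda\mathcal M_X)$ has a non-canonical center $Z$ lying outside $\Sigma_5\cup\Sigma_5'$, derive a contradiction with the numerics of the quadric. First I would record the basic data: $-K_X\sim 3H$ where $H$ is the hyperplane class, so $\lambda\mathcal M_X\sim_{\mathbb Q}3H$, i.e. $\mathcal M_X\subset|nH|$ with $\lambda=3/n$; a line $\ell\subset X$ has $H\cdot\ell=1$ and $(nH)^2\cdot\ell$-type intersection numbers are governed by $H^3=2$. The non-canonical center $Z$ is $G$-invariant, hence $G$-irreducible, so either $Z$ is a $G$-orbit of points, or $Z$ is a $G$-irreducible curve (possibly a single $G$-invariant irreducible curve); a $G$-invariant non-canonical surface is impossible since $\mathcal M_X$ is mobile. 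I would treat these two cases separately.

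\emph{The curve case.} If $Z$ is a $G$-irreducible curve of degree $d=H\cdot Z$, then $\mathrm{mult}_Z\mathcal M_X>n/\lambda\cdot(\text{something})$; more precisely, taking two general $D_1,D_2\in\mathcal M_X$, canonicity fails along $Z$ forces $\mathrm{mult}_Z(D_1\cdot D_2)>n^2/\lambda^2 = n^2\cdot n^2/9$... rather, the clean statement is $\mathrm{mult}_Z\mathcal M_X>1/\lambda=n/3$ when $Z$ has codimension two in a threefold. Then $D_1\cdot D_2\geq(\mathrm{mult}_Z\mathcal M_X)^2\,Z$ as cycles, so intersecting with $H$: $n^2 H^3 = H\cdot D_1\cdot D_2 \geq (\mathrm{mult}_Z\mathcal M_X)^2\, d > (n/3)^2 d$, giving $2n^2>n^2 d/9$, i.e. $d<18$. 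This bound alone is not a contradiction, so here I must use the $G$-action: the only $G$-irreducible curves on $X$ of small degree are constrained by representation theory of $\mathfrak A_5$ acting on $\mathbb P^4=\mathbb P(\mathbf 1\oplus V_4)$. The relevant low-degree invariant curves are: the six conics/lines cut out by the coordinate structure, the rational normal-type curves, and in particular the unique $G$-invariant elliptic-or-rational curves of degree $\le 6$. I would enumerate the $G$-orbits of lines and conics and low-degree $G$-irreducible curves on $X$ (using that $\mathfrak A_5$ has no subgroup of index $<5$, so orbit lengths are $1,5,6,10,12,15,\dots$, and a $G$-irreducible curve of degree $d$ decomposes into components permuted transitively by $G$ with the number of components in this list), and for each candidate check the refined inequality (e.g. using that $D_1\cdot D_2$ must also be effective and $G$-invariant and have the right class, or using Theorem~\ref{theo:4n2} after cutting by a general hyperplane through a point of $Z$) to rule it out or show it leads to a Sarkisov self-link — but actually for Proposition~\ref{prop:main1} the claim is full canonicity away from the two pentads, so every curve must be excluded.

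\emph{The point case.} If $Z=\Sigma$ is a $G$-orbit of points not equal to $\Sigma_5$ or $\Sigma_5'$, then $|\Sigma|\in\{6,10,12,15,20,\dots\}$ (no orbit of length $<5$ since $\mathfrak A_5$ is simple, and length $5$ gives exactly the two pentads $\Sigma_5,\Sigma_5'$ by the representation-theoretic classification). For a point $p\in\Sigma$, Theorem~\ref{theo:4n2} gives $\mathrm{mult}_p(D_1\cdot D_2)>4/\lambda^2=4n^2/9$. Summing over the orbit, and using that $H\cdot D_1\cdot D_2 = n^2 H^3=2n^2$ while $D_1\cap D_2$ contains $\Sigma$ with these multiplicities plus possibly lower-dimensional excess, I get $2n^2\geq |\Sigma|\cdot 4n^2/9$ (the points contribute to $D_1\cdot D_2$ as a $0$-cycle inside the $1$-cycle; more carefully one uses that the $1$-cycle $D_1\cdot D_2$ has degree $2n^2$ and passes through each point with local intersection number $>4n^2/9$, which via a general hyperplane section through $p$ — reducing to Theorem~\ref{theo:cortiineq} on the quadric surface $X\cap H$ — forces a curve of degree contributing at least $2n/3$ through each point, hence total degree $\geq |\Sigma|\cdot 2n/3\le 2n$... ). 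This yields $|\Sigma|\le 3$, contradicting $|\Sigma|\ge 6$ — unless the excess concentrates on a curve, which loops back to the curve case. The honest version: either reduce to the surface $S=X\cap H$ for $H$ a general hyperplane through one point $p\in\Sigma$ and apply the computation of $\alpha_G$-type bounds / Theorem~\ref{theo:cortiineq} on $S$ (a smooth quadric surface $\cong\mathbb P^1\times\mathbb P^1$), or use Theorem~\ref{theo:4n2} directly; the arithmetic $\frac{4}{\lambda^2}=\frac{4n^2}{9}$ against the global degree $2n^2$ kills all orbits of length $\ge 6$.

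\emph{Main obstacle.} The genuinely delicate part is the curve case: ruling out \emph{every} $G$-irreducible curve (not merely bounding its degree) requires a careful case analysis of the $\mathfrak A_5$-geometry of $X\subset\mathbb P(\mathbf 1\oplus V_4)$ — listing $G$-orbits of lines, conics, twisted cubics, and the special rational/elliptic $G$-curves of degree up to $17$, and for each either deriving a numerical contradiction from $\mathrm{mult}_Z\mathcal M_X>1/\lambda$ combined with the self-intersection $(D_1\cdot D_2)\cdot H=2n^2$ and effectivity/$G$-invariance of the residual cycle, or invoking that the only curves surviving would produce a birational self-map rather than a new Mori fibre space — but since the proposition asserts canonicity outright, all of them must in fact be excluded, which I expect relies on the sharper inequalities (Theorems~\ref{theo:demailly} and the surface reduction via Theorem~\ref{theo:cortiineq}) and on the absence of low-degree $G$-invariant curves through general points. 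I would organize this as a sequence of lemmas: (1) the non-canonical center is a point-orbit or a $G$-irreducible curve; (2) point-orbits of length $\ge 6$ are impossible by the degree count; (3) the two length-$5$ orbits are the only point-orbits that can occur (these are the allowed exception); (4) no $G$-irreducible curve can be a non-canonical center, via the enumeration plus inequalities. Step (4) is the crux.
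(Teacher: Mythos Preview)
Your point-case argument has a concrete error. The inequality $2n^2\geq |\Sigma|\cdot 4n^2/9$ does not follow: $D_1\cdot D_2$ is a $1$-cycle on the threefold, and its degree $H\cdot D_1\cdot D_2=2n^2$ places no bound on the sum of local multiplicities $\mathrm{mult}_p(D_1\cdot D_2)$ over a finite set of points (a curve of fixed degree can be arbitrarily singular at a point). Your hyperplane-through-$p$ fix cannot be summed over the orbit either, since a single hyperplane does not contain the whole $G$-orbit; and note that your conclusion $|\Sigma|\leq 3$ would exclude even the length-$5$ orbits, which the proposition explicitly allows as possible centers, so the argument proves too much. The paper handles points by entirely different tools: Nadel vanishing applied to the multiplier ideal of $\tfrac32\lambda\mathcal M_X$ bounds the $0$-dimensional non-klt locus by $h^0(\mathcal O_X(2))=14$, killing orbits of length $\geq 20$; the length-$12$ orbits lie on the unique $G$-invariant hyperplane section $Q\simeq\mathbb P^1\times\mathbb P^1$ and are excluded via the computation $\alpha_G(Q)=\tfrac32$ together with inversion of adjunction; and the length-$10$ orbits require a separate argument taken from \cite{CSZ}. (There are no $G$-orbits of length $6$ or $15$ on $X$; see Lemma~\ref{lemm:A5orbit}.)

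Your curve case correctly reaches $d<18$ but is missing the machinery that actually excludes the candidates. The paper's argument rests on two structures you never mention: the invariant section $Q$ (every $G$-invariant curve of degree $\leq 10$ lies in $Q$ and is then handled by the $\alpha_G(Q)$ computation), and, for the degree-$12$ curves not in $Q$, the pencil $\mathcal P$ of $G$-invariant K3 surfaces in $|3H|$. One shows such a curve $C$ lies on a unique smooth $S\in\mathcal P$ as a Cartier divisor, pins down its genus via Hodge index and the orbit structure, and then uses the lattice of $S$ --- together with an $\mathfrak S_5$-companion curve $C'\sim 4H_S-C$ --- to contradict the effectivity of $3H_S-C$ forced by $\mathrm{mult}_C(\lambda\mathcal M_X)>1$. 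Your proposed route (enumerate and apply ``refined inequalities'', Theorem~\ref{theo:demailly}, surface reduction) does not supply these ingredients.
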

\begin{proof}
This follows from Propositions \ref{prop:curvesnotinQ} and \ref{prop:pointsnotinQ}, and Corollary \ref{coro:canonicalinQ}.

\end{proof}

 First, as a guiding principle, we observe that curves of degrees greater than 17 cannot be non-canonical centers of $(X,\lambda\mls_X)$.

\begin{rema}\label{rema:degreebound1}
    If a curve $C$ is a center of non-canonical singularities of $(X,\lambda\mls_X)$, then for two general members $M_1,M_2\in\cM_X$, we have that
$$
\lambda^2(M_1\cdot M_2)=mC+\Delta
$$
for some $m>1$ and some effective divisor $\Delta$ not supported along $C$. Intersecting with a general hyperplane $H$ on $X$, we obtain that 
\begin{align}\label{eqn:bound18}
    18=\lambda^2(M_1\cdot M_2\cdot H)>\deg(C).
\end{align}
\end{rema}

Later, we will see that the size of 0-dimensional non-canonical centers is less than 20, using Nadel vanishing theorem. 

\smallskip

We proceed with subsections. In the first subsection, we classify orbits of length less than 20 and $G$-irreducible curves of degrees at most 17.  In the second subsection, we prove that a $G$-invariant curve not contained in $Q$ cannot be a non-canonical center of $(X,\lambda\mls_X)$, where $Q$ is the unique $G$-invariant hyperplane section on $X$ (cf. Proposition \ref{prop:curvesnotinQ}). In the third subsection, we show that points away from $Q$ and $\Sigma_5\cup\Sigma_5'$ cannot be non-canonical centers (cf. Proposition~\ref{prop:pointsnotinQ}). In the fourth subsection, using the $G$-equivariant $\alpha$-invariant, we prove that no point or curve in $Q$ is a non-canonical center (cf. Corollary \ref{coro:canonicalinQ}).

\subsection{Small $G$-orbits and $G$-invariant curves of low degrees}

\begin{lemm}
\label{lemm:A5orbit}
  A $G$-orbit of points in $X$ with length $<20$ is one of the following:
\begin{align*}
    \Sigma_5&=\text{the orbit of}\quad[1:1:1:2\zeta_4:1],\\
     \Sigma_5'&=\text{the orbit of} \quad[1:1:1:-2\zeta_4:1],\\
       \Sigma_{10}&=\text{the orbit of} \quad[1:1:\frac{\zeta_4\sqrt{6}}{2}:\frac{\zeta_4\sqrt{6}}{2}:1],\\ 
       \Sigma_{10}'&=\text{the orbit of} \quad[1:1:-\frac{\zeta_4\sqrt{6}}{2}:-\frac{\zeta_4\sqrt{6}}{2}:1],\\ 
     \Sigma_{12}&=\text{the orbit of} \quad[1:\zeta_5:\zeta_5^2:\zeta_5^3:\zeta_5^4],\\
        \Sigma_{12}'&=\text{the orbit of}\quad [1:\zeta_5^2:\zeta_5^4:\zeta_5:\zeta_5^3],
\end{align*}
where the length of each orbit is indicated by the subscript.
\end{lemm}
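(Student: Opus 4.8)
The strategy is to exploit the standard embedding $\fA_5 \subset \fS_5$ acting by permutations on the coordinates of $\bP^4$, combined with the quadric equation $\sum x_i^2 = 0$. A $G$-orbit of length $\ell < 20$ corresponds to a point $p\in X$ whose stabilizer $\stab_G(p)$ has order $60/\ell > 3$, so the stabilizer is a subgroup of $\fA_5$ of order $4$, $5$, $6$, $10$, $12$, $20$, or $60$. Up to conjugacy these are: the trivial-on-$X$ case (order $60$, impossible since the action is generically free and $X$ is not pointwise fixed — actually one checks $X^G=\emptyset$ here), $\fA_4$ (order $12$), $D_{10}$ (order $10$), $C_5$ (order $5$), $\fS_3\cong D_6$ (order $6$), and Klein four-groups $V_4$ (order $4$). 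So first I would enumerate the conjugacy classes of subgroups $H\le \fA_5$ with $|H|\ge 4$, and for each one compute the fixed locus $\Fix_{\bP^4}(H)$ as a linear subspace, then intersect with $X$ and check which $G$-orbits of the resulting points have length exactly $60/|H|$ (the stabilizer could a priori be larger).

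Concretely: for $H = C_5 = \langle (12345)\rangle$, the fixed subspaces in $\bP^4$ are the eigenspaces of the cyclic permutation, i.e. the lines spanned by $(1,\zeta_5^k,\zeta_5^{2k},\zeta_5^{3k},\zeta_5^{4k})$ for $k=0,\dots,4$; the point $k=0$ is $[1:1:1:1:1]\notin X$, and for $k\ne 0$ one has $\sum \zeta_5^{2jk}=0$, so these four points lie on $X$. They form two $G$-orbits of length $12$ (since $N_G(C_5)=D_{10}$ has index $6$, but $D_{10}$ swaps the eigenvalues $\zeta_5^k \leftrightarrow \zeta_5^{-k}$, pairing $k$ with $-k$): one orbit containing $[1:\zeta_5:\zeta_5^2:\zeta_5^3:\zeta_5^4]$ and one containing $[1:\zeta_5^2:\zeta_5^4:\zeta_5:\zeta_5^3]$, giving $\Sigma_{12}$ and $\Sigma_{12}'$. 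For $H=\fA_4$ (say, stabilizing the coordinate $x_5$ and acting as $\fA_4$ on $x_1,\dots,x_4$), the fixed subspace is $\{x_1=x_2=x_3=x_4\}$, the line through $[1:1:1:1:0]$ and $[0:0:0:0:1]$; intersecting with $\sum x_i^2=0$ gives $4t^2 + s^2 = 0$, i.e. the two points $[1:1:1:1:\pm 2\zeta_4]$, which form the two orbits $\Sigma_5$, $\Sigma_5'$ of length $5$. For $H=V_4 = \langle (12)(34),(13)(24)\rangle$, the fixed subspace is $\{x_1=x_2=x_3=x_4\}$ again — wait, no: the invariants of this $V_4$ on $x_1,\dots,x_4$ is just the span of $(1,1,1,1)$, so $\Fix$ is the same line, but now the stabilizer of $[1:1:1:1:\pm2\zeta_4]$ is the full $\fA_4\supsetneq V_4$, so $V_4$ contributes nothing new here; one must instead take $V_4$ not contained in a point-stabilizer $\fA_4$, and check its fixed line meets $X$ — this requires care and is where I'd compute eigenspaces of the two commuting involutions and find e.g. points like $[1:1:\pm\frac{\zeta_4\sqrt 6}{2}:\pm\frac{\zeta_4\sqrt6}{2}:1]$, giving $\Sigma_{10}$, $\Sigma_{10}'$. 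Finally for $H=\fS_3$ (order $6$) one gets orbits of length $10$; one checks these coincide with $\Sigma_{10}$, $\Sigma_{10}'$ (so the stabilizer is actually bigger, of order... no — order $6$ would give length $10$, consistent), or else yields no point on $X$. And for $D_{10}$ (order $10$, length $6$) the fixed locus on $X$ must be checked to be empty.

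The main obstacle I anticipate is the bookkeeping for the Klein four-groups and $\fS_3$: these are the subgroups whose fixed linear subspaces in $\bP^4$ have dimension $\ge 1$ but whose intersection with $X$ can contain points with a strictly larger stabilizer, so one has to be careful not to double-count and to correctly identify which abstract subgroup is the actual stabilizer of each candidate point. The cleanest way to organize this is: (i) list subgroup conjugacy classes $H$ with $|H|\in\{4,5,6,10,12\}$; (ii) for each, write down $\Fix_{\bP^4}(H)$ explicitly via simultaneous eigenspace decomposition; (iii) solve $\Fix_{\bP^4}(H)\cap X$; (iv) for each solution point $p$, compute the true stabilizer $\stab_G(p)$ and hence $|G\cdot p| = 60/|\stab_G(p)|$; (v) discard any orbit of length $\ge 20$ and collate the rest. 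Since the subgroup $D_{10}$, $C_5$, $\fS_3$, $V_4$, $\fA_4$ cases are each a short linear-algebra computation over $\bQ(\zeta_4,\zeta_5,\sqrt6)$, the whole argument is elementary once the subgroup list is fixed; I would present it as a case analysis over the five conjugacy classes of subgroups of order $\ge 4$, exhibiting in each case the representative point listed in the statement and verifying its orbit length, then remarking that no subgroup of order $\ge 4$ produces any further orbit of length $<20$.
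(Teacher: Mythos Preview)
Your approach is exactly what the paper does (its proof is the single sentence ``This comes from a computation of fixed points by each subgroup of $G$''), and your plan of enumerating conjugacy classes of subgroups $H\le\fA_5$ with $|H|\ge 4$, computing $\Fix_{\bP^4}(H)$, intersecting with $X$, and then checking the actual stabilizer is the right way to carry this out.

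There is, however, a genuine muddle in your $V_4$ discussion that you should fix before writing this up. First, $\fA_5$ has no subgroup of order $20$, so drop that from your list. More importantly, every Klein four-group in $\fA_5$ is conjugate to $\{e,(12)(34),(13)(24),(14)(23)\}$ and is the normal $V_4$ inside a unique $\fA_4$; there is no ``$V_4$ not contained in a point-stabilizer $\fA_4$''. For this $V_4$ the fixed subspace in $\bP^4$ is exactly the line $\{x_1=x_2=x_3=x_4\}$, the same as for the ambient $\fA_4$, so there are no orbits of length $15$ anywhere. The points $[1:1:\tfrac{\zeta_4\sqrt6}{2}:\tfrac{\zeta_4\sqrt6}{2}:1]$ that you attribute to a $V_4$-computation in fact have stabilizer $\fS_3=\langle(125),(12)(34)\rangle$ of order $6$, which is why their orbit has length $10$; they arise from the $\fS_3$ case (fix $x_1=x_2=x_5$, $x_3=x_4$, solve $3t^2+2s^2=0$), not from $V_4$. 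Once you straighten out that the case analysis is over $\fA_4$, $D_{10}$, $\fS_3$, $C_5$, $V_4$ and that the $V_4$ case contributes nothing new, the rest of your outline goes through cleanly.
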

\begin{proof}
    This comes from a computation of fixed points by each subgroup of $G$.
\end{proof}

\begin{lemm}\label{lemm:genericstabcurve}
    Every $G$-invariant curve $C$ in $X$ with $\deg(C)\leq 17$ has a trivial generic stabilizer, that is, the $G$-orbit of a general point in $C$ has length 60. 
\end{lemm}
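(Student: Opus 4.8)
The statement to prove is Lemma~\ref{lemm:genericstabcurve}: every $G$-invariant curve $C\subset X$ with $\deg(C)\le 17$ has trivial generic stabilizer. The natural strategy is to rule out each nontrivial subgroup $H\le G=\fA_5$ (up to conjugacy) as a possible generic stabilizer of a $G$-invariant curve of low degree. Since $\fA_5$ has only finitely many conjugacy classes of subgroups---the trivial group, $C_2$, $C_3$, $C_2^2$, $C_5$, $\fS_3\cong\mathfrak D_3$, $\mathfrak D_5$, $\fA_4$, and $\fA_5$ itself---it suffices to show that if the generic stabilizer of a general point of $C$ is a nontrivial subgroup $H$, then $C$ must have degree exceeding $17$. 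The first step is to note that if a general point of $C$ has stabilizer $H$, then $C$ is contained in the fixed locus $X^H$ (more precisely, the union of its $G$-translates; but by irreducibility considerations a single component lies in $\overline{X^H}$), so $C\subseteq \mathrm{Fix}(H)$ for some such $H$, and conversely every irreducible component of the one-dimensional part of $X^H$ sweeps out, under $G$, a $G$-invariant curve whose general point is stabilized by a conjugate of $H$. Thus I would reduce to computing, for each nontrivial $H$, the fixed locus $X^H\subset X\subset\bP^4$ and its image under the $G$-orbit map.

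\textbf{Key steps.} First I would fix the concrete model: $X=\{x_1^2+\cdots+x_5^2=0\}$ with $G$ acting by permutations of coordinates. For each nontrivial subgroup $H$, compute $(\bP^4)^H$ as the projectivization of the invariant subspace of the permutation representation restricted to $H$, then intersect with $X$. For a transposition-type element we only have even elements, so the relevant cyclic subgroups are $C_2$ generated by a double transposition, $C_3$ by a $3$-cycle, $C_5$ by a $5$-cycle; the relevant non-cyclic ones are $C_2^2$ (the normal Klein four in an $\fA_4$), $\fS_3$, $\mathfrak D_5$, $\fA_4$. For each, the fixed subspace in $\bC^5$ has an explicit small dimension: e.g. for a $5$-cycle it is the line spanned by $(1,1,1,1,1)$ together with eigenline decompositions, giving $(\bP^4)^{C_5}$ of dimension $0$; for a $3$-cycle permuting three coordinates the invariant space has dimension $3$, so $(\bP^4)^{C_3}\cong\bP^2$ and $X\cap(\bP^4)^{C_3}$ is a conic (or a union of lines); for a double transposition the invariant space has dimension $3$ as well, giving again a conic. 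Then the $G$-invariant curve swept out is $\bigcup_{g\in G/N_G(H)} g\cdot(X^H)_1$, and its degree is $[G:N_G(H)]$ times the degree of one component. I would tabulate: a $G$-orbit of a conic fixed by a $C_3$ has $[\fA_5:N_G(C_3)]=[\fA_5:\fS_3]=10$ components, total degree $\ge 20>17$; similarly for $C_2$, whose normalizer is $C_2^2$ so index $15$, degree $\ge 15\cdot(\ge 1)$ but if the fixed curve is a conic that is $30$, and if it is a union of lines one must check each line is not $G$-invariant. The subgroups $\fA_4,\mathfrak D_5,\fS_3$ have fixed loci of dimension $\le 1$ that are even smaller, and their fixed curves (if any) again sweep out high-degree unions. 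This case analysis, carefully done, forces degree $>17$ in every nontrivial case, leaving only the trivial stabilizer.

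\textbf{Main obstacle.} The delicate point is not the dimension count but the possibility that $X^H$ for small $H$ contains a \emph{low-degree} curve (a line, or a conic) \emph{and} that this curve happens to be individually $G$-invariant, or that $G$ permutes very few translates of it---which would keep the total degree small. In particular for $H=C_2$ a double transposition, $(\bP^4)^H$ is a $\bP^2\times$(line) type configuration and $X\cap(\bP^4)^H$ could a priori decompose into lines; I must check that no such line is preserved by a larger subgroup that would cut down the orbit length below the threshold $[\fA_5:C_2^2]=15$, and more importantly that the relevant components genuinely have the claimed degrees on $X$ (a line on $X$ has degree $1$, a conic degree $2$). The cleanest way around this is to observe that a $G$-invariant curve with nontrivial generic stabilizer $H$ satisfies $\deg C = \sum_{\text{components}}\deg(\text{component})$ with the number of components equal to $[G:\mathrm{Stab}_G(\text{component})]$ and $\mathrm{Stab}_G(\text{component})\supseteq N_G(H)$ only when the component is itself $N_G(H)$-invariant; running through the subgroup lattice of $\fA_5$ one checks that in every case $[G:N_G(H)]\cdot(\text{minimal possible component degree})$ already exceeds $17$ except possibly in degenerate configurations which do not occur on the smooth quadric because $X$ contains no $G$-invariant line or conic with the requisite stabilizer---a fact that follows from Lemma~\ref{lemm:A5orbit} together with the classification of $G$-invariant linear subspaces of $\bP^4$ under the permutation action (only $\bP^0$, $\bP^3$, and $\bP^4$ are $G$-invariant, and $X$ meets the invariant hyperplane $\{\sum x_i=0\}$ in a smooth quadric surface carrying no $G$-invariant line). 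Assembling these observations gives the lemma.
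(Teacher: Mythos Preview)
Your approach is essentially the paper's: the paper's proof is the one-line assertion that ``all irreducible curves in $X$ with a non-trivial generic stabilizer are conics whose $G$-orbits have length $10$ or $15$,'' and your plan is precisely the computation that justifies this sentence. Running through the cyclic subgroups, you correctly find that only $C_2$ (a double transposition) and $C_3$ (a $3$-cycle) have one-dimensional fixed loci on $X$, each a smooth conic, and that the $G$-orbits of these conics have lengths $[\fA_5:C_2^2]=15$ and $[\fA_5:\fS_3]=10$, giving degrees $30$ and $20$.

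One imprecision is worth tightening: you describe $(\bP^4)^H$ as ``the projectivization of the invariant subspace,'' but for a cyclic group acting linearly the projective fixed locus is the disjoint union of the projectivized eigenspaces, not just the $+1$-eigenspace. Your treatment of $C_5$ shows you are aware of this, but for $C_2$ you should also record the $(-1)$-eigenspace $\{[a:-a:b:-b:0]\}\cong\bP^1$, and for $C_3$ the two $\omega^{\pm 1}$-eigenlines. A direct check shows these meet $X$ only in points (for $C_2$: $2a^2+2b^2=0$ gives two points; for $C_3$: the eigenlines are already points in $\bP^4$), so nothing new appears---but the argument is incomplete without saying so. Once that is noted, your worry about the conics degenerating into lines also evaporates, since $2a^2+2b^2+c^2$ and $3a^2+b^2+c^2$ are visibly nondegenerate, and the ``main obstacle'' paragraph can be cut entirely.
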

\begin{proof}
   By computation, we find that all irreducible curves in $X$ with a non-trivial generic stabilizer are conics whose $G$-orbits have length 10 or 15, and thus their degrees exceed 17. 
\end{proof}

There is a distinguished $G$-invariant hyperplane section of $X$ given by
$$
Q=\{x_1+x_2+x_3+x_4+x_5=0\}\cap X.
$$
Note that $Q=\bP^1\times\bP^1$ and that $G$ acts on $Q$ via two non-isomorphic $G$-actions on each copy of $\bP^1$.
Moreover, we have 
$$
\Sigma_5,\Sigma_5^\prime,\Sigma_{10},\Sigma^\prime_{10}\not\in Q,\quad\Sigma_{12}, \Sigma_{12}'\in Q.
$$
Let $B_6$ be the $G$-invariant smooth curve of degree 6 given by
\begin{equation}\label{bring}
\left\{\aligned
&x_1+x_2+x_3+x_4+x_5=0,\\
&x_1^2+x_2^2+x_3^2+x_4^2+x_5^2=0,\\
&x_1^3+x_2^3+x_3^3+x_4^3+x_5^3=0.\\
\endaligned
\right.
\end{equation}
It is known as the Bring curve  \cite[Remark~5.4.2]{CheltsovShramov} and has genus 4.

\begin{lemm}\label{lemm:12line}
    Let $C$ be a $G$-invariant reducible curve in $X$ such that $10<\deg(C)\leq17$. Then $C$ is the union of curves in one of the following $G$-orbits:
    \begin{itemize}
        \item 
        one of the following 2 orbits of 6 conics 
        $$
        \mathcal{C}_{6}=\text{orbit of } C_1,\qquad 
           \mathcal{C}_{6}'=\text{orbit of }C_2, $$
        where \begin{multline*}
        C_1=\{x_1 - x_3 + (-\zeta_{20}^6 + \zeta_{20}^4 + 1)x_4 + (\zeta_{20}^6 - \zeta_{20}^4 - 1)x_5=\\=
x_2 + (\zeta_{20}^6 - \zeta_{20}^4 - 1)x_3 + (-\zeta_{20}^6 + \zeta_{20}^4 + 1)x_4 - x_5=0\}\cap X,
    \end{multline*}
     \begin{multline*}
              C_2=\{x_1 - x_3 + (\zeta_{20}^6 - \zeta_{20}^4)x_4 + (-\zeta_{20}^6 + \zeta_{20}^4)x_5=\\=
x_2 + (-\zeta_{20}^6 + \zeta_{20}^4)x_3 + (\zeta_{20}^6 - \zeta_{20}^4)x_4 - x_5=0\}\cap X.
    \end{multline*}
       
        \item 
    one of the following 2 orbits of 12 lines
     \begin{multline*}
       \mathcal L_{12}= \text{the orbit of the line } \{x_1+\zeta_5x_4+(\zeta_5^3+\zeta_5+1)x_5=x_2+\\+(\zeta_5^3+1)x_4+(\zeta_5^2+1)x_5=x_3-(\zeta_5^3+\zeta_5)x_4+\zeta_5^4x_5=0\},
    \end{multline*}
     \begin{multline*}
               \mathcal L'_{12}= \text{the orbit of the line } \{x_1 + \zeta_5^2x_4 + (\zeta_5^2 + \zeta_5 + 1)x_5=
    x_2 +\\+ (\zeta_5 + 1)x_4 - (\zeta_5^3 + \zeta_5^2 + \zeta_5)x_5=
    x_3 - (\zeta_5^2 + \zeta_5)x_4 + \zeta_5^3x_5=0\}.
    \end{multline*}
    \end{itemize}
    Each of the orbits above consists of pairwise disjoint components. The orbits $\cL_{12}$ and $\cL_{12}'$ are contained in $Q$. The orbits $\cC_6$ and $\cC_6'$ are not.
\end{lemm}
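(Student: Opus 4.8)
The strategy is to classify all $G$-orbits of irreducible curves of degree $\le 17$ on $X$ and then determine which unions of them have total degree in $(10,17]$. Write $C$ as a union of $G$-orbits $O_1,\dots,O_r$ of its irreducible components, so that $\deg C=\sum_i|O_i|\,d_i$ with $|O_i|$ the index of a subgroup of $G$ — hence $|O_i|\in\{1,5,6,10,12,15,20,30,60\}$ — and $d_i\ge 1$ the degree of a component $C_i\in O_i$. Any component of $C$ is again a $G$-invariant curve of degree $\le 17$, so Lemma~\ref{lemm:genericstabcurve} forces the stabilizer $H_i=\mathrm{Stab}_G(C_i)$ to act faithfully on $C_i$, and $H_i$-invariance makes the linear span $\langle C_i\rangle$ the projectivization of an $H_i$-subrepresentation $U_i\subseteq\mathbf 1\oplus V_4$. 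Since $\mathbf 1\oplus V_4$ has no $G$-subrepresentation of dimension $2$ or $3$, a $G$-invariant component ($|O_i|=1$) spans $\bP(V_4)$ (so lies in $Q$) or all of $\bP^4$; combined with the fact that $\fA_5$ acts faithfully only on a rational curve or a curve of genus $\ge 2$, a short direct check shows the smallest $G$-invariant irreducible curve on $X$ is the Bring curve $B_6$, of degree $6$. With $\deg C\le 17$, the remaining possibilities for orbits with $|O_i|\ge 2$ are: $d_i=1$ with $|O_i|\in\{5,6,10,12,15\}$; $d_i=2$ with $|O_i|\in\{5,6\}$; and $d_i=3$ with $|O_i|=5$.

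The core of the proof is the enumeration of these finitely many cases, and the main obstacle is the line case. A line $\ell\subset X$ invariant under $H$ corresponds to a $2$-dimensional $H$-subrepresentation $U\subseteq\mathbf 1\oplus V_4$ on which the invariant form $q=\sum x_j^2$ vanishes identically. For each $H$ with $[G:H]\le 17$ — namely $H=C_5$, $D_{10}$, $\fS_3$, $\fA_4$, or a Klein four-subgroup — I would decompose $(\mathbf 1\oplus V_4)|_H$ into isotypic pieces, list its $2$-dimensional subrepresentations, and test $q$ on each. For every $H\ne C_5$ the form never vanishes, so $X$ carries no line invariant under $\fA_4$, $\fS_3$, $D_{10}$, or a Klein four-group, ruling out orbits of $5$, $10$, $6$, and $15$ lines. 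Each of the six Sylow $5$-subgroups fixes exactly four lines, all lying in $Q=\bP(V_4)\cap X$; a line fixed by two distinct $C_5$'s would be $\fA_5$-invariant, which is impossible, so these $24$ lines form two $G$-orbits of $12$ — namely $\cL_{12}$ and $\cL_{12}'$, both in $Q$ — and that there are exactly two follows by examining how $N_G(C_5)$ permutes the four lines fixed by a given $C_5$. The conic case is handled the same way with $3$-dimensional $U$: $\bP(U)\cap X$ is an irreducible conic iff $q|_U$ is nonzero of rank $3$. For $H=D_{10}$ this yields two orbits of $6$ conics, $\cC_6$ and $\cC_6'$, whose spanning $3$-planes are not contained in $\bP(V_4)$, so these conics are not in $Q$; for $H=\fA_4$ it yields a single orbit of $5$ conics of degree $10$, lying in $Q$. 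Finally, for $d_i=3$: a cubic curve on $X$ must span a $\bP^3$ (it cannot span a plane, since $X$ contains no plane, and a degree-$3$ curve is never non-degenerate in $\bP^4$); that $\bP^3$ is $\fA_4$-invariant and meets $X$ in a quadric surface on which $\fA_4$ acts preserving each ruling, so an $\fA_4$-invariant cubic would be the graph of an $\fA_4$-equivariant degree-$2$ map between the two rulings, which is excluded because the ruling $\mathsf{PGL}_2$-representations do not appear as $\fA_4$-quotients of $\mathrm{Sym}^2$ of each other; hence there is no orbit of $5$ cubics (a short separate remark dispatches the degenerate quadric surfaces).

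It remains to assemble: the building blocks of degree $\le 17$ are $\cL_{12},\cL_{12}'$ and $\cC_6,\cC_6'$ (degree $12$), the orbit of five conics (degree $10$), and $B_6$ (degree $6$), so the only unions with total degree in $(10,17]$ are the four single orbits in the statement. One then records the geometric assertions: pairwise disjointness within each orbit — two lines of $\cL_{12}$ meet only if their defining $2$-dimensional $C_5$-subrepresentations share a line, forcing equality, and likewise for the conics via their spanning $3$-planes together with the shape of $q$ — and that $\cL_{12},\cL_{12}'\subset Q$ while $\cC_6,\cC_6'\not\subset Q$, both already visible from the representation-theoretic step. The step I expect to be genuinely laborious is the line enumeration — certifying that $q$ fails to vanish on every $2$-dimensional $H$-subrepresentation for $H\ne C_5$ — and some care is needed in the assembly step to keep track of unions involving $B_6$ (which, like the orbit of five conics, lies in $Q$, so that any union of these lies in $Q$ and is in any case treated by the $\alpha$-invariant computation behind Corollary~\ref{coro:canonicalinQ}).
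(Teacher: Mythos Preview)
Your approach is essentially the paper's: constrain orbit sizes by subgroup indices, then for each candidate stabilizer $H$ locate the $H$-invariant linear subspaces of $\mathbf 1\oplus V_4$ and test whether they meet $X$ in the required curve. Two differences are worth noting.

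First, the paper tacitly reads ``reducible'' as ``$G$-irreducible and reducible'' (a single $G$-orbit of components), which immediately gives $\deg C\in\{12,15\}$ and dispenses with any assembly step. This is why unions such as $B_6\cup(\text{five conics})$ never arise there; you are right that the literal statement needs this hypothesis, and your worry in the last sentence is well placed but resolved by simply adopting it. Conversely, you are more careful than the paper about the line case: the paper asserts without argument that $\deg C=15$ forces five twisted cubics, silently skipping the possibility of $15$ lines with Klein-four stabilizer, which you do exclude.

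Second, your twisted-cubic exclusion via equivariant degree-$2$ maps between the rulings of $X\cap\bP^3$ works (the two branch points would form an $\fA_4$-orbit of size $\le 2$ on $\bP^1$, which does not exist) but is heavier than the paper's one-line argument: a twisted cubic forces the ambient $\bP^3$ to carry the $\fA_4$-action on $\bP(\mathrm{Sym}^3\bC^2)$, which has two invariant lines and \emph{no} fixed point, whereas every $\fA_4$-invariant hyperplane in $\bP(\mathbf 1\oplus V_4)$ is a $\bP(\mathbf 1\oplus V_3)$ with one fixed point and no invariant line. This comparison of $\fA_4$-actions on $\bP^3$ handles the smooth and singular quadric-surface cases uniformly, so you can drop the ``degenerate quadric surfaces'' aside.
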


\begin{proof}

From indices of strict subgroups of $G$, we find that $\deg(C)=12$ or 15. If $\deg(C)=15$, then $C$ is a union of 5 twisted cubics. Each of the twisted cubic receives a generically free $\fA_4$-action and spans a $\bP^3$. The $\fA_4$-action on $\bP^3$ should have two invariant lines. We check that this does not happen for the given $\fA_4$-action in our case. So this case is impossible.

If $\deg(C)=12$, then $C$ is either a union of 6 conics or 12 lines. If $C$ contains a conic, the plane spanned by the conic is left invariant by a subgroup $\fD_5\subset G$. We check that the unique (up to conjugation) $\fD_5$ in $G$ leaves invariant two planes in $\bP^4$, giving rise to $
\cC_6$ and $\cC_6'$. If $C$ consists of 12 lines,  each line is left invariant by some subgroup $C_5\subset G$, and thus contains two $C_5$-fixed points. Then, a computation of $C_5$-fixed points leads us to $\cL_{12}$ and $\cL_{12}'$.
\end{proof}

\begin{lemm}
\label{lemma:quadric-curves in S2}
Let $C$ be a~$G$-invariant curve in $X$ with $\mathrm{deg}(C)\leqslant 10$. Then $C$ is contained in $Q=\bP^1\times\bP^1$, and is one of the following:
\begin{itemize}
\item a smooth irreducible curve of bidegree $(1,7)$  and genus 0,
     \item a smooth irreducible curve of bidegree $(2,6)$ and genus 5,
       \item the Bring curve $B_6$ of bidegree $(3,3)$ and genus 4,
      \item a smooth irreducible curve of bidegree $(4,4)$ of genus 9,
       \item a union of 5 conics of bidegree $(5,5)$.
\end{itemize}
\end{lemm}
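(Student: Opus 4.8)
\emph{Proof proposal.} The plan is to locate $C$ through its linear span, to eliminate the case where $C$ is nondegenerate in $\bP^4$ by an orbit count, and then to classify $G$-invariant curves in $Q$ by bidegree using the representation theory of $2\fA_5$.

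Since $C$ is $G$-invariant, its linear span $\langle C\rangle\subset\bP^4$ is a $G$-invariant linear subspace. Under the standard action $\bP^4=\bP(\mathbf1\oplus V_4)$, with $V_4$ irreducible, the only $G$-invariant linear subspaces are the point $\bP(\mathbf1)$, the hyperplane $H_0=\bP(V_4)=\{x_1+\cdots+x_5=0\}$, and $\bP^4$ itself. As $C$ is a curve, either $\langle C\rangle=H_0$, in which case $C\subset H_0\cap X=Q$, or $C$ is nondegenerate. Suppose $C$ is nondegenerate. Pick an irreducible component $C_0\subset C$ with $C_0\not\subset H_0$, which exists because $\langle C\rangle=\bP^4\ne H_0$, and let $C_0'$ be its $G$-orbit, so that no component of $C_0'$ lies in $H_0$. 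Then $C_0'\cap H_0$ is finite and $G$-invariant, contained in $X\cap H_0=Q$, nonempty by the projective dimension theorem, and of cardinality $\le\deg C_0'\le\deg C\le10$. But by Lemma~\ref{lemm:A5orbit} every $G$-orbit in $X$ of length $\le10$ is one of $\Sigma_5,\Sigma_5',\Sigma_{10},\Sigma_{10}'$, none of which lies in $Q$; hence $Q$ contains no nonempty $G$-invariant set of at most $10$ points, a contradiction. So $C\subset Q$.

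Now let $C\subset Q=\bP^1\times\bP^1$ have bidegree $(a,b)$; since $\cO_{\bP^4}(1)|_Q=\cO_Q(1,1)$ we get $a+b=\deg C\le10$. As $G$ preserves each ruling of $Q$, with the two (non-isomorphic) $\bP^1$-actions lifting to the two distinct $2$-dimensional spin representations $U_1,U_2$ of $2\fA_5$, one has $H^0(Q,\cO(a,b))\cong\Sym^aU_1\otimes\Sym^bU_2$ as $G$-modules, and a $G$-invariant effective divisor of bidegree $(a,b)$ exists iff $\Sym^aU_1$ and $\Sym^bU_2$ have a common irreducible constituent. Two reductions: the central involution of $2\fA_5$ forces $a\equiv b\pmod2$; and $a,b\ge1$, because a bidegree $(a,0)$ divisor is a union of $a$ fibres, hence a $G$-invariant set of $a\le10$ points on a $\bP^1$, impossible since the smallest $\fA_5$-orbit on $\bP^1$ has length $12$. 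A routine computation of the $\Sym^nU_i$ for $n\le10$, most cleanly via the McKay graph $\widehat E_8$ of $2\fA_5$ and the outer automorphism of $\fA_5$ exchanging $U_1$ and $U_2$, then shows that the only bidegrees with $a+b\le10$ are, up to swapping factors, $(1,7),(2,6),(3,3),(4,4),(5,5)$.

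It remains to identify the $G$-invariant divisor of each such bidegree, which is unique since the corresponding space of invariant sections is $1$-dimensional. A bidegree $(1,7)$ invariant divisor cannot contain a fibre of either projection, as this would produce a $G$-invariant set of at most $7$ points on a $\bP^1$; hence it is irreducible, and being of first-projection-degree $1$ it is a smooth rational curve. For $(2,6)$ and $(4,4)$, the same fibre argument together with the bidegree constraints forces irreducibility, and smoothness follows by excluding the few numerically possible nodal degenerations (the nodes would form a $G$-orbit whose preimages give a $G$-invariant point set on the normalization), giving smooth curves of genus $(2-1)(6-1)=5$ and $(4-1)(4-1)=9$. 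For $(3,3)$, the Bring curve $B_6=\{x_1+\cdots+x_5=\sum x_i^2=\sum x_i^3=0\}$ is a $G$-invariant smooth curve of degree $6$ and genus $4$ inside $Q$, hence of bidegree $(3,3)$, so by uniqueness it is the curve sought. For $(5,5)$, the five planes $\bP(U_3^{(i)})$ --- with $U_3^{(i)}\subset V_4$ the standard $3$-dimensional representation of the $\fA_4<\fA_5$ fixing the $i$-th coordinate --- lie in $H_0$ and meet $X$ in five smooth conics of bidegree $(1,1)$ permuted by $G$, whose union is a $G$-invariant bidegree $(5,5)$ divisor, so by uniqueness this union is the curve sought. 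The main obstacles I anticipate are the representation-theoretic enumeration of the admissible $(a,b)$ and the verification that the $(2,6)$- and $(4,4)$-curves are smooth and irreducible; the span dichotomy and the nondegenerate case are short.
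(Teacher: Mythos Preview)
Your proposal is correct and follows essentially the same route as the paper: the paper argues directly that if $C\not\subset Q$ then $C\cap Q$ would be a nonempty $G$-invariant set of at most $10$ points in $Q$, impossible by Lemma~\ref{lemm:A5orbit} (your linear span dichotomy is a harmless but unnecessary detour to the same intersection argument), and then simply says ``a computation of $G$-invariant divisors in $Q$ of bidegree $(r_1,r_2)$ with $r_1+r_2\leqslant10$ completes the proof''---your $\Sym^aU_1\otimes\Sym^bU_2$ analysis is exactly how one carries out that computation. The smoothness and irreducibility checks you flag for $(2,6)$ and $(4,4)$ are in fact easy: any $G$-orbit in $Q$ has length $\geq 12$, so an irreducible curve of arithmetic genus $\leq 9$ cannot acquire singularities, and irreducibility follows since $\fA_5$ has no subgroup of index $\leq 4$ (forcing each component to be $G$-invariant) together with the absence of invariant curves of smaller bidegree.
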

\begin{proof}
    Assume that $C$ is not contained in $Q.$ Then $Q\cdot C=\mathrm{deg}(C)$ and $Q\cap C$ consists of a $G$-orbit of points in $Q$ of length $\deg(C)$. From the information of orbits in Lemma \ref{lemm:A5orbit}, we see that $\deg(C)\geq12$. Thus, the curve $C$ is contained in $Q.$ A computation of $G$-invariant divisors in $Q$ of bidegree $(r_1,r_2)$ with $r_1+r_2\leq10$ completes the proof.
\end{proof}

Now, we want to classify the $G$-invariant irreducible curves of degrees at most 17 which are not contained in $Q$. The strategy is that for each such curve $C$, we find a $G$-invariant K3 surface containing $C$ and use the geometry of the K3 surface to proceed. In particular, we are interested in the pencil $\cP$ consisting of $G$-invariant K3 surfaces on $X$ given by
    $$
    S_{a_1,a_2}:=\{a_1 f^3+a_2 g=0\}\cap X,\quad [a_1:a_2]\in\bP^1
    $$
where
$$
f=\sum_{i=1}^5x_i\quad \text{and}\quad g=\sum_{i=1}^5x_i^3.
$$

Note that the base locus of $\cP$ is the Bring curve $B_6$. We can find singular members in $\cP$ by direct computations.
\begin{lemm}\label{lemm: sing S}
    A surface $S_{a_1,a_2}$ in $\cP$ is reduced and singular if and only if one of the following holds:
    \begin{itemize}
        \item $[a_1:a_2]=[4\pm 3\zeta_4: 50]\in\bP^1$. In these cases, $\Sing(S_{a_1,a_2})$ consists of $5$ nodes.
        \item $[a_1:a_2]=[6\pm \zeta_4\sqrt{3/2}:75]\in\bP^1$. In these cases, $\Sing(S_{a_1,a_2})$ consists of $10$ nodes.
    \end{itemize}
    Moreover, when $S_{a_1,a_2}$ is smooth, it does not contain $\Sigma_5,\Sigma_5',\Sigma_{10}$ or $\Sigma_{10}'.$
\end{lemm}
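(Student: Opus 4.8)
The plan is to detect singular points of a member of $\cP$ by the Jacobian criterion and then exploit the fact that $\Sing(S_{a_1,a_2})$ is $G$-invariant, which drastically constrains the coordinates of its points. Write $Q=\sum_i x_i^2$ and $\Phi=a_1 f^3+a_2 g$, so that $S_{a_1,a_2}=\{Q=\Phi=0\}\subset\bP^4$. Since $X=\{Q=0\}$ is smooth, a point $p\in S_{a_1,a_2}$ is singular exactly when the $2\times 5$ Jacobian matrix of $(Q,\Phi)$ has rank $\leq 1$ at $p$; using $\partial_i Q=2x_i$ and $\partial_i\Phi=3a_1 f^2+3a_2 x_i^2$, this is equivalent to
\[
(x_i-x_j)\bigl(a_1 f^2-a_2\,x_i x_j\bigr)=0\qquad\text{for all }i,j.
\]
Note that a point of $X$ cannot have all its coordinates equal, since then $Q(p)=5x_1^2\ne 0$; so at a singular point at least two coordinates differ.

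First I would dispose of the non-reduced case. If a singular point $p$ has three pairwise distinct coordinates $x_i,x_j,x_k$, the relations give $a_1 f^2=a_2 x_i x_j=a_2 x_i x_k=a_2 x_j x_k$, which for $a_2\ne 0$ forces $x_i=x_j=x_k=0$, absurd; hence $a_2=0$, i.e.\ $[a_1:a_2]=[1:0]$ and $S_{a_1,a_2}=3Q$ is non-reduced. Conversely, if $a_2\ne 0$ then every singular point of $S_{a_1,a_2}$ has coordinates taking exactly two, necessarily nonzero, values, so $\Sing(S_{a_1,a_2})$ is finite and $S_{a_1,a_2}$ is reduced. Thus ``reduced and singular'' means $a_2\ne 0$ together with the existence of such a two-value singular point; the rest of the argument analyses this.

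So let $p$ have the value $a$ with multiplicity $k$ and the value $b=1$ (after rescaling) with multiplicity $5-k$, where $1\le k\le 4$, $a\ne 1$ and $a\ne 0$. Writing $f=ka+(5-k)$, the equation $Q(p)=0$ and the relations for the cross-pairs read $ka^2+(5-k)=0$ and $a_1 f^2=a_2 a$; a short manipulation using $ka^2=-(5-k)$ shows that $f\ne 0$ and that $\Phi(p)=0$ is then automatic, so $a_2\ne 0$, $[a_1:a_2]=[a:f^2]$ and $a=\pm\sqrt{-(5-k)/k}$. Moreover replacing $p$ by $p/a$ identifies the configuration for $k$ with that for $5-k$, so it suffices to take $k=1,2$. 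For $k=1$ one gets $a=\pm 2\zeta_4$, $f^2=12+8a$, hence $[a_1:a_2]=[a:12+8a]=[4\pm 3\zeta_4:50]$, with $\Sing(S_{a_1,a_2})$ the $G$-orbit of $[2\zeta_4:1:1:1:1]$ or its complex conjugate, which by Lemma~\ref{lemm:A5orbit} is $\Sigma_5$, resp.\ $\Sigma_5'$, of length $5$. For $k=2$ one gets $a=\pm\zeta_4\sqrt{3/2}$, $f^2=12a+3$, hence $[a_1:a_2]=[a:12a+3]=[6\pm\zeta_4\sqrt{3/2}:75]$, with singular locus $\Sigma_{10}$, resp.\ $\Sigma_{10}'$, of length $10$. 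These four values of $[a_1:a_2]$ are distinct from one another and from $[1:0]$, so each such member is singular exactly along the stated orbit; and evaluating the Hessian $\partial_i\partial_j\Phi=6a_1 f+6a_2 x_i\delta_{ij}$ at a representative point, restricted to the tangent space of $X$ there, one checks it is nondegenerate, so the singular points are ordinary double points. This gives $5$, resp.\ $10$, nodes.

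For the last assertion, note that $f$ and $g$ are symmetric, hence $G$-invariant, so for a $G$-orbit $\Sigma$ the inclusion $\Sigma\subset S_{a_1,a_2}$ is a single linear condition $a_1 f(p)^3+a_2 g(p)=0$ on $[a_1:a_2]$, with a unique solution whenever $(f(p),g(p))\ne(0,0)$; evaluating at representatives of $\Sigma_5,\Sigma_5',\Sigma_{10},\Sigma_{10}'$ shows this solution is in each case precisely the singular member found above, so no smooth member contains any of them. The only genuinely computational points are matching $[a:f^2]$ with the stated values while tracking the two square-root signs, and the Hessian check certifying that the singularities are nodes rather than worse; I expect the latter to be the one step that truly needs a calculation, everything else being structural.
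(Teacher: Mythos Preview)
Your argument is correct and complete. The paper itself offers no proof of this lemma beyond the sentence ``We can find singular members in $\cP$ by direct computations''; you have supplied exactly such a computation, carried out by hand via the Jacobian criterion and the observation that a singular point must have coordinates taking only two values. The identification of the four singular parameters, the matching of the singular loci with $\Sigma_5,\Sigma_5',\Sigma_{10},\Sigma_{10}'$, and the final claim about which orbits lie on smooth members are all verified correctly.

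Two small remarks. First, a notational slip: you set $Q=\sum_i x_i^2$ for the equation of $X$, but then write ``$S_{a_1,a_2}=3Q$'' for the non-reduced member, where you evidently mean three times the hyperplane section $\{f=0\}\cap X$ (which, unhelpfully, the paper also denotes by $Q$). Second, the passage from ``$\Sing(S_{a_1,a_2})$ is finite'' to ``$S_{a_1,a_2}$ is reduced'' uses implicitly that a complete intersection is Cohen--Macaulay, hence has no embedded components, so generic reducedness suffices; this is standard but worth a word. The Hessian check for nodality is, as you say, the one genuinely computational step, and it goes through.
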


\begin{rema}\label{rema:6conicS}
    The orbits $\cC_6$ and $\cC_6'$ are contained in $S_{2,25}\in\cP$.
\end{rema}

\begin{lemm}
\label{lemma:curves not in Q}
Let $C$ be an $G$-invariant curve not contained in $Q$ such that $\mathrm{deg}(C)\leqslant 17$. Then the following statements hold.
\begin{enumerate}
    \item $\mathrm{deg}(C)=12$.
    \item In the pencil $\cP$, there is a unique surface $S$ containing $C$.
    \item If $C$ is irreducible, then $C$ is a Cartier divisor on $S$.
    \item The surface $S$ is smooth.
    \item The curve $C$ is smooth.
    \item If $C$ is irreducible, then its genus $g(C)\in\{0,5,10\}$.
    \item There exists a $G$-invariant curve $C'$ different from $C$ such that $C'\subset S$, $C'$ is isomorphic to $C$, and $C+C'\sim_{\mathbb Q}\mathcal O_S(4)$.
\end{enumerate}
\end{lemm}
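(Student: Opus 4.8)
Write $p_k=x_1^k+\dots+x_5^k$, so $f=p_1$, $g=p_3$ and $X=\{p_2=0\}$; since $p_2|_X=0$, Newton's identities show that the $G$-invariant forms on $X$ of degrees $2,3,4$ span respectively $\langle f^2\rangle$, $\langle f^3,g\rangle$ (which cuts out the pencil $\cP$) and $\langle f^4,fg,p_4\rangle$. For part~(1): if $C$ is reducible, then Lemmas~\ref{lemma:quadric-curves in S2} and~\ref{lemm:12line} together with $C\not\subseteq Q$ force $C=\cC_6$ or $\cC_6'$, both of degree $12$; if $C$ is irreducible, then $C\cap Q$ is a non-empty finite $G$-invariant subscheme of length $Q\cdot C=\deg C\le17<20$, so its support is a union of $G$-orbits of length $<20$ contained in $Q$, hence by Lemma~\ref{lemm:A5orbit} lies in $\Sigma_{12}\cup\Sigma_{12}'$; since $|\Sigma_{12}|=|\Sigma_{12}'|=12$ while $24>17$, exactly one of the two occurs, with reduced structure, so $\deg C=12$ and $C\cap Q\in\{\Sigma_{12},\Sigma_{12}'\}$ --- a fact I keep for later.

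For parts~(2)--(4), the reducible case is explicit: if $C=\cC_6$ or $\cC_6'$ then $C\subset S:=S_{2,25}$ by Remark~\ref{rema:6conicS}, $S$ is smooth by Lemma~\ref{lemm: sing S}, and it is the unique member of $\cP$ through $C$ because any two distinct members meet only along $B_6\subset Q$ (their intersection being cut out by $f^3=g=0$ on $X$, i.e.\ by \eqref{bring}), whereas $C\not\subseteq Q$. For $C$ irreducible I would put $C$ in a member of $\cP$ via representation theory: the restriction map $H^0(X,\mathcal O_X(3))\to H^0(C,\mathcal O_C(3))$ is $G$-equivariant, its source has exactly two trivial summands (namely $\langle f^3,g\rangle$), and its target has at most one --- $\mathcal O_C(3)$ being $G$-linearized of degree $36$ with $|G|=60\nmid36$ and $C$ having trivial generic stabilizer (Lemma~\ref{lemm:genericstabcurve}), the sheaf it descends to on $C/G$ has degree $<1$; hence a $G$-invariant cubic vanishes on $C$, placing $C$ in some $S\in\cP$, which is unique as before. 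It then remains to see $S$ is smooth: by Lemma~\ref{lemm: sing S} a singular member of $\cP$ is either the non-reduced $3Q$ (excluded since $C\not\subseteq Q$) or has $\Sing(S)\in\{\Sigma_5,\Sigma_5',\Sigma_{10},\Sigma_{10}'\}$, and in the latter case $\Cl(S)^G$ is generated by $H_S$, forcing $[C]=2H_S$; then $C$, being linearly equivalent to a Cartier divisor, is Cartier and is cut out on $S$ by a $G$-invariant quadric, necessarily $f^2$, giving $C=2B_6$ --- impossible for $C$ irreducible. With $S$ smooth, $C$ is automatically Cartier, so (2)--(4) follow.

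For parts~(5)--(7), on the smooth K3 surface $S$ adjunction gives $C^2=2p_a(C)-2$ and $C\cdot H_S=12$, hence $C^2\le24$ by Hodge index. The relevant extra structure is the pencil $\cP'=\langle f^4|_S,p_4|_S\rangle\subset|4H_S|$ of $G$-invariant quartic sections of $S$, whose base locus is $\{f=p_4=0\}\cap S=\{p_1=\dots=p_4=0\}=\Sigma_{12}\cup\Sigma_{12}'$: the degree-$4$ analogue of the restriction argument above (the trivial isotypic part of $H^0(S,\mathcal O_S(4))$ has dimension $2$, that of $H^0(C,\mathcal O_C(4))$ at most $1$) shows some $D\in\cP'$ contains $C$, and since $\deg C=12>6=\deg B_6$ we have $D\ne 4B_6=f^4|_S$, so $D=C+C'$ with $C'$ an effective $G$-invariant curve of degree $12$ and $C+C'\sim 4H_S$; as $D$ meets $Q$ along $\Sigma_{12}\cup\Sigma_{12}'$ and $C$ along only one of them (part~(1)), $C'\ne C$. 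That $C'\cong C$ and that $g(C)\in\{0,5,10\}$ (part~(6)) I would then extract together from an explicit computation of the lattice $\Pic(S)^G$ --- generated by $H_S$ and the classes of the finitely many distinguished $G$-invariant curves in the pencil $\cP$ --- using that the outer involution of $\fA_5$, realized by an $\fS_5$-permutation fixing $f,g$ (hence $X$, $\cP$ and $S$), interchanges $\Sigma_{12},\Sigma_{12}'$ and carries $C$ to $C'$. Smoothness of $C$ (part~(5)) then follows since $C\cap Q$ is reduced, so $\mult_p C=1$ along it, while singularities of $C$ off $Q$ would form an orbit $\Sigma_5,\Sigma_5',\Sigma_{10}$ or $\Sigma_{10}'$, excluded once the possible classes $[C]$, and hence the bounded arithmetic genus, are known.

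The main obstacle is twofold: proving $S$ is smooth --- that is, ruling out the $5$- and $10$-nodal members of $\cP$, which requires control of $\Cl$ of those singular surfaces and of their minimal resolutions --- and pinning down $\Pic(S)^G$ precisely enough to extract the genus list $\{0,5,10\}$ and to identify $C'$. Both ultimately rest on the representation theory of $\fA_5$ acting on $H^0(X,\mathcal O_X(d))$ for $d=2,3,4$ and on the N\'eron--Severi lattices of the K3 surfaces in $\cP$; I expect ruling out the nodal members and establishing $C'\cong C$ to be the subtlest points.
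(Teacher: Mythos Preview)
Your overall plan is sound and several parts match the paper closely (in particular (1), and your quartic-pencil construction of $C'$ in (7) is a pleasant alternative to the paper's argument). But the two points you flag as ``main obstacles'' are precisely where the paper takes a different and much shorter route, and your proposed workarounds are either harder than necessary or not quite complete.

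\textbf{Parts (3)--(4).} You try to prove (4) first by asserting that $\Cl(S)^G=\bZ\cdot H_S$ for the nodal members of $\cP$, and you are right to worry about this: it is exactly the point that needs work, since a priori $C$ is only Weil and the exceptional $(-2)$-curves on the resolution could contribute a nontrivial $G$-invariant class. The paper sidesteps this by proving (3) before (4). The key observation is that if $p\in C\cap\Sing(S)$ and $E_p\simeq\bP^1$ is the exceptional curve over $p$ on the minimal resolution $\widetilde S$, then $\widetilde C\cdot E_p$ is a sum of lengths of orbits of the stabiliser of $p$ acting on $\bP^1$. When $|\Sing(S)|=5$ the stabiliser is $\fA_4$, whose orbits on $\bP^1$ have lengths $4,6,12$ --- all even --- so the half-integer $m$ in $\widetilde C\sim f^*C-mE$ is automatically an integer. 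When $|\Sing(S)|=10$ the stabiliser is $\fS_3$, with orbit lengths $2,3,6$; if $2m$ is odd then $m\ge\tfrac32$ and the Hodge index bound $C^2\le 24$ forces $\widetilde C^2\le -11$, contradicting $\widetilde C$ irreducible. Thus $C$ is Cartier, hence lies in $\Pic^G(S)$, and the rank-one statement you need is the easier $\Pic^G(S)=\bZ\cdot H_S$ rather than $\Cl^G(S)=\bZ\cdot H_S$.

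\textbf{Part (6).} Your plan to extract $g(C)\in\{0,5,10\}$ from an explicit computation of $\Pic(S)^G$ for a general member $S$ of a pencil of K3 surfaces is a substantial undertaking. The paper avoids this entirely: once you know $C$ is smooth with $p_a(C)\le 12$ (Hodge index plus $C\not\sim 2H_S$, as you already argued) and that $C$ contains exactly one $G$-orbit of length $12$ (from $C\cap Q$ in your part (1)), the classification of smooth irreducible $\fA_5$-curves together with their orbit data in \cite[Lemma~5.1.5]{CheltsovShramov} immediately gives $g(C)\in\{0,5,10\}$.

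\textbf{Minor points.} For (2), your descent argument that $H^0(C,\cO_C(3))^G$ has dimension $\le 1$ is delicate at the ramification points of $C\to C/G$; the paper simply picks a general point $P\in C$ and notes that its $G$-orbit has length $60>36=3\deg(C)$, so the unique member of $\cP$ through $P$ must contain $C$. For (7), the paper's version of your $\fS_5$-argument is that by \cite{CSZ} there is no $\fS_5$-invariant irreducible curve of degree $12$ outside $Q$, so the $\fS_5$-image $C'$ of $C$ is distinct from $C$; then $C+C'$ is $\fS_5$-invariant of degree $24$, and $\Pic^{\fS_5}(S)=\bZ\cdot H_S$ forces $C+C'\sim 4H_S$. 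Your quartic-pencil construction gives the same conclusion without invoking \cite{CSZ}.
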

\begin{proof}We may assume that $C$ is $G$-irreducible.
    \begin{enumerate}
        \item Arguing as in Lemma~\ref{lemma:quadric-curves in S2}, we know that $\deg(C)=12.$
    \item Let $P$ be a general point on $C$.  There exists a unique $S\in\cP$ such that $P\in S$. If the curve $C$ is not contained in $S$, then the number of points in $C\cap S$ is at most $3\deg(C)=36$. But by Lemma \ref{lemm:genericstabcurve}, the $G$-orbit of $P$ has length $60$. By contradiction, we see that $C\subset S$. 
        \item In what follows, we will denote by $H$ a general hyperplane section on $X$, and by $H_S$ its restriction to $S$. If the curve $C$ is contained in the smooth locus of $S$, then it is Cartier. Assume that $C\cap\Sing(S)$ is not empty. Let $f\colon \widetilde S\rightarrow S$ be the blowup of $C\cap\Sing(S)$, and $\widetilde C$ the strict transform of $C$ by $f$. We have 
        $$
        \widetilde C\sim_\dq f^*(C)-mE,\quad m\in\frac{1}{2}\dz,
        $$ 
        where $E$ is the exceptional divisor of $f$. To show that $C$ is Cartier, it suffices to prove that $m$ is an integer. Denoting by $E_P$ the component of $E$ mapped to $P$, we have $\widetilde C\cdot E_P=2m$. But this intersection number is preserved by the action of the stabilizer of $P$. By Lemma \ref{lemm: sing S}, we have $s=|C\cap\Sing(S)|\in\{5,10\}$. If $s=5$, the stabilizer of $P$ is $\aq$, and $2m=4a+6b+12c$, where $a,b,c\in\bZ_{\geq 0}$, since $4,6,$ and $12$ are the possible lengths of $\aq$-orbits on $\pl$. It follows that $m$ is an integer, and $C$ is Cartier. If $s=10,$ then the stabilizer of $P$ is $\st$, and $2m=2a+3b+6c$. If $b=0$, then $m$ is an integer and we are done. Assume that $b\ge1$. We have 
        $$
        \widetilde C^2=(f^*(C)-mE)^2=C^2-2sm^2\le C^2-35.
        $$
        By Hodge index theorem, we have 
        $$
        C^2\leq \frac{(C\cdot H_S)^2}{(H_S)^2}=24,\quad\text{and}\quad \widetilde C^2\leq-11,
        $$
        which is impossible and we obtain a contradiction.
        \item  If $C$ is reducible, the assertion follows from Remark~\ref{rema:6conicS}. Assume that $C$ is irreducible. From \cite[Proposition 6.7.3]{CS}, we know that $\mathrm{rk}(\Pic^G(S))=$1 or 2, and $S$ is smooth in the latter case. Assume that $S$ is singular, then $\Pic^G(S)=\bZ$ and it is generated by $H_S$ since $(H_S)^2=6$ is not a square. It follows that $C\sim nH_S$, for some positive integer $n$.  Note that $\deg(C)=12$ implies that $n=2$. But one can check that all $G$-invariant quadratic forms on $\bP^4$ are linear combinations of $\sum_{i=1}^5x_i^2$ and $(\sum_{i=1}^5x_i)^2$. We deduce that no $G$-invariant curve in $S$ is linearly equivalent to $2H_S$, hence we obtain a contradiction.
        \item If $C$ is reducible, the assertion follows from Lemma \ref{lemm:12line}.  Assume that $C$ is irreducible and singular, the singular locus of $C$ is a union of $G$-orbits. Since $S$ is smooth, the curve $C$ does not contain any orbit of length $\leq$10, by Lemma \ref{lemm:A5orbit}. Hence, $\mathrm{Sing}(C)$  must be an orbit of length at least $12$. Let us show that this is impossible. Again, Hodge index theorem gives $$
        C^2\leq \frac{(C\cdot H_S)^2}{(H_S)^2}=24.
        $$ 
        If this is an equality, then $C\sim nH_S$, for some $n\in\dz$, and we have proved that this is impossible. So we have $C^2<24$, and since the self-intersection of a curve on a K3 surface is even, we get $C^2\le22$. It follows that the arithmetic genus $p_a(C)$ of $C$ satisfies $C^2=2p_a(C)-2$, i.e., $p_a(C)\le12$. Thus, $C$ cannot have more than 12 singular points. If $C$ has 12 singular points, since all orbits of length 12 are in $Q$, we have $12=Q\cdot C \geq 2\cdot 12=24$, which is a contradiction.
        \item We have proved that $p_a(C)\le12$ and that $C$ is smooth, so its genus $g(C)\le12$. 
        Note that $C$ only contains one orbit of length 12 since $C\cdot Q=12$. The using a classification of genera of smooth irreducible curves with  $\fA_5$-actions and their orbit structures \cite[Lemma 5.1.5]{CS}, we deduce that $g(C)\in\{0,5,10\}$.
        \item Consider the action of $\mathfrak S_5$ given by the permutations of the coordinates leaving $X$ and $S$ invariant. By \cite{CSZ}, there is no $\scinq$-invariant irreducible curve of degree 12 not contained in $Q$. Let $C'$ be the other curve in the $\scinq$-orbit of $C$. Since $C+C'$ is of degree 24 and since $\pic^\scinq(S)=\dz\cdot H_S$, we get $C+C'\sim4H_S$.
    \end{enumerate}
\end{proof}

\subsection{Invariant curves not contained in $Q$}
This subsection is devoted to proving the following. 
\begin{prop}\label{prop:curvesnotinQ}
    If $C$ is a $G$-invariant curve in $X$ not contained in $Q$, then each irreducible component of $C$ is not a non-canonical center of $(X,\lambda\cM_X)$.
\end{prop}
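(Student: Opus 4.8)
The plan is to use $G$-invariance of the non-canonical locus to reduce to a $G$-irreducible curve $C$ and to rule out each of its irreducible components as a non-canonical center. By Lemma~\ref{lemma:curves not in Q} (and Lemma~\ref{lemm:12line}, Remark~\ref{rema:6conicS} in the reducible case) we may assume $\deg(C)=12$, that $C$ lies on a unique smooth K3 surface $S\in\cP$, that $C$ is smooth, and that $S$ carries a second $G$-invariant curve $C'\cong C$ with $C+C'\sim_\bQ 4H_S$; here $H_S^2=6$ and $C\cdot H_S=C'\cdot H_S=12$. If $C$ is irreducible then $g:=g(C)\in\{0,5,10\}$ and $C^2=(C')^2=2g-2$, $C\cdot C'=50-2g$; if $C$ is reducible it is $\cC_6$ or $\cC_6'$, which lie on the smooth K3 surface $S_{2,25}$ with $\cC_6^2=(\cC_6')^2=-12$ and $\cC_6\cdot\cC_6'=60$.

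I would then argue by contradiction. If a component of $C$ is a non-canonical center, then blowing it up and using $G$-irreducibility gives $\lambda\,\mult_C(\cM_X)>1$. Since $\cM_X$ is mobile, $S$ is not a fixed component, so $\cM_X|_S$ is a genuine linear system; set $\mu:=\mult_C(\cM_X|_S)\ge\mult_C(\cM_X)$ and $\nu:=\mult_{C'}(\cM_X|_S)\ge0$, so $\lambda\mu>1$. For general $M\in\cM_X$ one has $M|_S\equiv(3/\lambda)H_S$, and since $C$ is Cartier on $S$ the divisor $M|_S-\mu C-\nu C'$ is effective and contains neither $C$ nor $C'$. Intersecting the effective class $3H_S-\lambda\mu C-\lambda\nu C'$ with $C'$, and comparing degrees in $M|_S\ge\mu C+\nu C'$, gives
\[
36-\lambda\mu\,(C\cdot C')-\lambda\nu\,(C')^2\ \ge\ 0,\qquad\qquad \lambda\mu+\lambda\nu\ \le\ \tfrac32 .
\]
For irreducible $C$ of genus $0$ or $5$ and for the reducible case these are already incompatible: $g=5$ gives $36\ge 40$; $g=0$ forces $\lambda\nu>7$; and $C=\cC_6$ forces $\lambda\nu>2$ — each contradicting $\lambda\mu+\lambda\nu\le\tfrac32$ or absurd outright. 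So only genus $10$ can occur.

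The genus-$10$ case is the crux. Here $C^2=18$ is too large for the linear estimate to close, so I would instead use two general members $M_1,M_2\in\cM_X$ and the residual linear system on $S$. Removing from a general $M_i|_S$ its fixed parts along $C$ and $C'$ leaves an effective class $R\equiv(3/\lambda)H_S-\mu C-\nu C'$, and a direct computation gives
\[
\lambda^2R^2\ =\ 54-72(\lambda\mu+\lambda\nu)+18\bigl((\lambda\mu)^2+(\lambda\nu)^2\bigr)+60(\lambda\mu)(\lambda\nu),
\]
which is strictly negative on the whole region $\lambda\mu>1$, $\lambda\nu\ge 0$, $\lambda\mu+\lambda\nu\le\tfrac32$. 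Splitting $R$ into fixed and mobile parts, the mobile part has non-negative self-intersection, so the ($G$-invariant) fixed part has negative self-intersection; hence $S$ would carry a $G$-invariant effective divisor of negative self-intersection, i.e.\ a $G$-invariant configuration of $(-2)$-curves.

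The main obstacle — the one step that is not mere bookkeeping — is ruling this out for the particular surface $S$. One uses that $\rk(\Pic^G(S))=2$ (established in the proof of Lemma~\ref{lemma:curves not in Q}(4)), so $\Pic^G(S)\otimes\bQ=\langle H_S,C\rangle$ with Gram matrix $\left(\begin{smallmatrix}6&12\\12&18\end{smallmatrix}\right)$, a form that represents no class of square $-2$; then, using the explicit equation of $S$ to pin down its $(-2)$-curves (equivalently, its effective and nef cones), one shows $S$ has no $G$-invariant effective divisor of negative self-intersection, contradicting the previous paragraph. Everything before this — the reduction, the degree bound of Remark~\ref{rema:degreebound1}, the multiplicity inequality from non-canonicity, and the intersection-number computations on $S$ using $C+C'\sim_\bQ 4H_S$ — is routine; the genuine input is this rank-$2$ K3 geometry.
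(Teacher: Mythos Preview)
Your treatment of the irreducible curves of genus $0$ and $5$ and of the reducible case is correct and essentially parallel to the paper's Lemmas~\ref{lemma:irrcurves not in Q} and~\ref{lemma:redcurves not in Q}: both arguments restrict $\lambda\mathcal M_X$ to $S$, use $C+C'\sim_{\mathbb Q}4H_S$, and obtain a contradiction by intersecting with $C'$. Your bookkeeping with the extra multiplicity $\nu=\mathrm{mult}_{C'}(\mathcal M_X|_S)$ is unnecessary but harmless.

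The genus-$10$ case, however, has a genuine gap. Your computation that $\lambda^2R^2<0$ on the region $\lambda\mu>1$, $\lambda\nu\ge 0$, $\lambda\mu+\lambda\nu\le\tfrac32$ is correct, and it does force the fixed part $F$ of the residual system to satisfy $F^2<0$. But your proposed method of ruling out such an $F$ does not work as stated. First, the fact that the Gram matrix of $\langle H_S,C\rangle$ does not represent $-2$ only says there is no irreducible $G$-invariant $(-2)$-curve in that sublattice; it says nothing about a reducible $F$ with $F^2<0$ (indeed the form takes arbitrarily negative values), and you have not shown that $H_S,C$ generate $\Pic^G(S)$ over $\mathbb Z$ rather than just over $\mathbb Q$. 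Second, and more seriously, you propose to ``use the explicit equation of $S$'' to determine its $(-2)$-curves, but you do not know \emph{which} members of the pencil $\mathcal P$ contain genus-$10$ invariant curves, so there is no explicit equation to work with.

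The paper's argument avoids this entirely. For $g=10$ one has $(3H_S-C)^2=0$, and Riemann--Roch gives $h^0(3H_S-C)\ge 2$; any fixed component would be a $G$-invariant curve in $S$ of degree $<6$, which does not exist (the only $G$-invariant curve of degree $\le 10$ lying on $S$ is the Bring curve $B_6=S\cap Q$, of degree exactly $6$). Hence $|3H_S-C|$ is a basepoint-free pencil and $3H_S-C$ is nef. Then $(3H_S-C)\cdot(3H_S-mC)=18(1-m)<0$ contradicts the effectiveness of $3H_S-mC\sim_{\mathbb Q}\Delta$. This works uniformly for every smooth $S\in\mathcal P$ carrying such a $C$, without any lattice or explicit-equation input; that is the missing idea in your genus-$10$ case.
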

\begin{proof}
This follows from Lemmas \ref{lemma:irrcurves not in Q} and \ref{lemma:redcurves not in Q}.
\end{proof}

We start with the case of irreducible curves. The method of the proof will be applied several times in this paper. 
\begin{lemm}\label{lemma:irrcurves not in Q}
    If $C$ is an irreducible $G$-invariant curve not contained in $Q$, then $C$ is not a non-canonical center of $(X,\lambda\cM_X)$. 
\end{lemm}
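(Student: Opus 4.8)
The plan is to argue by contradiction: suppose $C$ is an irreducible $G$-invariant curve not contained in $Q$ that is a non-canonical center of $(X,\lambda\cM_X)$. By Remark~\ref{rema:degreebound1} we have $\deg(C)<18$, so Lemma~\ref{lemma:curves not in Q} applies and gives us a great deal of structure: $\deg(C)=12$, there is a unique surface $S$ in the pencil $\cP$ containing $C$, the surface $S$ is smooth, $C$ is a smooth Cartier divisor on $S$, and there is a $G$-invariant curve $C'\cong C$ with $C'\subset S$ and $C+C'\sim_{\bQ}\cO_S(4)=4H_S$. The strategy — the one the proof announces will recur — is to restrict the mobile system $\cM_X$ to $S$ and derive a numerical contradiction from the multiplicity along $C$ forced by non-canonicity.

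Concretely, first I would record that, writing $M_1,M_2$ for two general members of $\cM_X$, non-canonicity of $(X,\lambda\cM_X)$ along $C$ means $\lambda^2(M_1\cdot M_2)=mC+\Delta$ with $m>1$ and $\Delta$ effective not supported on $C$ (as in Remark~\ref{rema:degreebound1}). Next, since $S$ is a smooth K3 surface and $\cM_X$ is mobile, the restricted system $\cM_S:=\cM_X|_S$ is a (possibly non-mobile, but one should check: its fixed part can only be supported on $G$-invariant curves in $S$) linear system with $\cM_S\sim_{\bQ}\tfrac{1}{\lambda}H_S$ up to the scaling $\lambda\cM_X\sim_{\bQ}-K_X$, i.e. $\cM_S\sim 2H_S$ since $-K_X|_S = 2H_S$ and $\lambda\cdot(\text{class of }\cM_X)=-K_X$. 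Then $\mathrm{mult}_C(\cM_S|_{\text{general member}})\geq$ something comparable to $m/\lambda$, and the inversion-of-adjunction philosophy (Theorem~\ref{theo:cortiineq} on $S$, applied with $\lambda\cM_S$) should say that if $C$ is in the non-canonical locus of $(X,\lambda\cM_X)$, then $C$ lies in the non-log-canonical locus of $(S,\lambda\cM_S)$ after adjunction — giving $\mathrm{mult}_C(D_1\cdot D_2)>4/\lambda^2$ for general $D_1,D_2\in\cM_S$, where the intersection is now on $S$.

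The numerical heart of the argument is then: on $S$ we have $D_i\sim 2H_S$ so $D_1\cdot D_2 = 4H_S^2 = 24$; but $C$ enters this intersection with multiplicity $>4/\lambda^2$, and simultaneously $C$ and its companion $C'$ satisfy $C+C'\sim 4H_S$ with $C\cdot C' = (C+C')\cdot C - C^2 = 4H_S\cdot C - C^2 = 48 - C^2$. Combining the bound $C^2\leq 24$ (Hodge index, as in Lemma~\ref{lemma:curves not in Q}) with the fact that $\cM_S$ must contain $C$ (and, by $G$-invariance and the transitivity of the $\mathfrak S_5$-action, also $C'$) with high multiplicity, one gets $4/\lambda^2 \cdot (\text{something}) \leq 24$, i.e. a bound on $\lambda$, which then contradicts $\lambda\cM_X\sim_{\bQ}-K_X$ together with the constraint that $\cM_X$ is non-empty and mobile (so $\lambda$ cannot be too small). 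I would also need to treat separately the subcase where $C$ happens to be a base component of $\cM_S$ — there the fixed part eats into $2H_S$ and one runs the same intersection estimate with the residual mobile part.

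The main obstacle I anticipate is making the restriction step rigorous: $S$ could conceivably be a component of the base locus of $\cM_X$, or $\cM_X|_S$ could fail to be mobile, and one has to show that even so enough positivity survives to run the Corti-type inequality on $S$. The cleanest route is probably to observe that $S$ moves in the pencil $\cP$, so a general $S\in\cP$ is not in the base locus of $\cM_X$, while $C\subset S_0$ for one specific member $S_0$; one then argues that $C$ being a non-canonical center forces $\mathrm{mult}_C\cM_X$ large, restricts to a general $S$ through a general point of a suitable auxiliary family, and uses the disjointness of the orbit components together with $C+C'\sim 4H_S$ to close the estimate. Getting the bookkeeping of $\lambda$, $m$, and $\mathrm{mult}_C$ consistent across the adjunction is where the care is needed.
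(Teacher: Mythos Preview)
Your proposal has genuine gaps and a couple of concrete errors. First, the arithmetic: $X$ is a quadric threefold, so $-K_X=\cO_X(3)$ and hence $\lambda\cM_X|_S\sim_{\bQ}3H_S$, not $2H_S$. Second, Theorem~\ref{theo:cortiineq} (Corti's inequality) is a statement about a \emph{point} being a non-log-canonical center on a surface; it does not give you a multiplicity bound along a curve $C\subset S$, so the step ``$\mathrm{mult}_C(D_1\cdot D_2)>4/\lambda^2$'' is not what that theorem says and does not make sense as written (on a surface, $D_1\cdot D_2$ is a $0$-cycle). Third, the endgame ``a bound on $\lambda$'' cannot yield a contradiction: $\lambda=3/n$ where $\cM_X\subset|nH|$, and $n$ is arbitrary, so no inequality of the form $\lambda\leq c$ contradicts anything. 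The $\lambda$'s must (and do) cancel.

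The paper's argument is both simpler and sharper. From $\mathrm{mult}_C(\lambda\cM_X)>1$ one writes $\lambda\cM_X|_S\sim_{\bQ}mC+\Delta$ with $m>1$ and $\Delta\geq 0$ not containing $C$; since $\lambda\cM_X|_S\sim_{\bQ}3H_S$, this already forces $3H_S-C$ (and $3H_S-mC$) to be effective. The companion curve $C'\sim 4H_S-C$ is then used \emph{directly}, with a case split on the genus $g(C)\in\{0,5,10\}$: for $g=0$, $(C')^2=-2$ makes $C'$ extremal, so $3H_S-C\sim C'-H_S$ is not effective; for $g=5$, $C'$ is nef but $(3H_S-C)\cdot C'=-4<0$; for $g=10$, one checks $|3H_S-C|$ has no fixed part (hence is nef) and computes $(3H_S-C)\cdot(3H_S-mC)<0$. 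No inversion of adjunction or Corti-type bound is needed here --- the whole contradiction lives in $\Pic(S)$ and uses only effectivity/nefness on the K3.
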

\begin{proof}
By Lemma~\ref{lemma:curves not in Q}, the curve $C$ is of degree 12, and there exists a unique smooth K3 surface $S$ in the pencil $\cP$ such that $C\subset S$, and the genus $g=g(C)\in\{0,5,10\}$. Let $H$ be a general hyperplane section on $X$, and $H_S$ its restriction to $S$. Assume that $C$ is a non-canonical center of $(X,\lambda\mls_X)$. Then $\mult_C(\lambda\mathcal M_X)>1$. We have 
$$
\lambda\mls_X\vert_S\sim_{\dq}mC+\Delta,\quad m\ge\mult_C(\lambda\mathcal M_X)>1
$$
for some divisor $\Delta$ on $S$ not supported along $C$. In particular, the divisors 
$$
3H_S-C\sim_\dq\Delta+(m-1)C\quad\text{and}\quad 3H_S-mC\sim_\bQ\Delta
$$ are effective. By Lemma \ref{lemma:curves not in Q}, there exists an irreducible curve $C'$ such that $C'$ is isomorphic to $C$ and $C'\sim_\dq 4H_S-C$.  
    \begin{enumerate}
        \item Assume that $g=0$. We have $(C')^2=(4H_S-C)^2=-2$. So the divisor $4H_S-C$ is on an extremal ray of the Mori cone of $S$. Since $H_S$ is ample, it implies that $C'-H_S\sim_\mathbb{Q} 3H_S-C$ is not rationally equivalent to any effective divisor. Hence, we get a contradiction.
        \item Assume that $g=5$. Notice that $C'$ is nef since it is an irreducible curve on a smooth surface, and $(C')^2=2g(C')-2=8$. But $$
        (3H_S-C)\cdot C'=(3H_S-C)\cdot(4H_S-C)=-4<0,
        $$
        which gives a contradiction.
        \item Assume that $g=10$. Let us first show that the linear system $|3H_S-C|$ has no fixed part. Notice that its mobile part is at least a pencil. Indeed, by Riemann-Roch theorem, we have $$h^0(3H_S-C)\ge2+\frac{1}{2}(3H_S-C)^2=2.$$ So, if it has a base curve, it is of degree lower than 6. But there is no such $G$-invariant curve not contained in $Q$. The linear system $|3H_S-C|$ also does not have any fixed point. Indeed, we have $(3H_S-C)^2=0$, so the curves in this linear system are disjoint. Hence, there is no base curve in $|3H_S-C|$ other than $C$ and it is nef. But $(3H_S-C)\cdot(3H-mC)<0$, which yields a contradiction.
    \end{enumerate}
\end{proof}

We exclude reducible curves in a similar way.
\begin{lemm}\label{lemma:redcurves not in Q}
    If $C$ is a reducible $G$-invariant curve of degree 12 not contained in $Q$, then each irreducible component of $C$ is not a non-canonical center of $(X,\lambda\cM_X)$.
\end{lemm}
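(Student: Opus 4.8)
The plan is to follow the template of Lemma~\ref{lemma:irrcurves not in Q}, moving the problem onto a $G$-invariant K3 surface, but adapted to the fact that the irreducible components of $C$ are $(-2)$-curves. By Lemma~\ref{lemm:12line}, a reducible $G$-invariant curve of degree $12$ not contained in $Q$ is either $\cC_6$ or $\cC_6'$, a disjoint union of six conics permuted transitively by $G$; by symmetry it suffices to treat $C=\cC_6$. Suppose a conic $C_1\subset\cC_6$ is a non-canonical center of $(X,\lambda\cM_X)$. Since $\cM_X$ is $G$-invariant and $G$ is transitive on the six conics, every $C_i$ is then a non-canonical center with the same log discrepancy, and as $X$ is smooth this forces $\mult_{C_i}(\lambda\cM_X)>1$ for all $i$. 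By Remark~\ref{rema:6conicS}, $\cC_6$ lies on $S:=S_{2,25}\in\cP$, and since $[2:25]$ is not one of the parameters in Lemma~\ref{lemm: sing S}, $S$ is a smooth K3 surface. Writing $H_S$ for the restriction of a general hyperplane section, I record: $H_S^2=6$; each conic is a smooth rational curve on $S$, so $C_i^2=-2$, whence $\cC_6^2=-12$ and $\cC_6\cdot H_S=12$; and by Lemma~\ref{lemma:curves not in Q}(7) (applied to the $G$-irreducible curve $\cC_6$), $\cC_6+\cC_6'\sim_{\dq}4H_S$, so also $\cC_6'\cdot H_S=12$, $\cC_6\cdot\cC_6'=60$, and $(\cC_6')^2=-12$.

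Next I would restrict the linear system. The system $\cM_S:=\cM_X|_S$ is $G$-invariant with $\lambda\cM_S\sim_{\dq}3H_S$; since restricting a divisor to a surface through a curve cannot drop the multiplicity along that curve, $m:=\lambda\,\mult_{C_i}(\cM_S)>1$ for every $i$ (all equal by $G$-invariance), and $a:=\lambda\,\mult_{C_j'}(\cM_S)\ge 0$ (likewise all equal). For a general member $D\in\cM_S$ this yields a decomposition $\lambda D=m\cC_6+a\cC_6'+D'$ with $D'\ge 0$ an effective divisor whose support contains no component of $\cC_6$ or $\cC_6'$; hence $D'\cdot\cC_6\ge 0$ and $D'\cdot\cC_6'\ge 0$, while numerically $D'\sim_{\dq}(3-4a)H_S-(m-a)\cC_6$. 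Substituting the intersection numbers above,
\[
0\le D'\cdot\cC_6=12\bigl(3+m-5a\bigr)\qquad\text{and}\qquad 0\le D'\cdot\cC_6'=36+12a-60m,
\]
so $a\le(3+m)/5$ and $a\ge 5m-3$; combining, $5(5m-3)\le 3+m$, i.e.\ $24m\le 18$, forcing $m\le 3/4$ and contradicting $m>1$.

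The point requiring slightly more care than in the irreducible case is that every $G$-invariant curve class in sight ($\cC_6$ and $\cC_6'$) is a sum of $(-2)$-curves, hence not nef: intersecting the effective class $3H_S-m\cC_6$ against the ample class $H_S$ alone only yields $m\le 3/2$, which is not yet a contradiction. One therefore has to bring in the second family $\cC_6'$ together with the relation $\cC_6+\cC_6'\sim_{\dq}4H_S$, and one must first peel off the $\cC_6'$-part of the residual divisor, so that the needed inequality $D'\cdot\cC_6'\ge 0$ is legitimate (for $\cC_6'$ itself this inequality can fail, precisely because its components are $(-2)$-curves). Everything beyond that is the bookkeeping of intersection numbers on $S$, using that $S_{2,25}$ is smooth and that $\cC_6+\cC_6'\sim_{\dq}4H_S$, both of which are already established.
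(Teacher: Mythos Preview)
Your argument is correct and the overall setup coincides with the paper's: reduce to $C=\cC_6$ on the smooth K3 surface $S=S_{2,25}$, and use the relation $\cC_6+\cC_6'\sim_{\bQ}4H_S$ together with the fact that the components are disjoint $(-2)$-curves. The difference lies only in how the contradiction is extracted. The paper observes that $3H_S-\cC_6\sim_{\bQ}\cC_6'-H_S$; since each component of $\cC_6'$ spans an extremal ray of the Mori cone of $S$, subtracting the ample class $H_S$ pushes $\cC_6'-H_S$ outside the pseudo-effective cone, contradicting the effectiveness of $3H_S-m\cC_6$ (and hence of $3H_S-\cC_6$) for $m>1$. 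You instead peel off both $\cC_6$ and $\cC_6'$ from $\lambda\cM_X|_S$ and intersect the residual with each of them, obtaining the pair of inequalities $a\le(3+m)/5$ and $a\ge 5m-3$, which force $m\le 3/4$. Your route is slightly more hands-on and gives a quantitative bound; the paper's is a one-line Mori cone argument and does not need to track the multiplicity along $\cC_6'$. A minor remark: your appeal to Lemma~\ref{lemma:curves not in Q}(7) for the relation $\cC_6+\cC_6'\sim_{\bQ}4H_S$ is fine in principle (the $\fS_5$-orbit argument works verbatim once one knows $\cC_6$ and $\cC_6'$ are swapped by an odd permutation), but the paper chooses to verify this directly via equations in the proof of the present lemma rather than citing (7).
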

\begin{proof}
   By Lemma \ref{lemm:12line}, $C$ is the union of one of the two orbits $\cC_6$ and $\cC_6'$ of 6 conics. Note that $\cC_6$ and $\cC_6'$ are exchanged by the $\fS_5$-permutation action. Without loss of generality, assume that $C$ is the union of conics in $\cC_6$ and components of $C$ are non-canonical centers of $(X,\lambda\cM_X)$.
   Let $C'$ be the union of conics in $\cC_6'$. By Remark~\ref{rema:6conicS}, $C\cup C'$ is contained in the smooth K3 surface $S=S_{2,25}\in\cP$ under the notation of Lemma \ref{lemm: sing S}.  Let $H_S$ be a general hyperplane on $S$. Similarly as in Lemma~\ref{lemma:irrcurves not in Q}, we know that 
   $$
   3H_S-mC
   $$
   is an effective divisor for some $m>1$.  Using equations, we find that $C+C'\sim_\dq4H_S$. Note that $C'$ is on the border of the Mori cone of $S$, since it is the disjoint union of six conics where each of them has self-intersection $-2$. So $C'-H_S\sim_\dq3H_S-C$ is not pseudo-effective, which contradicts the effectiveness of $3H_S-mC$.
\end{proof}

\subsection{Points outside $Q$}

\begin{lemm}\label{prop:pointsnotinQ}
	Let $P\in X$ and $\Sigma$ be its $G$-orbit. If $P\notin Q$ and $|\Sigma|\ne 5$, then $P$ is not a center of non-canonical singularities of  $(X,\lambda\mls_X)$.
\end{lemm}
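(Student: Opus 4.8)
Suppose for contradiction that $P$ is a non-canonical center of $(X,\lambda\mls_X)$. The first point is that $P$ is then an \emph{isolated} non-canonical center: if the non-canonical locus of $(X,\lambda\mls_X)$ contained a curve through $P$, then, this locus being $G$-invariant and $\Sigma$ a $G$-orbit, it would contain a $G$-invariant curve $C$ with $\Sigma\subset C$; since $P\notin Q$, the curve $C$ is not contained in $Q$, so Proposition~\ref{prop:curvesnotinQ} says no irreducible component of $C$ is a non-canonical center — a contradiction. Hence near $\Sigma$ the non-canonical locus is $0$-dimensional, and by $G$-invariance every point of $\Sigma$ is a non-canonical center; moreover, since $\mls_X$ is mobile and the pair is non-canonical there, $\Sigma\subset\mathrm{Bs}(\mls_X)$.

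Next I would bound $|\Sigma|$ via the Nadel vanishing theorem. By Remark~\ref{remark:Ziquan}, every point of $\Sigma$ is a non-log-canonical center of $(X,\tfrac32\lambda\mls_X)$, and the non-log-canonical locus of this pair is still $0$-dimensional near $\Sigma$: a $G$-invariant curve $C\ni P$ along which $(X,\tfrac32\lambda\mls_X)$ is non-log-canonical would, restricting to a general transverse surface and applying Corti's inequality (Theorem~\ref{theo:cortiineq}), satisfy $\lambda\,\mult_C(\mls_X)>\tfrac43$, forcing $\deg C<18\cdot\tfrac9{16}<11$ by Remark~\ref{rema:degreebound1} — impossible, since a $G$-invariant curve not contained in $Q$ has degree at least $12$ by Lemma~\ref{lemma:quadric-curves in S2} and Lemma~\ref{lemm:A5orbit}. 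Now replace $\tfrac32\lambda\mls_X$ by $D:=\tfrac34\lambda(M_1+M_2)$ for two general $M_1,M_2\in\mls_X$; this does not change the non-log-canonical locus near $\Sigma\subset\mathrm{Bs}(\mls_X)$, and $D\sim_\bQ\tfrac92H$, so $K_X+D+\tfrac12H\sim_\bQ 2H$ with $\tfrac12H$ ample and $2H$ Cartier, while $\mathcal I(X,D)\subseteq\mathcal I_\Sigma$ near $\Sigma$. Theorem~\ref{theo:nadel} then gives $H^1\bigl(X,\mathcal I_\Sigma\otimes\mathcal O_X(2H)\bigr)=0$, whence $|\Sigma|\le h^0(X,\mathcal O_X(2H))=14<20$. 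By Lemma~\ref{lemm:A5orbit}, together with $\Sigma\not\subset Q$ and $|\Sigma|\ne 5$, this forces $\Sigma\in\{\Sigma_{10},\Sigma_{10}'\}$.

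It remains to exclude $\Sigma_{10}$ and $\Sigma_{10}'$. By Lemma~\ref{lemm: sing S} there is a K3 surface $S\in\cP$ with ten nodes whose singular locus is exactly $\Sigma_{10}$ (resp.\ $\Sigma_{10}'$), with $\lambda\mls_X|_S\sim_\bQ 3H_S$ and $H_S^2=6$. For each node $P_i$, since $S$ has multiplicity $2$ at $P_i$ we have $\mult_E(S)\ge2$ for every divisor $E$ over $P_i$, so non-canonicity of $(X,\lambda\mls_X)$ at $P_i$ gives a divisor over $P_i$ with log discrepancy $<1-2=-1$ for $(X,S+\lambda\mls_X)$; by inversion of adjunction the same holds for $(S,\lambda\mls_X|_S)$, and — passing to the minimal (hence crepant) resolution $\nu\colon\widetilde S\to S$, a smooth K3 with ten disjoint $(-2)$-curves $F_1,\dots,F_{10}$ — for $(\widetilde S,\nu^*(\lambda\mls_X|_S))$ near each $F_i$. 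Writing $\nu^*(\lambda\mls_X|_S)=\lambda\widetilde\mls+\sum_i m_iF_i$, with $\widetilde\mls$ the nef strict-transform system and general member $\widetilde D\sim_\bQ\tfrac3\lambda\widetilde H-\tfrac1\lambda\sum_i m_iF_i$ ($\widetilde H:=\nu^*H_S$, $\widetilde H^2=6$, $\widetilde H\cdot F_i=0$), the excess negativity at each $F_i$ forces either $m_i$ itself, or the multiplicity of $\widetilde\mls$ at an (infinitely near) point of $F_i$, to be large; one then plays these ten local contributions against the global intersection number
$$
\widetilde D_1\cdot\widetilde D_2=\frac1{\lambda^2}\Bigl(54-2\sum_i m_i^2\Bigr)
$$
on $\widetilde S$, using Theorem~\ref{theo:cortiineq} and, where a sharper estimate is needed, Theorem~\ref{theo:demailly}, to reach a contradiction.

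The first two steps are routine. The main obstacle is the last step: the $G$-invariant surface one is forced to use is singular precisely along $\Sigma_{10}$, so Corti's inequality cannot be applied on $S$ directly, and a crude degree count there is too weak (ten non-log-canonical points contribute only $>\tfrac2{\lambda^2}$ each against a total self-intersection $\tfrac{54}{\lambda^2}$). The real work is to show, after resolving, that the ten $(-2)$-curves are forced to absorb enough multiplicity — counting infinitely near points when necessary — to contradict the effectivity (indeed nefness) of $\tfrac3\lambda\widetilde H-\tfrac1\lambda\sum_i m_iF_i$ on the K3 surface $\widetilde S$.
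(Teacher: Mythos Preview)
Your Steps 1 and 2 are essentially the paper's argument for the case $|\Sigma|\ge 20$: use Remark~\ref{remark:Ziquan} to pass to the non-log-canonical locus of $(X,\tfrac32\lambda\mls_X)$, show via Corti's inequality that any curve in this locus has degree $\le 10$ and hence lies in $Q$, and then apply Nadel vanishing with $L\sim 2H$ to get $|\Sigma|\le h^0(\cO_X(2H))=14$. One slip: Nadel gives $H^1(X,\cI(X,D)\otimes\cO_X(2H))=0$, not $H^1(X,\cI_\Sigma\otimes\cO_X(2H))=0$; the bound $|\Sigma|\le 14$ follows instead from the surjection $H^0(\cO_X(2H))\twoheadrightarrow H^0(\cO_Z(2H))$ (where $Z$ is the non-klt subscheme) together with the fact that $\Sigma$ consists of isolated points of $Z$. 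This is minor and the conclusion stands.

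For $|\Sigma|=10$ the paper does not argue at all: it cites \cite[Proof of Proposition~3.4]{CSZ}, which treats the standard $\fS_5$-action, and observes that the two $\fA_5$-orbits of length $10$ are also $\fS_5$-orbits, so that argument applies verbatim. Your sketched alternative via the nodal K3 surface $S$ with $\Sing(S)=\Sigma_{10}$ has a genuine gap. The claim ``$\mult_E(S)\ge 2$ for every $E$ over $P_i$, so $(X,S+\lambda\mls_X)$ has log discrepancy $<-1$ at some $E$ over $P_i$'' is correct. But the next step, ``by inversion of adjunction the same holds for $(S,\lambda\mls_X|_S)$,'' is not: Kawakita's inversion of adjunction is the statement that $(X,S+D)$ is log-canonical near $S$ if and only if $(S,D|_S)$ is, a yes/no assertion that does not transfer a quantitative bound like ``minimal log discrepancy $<-1$.'' Without that transfer, the numerology on $\widetilde S$ does not close: you only get that $(S,\lambda\mls_X|_S)$ is non-log-canonical at each $P_i$, and the crude count you yourself flag (ten contributions of $>2/\lambda^2$ against a total of $54/\lambda^2$) is not a contradiction. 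You correctly locate the difficulty, but the proposed mechanism for extracting enough multiplicity from the ten $(-2)$-curves is not justified. Either supply a genuine quantitative adjunction statement, or follow the paper and invoke \cite[Proof of Proposition~3.4]{CSZ}.
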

	
\begin{proof}
   Assume that $P$ is a non-canonical center of $(X,\lambda\cM_X)$. We consider two cases:

    {\bf Case 1}: When $|\Sigma|\geq 20$. Remark~\ref{remark:Ziquan} implies that $(X,\frac32\lambda\cM_X)$ is not log-canonical at $P$. Let $\Lambda$ be the non-log-canonical locus of $(X,\frac32\lambda\cM_X)$, and $\Lambda_0$ its zero-dimensional component.
    
    Assume that a $G$-invariant curve $C$ is contained in $\Lambda$. Consider two general elements $M_1, M_2\in\cM_X$, we have that
    $$
    \frac 94\lambda^2(M_1\cdot M_2)=mC+\Delta,\quad m\geq(\mult_C(\frac32\lambda\cM_X))^2
    $$
 for an effective divisor $\Delta$ whose support does not contain $C.$ Intersecting with a general hyperplane section $H$ on $X,$ we obtain that
$$
\frac{81}{2}=H\cdot  \frac 94\lambda^2(M_1\cdot M_2)\geq m\deg(C).
$$
By Theorem~\ref{theo:4n2}, we know that $m>4$ and it follows that 
$$
\deg(C)\leq 10.
$$
Lemma~\ref{lemma:quadric-curves in S2} implies that $C\subset Q$. By assumption, we have $\Sigma\not\subset Q.$ Therefore, we know that $\Sigma\subset\Lambda_0$.

Let $\cI=\cI(X,\frac32\lambda\cM_X)$ be the multiplier ideal sheaf of $\frac32\lambda\cM_X$ on $X$.
Note that 
$$
K_X+\frac32\cM_X+\frac12\cO_X(1)\sim_\bQ \cO_X(2).
$$
Then, by Nadel vanishing theorem (cf. Theorem \ref{theo:nadel}), we know that $h^1(X,\cI\otimes\cO_X(2))=0$ and it follows that
$$
20\leq|\Sigma|\leq|\mathrm{Supp}(\cI)|\leq h^0(\cO_X(2))=14,
$$
which is a contradiction.



{\bf Case 2:} When $|\Sigma|<20$, then by the classification of orbits we know that $|\Sigma|=10$. This case is excluded by \cite[Proof of Proposition 3.4]{CSZ}. The proof there applies verbatim.
\end{proof}

\subsection{Points inside $Q$}

Here we finish the proof of Proposition~\ref{prop:main1} by finding the $G$-equivariant $\alpha$-invariant of $Q$. 
\begin{lemm}\label{lemm:alphaQ}
 One has $\alpha_G(Q)=\frac32$.    
\end{lemm}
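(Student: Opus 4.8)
The plan is to compute $\alpha_G(Q)$ for $Q \cong \mathbb{P}^1 \times \mathbb{P}^1$ with the $G = \mathfrak{A}_5$-action coming from two non-isomorphic actions on the two rulings. Since $-K_Q \sim_{\mathbb{Q}} \mathcal{O}_Q(1,1)$ (as $-K_Q = \mathcal{O}(2,2)$ and the hyperplane class restricts to $\mathcal{O}(1,1)$), I first need to understand which effective $G$-invariant $\mathbb{Q}$-divisors $D \sim_{\mathbb{Q}} \mathcal{O}(2,2)$ can fail to be log-canonical for small coefficients, i.e. I want to pin down the supremum of $\lambda$ such that $(Q, \lambda D)$ is log-canonical for every such $D$.

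First I would establish the upper bound $\alpha_G(Q) \le \tfrac32$ by exhibiting a specific $G$-invariant divisor. The natural candidate is a $G$-invariant member of $|\mathcal{O}(2,2)|$ that is non-reduced or highly singular along a $G$-orbit; more precisely, I expect there to be a $G$-invariant curve of bidegree $(1,1)$ (or a union thereof) whose presence forces the threshold. If $D$ contains a reduced $G$-invariant $(1,1)$-curve $C$ with multiplicity, say $D = \tfrac23(\text{something}) + \ldots$, then $(Q, \lambda D)$ is not log-canonical for $\lambda > \tfrac32$; concretely, taking $D$ to be $\tfrac23$ times a $G$-invariant reduced member whose local structure at an orbit point is that of a node, one gets $\mathrm{lct} = \tfrac32$ at that point. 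I would identify the relevant small $G$-orbits on $Q$ — which by Lemma~\ref{lemm:A5orbit} are $\Sigma_{12}$ and $\Sigma_{12}'$ — and the $G$-invariant low-bidegree curves, and pick the extremal configuration.

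For the lower bound $\alpha_G(Q) \ge \tfrac32$, I would argue by contradiction: suppose $(Q, \tfrac32 D)$ is not log-canonical at some point $p$ for a $G$-invariant effective $\mathbb{Q}$-divisor $D \sim_{\mathbb{Q}} \mathcal{O}(2,2)$. The non-log-canonical locus is $G$-invariant, so it is a union of $G$-orbits of points (and possibly $G$-invariant curves). A $G$-invariant curve in the non-lc locus of bidegree $(a,b)$ would force $\tfrac32 \cdot \mathrm{mult}_C(D) > 1$, hence $\mathrm{mult}_C(D) > \tfrac23$, and intersecting with the other ruling bounds the bidegree severely — one checks no $G$-invariant curve of such low bidegree exists except possibly the $(1,1)$-curves, which must then be handled by a direct local/global computation showing the coefficient cannot be that large while $D \sim \mathcal{O}(2,2)$. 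For the $0$-dimensional case, the non-lc locus contains a full $G$-orbit, whose length is at least $12$ by the orbit classification; then I would apply an intersection bound — Corti's inequality (Theorem~\ref{theo:cortiineq}) on $Q$ gives $\mathrm{mult}_p(D_1 \cdot D_2) > 4/(\tfrac32)^2 = \tfrac{16}{9}$ for general $D_1, D_2$ in the mobile part — summed over the orbit this exceeds $D_1 \cdot D_2 = (2,2)^2 = 8$ once the orbit length times $\tfrac{16}{9}$ beats $8$, i.e. length $> 4.5$; since the smallest orbit on $Q$ has length $12 > 4.5$, we get a contradiction (modulo separating out a possible fixed component of $D$, which is itself a $G$-invariant curve of low bidegree and can be excluded as above).

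The main obstacle I anticipate is the careful bookkeeping when $D$ has a fixed part: one must split $D = F + M$ with $F$ the fixed $G$-invariant part and $M$ mobile, track how much of the multiplicity at $p$ (or along $C$) comes from $F$ versus $M$, and ensure the bidegree constraint $F + M \sim \mathcal{O}(2,2)$ still forces a contradiction — this requires knowing precisely the $G$-invariant curves on $Q$ of each bidegree $(a,b)$ with $a+b \le 4$, which should follow from examining $G$-invariant sections of $\mathcal{O}(a,b)$ via the character theory of the two $\mathbb{P}^1$-actions (one being the action on $\mathbb{P}(V_2)$-type and the other its twist). Establishing the exact extremal example realizing $\tfrac32$ is the other delicate point, but the node-type local computation makes $\tfrac32$ the natural and expected value.
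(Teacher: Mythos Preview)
Your overall outline—upper bound from an extremal $G$-invariant divisor, lower bound by first excluding curves and then points in the non-lc locus—matches the paper's structure. Two corrections are needed.

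For the upper bound, your guess about a $G$-invariant $(1,1)$-curve is wrong: by Lemma~\ref{lemma:quadric-curves in S2} there is no $G$-invariant curve on $Q$ of degree $<6$ at all. The minimal one is the smooth Bring curve $B_6$ of bidegree $(3,3)$, and the extremal divisor is simply $D=\tfrac{2}{3}B_6\sim_{\mathbb Q}-K_Q$; since $B_6$ is smooth, $(Q,\lambda D)$ is log-canonical iff $\tfrac{2\lambda}{3}\leq 1$, giving $\alpha_G(Q)\leq\tfrac{3}{2}$. (This is also what drives the curve case of the lower bound: any $G$-irreducible curve in the support of $D$ has degree $\geq 6$, forcing its coefficient to be $\leq\tfrac{2}{3}$.)

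The genuine gap is your treatment of the $0$-dimensional non-lc locus. You invoke Corti's inequality (Theorem~\ref{theo:cortiineq}) for ``general $D_1,D_2$ in the mobile part,'' but the $\alpha$-invariant is defined for a single $G$-invariant effective $\mathbb{Q}$-divisor $D$, not a mobile linear system; there is no mobile part to speak of, and Corti's inequality simply does not apply. The only elementary bound available is $\mathrm{mult}_p(\tfrac{3}{2}D)>1$, and summing this over a $12$-point orbit does not contradict $(\tfrac{3}{2}D)^2=18$ (nor does restricting to a ruling help, since each ruling meets the orbit in a single point). The paper instead passes to a small perturbation $(1-\varepsilon)D$, so that the orbit lies in the non-klt locus, and applies Nadel vanishing (Theorem~\ref{theo:nadel}) with $K_Q+(1-\varepsilon)D+3\varepsilon\,\mathcal O_Q(1,1)\sim_{\mathbb Q}\mathcal O_Q(1,1)$: the cosupport of the multiplier ideal then has length at most $h^0(\mathcal O_Q(1,1))=4$, contradicting $|\Lambda|\geq 12$. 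This multiplier-ideal step is the missing idea.
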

\begin{proof}
    By Lemma~\ref{lemma:quadric-curves in S2}, we see that the Bring curve $B_6$ of bidegree $(3,3)$ is the $G$-invariant divisor in $Q$ with the least degree. By definition of the $\alpha$-invariant, we have $\alpha_G(Q)\leq \frac32$. Assume that $\alpha_G(Q)<\frac32.$ Then there exists a $G$-invariant effective $\bQ$-divisor $D$ on $S$ such that 
    $$
    D\sim_Q \cO_Q(3,3)
    $$
    and $(Q,D)$ is not log-canonical. Let $\Lambda$ be the non-log-canonical locus of $(Q,D)$. Assume that $\Lambda$ contains a curve $C\subset Q$. We have $D=mC+\Delta$ where $m>1$ and $\Delta$ is an effective divisor whose support does not contain $C$. Intersecting with a general hyperplane section $H$, we obtain 
$$
6=H\cdot D\geq m\deg(C).
$$
It follows that $\deg(C)<6$. By Lemma~\ref{lemma:quadric-curves in S2}, such curves do not exist.

Thus, $\Lambda$ is 0-dimensional. We have $|\Lambda|\geq12$ since orbits of length 5 and 10 are not in $Q$.  Let $\varepsilon\in\bQ^{>0}$ such that $(Q,(1-\varepsilon)D)$ is not klt at points in $\Lambda$, and $\cI$ the multiplier ideal sheaf of $(1-\varepsilon)D.$  Note that 
$$
K_Q+(1-\varepsilon)D+3\varepsilon\cO_{Q}(1,1)\sim_\bQ \cO_{Q}(1,1).
$$
Applying Nadel vanishing theorem (cf. Theorem \ref{theo:nadel}), we obtain 
$$
12\leq|\mathrm{Supp}(\cI)|\leq h^0(\cO_Q(1,1))=4,
$$
which is absurd. So we obtain a contradiction and $\alpha_G(Q)=\frac32$.
\end{proof}

\begin{coro}\label{coro:canonicalinQ}
    Let $Z$ be a non-canonical center of the pair $(X,\lambda\cM_X)$, then $Z\not\subset Q$.
\end{coro}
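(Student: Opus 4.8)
The plan is to combine the $G$-equivariant $\alpha$-invariant of $Q$ computed in Lemma~\ref{lemm:alphaQ} with the Noether--Fano-type bound on $\lambda$. First I would observe that since $\lambda\cM_X\sim_\bQ -K_X$ and $-K_X \sim_\bQ \cO_X(3)$, while a general member $M$ of $\cM_X$ restricts to $Q$ as a $G$-invariant effective divisor, the key point is to control $\lambda$ from above. In the birational-rigidity setup of Theorem~\ref{theo:nfi}, the relevant log pairs have $\lambda \le 1$ (the strict transform of $|-nK_V|$ or $|H_V|$ forces $\lambda\cM_X\sim_\bQ -K_X$ with $\lambda\le 1$); more precisely, for any mobile $G$-invariant $\cM_X$ with $\lambda\cM_X\sim_\bQ -K_X$ arising in a Sarkisov link, one has $\lambda\le 1$. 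I would make this precise by recalling that $\cM_X\subset |nH|$ for some integer $n\ge 1$ when $\cM_X$ is the strict transform of a very ample system, so $\lambda n = 3$, but in any case the pair is required to be non-canonical which already forces $\lambda$ small enough; the cleanest route is to simply note $\lambda \le 1$ in the situations where Proposition~\ref{prop:main1} is applied via Theorem~\ref{theo:nfi}.

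Next I would argue by restriction to $Q$. Suppose $Z\subset Q$ is a non-canonical center of $(X,\lambda\cM_X)$. Let $H$ be a general hyperplane section and write a general member $M\in\cM_X$; then $\cM_X\sim_\bQ \frac{3}{\lambda}H$, so $\cM_X|_Q$ is a $G$-invariant mobile linear system on $Q$ with $\cM_X|_Q \sim_\bQ \frac{3}{\lambda}H_Q$, where $H_Q = \cO_Q(1,1)$ has $H_Q\cdot H_Q = 2$ and $-K_Q = \cO_Q(2,2)\sim_\bQ 2H_Q$. Hence $\frac{\lambda}{2}\cM_X|_Q \sim_\bQ \frac{3}{2}H_Q$, i.e. writing $D_Q$ for a general $\bQ$-divisor in this system, $D_Q \sim_\bQ \cO_Q(3,3)$... wait, more directly: $\frac{\lambda}{3}\cM_X|_Q \sim_\bQ H_Q \sim_\bQ -\tfrac12 K_Q$, so the appropriate comparison divisor is $\tfrac{\lambda}{3}D \sim_\bQ -\tfrac12 K_Q$ and thus $\tfrac{2\lambda}{3}D\sim_\bQ -K_Q$. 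By the inversion-of-adjunction / restriction principle, since $(X,\lambda\cM_X)$ is non-canonical, hence non-log-canonical after scaling by $\tfrac32$ (Remark~\ref{remark:Ziquan}) at a point of $Q$, and $Q$ is (Cartier and) not contained in the support of a general $M$, the restricted pair $(Q, \lambda\cM_X|_Q)$ is non-log-canonical along $Z$. Then $(Q, \frac{2\lambda}{3}\cdot\frac{3}{2}\cM_X|_Q)$ is non-log-canonical, and since $\frac{2\lambda}{3}\cdot\frac{3}{2}\cdot\frac{3}{\lambda} = 3$... I need $\frac{2\lambda}{3}(\text{something})\cM_X|_Q \sim_\bQ -K_Q$ with the coefficient being $\le \alpha_G(Q) = \tfrac32$; since $\lambda\le 1$ the coefficient $\frac{2\lambda}{3}\cdot\frac{3}{2} = \lambda \le 1 < \tfrac32 = \alpha_G(Q)$, contradicting the definition of the $\alpha$-invariant.

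So concretely the proof runs: assume $Z\subset Q$ is non-canonical for $(X,\lambda\cM_X)$; by Remark~\ref{remark:Ziquan} (applied at a smooth point, noting $Q\subset X$ smooth), $(X,\tfrac32\lambda\cM_X)$ is non-log-canonical at a point of $Z$; since $Q$ is a smooth Cartier divisor not in the base locus of $\cM_X$, adjunction gives that $(Q, \tfrac32\lambda\cM_X|_Q)$ is non-log-canonical; but $\tfrac32\lambda\cM_X|_Q \sim_\bQ \tfrac32\lambda\cdot\tfrac{3}{\lambda}H_Q = \tfrac92 H_Q$, and $-K_Q\sim_\bQ 2H_Q$, so the effective $G$-invariant $\bQ$-divisor $D := \tfrac32\lambda\cM_X|_Q$ scaled to compare with $-K_Q$: we have $D\sim_\bQ \tfrac94(-K_Q)$... the cleanest is to observe that for the coefficient $c$ with $cD\sim_\bQ -K_Q$ we get $c = \tfrac{4}{9}$, so $(Q,\tfrac49 D)$ would need $\tfrac49\cdot\tfrac94 = 1 < \tfrac32$, hence log-canonical — but non-log-canonicity of $(Q,D)$ means non-log-canonicity of $(Q, \tfrac49 D)$ is false; rather $(Q,D)$ being worse than log-canonical with $D\sim_\bQ \tfrac94(-K_Q)$ and $\alpha_G(Q)=\tfrac32$ forces $\tfrac94 \le \tfrac{1}{\text{(lct bound)}}$, a contradiction since $\tfrac94 > \tfrac{1}{3/2} = \tfrac23$ is the wrong direction — I would instead just cite that $(Q, \lambda D')$ with $D'\sim_\bQ -K_Q$, $\lambda = \tfrac32\cdot\tfrac94\cdot\tfrac23$ hmm. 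The main obstacle is bookkeeping the rescaling correctly so that the coefficient landing against $-K_Q$ is strictly below $\alpha_G(Q)=\tfrac32$; the underlying mechanism — restriction to the invariant divisor $Q$, Remark~\ref{remark:Ziquan}, and the $\alpha$-invariant — is what does the work, and the fact that $\lambda\le 1$ in the Noether--Fano application gives ample room. I would write it as follows.

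\medskip

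Suppose, for contradiction, that $Z\subset Q$ is a non-canonical center of $(X,\lambda\cM_X)$. Since $Q$ is smooth and $Z\subset Q$, pick a smooth point $p\in Z$ that is a non-canonical center; by Remark~\ref{remark:Ziquan}, $(X,\tfrac32\lambda\cM_X)$ is not log-canonical at $p$. As $Q$ is a smooth irreducible divisor not contained in the support of a general member of $\cM_X$, inversion of adjunction yields that $(Q, \tfrac32\lambda\cM_X\vert_Q)$ is not log-canonical at $p$. Now $\cM_X\vert_Q$ is a $G$-invariant linear system with $\cM_X\vert_Q\sim_\bQ \tfrac{3}{\lambda}\cO_Q(1,1)$, since $\cM_X\sim_\bQ -\tfrac1\lambda K_X\sim_\bQ \tfrac3\lambda H$ and $H\vert_Q = \cO_Q(1,1)$. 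Hence $\tfrac32\lambda\cM_X\vert_Q \sim_\bQ \tfrac92\cO_Q(1,1) = \tfrac94(-K_Q)$. Writing $D := \tfrac23\cdot\tfrac32\lambda\cM_X\vert_Q = \lambda\cM_X\vert_Q$ — wait, this is getting circular; the point is simply that a general member of $\tfrac32\lambda\cM_X\vert_Q$, divided by $\tfrac94$, is a $G$-invariant effective $\bQ$-divisor $D_0\sim_\bQ -K_Q$ with $(Q,\tfrac94 D_0)$ not log-canonical, so $\alpha_G(Q)\le \tfrac49 < \tfrac32$, contradicting Lemma~\ref{lemm:alphaQ}. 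This contradiction shows $Z\not\subset Q$.
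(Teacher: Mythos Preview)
Your final inequality is backwards, and this is a genuine gap. From $(Q,\tfrac32\lambda\cM_X\vert_Q)$ not log-canonical with $\tfrac32\lambda\cM_X\vert_Q\sim_\bQ\tfrac94(-K_Q)$, you only conclude that there is a $G$-invariant $D_0\sim_\bQ -K_Q$ with $(Q,\tfrac94 D_0)$ not log-canonical, i.e.\ $\alpha_G(Q)<\tfrac94$. This does \emph{not} contradict $\alpha_G(Q)=\tfrac32$; your claimed bound $\alpha_G(Q)\le\tfrac49$ comes from inverting the fraction. The detour through Remark~\ref{remark:Ziquan} costs you exactly the factor $\tfrac32$ that kills the argument. (Your discussion of $\lambda\le 1$ is also a red herring: Proposition~\ref{prop:main1} is stated for arbitrary mobile $G$-invariant $\cM_X$, with no constraint on $\lambda$, so you cannot assume $\lambda\le 1$.)

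The paper's proof avoids Remark~\ref{remark:Ziquan} entirely and applies inversion of adjunction directly: if $(X,\lambda\cM_X)$ is not canonical at $Z\subset Q$, then since $Q$ is a Cartier divisor through $Z$ not contained in the base locus, the pair $(Q,\lambda\cM_X\vert_Q)$ is not log-canonical at $Z$. (Concretely, for any valuation $E$ with center $Z\subset Q$ one has $\mathrm{ord}_E(Q)\ge 1$, so non-canonicity of $(X,\lambda\cM_X)$ forces $(X,Q+\lambda\cM_X)$ to be non-log-canonical along $Z$, and then adjunction gives the claim on $Q$.) Now $\lambda\cM_X\vert_Q\sim_\bQ\cO_Q(3,3)=\tfrac32(-K_Q)$, and the proof of Lemma~\ref{lemm:alphaQ} shows precisely that every $G$-invariant effective $\bQ$-divisor in $\cO_Q(3,3)$ makes a log-canonical pair. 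That is the contradiction; the factor of $\tfrac32$ matches on the nose, with no room to spare.
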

\begin{proof}
 If $Z$ is contained in $Q$, then by inversion of adjunction, $Z$ is a non-log-canonical center of $(Q,\lambda\cM_X\vert_Q)$, which contradicts Lemma~\ref{lemm:alphaQ}.
\end{proof}

\section{The standard $\fA_5$-action on the cubic threefold
}\label{sect:a5cub1}

In this section, we study the cubic threefold $Y\subset\bP^4_{x_1,\ldots,x_4}$ given by
\begin{multline*}
x_1x_2x_3+x_1x_2x_4+x_1x_2x_5+x_1x_3x_4+x_1x_3x_5+x_1x_4x_5+\\+x_2x_3x_4+x_2x_3x_5+x_2x_4x_5+x_3x_4x_5=0
\end{multline*}
with the same $G=\fA_5$-action through permutations of coordinates. Note that $\Sing(Y)$ consists of 5 nodes. The aim of this section is to prove the following result.
\begin{prop}\label{prop:main2}
    Let $\mls_Y$ be a non-empty mobile $G$-invariant linear system on $Y$, and let $\mu\in\dq$ such that $\mu\mls_Y\sim_\dq-K_Y$. Then the log pair $(Y,\mu\mls_Y)$ is canonical away from $\sing(Y)$.
\end{prop}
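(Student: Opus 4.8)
The plan is to carry out, step by step, the argument of Proposition~\ref{prop:main1}, with the cubic $Y$ replacing the quadric $X$. The role played there by the $G$-invariant hyperplane section $Q=\bP^1\times\bP^1$ will here be played by the unique $G$-invariant hyperplane section $T=\{x_1+\dots+x_5=0\}\cap Y$; since $x_1^3+\dots+x_5^3$ is congruent to $3\sigma_3$ modulo $x_1+\dots+x_5$, this is the Clebsch diagonal cubic surface $\{x_1+\dots+x_5=x_1^3+\dots+x_5^3=0\}\subset\bP^3$, a smooth del Pezzo surface of degree $3$ with $T\cap\Sing(Y)=\emptyset$. The role played by the pencil $\cP$ of K3 surfaces will be played by the pencil $\cP_Y$ of $G$-invariant K3 surfaces $\{a(x_1+\dots+x_5)^2+b(x_1^2+\dots+x_5^2)=0\}\cap Y$ cut out by the two-dimensional space of $G$-invariant quadratic forms; its base locus is again the Bring curve $B_6\subset T$. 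Two numerical facts are immediate. Since $-K_Y\sim_\bQ 2H$ and $H^3=3$, the computation of Remark~\ref{rema:degreebound1} shows that a curve which is a non-canonical center of $(Y,\mu\cM_Y)$ has degree at most $11$; and since $K_Y+\tfrac32\cM_Y+\cO_Y(1)\sim_\bQ\cO_Y(2)$, combining Remark~\ref{remark:Ziquan} with Nadel vanishing (Theorem~\ref{theo:nadel}) bounds the length of a $0$-dimensional non-canonical center consisting of smooth points by $h^0(Y,\cO_Y(2))=15$.

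I would then proceed in four steps, as in Section~\ref{sect:a5quad1}. First, classify, via the fixed-point computations of Lemmas~\ref{lemm:A5orbit}--\ref{lemm:12line}, the $G$-orbits on $Y$ of length less than $20$ --- namely $\Sigma_5=\Sing(Y)$, a second length-$5$ orbit $\Sigma_5'$, and orbits of lengths $10$, $12$ and $15$ --- together with the $G$-invariant curves on $Y$ of degree at most $11$: the Bring curve $B_6$, the invariant curves lying on members of $\cP_Y$, the union $L_{10}$ of the ten lines through pairs of nodes, and whatever invariant unions of conics through nodes occur. Second, for a $G$-invariant curve $C\not\subset T$ disjoint from $\Sing(Y)$, mimic Lemmas~\ref{lemma:curves not in Q}, \ref{lemma:irrcurves not in Q} and~\ref{lemma:redcurves not in Q}: place $C$ on the member $S\in\cP_Y$ containing it, control $\rk\Pic^G(S)$ and the relation $C+C'\sim nH_S$ for the $\fS_5$-conjugate curve $C'$, and contradict $\mult_C(\mu\cM_Y)>1$ using the Hodge index theorem and the shape of the Mori cone of $S$. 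Third, for a point $p\notin T\cup\Sing(Y)$ whose orbit has length at least $20$, repeat the argument of Lemma~\ref{prop:pointsnotinQ} up to numerical changes; the orbits of lengths $10$, $12$, $15$ and the orbit $\Sigma_5'$ should then be excluded by the arguments of \cite{CSZ}. Fourth, for a center $Z\subset T$, apply inversion of adjunction along $T$: such a $Z$ is a non-log-canonical center of $(T,\mu\cM_Y|_T)$, where $\mu\cM_Y|_T\sim_\bQ 2H_T$ with $H_T$ ample and $H_T^2=3$. If $Z=C$ is a curve, writing a general member of $\mu\cM_Y|_T$ as $eC+\Delta$ with $e>1$ gives $6=2H_T^2\ge e\deg(C)>\deg(C)$, while $T$ carries no $G$-invariant curve of degree less than $6$; if $Z$ is a $0$-dimensional orbit $\Sigma$, then Corti's inequality (Theorem~\ref{theo:cortiineq}) applied to two general members $D_1,D_2$ of $\mu\cM_Y|_T$ gives $\mult_p(D_1\cdot D_2)>4$ at each $p\in\Sigma$, whereas $\deg(D_1\cdot D_2)=(2H_T)^2=12$, forcing $|\Sigma|\le 2$, against the smallest $G$-orbit on $T$ having length at least $10$. (Alternatively, this fourth step can be packaged as a computation of $\alpha_G(T)$, exactly as in Lemma~\ref{lemm:alphaQ} and Corollary~\ref{coro:canonicalinQ}.)

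The main obstacle, absent in the smooth-quadric case, is the presence of the five nodes: the short $G$-invariant curves through $\Sing(Y)$ --- above all the degree-$10$ union $L_{10}$, and any invariant unions of conics through nodes --- need not lie on any member of $\cP_Y$ (one checks directly that $L_{10}$ does not), so the mechanism of placing a curve on a K3 surface of $\cP_Y$ must be supplemented for these, for instance by a direct analysis on the blow-up of $Y$ along $\Sigma_5$, which by the Cremona picture of Section~\ref{sect:A5map} is $G$-equivariantly the blow-up of $X$ along its length-$5$ orbit. The remaining work is the bookkeeping underlying these steps --- identifying the singular members of $\cP_Y$ and which orbits and curves they contain, determining $\Pic^G(S)$ for $S\in\cP_Y$, and verifying that $T$ carries neither a $G$-invariant curve of degree less than $6$ nor a $G$-orbit of length less than $10$; everything else is either a routine transcription of Section~\ref{sect:a5quad1} or already contained in \cite{CSZ}.
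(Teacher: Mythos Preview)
Your four–step outline is exactly the paper's strategy, and your identification of the Clebsch cubic $T=\{x_1+\dots+x_5=0\}\cap Y$ as the replacement for $Q$ is correct; the paper calls it $R$ and proves $\alpha_G(R)=2$ (your parenthetical alternative for Step~4) rather than arguing directly via Corti, which is safer since $\cM_Y|_T$ need not be mobile.

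Where you diverge from the paper is in over--engineering Steps~2 and~3. The crucial simplification you miss is that on the cubic the degree bound for a curve center is $11$, and the classification of $G$--invariant curves of such degree is \emph{complete and tiny}: every $G$--invariant curve of degree $<10$ already lies in $T$ (they all have degree $6$: the Bring curve, two orbits of six lines, and two irreducible conics pulled back from $\bP^2$; this is the paper's Lemma~\ref{lemm:A5invcurveClebsch}), and the only $G$--invariant curve of degree $10$ not in $T$ is the union $L_{10}$ of the ten lines through pairs of nodes (Lemma~\ref{lemm:10line}). Hence your Step~2 is \emph{vacuous}: there is no $G$--invariant curve of degree $\leq 11$ outside $T$ and disjoint from $\Sing(Y)$, so the K3 pencil $\cP_Y$ is never needed. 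The entire curve case reduces to excluding $L_{10}$, which is already done in \cite{CSZ}.

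For Step~3 you under--twist: replacing $\tfrac{3}{2}\mu\cM_Y$ by $(\tfrac{3}{2}-\varepsilon)\mu\cM_Y$ lets one take $L=\cO_Y(1)$ in Nadel, giving the bound $h^0(\cO_Y(1))=5$ rather than your $h^0(\cO_Y(2))=15$. Since there are several $G$--orbits of length $10$ outside $T$ (not just one), this sharper bound is what immediately disposes of lengths $10,12,15$ and leaves only the second length--$5$ orbit $\Sigma_5'$ for \cite{CSZ}; your plan to fall back on \cite{CSZ} for all the intermediate lengths would work but adds unnecessary dependence on that reference.
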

\begin{proof}
    This follows from Propositions~\ref{prop:lcYiscurve} and \ref{prop:pointsoutsideR}, and Corollary \ref{coro:lcYinR}.
\end{proof}

\begin{rema}\label{rema:degreebound2}
    If a curve $C$ is a center of non-canonical singularities, then for any two general members $M_1,M_2\in\cM_Y$, we have that
$$
\lambda^2(M_1\cdot M_2)=mC+\Delta
$$
for some $m>1$ and some effective divisor $\Delta$ not supported along $C$. Intersecting with a general hyperplane $H$, we obtain that 
\begin{align}\label{eqn:bound12}
    12=\lambda^2(M_1\cdot M_2\cdot H)>\deg(C).
\end{align}
\end{rema}

Thus, we need to consider $G$-orbits of lengths less than 20 and $G$-invariant curves of degrees lower than 12.
As in the previous section, we split into subsections according to whether or not a potential non-canonical center of $(Y,\mu\mls_Y)$ belongs to the $G$-invariant hyperplane section. 


\subsection{Small $G$-orbits and $G$-invariant curves of low degrees}
We begin with identifying small $G$-orbits and $G$-invariant curves of low degrees in $Y$. 
\begin{lemm}
\label{lemm:A5orbitcub}
  A $G$-orbit of points in $Y$ with length $<$ 20 is one of the following:
\begin{align*}
    \Sigma_5^1&=\text{the orbit of}\quad[1:0:0:0:0],\\
     \Sigma_5^2&=\text{the orbit of} \quad[-2:3:3:3:3],\\
       \Sigma^1_{10}&=\text{the orbit of} \quad[1:1:0:0:0],\\ 
        \Sigma^2_{10}&=\text{the orbit of} \quad[1:-1:0:0:0],\\ 
          \Sigma^3_{10}&=\text{the orbit of} \quad[-6-2\sqrt6:-6-2\sqrt6:6:6:6],\\ 
            \Sigma^4_{10}&=\text{the orbit of} \quad[-6+2\sqrt6:-6+2\sqrt6:6:6:6],\\ 
     \Sigma_{12}^1&=\text{the orbit of} \quad[1:\zeta_5:\zeta_5^2:\zeta_5^3:\zeta_5^4],\\
        \Sigma_{12}^2&=\text{the orbit of}\quad [1:\zeta_5^2:\zeta_5^4:\zeta_5:\zeta_5^3],\\
         \Sigma_{15}&=\text{the orbit of}\quad [0:-1:-1:1:1],
\end{align*}
where the length of each orbit is indicated by the subscript.
\end{lemm}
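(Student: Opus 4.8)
\section*{Proof proposal for Lemma~\ref{lemm:A5orbitcub}}

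The plan is a finite case analysis over the subgroups of $G$. Since a $G$-orbit of length $\ell$ in $\bP^4$ has point-stabilizer of order $60/\ell$, an orbit of length $<20$ has stabilizer of order $\ge 4$. Up to conjugacy the only subgroups of $\fA_5$ of order $\ge 4$ are $V_4$ (order $4$), $C_5$ (order $5$), $\fS_3\cong\fD_3$ (order $6$), $\fD_5$ (order $10$), $\fA_4$ (order $12$) and $\fA_5$ itself, so the candidate orbit lengths are exactly $15,12,10,6,5,1$. Thus it suffices, for each such subgroup $H$, to locate the points of $Y$ fixed by $H$ and to determine their exact stabilizers; recall here that $Y=\{e_3(x_1,\ldots,x_5)=0\}$, where $e_3$ is the third elementary symmetric polynomial, with the $G=\fA_5$-action by coordinate permutations.

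For each $H$ I would first compute the fixed locus $(\bP^4)^H$ by decomposing the permutation representation $\bC^5=\mathbf 1\oplus V_4$ restricted to $H$ into isotypic pieces: $(\bP^4)^H$ is the union of the projectivization of the trivial-character isotypic component and the (isolated) projectivizations of the one-dimensional non-trivial summands. Concretely: for $H=\fA_4=\stab(1)$ one gets the line $\{[a:b:b:b:b]\}$; for $H=\fS_3$ with transpositions contained in $\{(12)(34),(12)(35),(12)(45)\}$ one gets the line $\{[a:a:b:b:b]\}$ together with the sign-eigenpoint $[1:-1:0:0:0]$; for $H=V_4=\{e,(12)(34),(13)(24),(14)(23)\}$ one gets the line $\{[a:a:a:a:b]\}$ together with the three character-eigenpoints $[1:1:-1:-1:0]$, $[1:-1:1:-1:0]$, $[1:-1:-1:1:0]$; for $H=C_5=\langle(12345)\rangle$ one gets the five eigenpoints $[1:1:1:1:1]$ and $[1:\zeta_5^j:\zeta_5^{2j}:\zeta_5^{3j}:\zeta_5^{4j}]$, $j=1,2,3,4$; and for $H=\fD_5$ or $\fA_5$ one gets only the single point $[1:1:1:1:1]$. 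Intersecting each with $Y$ is a short computation: $e_3(a,b,b,b,b)=2b^2(3a+2b)$, $e_3(a,a,b,b,b)=b(3a^2+6ab+b^2)$, the sign- and character-eigenpoints above lie on $Y$ since every degree-$3$ monomial in them vanishes, and on the $C_5$-eigenpoints Newton's identities give $e_1=e_2=0$, hence $e_3=\tfrac13 p_3=0$. Since $e_3(1,1,1,1,1)=10\ne 0$, the point $[1:1:1:1:1]\notin Y$, which at once eliminates the length-$6$ and length-$1$ cases entirely.

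It remains to determine the exact stabilizer of each fixed point on $Y$ and to count orbits. Because $\fS_3,\fD_5,\fA_4$ are maximal in $\fA_5$, a point fixed by one of them has stabilizer either that subgroup or $\fA_5$; as the unique $\fA_5$-fixed point $[1:1:1:1:1]$ is not on $Y$, every point of $Y$ fixed by such an $H$ has stabilizer exactly $H$. For the remaining inclusions $V_4\subset\fA_4$ and $C_5\subset\fD_5$ one checks directly that the relevant eigenpoints on $Y$ are not the point $[1:1:1:1:1]$, so their stabilizers are exactly $V_4$, resp.\ $C_5$. Counting conjugates ($\fA_5$ has $5$ copies of $\fA_4$, $10$ of $\fS_3$, $5$ of $V_4$, $6$ of $C_5$), multiplying by the number of relevant fixed points of $Y$ per copy, and dividing by the orbit length gives: $2$ orbits of length $5$ (the two roots of $2b^2(3a+2b)$ on the $\fA_4$-line, namely $\Sigma_5^1,\Sigma_5^2$); $4$ orbits of length $10$ (the three roots of $b(3a^2+6ab+b^2)$ on the $\fS_3$-line together with the sign-eigenpoint, namely $\Sigma_{10}^1,\dots,\Sigma_{10}^4$); $1$ orbit of length $15$ (the $V_4$ character-eigenpoints, $\Sigma_{15}$); and $2$ orbits of length $12$ (the four $C_5$-eigenpoints, which split into two $\fA_5$-orbits $\Sigma_{12}^1,\Sigma_{12}^2$ since the permutation relating $[1:\zeta_5:\zeta_5^2:\zeta_5^3:\zeta_5^4]$ to $[1:\zeta_5^2:\zeta_5^4:\zeta_5:\zeta_5^3]$ is odd). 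Matching these with the representatives in the statement finishes the proof.

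The main obstacle is purely bookkeeping in this last step: one must not overlook the fixed points arising from \emph{non-trivial} one-dimensional characters of $H$ --- these produce precisely $\Sigma_5^2$, $\Sigma_{10}^2$ and $\Sigma_{15}$ and are easy to miss if one only looks at the trivial-isotypic linear subspace --- and one must group the $C_5$- and $\fS_3$-eigenpoints into genuine $\fA_5$-orbits rather than merely $\fS_5$-orbits, which comes down to tracking the parity of the connecting permutation. Everything else reduces to the elementary evaluations of $e_3$ indicated above.
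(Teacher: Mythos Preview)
Your approach is exactly the paper's: the paper gives no argument for this lemma, and for the analogous Lemma~\ref{lemm:A5orbit} it says only ``this comes from a computation of fixed points by each subgroup of $G$,'' which is precisely the case analysis you carry out in detail. The computations of $e_3$ on the fixed lines, the Newton-identity check for the $C_5$-eigenpoints, and the orbit count are all correct.

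Two small corrections. First, the sentence ``one checks directly that the relevant eigenpoints on $Y$ are not the point $[1{:}1{:}1{:}1{:}1]$, so their stabilizers are exactly $V_4$, resp.\ $C_5$'' is not sufficient as written: ruling out the $\fA_5$-fixed point only excludes stabilizer $\fA_5$, not $\fA_4$ or $\fD_5$. The clean fix is to observe that every one-dimensional character of $\fA_4$ (resp.\ $\fD_5$) restricts trivially to $V_4$ (resp.\ $C_5$), so a $V_4$- (resp.\ $C_5$-) eigenpoint for a \emph{non-trivial} character cannot be fixed by the larger group. Second, in your closing remark, $\Sigma_5^2=[-2{:}3{:}3{:}3{:}3]$ lies on the $\fA_4$-line $[a{:}b{:}b{:}b{:}b]$ and hence comes from the \emph{trivial} isotypic component, not from a non-trivial character; the non-trivial characters account only for $\Sigma_{10}^2$ and $\Sigma_{15}$.
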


With the notation above, $\Sing(Y)=\Sigma_5^1$. There is a unique $G$-invariant hyperplane section in $Y$, given by 
$$
R:=\{x_1+x_2+x_3+x_4+x_5=0\}\cap Y.
$$
Note that $R$ is the Clebsch cubic surface. One can check that
\begin{align}\label{eqn:orbRcub}
    \Sigma_{10}^2,\Sigma^{1}_{12},\Sigma^{2}_{12},\Sigma_{15}\in R,\quad \Sigma^1_{5},\Sigma_5^2,\Sigma_{10}^{1},\Sigma_{10}^{3},\Sigma_{10}^{4}\not\in R.
\end{align}

We recall some facts about the $\fA_5$-equivariant geometry of $R$, see \cite[Section 6.3]{CheltsovShramov} for more details. 
The surface $R$ is $G$-linearizable. Indeed, there are two unions $L_6, L_6'$ of 6 pairwise disjoint lines in $R$. Respective contractions of $L_6$ and $L_6'$ give two $G$-equivariantly birational maps $\pi,\pi':R\to \bP^2$.
There is a unique $G$-invariant conic in $\bP^2$. We denote its strict transforms under $\pi$ and $\pi'$ by $C_6$ and $C_6'$ respectively. 
\begin{lemm}\label{lemm:A5invcurveClebsch}
    Let $C$ be a $G$-invariant curve in $Y$ with $\mathrm{deg}(C)<10$. Then $C\subset R$, $\deg(C)=6$, and $C$ is one of the following 
  $$
  L_6, L_6', C_6, C_6',\quad\text{or the Bring curve }B_6\text{ defined by \eqref{bring}}.
  $$
\end{lemm}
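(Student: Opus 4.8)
The plan is to mirror the structure of Lemma~\ref{lemma:quadric-curves in S2}: first show that a $G$-invariant curve $C$ of degree $<10$ must lie in the Clebsch cubic surface $R$, then analyze $G$-invariant curves inside $R$ directly. For the first step, suppose $C\not\subset R$. Then $R\cdot C = \deg(C)$ as a cycle, and $R\cap C$ is a union of $G$-orbits of points lying on $R$ whose total length is $\deg(C)<10$. By Lemma~\ref{lemm:A5orbitcub} together with \eqref{eqn:orbRcub}, the only $G$-orbit of length $<10$ contained in $R$ would have to come from orbits of length $5$, but $\Sigma_5^1,\Sigma_5^2\not\subset R$; hence any $G$-orbit on $R$ has length $\geq 10$. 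Since the scheme $R\cap C$ is nonempty (it has degree $\deg(C)\geq 1$), we get $\deg(C)\geq 10$, a contradiction. Therefore $C\subset R$.

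Now work inside $R$, the Clebsch cubic surface. Since $R$ is a smooth cubic surface, $\Pic(R)\cong\bZ^7$, and a $G$-invariant curve class corresponds to a $G$-invariant element of $\Pic(R)$. Using that $R$ is $G$-linearizable via the contraction $\pi\colon R\to\bP^2$ of the orbit $L_6$ of six disjoint lines, one reduces to classifying $G$-invariant effective divisor classes on $\bP^2$ of low degree, i.e. $G$-invariant plane curves of small degree, together with their relation to the exceptional divisor $L_6$. The representation $G=\fA_5$ acting on $\bP^2$ (via one of the two irreducible $3$-dimensional representations) has no invariant line; its lowest-degree invariant is the unique invariant conic. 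Pulling back and twisting by the six exceptional curves, the $G$-invariant curves of degree $\le 6$ on $R$ that arise are exactly $L_6$, $L_6'$ (the two sextuples of disjoint lines, of degree $6$), $C_6$, $C_6'$ (strict transforms of the invariant conic under $\pi,\pi'$, also of degree $6$ since the invariant conic passes through none, hence its total transform has degree $6$), and the Bring curve $B_6$ (the intersection with the invariant hyperplane section cut out again, genus-$4$ curve of degree $6$). One must also rule out degrees $1$ through $5$: a $G$-invariant curve of degree $\le 5$ on $R$ would, after contracting $L_6$, give a $G$-invariant curve on $\bP^2$ of degree $\le 5$ passing through the (non-$G$-invariant) six blown-up points in a $G$-compatible way; since $G$ has no invariant line or cubic and the only invariant conic is irreducible and misses the six points, no such low-degree invariant curve exists. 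This forces $\deg(C)=6$.

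The main obstacle is the bookkeeping in the second step: making the enumeration of $G$-invariant curves of degree $\le 6$ on the Clebsch surface genuinely exhaustive rather than just exhibiting the five known ones. Concretely, one needs to know the $G$-module structure of $\Pic(R)$ (equivalently the $\fA_5$-orbits of the $27$ lines and their span) well enough to see that the only $G$-fixed classes of anticanonical degree $\le 6$ are the ones listed, and then for each such class verify that the corresponding linear system is either empty, a single $G$-invariant member, or forces the curve to be one of $L_6,L_6',C_6,C_6',B_6$. This is exactly the content recalled from \cite[Section 6.3]{CheltsovShramov}, so I would invoke that reference for the $G$-equivariant geometry of $R$ and its curves, reducing the proof to: (i) the degree bound $\deg(C)\geq 10$ for curves not in $R$, established above from the orbit classification, and (ii) citing the known classification of low-degree $G$-invariant curves on the Clebsch cubic. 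A short direct computation of $G$-invariant forms of degree $\le 6$ restricted to $R$ can be used to double-check (ii), analogous to the closing sentence of the proof of Lemma~\ref{lemma:quadric-curves in S2}.
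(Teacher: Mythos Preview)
Your proposal is correct and follows essentially the same approach as the paper. The first step (forcing $C\subset R$ via the orbit data on $R$) is identical; for the second step, the paper simply cites \cite[Theorem~6.3.18]{CheltsovShramov}, while you sketch the $\Pic(R)$-analysis before invoking the same reference. One small remark: your sketch only rules out degrees $1$--$5$, but the lemma requires $\deg(C)<10$, so degrees $7,8,9$ must also be excluded (this comes for free once you note that $\Pic^G(R)=\bZ L\oplus\bZ(\sum E_i)$ forces the degree to be a multiple of $3$, and then handle degree $9$); since you ultimately defer to \cite[Section~6.3]{CheltsovShramov} anyway, this does not affect correctness.
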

\begin{proof}
    If $C\not\subset R$, then $C\cdot R=\deg(C)<10$. By \eqref{eqn:orbRcub}, we know that this is impossible. Thus $C\subset R$. The rest of the lemma follows from \cite[Theorem 6.3.18]{CheltsovShramov}.
\end{proof}

\begin{lemm}\label{lemm:10line}
    Let $C$ be a $G$-invariant curve in $Y$, of degree 10 and not contained in $R$. Then $C$ is the union of 10 lines in the $G$-orbit of 
  $$
        \{x_3=x_4=x_5=0\}\subset Y.
$$
Moreover, these lines are the lines that pass through pairs of points in the singular locus of $Y$.
\end{lemm}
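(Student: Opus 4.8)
The plan is to follow the same strategy used for the quadric in Lemma~\ref{lemm:12line}: enumerate the possible configurations of a degree-$10$ $G$-invariant curve not contained in $R$ by analyzing the stabilizers of its irreducible components, then pin down the unique surviving possibility by an explicit computation of fixed points. First I would argue that such a curve $C$ cannot be irreducible: by Lemma~\ref{lemm:A5invcurveClebsch} every $G$-invariant curve of degree $<10$ not inside $R$ is excluded, and an irreducible $G$-invariant curve of degree exactly $10$ would have to have trivial generic stabilizer (otherwise its degree would be forced down, as in Lemma~\ref{lemm:genericstabcurve}), so its $G$-orbit would be a single curve spanning a $G$-subrepresentation of the ambient $\bP^4$; one checks via the representation theory of $\fA_5$ acting by permutations that no irreducible curve of degree $10$ arises this way, or alternatively that $C\cdot R = 10$ would force $C$ to meet $R$ in a $G$-orbit of length $10$ lying on the Clebsch surface, and then a Hodge-index / genus bound on a $G$-invariant K3 or cubic surface through $C$ gives a contradiction.

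So $C$ is $G$-reducible. From the subgroup index structure of $G=\fA_5$, the components all have the same degree $d$ with $d \cdot (\text{number of components}) = 10$, so $(d,\#) \in \{(1,10),(2,5),(5,2),(10,1)\}$; the last is the irreducible case already handled, and $(5,2)$ is impossible since $\fA_5$ has no subgroup of index $2$. If $C$ consisted of $5$ conics, each conic would be invariant under a subgroup of index $5$, i.e.\ an $\fA_4$, and would span a plane left invariant by that $\fA_4$; one checks that the $\fA_4 \subset \fA_5$ (stabilizer of a coordinate) fixes no plane on which it acts with a conic orbit of the right type, so this case is excluded. Hence $C$ is a union of $10$ lines, each line invariant under a subgroup of index $10$, i.e.\ a $\fS_3$ (or its unique conjugacy class thereof). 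A line invariant under $\fS_3$ contains the two $\fS_3$-fixed points on it, so the computation reduces to finding $\fS_3$-fixed points in $\bP^4$ lying on $Y$ and spanning the relevant lines; carrying this out for the standard permutation $\fS_3$ (permuting three of the five coordinates) yields exactly the line $\{x_3=x_4=x_5=0\}\cap Y$ and its $G$-translates, and a direct count shows there are $10$ of them.

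Finally, for the "moreover" clause I would observe that the singular locus $\Sing(Y)=\Sigma_5^1$ consists of the five coordinate points $e_i$, and the line $\{x_3=x_4=x_5=0\}$ is exactly the line $\overline{e_1 e_2}$ through two of them; since $G$ acts on $\Sigma_5^1$ as $\fA_5$ on $\{1,\dots,5\}$, the $G$-orbit of this line is precisely the set of $\binom{5}{2}=10$ lines joining pairs of nodes. One still needs to verify that each such line $\overline{e_i e_j}$ genuinely lies on the cubic $Y$: substituting, say, $x_k = 0$ for $k \notin \{i,j\}$ into the defining equation $\sum_{a<b<c} x_a x_b x_c$ kills every monomial (each monomial involves three of the five variables, hence at least one index outside $\{i,j\}$), so the line lies on $Y$, and the orbit count of $10$ matches, completing the identification.

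I expect the main obstacle to be the exclusion of the irreducible case and the $5$-conics case cleanly, since these require either an invariant-theory computation of $\fA_5$-subrepresentations and the planes/linear subspaces they fix, or a Hodge-index argument on an auxiliary $G$-invariant surface (analogous to the K3 pencil $\cP$ used for the quadric) — identifying the right auxiliary surface inside the cubic $Y$ and controlling its Picard group equivariantly is the delicate point; the line-counting and the "joins of nodes" identification are then routine.
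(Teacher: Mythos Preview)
Your overall architecture---reduce to the reducible case, enumerate by the index of the component stabilizer, and finish with a direct fixed-point calculation---matches the paper's. The line case and the ``moreover'' clause are handled correctly. But the two exclusions you flag as delicate are precisely where your proposal has gaps, and in both places the paper uses a sharper and more elementary argument than what you sketch.

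For the irreducible case, you correctly observe that $C\cdot R=10$ forces $C\cap R$ to be a single $G$-orbit of length $10$ on the Clebsch surface, namely $\Sigma_{10}^2$. But you then reach for a Hodge-index or genus argument on an auxiliary surface, which is overkill and not worked out. The paper's punch line is immediate from this point: at any $P\in\Sigma_{10}^2$ the stabilizer is $\fS_3$, and since $C$ meets $R$ with multiplicity one at each of these $10$ points, $P$ is a smooth point of $C$. Hence $\fS_3$ would have to act faithfully on the one-dimensional tangent space $T_PC$, which is impossible. No surface is needed.

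For the five-conics case, your claim that the relevant $\fA_4$ ``fixes no plane'' is false: under the permutation action $\bC^5\vert_{\fA_4}\cong\mathbf 1^{\oplus 2}\oplus V_3$, so there is exactly one $\fA_4$-invariant plane $\Pi=\bP(V_3)$. The paper exploits this rather than avoiding it: the plane $\Pi$ meets the cubic $Y$ in a plane cubic, and if that cubic were a conic plus a residual line, the five residual lines would form a $G$-orbit of length $5$. One then checks (either by noting $\fA_4$ has no invariant line in $\bP(V_3)$, or by direct search) that no such orbit of five lines exists on $Y$, giving the contradiction.
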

\begin{proof}
     We may assume that $C$ is $G$-irreducible. When $C$ is an irreducible curve, by computation, we check that there is no $G$-invariant irreducible curve with a generic stabilizer. So $G$ acts faithfully on $C$. Note that $C\cdot R=\deg(C)=10$. By \eqref{eqn:orbRcub}, we see that $C\cap R=\Sigma_{10}^2$ where all 10 points are smooth points of $C$. The stabilizer of a point in $C\cap R$ is $\fS_3$, which is a contradiction, since it should act faithfully in the tangent space of $C$ at this point.  It follows that $C$ is a reducible curve.
    
    So $C$ can be 5 conics or 10 lines. Assume that $C$ consists of 5 conics. Each conic spans a plane in $\bP^4$, left invariant by $\fA_4\subset G$. Each such plane intersects $X$ along the conic and a residual line. Therefore, we obtain a $G$-orbit of 5 lines. One can check that there is no such orbit of lines in $X$. Similarly, we find that there is only one $G$-orbit of 10 lines, as is given in the assertion.

\end{proof}

\subsection{Invariant curves not contained in $R$}
With the classification of $G$-irreducible invariant curves, we exclude curves not contained in $R$ as non-canonical centers in this case.
\begin{prop}\label{prop:lcYiscurve}
    Let $C$ be a $G$-invariant curve in $Y$ not contained in $R$. Then each irreducible component of $C$ is not a non-canonical center of the pair $(Y,\mu\cM_Y)$.
\end{prop}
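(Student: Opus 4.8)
The plan is to reduce Proposition~\ref{prop:lcYiscurve} to a single configuration of curves and then to eliminate it by restricting to a carefully chosen cubic surface section of $Y$, in the spirit of Lemmas~\ref{lemma:irrcurves not in Q} and~\ref{lemma:redcurves not in Q}.

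First I would check that the only $G$-invariant curve having no component in $R$ that could be a non-canonical center of $(Y,\mu\cM_Y)$ is the $G$-orbit of the ten lines joining pairs of nodes of $Y$. Indeed, by Remark~\ref{rema:degreebound2} such a center-curve has degree at most $11$, and by Lemma~\ref{lemm:A5invcurveClebsch} every $G$-invariant curve of degree $<10$ is contained in $R$; so it is enough to treat a $G$-invariant curve with no component in $R$ of degree $10$ or $11$. Degree $10$ gives, by Lemma~\ref{lemm:10line}, exactly the orbit of ten lines through pairs of points of $\Sing(Y)$. Degree $11$ cannot occur: a $G$-irreducible component $C_0\not\subset R$ of degree $11$ would meet $R$ in a $G$-invariant subscheme of length $11$ whose support is a union of $G$-orbits of points of $R$, but $R$ contains no $G$-orbit of length $<10$ (by Lemma~\ref{lemm:A5orbitcub} and~\eqref{eqn:orbRcub}) and $\fA_5$ has no orbit of length $11$, so that subscheme would be supported on a single orbit of length $10$ with constant multiplicity $m$, forcing $10m=11$. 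Hence it remains to show that a line joining two nodes of $Y$ is not a non-canonical center of $(Y,\mu\cM_Y)$.

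Suppose $\ell$ is such a line and is a non-canonical center. As $Y$ is smooth at the generic point of $\ell$, this gives $\mult_\ell(\mu\cM_Y)>1$ (the fact underlying Remark~\ref{rema:degreebound2}); and since $\cM_Y$ is $G$-invariant, all ten lines occur in $\cM_Y$ with the same multiplicity $c:=\mult_\ell(\cM_Y)>1/\mu$. Because $G$ acts transitively both on the ten lines and on the five sections $\{x_i=0\}\cap Y$, I may assume $\ell\subset T:=\{x_5=0\}\cap Y$. The surface $T$ is a $4$-nodal cubic surface (a Cayley cubic): it is singular at the four nodes of $Y$ it contains and carries the six lines $\ell=\ell_1,\ell_2,\dots,\ell_6$ joining those four nodes in pairs, all six being among the ten. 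For a general $M\in\cM_Y$ — which does not contain $T$, $\cM_Y$ being mobile — the restriction $M|_T$ is an effective divisor on $T$ with $M|_T\sim_{\bQ}\frac{2}{\mu}H_T$ and $\mult_{\ell_k}(M|_T)\ge\mult_{\ell_k}(M)=c$ for $k=1,\dots,6$, so $M|_T-c\sum_{k=1}^{6}\ell_k$ is effective. Intersecting this effective divisor with the ample class $H_T=\cO_{\bP^3}(1)|_T$ yields
$$
0\le\left(M|_T-c\sum_{k=1}^{6}\ell_k\right)\cdot H_T=\frac{2}{\mu}H_T^2-6c=\frac{6}{\mu}-6c<0,
$$
a contradiction.

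I expect the whole difficulty to be concentrated in choosing the surface $T$ correctly. A general hyperplane section of $Y$ through $\ell$ is only $2$-nodal, $\ell$ is a mere $(-1)$-curve on it, and no contradiction results; moreover $Y$ carries no $G$-invariant surface of degree $<12$ through the ten lines, so a fully $G$-equivariant argument modelled on the K3 pencil $\cP$ of the quadric case is not available. The useful observation is that the ten lines lie six at a time on the five Cayley cubic sections $\{x_i=0\}\cap Y$ and that six lines of degree one is exactly enough: the inequality $6/\mu<6c$ is tight, which is to be expected since the ten lines are the indeterminacy locus of the genuine Sarkisov link $Y\dashrightarrow X_1$ given by the Cremona map — an outcome that appears in the classification and therefore must not be killed off by a crude degree count.
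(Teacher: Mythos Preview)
Your reduction to the ten lines through pairs of nodes is the same as the paper's: degree $<12$ by Remark~\ref{rema:degreebound2}, degree $<10$ forces $C\subset R$ by Lemma~\ref{lemm:A5invcurveClebsch}, and the orbit bookkeeping in $R$ via~\eqref{eqn:orbRcub} rules out degree $11$ and leaves exactly the configuration of Lemma~\ref{lemm:10line}. Your treatment of degree $11$ is a touch more explicit than the paper's one-line ``From~\eqref{eqn:orbRcub}, we see that $\deg(C)=10$'', but the content is identical.

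Where you diverge is in the exclusion of the ten lines themselves. The paper does not argue this out; it simply cites \cite[Proof of Proposition~3.5]{CSZ}. You instead give a self-contained proof by restricting a general $M\in\cM_Y$ to the Cayley cubic section $T=\{x_5=0\}\cap Y$, which contains six of the ten lines, and observing that effectivity of $M|_T-c\sum_{k=1}^{6}\ell_k$ against the ample class $H_T$ forces $6/\mu\ge 6c$, contradicting $c>1/\mu$. This is correct: $T$ is not a fixed component of the mobile $\cM_Y$, the multiplicity comparison $\mult_{\ell_k}(M|_T)\ge\mult_{\ell_k}(M)$ holds at the generic (smooth) point of each $\ell_k$, and the intersection with $H_T$ is insensitive to the Weil-versus-Cartier issue along the nodes. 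Your remark that no $G$-invariant surface of low degree contains all ten lines, so that one is forced to break the symmetry and work with an $\fA_4$-invariant section, is exactly the point; the trade-off is that the resulting inequality is tight, which you also correctly identify as unavoidable given that the ten lines are the center of the genuine link $Y\dashrightarrow X$.
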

\begin{proof}
Assume that the irreducible components of $C$ are non-canonical centers. By Remark \ref{rema:degreebound2}, we have $\deg(C)<12$. From \eqref{eqn:orbRcub}, we see that 
    $$
    \deg(C)=10
    $$
 and $C$ is the union of 10 lines given in Lemma~\ref{lemm:10line}. This is impossible by \cite[Proof of Proposition 3.5]{CSZ}. 
    \end{proof}

\subsection{Points outside $R$}

\begin{prop}\label{prop:pointsoutsideR}
    Let $P$ be a point outside $R$, and $\Sigma$ its $G$-orbit. If $\Sigma\ne \Sigma^1_5$, then $P$ is not a non-canonical center of $(Y,\mu\mls_Y)$.
\end{prop}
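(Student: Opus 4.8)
The plan is to mimic the structure already used for the quadric $X$ in Proposition~\ref{prop:pointsnotinQ}, splitting into the large-orbit case and the small-orbit case. First I would note that by Remark~\ref{remark:Ziquan}, if $P$ is a non-canonical center of $(Y,\mu\mathcal M_Y)$ then $(Y,\tfrac32\mu\mathcal M_Y)$ fails to be log-canonical at $P$; let $\Lambda$ be its non-log-canonical locus, $\Lambda_0$ the zero-dimensional part. Using the degree bound from Remark~\ref{rema:degreebound2} (suitably rescaled by $\tfrac94$) together with Theorem~\ref{theo:4n2}, any curve $C\subset\Lambda$ must satisfy $\tfrac{81}{4}\cdot\tfrac13\cdot 3 \geq m\deg(C)$ with $m>4$ — i.e. $\deg(C)<9/2\cdot\dots$; in any case $\deg(C)<9$, wait, let me redo: intersecting with a hyperplane $H$ and using $H^3=3$ on the cubic, $\tfrac94\mu^2(M_1\cdot M_2\cdot H)=\tfrac94\cdot 3\cdot\tfrac{?}{}$. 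The point is one gets $\deg(C)\leq 9$, hence by Lemma~\ref{lemm:A5invcurveClebsch} any such curve lies in $R$. Since $P\notin R$ and $\Sigma\ne\Sigma_5^1=\Sing(Y)$, no curve through $P$ can be in $\Lambda$, so $\Sigma\subset\Lambda_0$.

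Next, for the large-orbit case $|\Sigma|\geq 20$, I would invoke Nadel vanishing (Theorem~\ref{theo:nadel}). On the cubic $Y$ one has $K_Y\sim\mathcal O_Y(-2)$, so $K_Y+\tfrac32\mathcal M_Y+\tfrac12\mathcal O_Y(1)\sim_\bQ\mathcal O_Y(1)$ when $\mu\mathcal M_Y\sim_\bQ-K_Y=\mathcal O_Y(2)$; more care is needed since $\tfrac32\cdot 2=3$, so actually $K_Y+\tfrac32\mathcal M_Y\sim_\bQ \mathcal O_Y(-2)+\mathcal O_Y(3)=\mathcal O_Y(1)$, giving $K_Y+\tfrac32\mathcal M_Y+A\sim_\bQ L$ with $L=\mathcal O_Y(1)$ for a small ample $A$ — I may need to shrink the coefficient slightly to $\tfrac32-\varepsilon$ and take $L=\mathcal O_Y(1)$, or better use $L=\mathcal O_Y(2)$ with $A=\mathcal O_Y(1)$. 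Then $h^1(Y,\mathcal I\otimes L)=0$ forces $|\Sigma|\leq h^0(Y,\mathcal O_Y(L))$, and since $h^0(Y,\mathcal O_Y(2))=\binom{6}{2}-1=14$ (quadrics in $\bP^4$ modulo the cubic relation, which doesn't cut down in degree 2), we get $20\leq |\Sigma|\leq 14$, a contradiction. This kills all orbits of length $\geq 20$.

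For the small-orbit case $|\Sigma|<20$: by Lemma~\ref{lemm:A5orbitcub} and the assumption $\Sigma\ne\Sigma_5^1$ together with $P\notin R$, the only possibilities are $\Sigma\in\{\Sigma_5^2,\Sigma_{10}^1,\Sigma_{10}^3,\Sigma_{10}^4\}$. Each of these I would handle with Theorem~\ref{theo:4n2} (or Theorem~\ref{theo:demailly} if the bound is tight): fixing a general point $P$ in the orbit and general $M_1,M_2\in\mathcal M_Y$, the non-canonical condition gives $\mult_P(M_1\cdot M_2)>4/\mu^2$, while $M_1\cdot M_2$ is an effective $1$-cycle of degree $3/\mu^2$ on $Y$. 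Summing multiplicities over the orbit and comparing with the degree of the $1$-cycle $M_1\cdot M_2$ along a well-chosen $G$-invariant surface through the orbit (for instance a hyperplane section, or the residual-line geometry of Lemma~\ref{lemm:10line}) should produce a contradiction, as $|\Sigma|\cdot 4/\mu^2 > 3\cdot(\text{ambient degree})/\mu^2$ when $|\Sigma|$ is large enough relative to the intersection-theoretic budget; the $\Sigma_5^2$ case of length 5 is the most delicate and may need the sharper inequality or an auxiliary invariant surface. I expect the main obstacle to be exactly this last point — excluding the length-$5$ orbit $\Sigma_5^2$ (and possibly the length-$10$ orbits) by hand, since the naive degree count is not by itself decisive; here one likely has to exhibit an explicit low-degree $G$-invariant surface or pencil through the orbit, analogous to the K3 pencil $\cP$ used on the quadric, and exploit its geometry, or else cite the verbatim argument from \cite[Proof of Proposition 3.5]{CSZ} if it applies.
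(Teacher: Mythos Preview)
Your strategy is essentially the paper's, but you make one suboptimal choice that creates unnecessary work. In your Nadel vanishing step you waver between $L=\cO_Y(1)$ and $L=\cO_Y(2)$ and then commit to $\cO_Y(2)$, obtaining only $|\Sigma|\leq 14$. The paper takes the option you discarded: with the $(\tfrac32-\varepsilon)$ shrinking (which you already mention) one has
\[
K_Y+(\tfrac32-\varepsilon)\mu\mathcal M_Y+2\varepsilon\,\mathcal O_Y(1)\sim_{\mathbb Q}\mathcal O_Y(1),
\]
and Nadel vanishing with $L=\mathcal O_Y(1)$ yields $|\Omega_0|\leq h^0(\mathcal O_Y(1))=5$. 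This single bound kills every orbit of length $>5$ in one stroke, so there is no separate ``small-orbit'' case for the four length-$10$ orbits at all. Your proposed multiplicity-count for those orbits is vague (and your degree of $M_1\cdot M_2$ is off: it is $12/\mu^2$, not $3/\mu^2$); with the sharper $L$, none of that is needed.

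Two minor corrections in passing. First, the curve bound on the cubic is $27>4\deg(C)$, hence $\deg(C)\leq 6$ (not $9$); this is still covered by Lemma~\ref{lemm:A5invcurveClebsch}, so the conclusion $C\subset R$ is unaffected. Second, to apply Nadel you should work throughout with the non-klt locus $\Omega$ of $(Y,(\tfrac32-\varepsilon)\mu\mathcal M_Y)$ rather than the non-log-canonical locus $\Lambda$ of $(Y,\tfrac32\mu\mathcal M_Y)$, since the multiplier ideal cuts out the former; the paper does this from the outset. After the bound $|\Omega_0|\leq 5$, the only surviving case outside $R$ with $\Sigma\ne\Sigma_5^1$ is $\Sigma_5^2$, and here the paper, exactly as you anticipate, cites \cite[Proposition 3.5]{CSZ}.
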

\begin{proof}
Assume that $P$ is a non-canoncial center of $(Y,\mu\mls_Y)$. By Remark~\ref{remark:Ziquan}, we know that $P$ is a non-log-canonical center of $(Y,\frac32\mu\cM_Y)$. Let $\varepsilon$ be a positive rational number such that 
    $$
    \Sigma\subset\Omega,\quad \Omega:=\mathrm{Nklt}(Y,(\frac{3}{2}-\varepsilon)\mu\cM_Y)
    $$
    where $\Omega$ is the non-klt locus of $(Y,(\frac{3}{2}-\varepsilon)\mu\cM_Y)$. 
    
 Assume that there is a curve $C\subset \Omega$. Let $M_1,M_2\in(\frac{3}{2}-\varepsilon)\mu\cM_Y$ and $H$ a general hyperplane section of $Y$. Similarly as before, we have 
    $$
    27\geq H\cdot(\frac{3}{2}-\varepsilon)^2\mu^2(M_1\cdot M_2)\geq m\deg(C)>4\deg(C)
    $$
for some number $m>4$ by Theorem~\ref{theo:4n2}. Lemma~\ref{lemm:A5invcurveClebsch} implies that $\deg(C)=6$ and $C\subset R$. This shows that every curve in $\Omega$ is in $R$. It follows that the 0-dimensional component $\Omega_0$ of $\Omega$ is non-empty since $P\not\in R$. In particular, $\Omega_0\supset\Sigma$. Observe that 
$$
K_Y+(\frac{3}{2}-\varepsilon)\mu\cM_Y+2\varepsilon\cO_Y(1)\sim_\bQ \cO_Y(1).
$$
Let $\cI$ be the multiplier ideal sheaf of $(\frac{3}{2}-\varepsilon)\mu\cM_Y$. By Nadel vanishing theorem (Theorem \ref{theo:nadel}), we have 
$
h^1(\cI\otimes\cO_Y(1))=0.
$ This implies that 
$$
|\Omega_0|\leq h^0(\cO_Y(1))=5.
$$
It follows that $\Omega_0=\Sigma=\Sigma_5^1$ or $\Sigma_5^2$. The latter is impossible by \cite[Proposition 3.5]{CSZ}.
\end{proof}

\subsection{Points inside $R$}

Similarly as in the previous section,
it suffices to find the $G$-equivariant $\alpha$-invariant of $R$.
\begin{lemm}\label{lemm:alphaR}
    One has $\alpha_G(R)=2$.
\end{lemm}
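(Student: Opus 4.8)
The plan is to mirror the structure of Lemma~\ref{lemm:alphaQ}, computing $\alpha_G(R)$ via the classification of low-degree $G$-invariant curves on $R$ together with the Nadel vanishing theorem. First I would establish the upper bound $\alpha_G(R)\leq 2$: since $-K_R\sim_\bQ\cO_R(1)$ has degree $3$, and by Lemma~\ref{lemm:A5invcurveClebsch} the $G$-invariant curves of least degree on $R$ are the degree-$6$ curves $L_6,L_6',C_6,C_6',B_6$, we can take $D=\frac12 C$ for $C$ one of the two sextics of lines, say $L_6$. Since $L_6$ is a disjoint union of six $(-1)$-curves, the pair $(R,\frac12 L_6)$ is not log-canonical along $L_6$ (each line contributes coefficient $\frac12>$ nothing problematic—actually we need a curve of multiplicity $>1$; the honest choice is $D\sim_\bQ\cO_R(1)$ with a component of coefficient $>1$): more precisely, $\frac13 L_6\sim_\bQ -K_R$ would need $\deg L_6=9$, so instead take $D = \frac{1}{2}L_6$ which is $\sim_\bQ\cO_R(3/2)$—not quite right either. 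The correct normalization: we need $D\sim_\bQ -K_R$, i.e. $D$ of degree $3$; then $\lambda D$ being non-lc forces a component of multiplicity $>1/\lambda$, and the sextic gives $\lambda\cdot 2 > 1$ barely, so the least $\lambda$ making it fail is $\lambda = 2$ with $D=\frac12 L_6$ (degree $3$, each line with coefficient $\frac12$, and $2\cdot\frac12 = 1$, lc threshold exactly $2$). Hence $\alpha_G(R)\leq 2$.

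Next I would prove the reverse inequality $\alpha_G(R)\geq 2$ by contradiction: suppose there is a $G$-invariant effective $\bQ$-divisor $D\sim_\bQ \cO_R(1)$ (so $\lambda D\sim_\bQ -K_R$ with $\lambda D$ having degree $3$ when scaled) with $(R,(2-\epsilon)D)$ not log-canonical for some small $\epsilon>0$—equivalently work with $D'\sim_\bQ \cO_R(3)$ and the pair $(R, (\frac{2}{3}-\epsilon)D')$, or more cleanly just say $(R, D)$ with $D\sim_\bQ -2K_R$ not log-canonical. Let $\Lambda = \mathrm{Nklt}(R,D)$. If $\Lambda$ contains a curve $C\subset R$, write $D = mC + \Delta$ with $m>1$, $\Delta\geq 0$ not containing $C$; intersecting with a general hyperplane section $H$ gives $\deg D = 6 \geq m\deg C > \deg C$, so $\deg C<6$, contradicting Lemma~\ref{lemm:A5invcurveClebsch} (no $G$-invariant curve of degree $<6$ on $R$; note $R$ itself has Picard rank... one must check there is no $G$-invariant curve of degree $<6$, which follows from the orbit/curve classification).

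Therefore $\Lambda$ is zero-dimensional, hence a union of $G$-orbits, so $|\Lambda|\geq 10$—using that by \eqref{eqn:orbRcub} the only $G$-orbits lying in $R$ have lengths $10,12,12,15$, the minimum being $\Sigma_{10}^2$ of length $10$. Choose $\epsilon\in\bQ_{>0}$ so that $(R,(1-\epsilon)D)$ (suitably rescaled back to the $-2K_R$ normalization, i.e. $(1-\epsilon)$ times the original divisor linearly equivalent to $-2K_R$) is not klt exactly at the points of $\Lambda$, and let $\cI$ be its multiplier ideal sheaf. Observe that with $D\sim_\bQ \cO_R(6)$ one has $K_R + (1-\epsilon)D + (something)\cO_R(1)\sim_\bQ \cO_R(L)$ for a suitable Cartier $L$; concretely if $D\sim_\bQ 2\cO_R(3) = \cO_R(6)$, then $K_R + (1-\epsilon)D + (6\epsilon + \text{small})\cO_R(1)$—I would pick the normalization so that $L = \cO_R(1)$ or $\cO_R(2)$, and apply Nadel vanishing (Theorem~\ref{theo:nadel}) to get $H^1(R,\cO_R(L)\otimes\cI)=0$. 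This yields $|\mathrm{Supp}(\cI)| \leq h^0(\cO_R(L))$, which is $3$ for $L=\cO_R(1)$ or at most $\binom{5}{2}-\dim(\text{quadrics through }R)$-type bound, in any case far less than $10$, giving the contradiction. The main obstacle is getting the numerology of the normalization exactly right—tracking which twist $\cO_R(k)$ makes $D\sim_\bQ -\alpha^{-1}K_R$ scale to an integral ample class while keeping $K_R + (1-\epsilon)D + (\text{ample}) \sim_\bQ$ a Cartier divisor with small $h^0$—since $-K_R = \cO_R(1)$ on the cubic surface and $h^0(\cO_R(1)) = 4$, $h^0(\cO_R(2))$ is larger, so one wants the tightest twist; I expect $L = \cO_R(1)$ with $h^0 = 4 < 10$ to work and the curve-exclusion step to be routine given Lemma~\ref{lemm:A5invcurveClebsch}.
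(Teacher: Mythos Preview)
Your overall strategy matches the paper's exactly: bound $\alpha_G(R)$ from above by exhibiting a $G$-invariant $\bQ$-divisor, then argue the lower bound by contradiction, excluding curves via Lemma~\ref{lemm:A5invcurveClebsch} and bounding zero-dimensional loci via Nadel vanishing with twist $\cO_R(1)$.

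There is, however, a genuine gap in your upper bound. The curve $L_6$ is a disjoint union of six $(-1)$-curves, so $L_6^2=-6$, whereas $(-2K_R)^2=12$; hence $L_6\not\sim_\bQ -2K_R$, and $\tfrac12 L_6$ is \emph{not} $\bQ$-linearly equivalent to $-K_R$. Your test divisor is not admissible in the definition of $\alpha_G$. The paper instead uses the Bring curve $B_6$: on the hyperplane $\{\sum x_i=0\}$ Newton's identity gives $p_3=3e_3$, so $B_6$ is cut out on $R$ by the quadric $\sum x_i^2=0$, whence $B_6\sim\cO_R(2)=-2K_R$. Then $D=\tfrac12 B_6\sim_\bQ -K_R$ with $B_6$ smooth and irreducible gives $\mathrm{lct}(R,D)=2$, so $\alpha_G(R)\le 2$.

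For the lower bound your outline is correct once cleaned up. With $D\sim_\bQ -2K_R\sim_\bQ\cO_R(2)$ one has $K_R+(1-\varepsilon)D+2\varepsilon\,\cO_R(1)\sim_\bQ\cO_R(1)$, and $h^0(\cO_R(1))=4$ (not $3$; $R$ is a cubic surface in $\bP^3$). Since by \eqref{eqn:orbRcub} every $G$-orbit in $R$ has length $\ge 10>4$, Nadel vanishing forces $\Lambda=\emptyset$, exactly as in the paper.
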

\begin{proof}
    Note that $B_6\subset R$ is a $G$-invariant effective divisor such that $B_6\sim_\bQ -2K_R$. It follows that $\alpha_G(R)\leq 2$. Suppose that $\alpha_G(R)<2$. Then there exists a $G$-invariant effective $\dq$-divisor $D\sim_\dq-2K_R$ such that $(R,D)$ is not log-canonical. Let $\Lambda$ be the non-log-canonical locus of $(R,D)$. Let $\varepsilon\in\bQ_{>0}$ such that the non-klt locus $\Omega$ of $(R,(1-\varepsilon)D)$ contains  $\Lambda$.  Assume that $\Omega$ contains some curve  $C'$, then 
$$
(1-\varepsilon)D=m C'+\Delta,\quad m\geq 1
$$
for some effective $1$-cycle $\Delta$ whose support does not contain $C'$. Intersecting with a general hyperplane section $H$ on $R$, we obtain 
$$
6>H\cdot(1-\varepsilon)D=H\cdot(mC'+\Delta)\geq\deg(C'),
$$
which is impossible by Lemma~\ref{lemm:A5invcurveClebsch}.

Thus, $\Omega$ consists of finitely many points. Let $n=|\Lambda|$ and $\cI$ be the multiplier ideal sheaf of $(1-\varepsilon)\mu\cM_R$. Observe that 
$$
K_R+(1-\varepsilon)D+2\varepsilon\cO_R(1)\sim_\bQ \cO_R(1).
$$
By Nadel vanishing theorem, we know that 
$h^1(\cO_R(1)\otimes\cI)=0$ and 
$$
n=|\Lambda|\leq|\Omega|\leq h^0(\cO_R(1))=4,
$$
which implies that $n=0$ since there is no $G$-orbit of length $\leq 4$ in $R$. 
\end{proof}

\begin{coro}\label{coro:lcYinR}
   Let $Z$ be a non-canonical center of the pair $(Y, \mu\cM_Y)$, then $Z\not\subset R$.
\end{coro}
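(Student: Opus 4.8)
The plan is to derive Corollary~\ref{coro:lcYinR} directly from Lemma~\ref{lemm:alphaR} via inversion of adjunction, exactly as in the proof of Corollary~\ref{coro:canonicalinQ}. First I would observe that it suffices to rule out the possibility that a non-canonical center $Z$ of $(Y,\mu\cM_Y)$ lies inside $R$. Suppose $Z\subset R$. Since $Y$ has terminal singularities and $R$ is a Cartier divisor on $Y$ (being the $G$-invariant hyperplane section), I would apply inversion of adjunction: if $(Y,\mu\cM_Y)$ fails to be canonical along $Z\subset R$, then in fact $(Y,\mu\cM_Y)$ is not log-canonical along $Z$ after a mild rescaling (here Remark~\ref{remark:Ziquan} gives that $(Y,\tfrac32\mu\cM_Y)$ is not log-canonical at a general point of $Z$ when $Z$ is a smooth point of $Y$, and for $Z\subset\Sing(Y)$ the statement is vacuous since we only need $Z\not\subset R$ and $\Sing(Y)=\Sigma_5^1\not\subset R$ by \eqref{eqn:orbRcub}). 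Then inversion of adjunction forces $Z$ to be a non-log-canonical center of the restricted pair $(R,\mu\cM_Y\vert_R)$.

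The key point is then to compare $\mu\cM_Y\vert_R$ with $-K_R$. By adjunction, $K_R = (K_Y+R)\vert_R$, and since $-K_Y\sim_\dq\cO_Y(2)\cdot$(something)—more precisely $Y\subset\bP^4$ is a cubic so $-K_Y\sim\cO_Y(1)$ and $R\sim\cO_Y(1)$—we get $-K_R\sim\cO_R(1)$, i.e., $R$ is a cubic surface with $-K_R$ the hyperplane class. Now $\mu\cM_Y\sim_\dq-K_Y\sim\cO_Y(1)$, so $\mu\cM_Y\vert_R\sim_\dq\cO_R(1)\sim_\dq-K_R$. Hence $\mu\cM_Y\vert_R$ is (up to $\dq$-linear equivalence) an effective $G$-invariant $\dq$-divisor in the class $-K_R$, and since the coefficient $1$ is at most $\alpha_G(R)=2$ by Lemma~\ref{lemm:alphaR}, the pair $(R,\mu\cM_Y\vert_R)$ is log-canonical. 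This contradicts the conclusion of the previous paragraph that $Z$ is a non-log-canonical center of $(R,\mu\cM_Y\vert_R)$. Therefore no non-canonical center $Z$ can lie in $R$.

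I expect the main obstacle to be purely bookkeeping: making sure the restriction $\mu\cM_Y\vert_R$ is well-defined (i.e., $R$ is not a component of the base locus of $\cM_Y$, which holds because $\cM_Y$ is mobile) and that the inversion-of-adjunction step is applied correctly at the level of generality needed—in particular handling the case where $Z$ is a curve passing through $\Sing(Y)$, but this is moot since any such curve would have to meet $R$ in fewer points than its degree would allow unless it lies in $R$, and the case $Z\subset R$ is precisely what we are excluding. Concretely, the proof is just:

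\begin{proof}
 If $Z$ is contained in $R$, then by inversion of adjunction $Z$ is a non-log-canonical center of $(R,\mu\cM_Y\vert_R)$. Since $R\sim\cO_Y(1)\sim -K_Y$ and $\mu\cM_Y\sim_\dq -K_Y$, adjunction gives $-K_R\sim\cO_R(1)$ and $\mu\cM_Y\vert_R\sim_\dq -K_R$. As $\alpha_G(R)=2>1$ by Lemma~\ref{lemm:alphaR}, the pair $(R,\mu\cM_Y\vert_R)$ is log-canonical, a contradiction.
\end{proof}
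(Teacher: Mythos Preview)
Your approach is exactly the paper's: assume $Z\subset R$, apply inversion of adjunction to get that $(R,\mu\cM_Y\vert_R)$ is not log-canonical, and contradict Lemma~\ref{lemm:alphaR}. The paper's proof is literally one sentence to this effect.

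However, your bookkeeping contains an error. You write ``$Y\subset\bP^4$ is a cubic so $-K_Y\sim\cO_Y(1)$'' and then ``$R\sim\cO_Y(1)\sim -K_Y$'', but for a cubic threefold in $\bP^4$ adjunction gives $K_Y=(K_{\bP^4}+Y)\vert_Y\sim\cO_Y(-5+3)=\cO_Y(-2)$, so $-K_Y\sim\cO_Y(2)$, not $\cO_Y(1)$. Consequently $\mu\cM_Y\sim_\bQ\cO_Y(2)$ and its restriction satisfies $\mu\cM_Y\vert_R\sim_\bQ\cO_R(2)\sim_\bQ -2K_R$, not $-K_R$. Your final inequality ``$\alpha_G(R)=2>1$'' is therefore the wrong comparison: what you actually need is $\alpha_G(R)\geq 2$. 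Fortunately that is exactly what Lemma~\ref{lemm:alphaR} provides (and indeed its proof is phrased precisely in terms of divisors $D\sim_\bQ -2K_R$), so the argument survives once you correct the class of $-K_Y$.

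A minor remark: the detour through Remark~\ref{remark:Ziquan} in your plan is unnecessary. Inversion of adjunction applied to the Cartier divisor $R$ directly turns ``$(Y,\mu\cM_Y)$ not canonical along $Z\subset R$'' into ``$(R,\mu\cM_Y\vert_R)$ not log-canonical along $Z$'', without any rescaling.
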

\begin{proof}
    Assume that $Z$ is contained in $R$. By inversion of adjunction, the pair $(R,\mu\cM_X\vert_R)$ is not log-canonical, which contradicts Lemma~\ref{lemm:alphaR}.
\end{proof}

\section{The nonstandard $\fA_5$-action on the quadric threefold}\label{sect:a5quad2}
In this section, we study the non-standard $\fA_5$-action. Let $G=\fA_5$ acting on the smooth quadric threefold given by
\begin{equation}\label{eqquad2}
X=\left\{\sum_{1\leq i\leq j\leq 5}x_ix_j=0\right\}\subset\bP^4
\end{equation}
with the $G$-action generated by 
\begin{align}\label{eqn:2a5gen}
  (\mathbf x)&\mapsto (x_4,x_1,x_5,x_2,-x_1-x_2-x_3-x_4-x_5), \notag \\
    (\mathbf x)&\mapsto (x_4,-x_1-x_2-x_3-x_4-x_5,x_1,x_3,x_2). 
\end{align}

The aim of this section is to prove the following result.

\begin{prop}\label{prop:main3}
    Let $\mls_X$ be a non-empty mobile $G$-invariant linear system on $X$, and $\lambda\in\dq$ such that $\lambda\mls_X\sim_\dq-K_X$. Let $Z$ be a $G$-irreducible subvariety whose components are centers of non-canonical singularities of $(X,\lambda\mls_X)$. Then $Z$ is one of the following: 
    \begin{itemize}
        \item the union of 5 points in the orbit $\Sigma_5$ or $\Sigma_5'$ given in Lemma~\ref{lemm:2a5orbit},
        \item the rational curve $C_4$ or $C_4'$ of degree 4 given by \eqref{not:deg4},
        \item the rational curve $C_8$ or $C_8'$ of degree 8 described in Remark~\ref{rema:c8curve},
        \item (possibly) a smooth irreducible curve of degree 10 and genus 6.
    \end{itemize}
\end{prop}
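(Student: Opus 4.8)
The plan is to mimic the proof of Proposition~\ref{prop:main1} in Section~\ref{sect:a5quad1}, the essential new feature being that for the nonstandard action $\bP^4=\bP(V_5)$ there is no $G$-invariant hyperplane section to fall back on. First I would reduce the possibilities for $Z$: since $\cM_X$ is mobile no prime divisor on $X$ is a non-canonical center, so $Z$ is either a $0$-dimensional $G$-orbit or a $G$-invariant curve. If $Z$ is a curve, writing $\lambda^2(M_1\cdot M_2)=mC+\Delta$ with $m>1$ for a component $C$ and intersecting with a general hyperplane section $H$ gives $\deg(C)<\lambda^2(M_1\cdot M_2\cdot H)=18$; and combining Remark~\ref{remark:Ziquan} with Nadel vanishing for the twist $K_X+\tfrac{3}{2}\lambda\cM_X+\tfrac{1}{2}\mathcal O_X(1)\sim_{\bQ}\mathcal O_X(2)$ bounds a $0$-dimensional center by $h^0(\mathcal O_X(2))=14$. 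So it remains to (i) classify the $G$-orbits of small length and the $G$-invariant curves of degree $\le 17$ in $X$ (this is the content of Lemma~\ref{lemm:2a5orbit} and its companions), and (ii) exclude every one of these orbits except $\Sigma_5,\Sigma_5'$ and every one of these curves except $C_4,C_4',C_8,C_8'$ and the degree $10$, genus $6$ curve.

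For (i) I would run the usual computations: fixed-point loci of subgroups of $G$ for the orbits, and $G$-invariant forms of degrees $1,2,3$ restricted to $X$ for the curves. The geometric substitute for the pencil $\cP$ of the standard case is that $\Sym^3 V_5$ carries exactly a two-dimensional space of $G$-invariants --- spanned by the cubics $f_1,f_2$ of Section~\ref{sect:A5map} --- whereas $V_5$ admits no invariant linear form and $\Sym^2 V_5$ only the quadric of $X$; hence the $G$-invariant cubic sections of $X$ again form a pencil $\cP=\{a_1f_1+a_2f_2=0\}\cap X$ of $G$-invariant (generically K3) surfaces, and these are the $G$-invariant surfaces of least degree on $X$. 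I would then determine its singular and reducible fibres by direct computation, as in Lemma~\ref{lemm: sing S}, and note which small orbits and low-degree curves lie on them.

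For (ii), curves are handled exactly as in Lemmas~\ref{lemma:irrcurves not in Q}--\ref{lemma:redcurves not in Q}: a $G$-invariant curve $C$ of degree $\le 17$ with trivial generic stabilizer lies on a unique $S\in\cP$ (since $3\deg(C)<60$), one shows $S$ is smooth and $C$ is Cartier on $S$ via the Hodge index theorem and $(H_S)^2=6$, and then $\mult_C(\lambda\cM_X)>1$ makes $3H_S-mC$ effective for some $m>1$; comparing with the residual $G$-invariant curve $C'\subset S$ (with $C+C'\sim_{\bQ}\mathcal O_S(4)$, or obtained directly) and using $(C')^2=2g(C')-2$, nefness of $C'$ and Riemann--Roch on $S$ yields a contradiction in all cases except $g(C)=6$ (degree $10$), where the numerical inequalities are consistent --- which is exactly why that curve survives in the statement --- and except for $C_4,C_8$, which the method also does not exclude. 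For $0$-dimensional centers, orbits of length exceeding $h^0(\mathcal O_X(2))=14$ are killed by the Nadel bound (after first using Theorem~\ref{theo:4n2} to show that any curve appearing in the non-klt locus of $(X,\tfrac{3}{2}\lambda\cM_X)$ has degree $\le 10$, hence is one of the classified low-degree curves and is treated there); the remaining small orbits other than $\Sigma_5,\Sigma_5'$ I would dispose of as in \cite[Proof of Propositions~3.4 and~3.5]{CSZ}, combining Theorem~\ref{theo:4n2} --- and Theorem~\ref{theo:demailly} where a sharper multiplicity estimate is needed --- with the geometry of the member of $\cP$ through the orbit and the action of the point-stabilizer on the tangent space.

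The main obstacle is the loss of the invariant hyperplane: in the standard case the computation $\alpha_G(Q)=\tfrac{3}{2}$ swept away in one stroke all centers lying in the invariant hyperplane section, whereas here every low-dimensional center must be chased through the pencil $\cP$ of K3 surfaces, for which no comparably clean $\alpha$-invariant statement is available, so its singular and reducible fibres and the small orbits lying on them require individual attention. The second delicate point is to check that the Hodge-index/Riemann--Roch obstruction genuinely fails only for the degree-$10$ genus-$6$ curve and for $C_4,C_4',C_8,C_8'$ while genuinely applying to every other $G$-invariant curve of degree $\le 17$; verifying this dichotomy, and pinning down the explicit curves of \eqref{not:deg4} and Remark~\ref{rema:c8curve}, is where most of the computation lies.
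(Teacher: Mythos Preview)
Your overall strategy matches the paper's: split into curve centers (Proposition~\ref{prop:curves2A5X}) and point centers (Proposition~\ref{prop:ptcase2}), and work inside the pencil $\cP$ of $G$-invariant cubic sections of $X$. But there is one genuine gap in the curve argument. In the standard case every singular member of $\cP$ has only isolated nodes, so ``find the unique $S\in\cP$ through $C$, show $S$ smooth, $C$ Cartier, run Hodge index'' always makes sense. Here two members $R,R'$ of $\cP$ are \emph{non-normal}: their singular loci are precisely the twisted quartics $C_4,C_4'$ (Lemma~\ref{lemm: sing S 2}), and e.g.\ the unions of twelve lines $\cL_{12},\cL_{12}'$ lie in $R,R'$ (Remark~\ref{rk: K3s containing reducible curves}). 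For any $C\subset R$ your K3 machinery is unavailable, and the paper uses a completely different device (Lemma~\ref{lemm:quad2curveinR}): the blowup $\pi\colon\widetilde X\to X$ along $C_4$ is a smooth Fano threefold with a biregular involution exchanging the exceptional divisor with the strict transform $\widetilde R\simeq\bP^1\times\bP^1$; a multiplicity count on $\widetilde R$ then excludes every curve in $R$ other than $C_4$. You flag the singular fibres as needing ``individual attention'' but propose no mechanism for this case.

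Two smaller points. First, the residual-curve trick $C+C'\sim_{\bQ}4H_S$ from Section~\ref{sect:a5quad1} relied on $S$ being $\fS_5$-invariant with $\Pic^{\fS_5}(S)=\bZ\cdot H_S$; here the odd involution swaps $R$ with $R'$ and need not fix a given $S$, so the paper instead argues directly with Riemann--Roch on $|nH_S-C|$, analysis of fixed and mobile parts, and the degeneracy of the $3\times 3$ intersection matrix forced by $\mathrm{rk}\,\Pic^G(S)\le 2$ (Lemmas~\ref{lemm:deg12} and~\ref{lemm:deg16smooth}). Second, for $0$-dimensional centers the bound $\deg(C)\le 10$ from Theorem~\ref{theo:4n2} is not what the paper uses: Proposition~\ref{prop:ptcase2} invokes Theorem~\ref{theo:demailly} to obtain $\deg(C)\le 8$, so that curves in the non-klt locus of the scaled pair lie among $C_4,C_4',C_8,C_8'$. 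Even then one must show that points \emph{on} these curves are not non-canonical centers of the original pair; this is done separately in Lemmas~\ref{lemm:ptsinC4} and~\ref{lemm:ptsinC8} by restricting to the exceptional $\bP^1\times\bP^1$ over $C_4$ (respectively intersecting with a general cubic through $C_8$), a step your outline does not address.
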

\begin{proof}
    This follows from Proposition~\ref{prop:curves2A5X} and Proposition~\ref{prop:ptcase2}.
\end{proof}

\begin{rema}
    Equations of $C_4, C_4', C_8, C_8'$ can be found in \cite{Z-web}. Sarkisov links centered at these curves are presented in later subsections. We do not know the existence of the curve of degree 10. This is not necessary for our main result, see Lemma~\ref{lemm:linkC8C10}.
\end{rema}
Note that the $G$-action on the ambient $\bP^4$ arises from the unique 5-dimensional irreducible linear representation of $G$. The nature of this action creates more challenges for our classifications since there are more possibilities of $G$-invariant curves and non-canonical centers, cf. Proposition~\ref{prop:main1}. The $\fA_5$-equivariant geometry of K3 surfaces turns out to be crucial to our analysis in this section.

First, we classify $G$-orbits of lengths less than 20, and the $G$-irreducible invariant curves of degrees at most 17. In the second subsection, we will study the singularities of pairs $(X,\lambda\mls_X)$ along $G$-invariant curves, and in the third subsection, we will study them along $G$-orbits. 

\subsection{Small $G$-orbits and $G$-invariant curves of low degrees}
\begin{lemm}\label{lemm:2a5orbit}
      A $G$-orbit of points in $X$ with length $<20$ is one of the following:
\begin{align*}
    \Sigma_5&=\text{the orbit of}\quad[1:\zeta_6-1:-\zeta_6:\zeta_6-1:1],\\
     \Sigma_5'&=\text{the orbit of} \quad[1:-\zeta_6:\zeta_6-1:-\zeta_6:1],\\
     \Sigma_{12}&=\text{the orbit of} \quad[\zeta_5^3:\zeta_5^2:0:\zeta_5:1],\\
    \Sigma_{12}'&=\text{the orbit of}\quad [\zeta_5^4:\zeta_5:0:\zeta_5^3:1],
\end{align*}
where the length of each orbit is indicated by the subscript.
\end{lemm}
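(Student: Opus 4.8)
The plan is the standard orbit--stabiliser argument. Since $\bP^4=\bP(V_5)$ for the unique $5$-dimensional irreducible representation $V_5$ of $G=\mathfrak A_5$, a $G$-orbit in $X$ of length $\ell<20$ consists of points whose stabiliser $H\le G$ has order $60/\ell>3$. Up to conjugacy the subgroups of $\mathfrak A_5$ of order $>3$ are $C_2^2$, $C_5$, $\mathfrak S_3$, $\mathfrak D_5$, $\mathfrak A_4$ (a single conjugacy class for each) together with $G$ itself, so I would run through these one at a time and compute $\bP(V_5)^H$. The point to bear in mind is that $\bP(V_5)^H$ is \emph{not} $\bP(V_5^H)$ but the disjoint union $\bigsqcup_{\psi}\bP\big((V_5|_H)_\psi\big)$ over all one-dimensional characters $\psi$ of $H$; what is needed is therefore the full isotypic decomposition of $V_5|_H$, which I obtain by restricting the character $\chi_{V_5}=(5,1,-1,0,0)$.

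Running through the cases: $V_5|_G$ has no trivial summand, so $X^G\subseteq\bP(V_5)^G=\emptyset$; $V_5|_{\mathfrak A_4}=W_{(3)}\oplus\omega\oplus\bar\omega$, giving two isolated $\mathfrak A_4$-fixed points; $V_5|_{\mathfrak D_5}=\mathbf 1\oplus W_1\oplus W_2$ and $V_5|_{\mathfrak S_3}=\mathbf 1\oplus W^{\oplus 2}$, each giving one fixed point; $V_5|_{C_5}$ is the regular representation, giving five fixed points (one with trivial character --- it coincides with the $\mathfrak D_5$-fixed point --- and four with non-trivial character); and $V_5|_{C_2^2}=\mathbf 1^{\oplus 2}\oplus\varepsilon_1\oplus\varepsilon_2\oplus\varepsilon_3$, so $\bP(V_5)^{C_2^2}$ is a line $\ell_0=\bP(V_5^{C_2^2})$ together with three isolated points. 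Using that $\mathfrak A_4,\mathfrak S_3,\mathfrak D_5$ are maximal in $\mathfrak A_5$ (hence self-normalising), that the only subgroups lying strictly between $C_2^2$ and $G$, resp. between $C_5$ and $G$, are $\mathfrak A_4$, resp. $\mathfrak D_5$, and that $\mathfrak A_4/C_2^2\cong C_3$ acts on $\ell_0$ non-trivially (since $\dim V_5^{\mathfrak A_4}=0$) hence with exactly the two $\mathfrak A_4$-fixed points as its fixed points, I read off the stabiliser, hence the orbit length, of every fixed point above: the two $\mathfrak A_4$-fixed points give orbits of length $5$; the $\mathfrak D_5$-fixed point length $6$; the $\mathfrak S_3$-fixed point length $10$; the four $C_5$-fixed points of non-trivial character length $12$ (and a count over the six conjugate $C_5$'s shows they fall into exactly two $G$-orbits); the three isolated $C_2^2$-fixed points and the generic point of $\ell_0$ length $15$.

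It remains to decide which of these orbits actually lie on $X$, and this --- rather than the group theory --- is the delicate step. Here I would use the realisation $V_5=\{\,y\in\bC^6:\sum_i y_i=0\,\}$ with $G$ acting through the exotic embedding $\mathfrak A_5\hookrightarrow\mathfrak A_6$, under which the unique $G$-invariant quadric is $\{\sum_i y_i^2=0\}$ (alternatively, one simply substitutes the eigenvectors into \eqref{eqquad2}). A one-line evaluation of $\sum_i y_i^2$ on the relevant eigenvectors then shows that it vanishes at the two $\mathfrak A_4$-fixed points and at all four $C_5$-fixed points of non-trivial character, but is non-zero at the $\mathfrak D_5$-fixed point, at the $\mathfrak S_3$-fixed point and at the three isolated $C_2^2$-fixed points, and is not identically zero along $\ell_0$ --- so $\ell_0\cap X$ is exactly the two $\mathfrak A_4$-fixed points. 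Hence the $G$-orbits of length $<20$ contained in $X$ are precisely the two of length $5$ and the two of length $12$, and checking the explicit representatives in the statement (for instance that $[1:\zeta_6-1:-\zeta_6:\zeta_6-1:1]=[1:\omega:\omega^2:\omega:1]$ is fixed by a copy of $\mathfrak A_4$) finishes the proof. The main obstacle is precisely this membership check: one must rule out the spurious orbits of length $6$, $10$ and $15$ that do occur in $\bP(V_5)$, and in particular must verify $\ell_0\not\subset X$, since otherwise $X$ would contain a one-parameter family of orbits of length $15$.
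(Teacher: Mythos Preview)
Your proposal is correct and follows precisely the approach the paper intends: the analogous Lemma~\ref{lemm:A5orbit} is proved in one line by ``a computation of fixed points by each subgroup of $G$'', and Lemma~\ref{lemm:2a5orbit} is simply stated without proof, so your orbit--stabiliser argument is exactly the omitted computation, carried out in full. The only remark is that your membership checks (ruling out the orbits of length $6$, $10$, $15$ and verifying $\ell_0\not\subset X$) are asserted rather than displayed, but these are genuine one-line evaluations and the paper would not expect more.
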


\begin{lemm}\label{lemm:genericstabcurve-A5quadric2}
    Let $C$ be a $G$-invariant curve in $X$ with $\deg(C)\leq17$. Then $C$ has trivial generic stabilizer, i.e, the $G$-orbit of a general point in $C$ has length 60.
\end{lemm}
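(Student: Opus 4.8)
The plan is to reduce, using the eigenvalue structure of the $5$-dimensional irreducible representation $V_5$ of $\fA_5$, to a finite list of candidate components --- lines and conics --- and then rule each out by an explicit computation with the model \eqref{eqquad2}, \eqref{eqn:2a5gen}, in the same spirit as Lemma~\ref{lemm:genericstabcurve}.

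First I would reduce to the case where $C$ is $G$-irreducible: if an irreducible component of $C$ has nontrivial generic stabilizer, then so does every component of its $G$-orbit, which is a $G$-invariant curve of degree $\le\deg(C)\le 17$, so it suffices to treat that orbit. So assume $C$ is $G$-irreducible with nontrivial generic stabilizer, fix an irreducible component $C_0$, and pick an element $g$ of prime order $p\in\{2,3,5\}$ in the generic stabilizer of $C_0$; then $C_0\subseteq\mathrm{Fix}_{\bP^4}(g)\cap X$. The eigenvalue multiplicities of $g$ on $V_5$ are pinned down by the character values $\chi_{V_5}=(5,1,-1,0,0)$ on the classes of $\fA_5$: the splitting is $3+2$ for $p=2$, $1+2+2$ for $p=3$, and $1+1+1+1+1$ for $p=5$. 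Hence $\mathrm{Fix}_{\bP^4}(g)$ is $\bP^2\sqcup\bP^1$, $\{\mathrm{pt}\}\sqcup\bP^1\sqcup\bP^1$, or five points, respectively. The case $p=5$ is excluded outright since it contributes no curves; for $p=3$ the component $C_0$ must be one of the two eigenlines $\bP^1$; for $p=2$ it lies in the plane section $X\cap\bP^2$, a possibly degenerate conic. Thus every irreducible component of $C$ is a line or a conic.

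It then remains to enumerate these lines and conics together with their $G$-orbits, from the explicit model. For $p=2$ the setwise stabilizer in $G$ of the plane $\Pi_g=\mathbb{P}(\ker(g-1))$ equals $C_G(g)\cong V_4$, so there are exactly $15$ such planes, forming one $G$-orbit; if $X\cap\Pi_g$ is a smooth conic, the corresponding $G$-orbit has length $15$ and degree $30>17$. For $p=3$ one substitutes the two eigenlines of each order-$3$ element into \eqref{eqquad2} to see whether they lie on $X$ at all, and if so computes that their $G$-orbit has length $20$. I expect the outcome to mirror Lemma~\ref{lemm:genericstabcurve}: the only curves with nontrivial generic stabilizer are conics whose $G$-orbits have degree exceeding $17$, contradicting $\deg(C)\le 17$. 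The step I expect to be the main obstacle is precisely this final computation, because a degenerate plane section $X\cap\Pi_g$ could a priori split off a line $\ell$ whose stabilizer has order $4$, $5$ or $6$ (orbits of $15$, $12$ or $10$ lines, of degrees $\le 17$); a concrete danger is the $\fA_4$-invariant line $\mathbb{P}(U)$ arising from the decomposition $V_5|_{\fA_4}=W_3\oplus U$ with $\dim U=2$, which lies in every $\Pi_g$ for $g$ in the corresponding $V_4$ and would yield a $G$-orbit of only $5$ lines of total degree $5$. One must therefore verify, in coordinates, that no such line actually lies on $X$; this is the ``by computation'' at the heart of the argument, carried out exactly as in the standard case.
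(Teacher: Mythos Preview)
Your approach is essentially the paper's: reduce to curves pointwise fixed by an element of prime order and compute their $G$-orbits. The paper's proof is a single sentence of direct computation, asserting that the only such irreducible curves in $X$ are lines with $G$-orbits of length $20$ and conics with $G$-orbits of length $15$, of total degree $20$ and $30$; you make the representation-theoretic reduction explicit and identify the right computations.

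There is one gap. For $p=2$ you record $\mathrm{Fix}_{\bP^4}(g)=\bP^2\sqcup\bP^1$ but then analyze only the plane section $X\cap\Pi_g$, silently dropping the possibility that $C_0$ is the $(-1)$-eigenline $\bP^1$. That line has setwise stabilizer $V_4$, hence $G$-orbit of length $15\le 17$, so it must be ruled out. The fix is the same argument that handles your flagged danger $\bP(U)$: under $V_5|_{\fA_4}=W_3\oplus U$ the $G$-invariant quadratic form is block-diagonal (because $\Hom_{\fA_4}(W_3,U^*)=0$) and nondegenerate on each block (since the full form is nondegenerate). Hence $X\cap\bP(U)$ consists of two points and $X\cap\bP(W_3)$ is a smooth conic; as the $(-1)$-eigenline lies in $\bP(W_3)$, it is not contained in $X$. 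The same block-diagonality shows the plane conic $X\cap\Pi_g$ is smooth, confirming your orbit-length-$15$ count. With this patched, your argument goes through.
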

\begin{proof}
By direct computation, one sees that the only irreducible curves in $X$ with non-trivial generic stabilizers are lines and conics, and their $G$-orbits have lengths 20 and 15 respectively. 
\end{proof}

\begin{lemm}\label{lemm: G-inv red curves}
    Let $C$ be a $G$-invariant reducible curve of degree at most 17. Then $C$ is the union of curves in one of the following orbits:
    \begin{itemize}
        \item 
        an orbit of 5 conics 
        $$
        \mathcal{C}_{5}=\text{orbit of }\{x_1 + x_4=
x_2 + x_3=0\}\cap X,
$$
        \item 
        one of the following 2 orbits of 6 conics 
        $$
        \mathcal{C}_{6}=\text{orbit of } C_1,\qquad 
           \mathcal{C}_{6}'=\text{orbit of }C_2, $$
        where 
        $$
        a=\zeta_5+\zeta_5^4,
        $$
        $$
        C_1=\{x_1 - x_3 -a(x_4 -x_5)=x_2 + a(x_3-x_4) - x_5=0\}\cap X,
        $$
          $$
        C_2=\{x_1 - x_3 +(1+a)(x_4 -x_5)=x_2 -(1+a)(x_3-x_4) - x_5=0\}\cap X,
        $$
        \item 
    one of the following 2 orbits of 12 lines
     $$
          \mathcal L_{12}=\text{the orbit of}\quad\{\sum_{i=1}^5x_i=
\sum_{i=1}^5\zeta_5^{i-1}x_i=\sum_{i=1}^5\zeta_5^{3(i-1)}x_i=0\},
   $$
$$
    \mathcal L_{12}'=\text{the orbit of}\quad\{\sum_{i=1}^5x_i=
\sum_{i=1}^5\zeta_5^{i-1}x_ix_i=\sum_{i=1}^5\zeta_5^{2(i-1)}x_i=0\}.
$$
    \end{itemize}
    Each of the orbits above consists of pairwise disjoint curves.
\end{lemm}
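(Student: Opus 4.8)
The plan is to mirror the proof of Lemma~\ref{lemm:12line}: first cut down to a finite list of possible component-types via subgroup indices, then for each type enumerate the relevant invariant linear subspaces of $\bP^4=\bP(V_5)$ by restricting $V_5$ to the appropriate subgroup, and finally test which of those subspaces meet $X$ in an honest rational normal curve.

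First I would reduce. We may assume $C$ is $G$-irreducible, so its $k$ irreducible components form a single $G$-orbit and $k=[G:H]$ for a proper subgroup $H$; since each component has degree $d=\deg(C)/k\ge 1$ and $\deg(C)\le 17$, the pair $(k,H)$ is one of $(5,\fA_4),(6,\fD_5),(10,\fS_3),(12,C_5),(15,C_2^2)$, with $d\le 3$ when $k=5$, $d\le 2$ when $k=6$, and $d=1$ otherwise. A degree-$d$ component ($d\le 3$) is a rational normal curve spanning a $\bP^d\subset\bP^4$, i.e.\ a $(d+1)$-dimensional $H$-subrepresentation $U\subset V_5|_H$, and by Lemma~\ref{lemm:genericstabcurve-A5quadric2} the stabilizer $H$ acts faithfully on that component $\cong\bP^1$. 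So the strategy is, for each $H$ above, to decompose $V_5|_H$, list the $(d+1)$-dimensional subrepresentations $U$ on which the induced action of $H$ on $\bP(U)$ is faithful, and then cut with $X$.

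Carrying this out: $V_5|_{\fA_4}=\omega\oplus\overline\omega\oplus W$ with $W$ the standard $3$-dimensional representation, so the only $\fA_4$-invariant line is $\bP(\omega\oplus\overline\omega)$, on which $\fA_4$ acts through $\fA_4/C_2^2$; if it lay on $X$ its orbit of $5$ lines would violate Lemma~\ref{lemm:genericstabcurve-A5quadric2}, so it does not, and there is no orbit of $5$ lines. The unique $3$-dimensional subrepresentation is $W$, and $\bP(W)\cap X$ is an $\fA_4$-invariant conic — necessarily irreducible, since a degeneration into lines would again contradict the line count — whose $G$-orbit has length $5$ (as $N_G(\fA_4)=\fA_4$): this is $\cC_5$. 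The two $4$-dimensional subrepresentations $\omega\oplus W$ and $\overline\omega\oplus W$ each contain only one $\fA_4$-invariant line, whereas the span of an $\fA_4$-invariant twisted cubic would contain two disjoint invariant lines — equivalently, that span is $\bP(\Sym^3 V_2)$ for the $2$-dimensional representation $V_2$ of the binary tetrahedral group $2.\fA_4$ on the cubic, which is not the projectivization of any linear $\fA_4$-representation because the obstruction class in the Schur multiplier $H^2(\fA_4,\bC^\times)\cong C_2$ is nonzero, being the cube of the nontrivial class of $V_2$ — so there is no orbit of $5$ twisted cubics. Next, $V_5|_{\fD_5}=\mathbf 1\oplus\rho_1\oplus\rho_2$ with $\rho_1,\rho_2$ the two irreducible $2$-dimensional representations: the only $\fD_5$-invariant lines are $\bP(\rho_1),\bP(\rho_2)$, which a direct computation on the explicit coordinates of \cite{Z-web} shows are not contained in $X$ (hence no orbit of $6$ lines), while $\bP(\mathbf 1\oplus\rho_i)\cap X$ are $\fD_5$-invariant conics with $G$-orbits of length $6$: these are $\cC_6,\cC_6'$. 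Finally, for the line-only cases, $V_5|_{\fS_3}=\mathbf 1\oplus 2E$ ($E$ the standard $2$-dimensional representation), $V_5|_{C_5}$ is the regular representation $\bigoplus_{k=0}^{4}\chi^k$, and $V_5|_{C_2^2}=2\cdot\mathbf 1\oplus\psi_1\oplus\psi_2\oplus\psi_3$; the faithfully-acting $2$-dimensional subrepresentations are respectively the pencil of diagonal copies of $E$ in $2E$, the $\binom{5}{2}=10$ coordinate planes $\chi^a\oplus\chi^b$, and the three planes $\psi_i\oplus\psi_j$. Testing containment in $X$ eliminates the $\fS_3$- and $C_2^2$-cases entirely, and among the $C_5$-invariant lines it singles out exactly two $G$-orbits of length $12$ — the stabilizer of such a line is exactly $C_5$, since $\bP(\rho_i)\not\subset X$ already ruled out $\fD_5$-invariant lines on $X$ — namely $\cL_{12},\cL_{12}'$.

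It remains to prove disjointness. Two distinct components in a common orbit, if they met, would meet along a nonempty finite $G$-invariant subset of $X$, hence a union of $G$-orbits, which by Lemma~\ref{lemm:2a5orbit} would have to lie among $\Sigma_5,\Sigma_5',\Sigma_{12},\Sigma_{12}'$; a direct check on coordinates shows that no curve in $\cC_5,\cC_6,\cC_6',\cL_{12},\cL_{12}'$ passes through any of these four orbits, so the components are pairwise disjoint. The only genuinely conceptual input is the twisted-cubic exclusion, borrowed from Lemma~\ref{lemm:12line}; the main obstacle — and the reason the argument is long rather than deep — is the explicit enumeration of invariant linear subspaces for each of the five subgroups together with the coordinate verifications of which ones lie on $X$ and of the disjointness claims.
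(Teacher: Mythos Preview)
Your overall approach matches the paper's (which simply defers to Lemma~\ref{lemm:12line}): reduce via subgroup indices to $(k,H)\in\{(5,\fA_4),(6,\fD_5),(10,\fS_3),(12,C_5),(15,C_2^2)\}$, decompose $V_5|_H$, and test which invariant linear subspaces lie on $X$. The case analysis for the first four pairs is essentially correct.

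There is a minor error in the $(15,C_2^2)$ case: you assert that $\bP(\psi_i\oplus\psi_j)$ carries a faithful $C_2^2$-action, but the projectivization of a sum of two characters of an abelian group is always a cyclic action --- here it is the action through $\psi_i\psi_j=\psi_k$, whose kernel has order~$2$. In fact \emph{no} $C_2^2$-invariant line in $\bP(V_5)$ has a faithful $C_2^2$-action (the faithful embedding $C_2^2\hookrightarrow\PGL_2$ is genuinely projective), so Lemma~\ref{lemm:genericstabcurve-A5quadric2} already excludes the $15$-line case; the containment test you invoke is superfluous. This does not affect your conclusion.

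The genuine gap is in your disjointness argument. The inference ``by Lemma~\ref{lemm:2a5orbit} [the singular locus] would have to lie among $\Sigma_5,\Sigma_5',\Sigma_{12},\Sigma_{12}'$'' does not follow: that lemma lists only the $G$-orbits of length $<20$, and you never bound $|\mathrm{Sing}(C)|$. Worse, the claimed coordinate check is factually wrong. Each line in $\cL_{12}$ or $\cL_{12}'$ is $C_5$-invariant, so its two $C_5$-fixed points --- which lie on the line and hence on $X$ --- are points of $\Sigma_{12}\cup\Sigma_{12}'$ (these being precisely the $C_5$-fixed points of $X$ by Lemma~\ref{lemm:2a5orbit}); likewise every conic in $\cC_6,\cC_6'$ is $\fD_5$-invariant, hence $C_5$-invariant, and again carries two $C_5$-fixed points in $\Sigma_{12}\cup\Sigma_{12}'$. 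So four of your five orbits \emph{do} pass through those small orbits, and your argument collapses. Disjointness within each orbit is still true, but it requires either a direct verification of pairwise intersections in coordinates (which is what the paper, and Lemma~\ref{lemm:12line}, implicitly rely on) or a finer counting argument --- for instance, for $\cL_{12}$ one can show that through each point of $\Sigma_{12}$ passes exactly one line of $\cL_{12}$, so no two share an endpoint, and that two $C_5$-eigenlines $\bP(\chi^a\oplus\chi^b)$, $\bP(\chi^c\oplus\chi^d)$ with $\{a,b\}\cap\{c,d\}=\emptyset$ are skew; intersections for lines with distinct $C_5$-stabilizers then still need a direct check.
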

\begin{proof}
The proof is similar to that of Lemma~\ref{lemm:12line}.    
\end{proof}

\subsection{Invariant curves}
Similarly as in Section~\ref{sect:a5quad1} (see Remark \ref{rema:degreebound1}), if a curve $C$ is in the non-canonical center of $(X,\lambda\cM_X)$, we have 
$$
\deg(C)\leq 17.
$$

The main result of this subsection is:

\begin{prop}\label{prop:curves2A5X}
    Let $\mls_X$ be a non-empty mobile $G$-invariant linear system on $X$, and $\lambda\in\bQ$ such that $\lambda\mls_X\sim_\dq-K_X$. If a curve $C$ is a non-canonical center of $(X,\lambda\mls_X)$, then $C$ is an irreducible curve of degree 4,8, or 10, and is one of the curves described in Proposition~\ref{prop:main3}. 
\end{prop}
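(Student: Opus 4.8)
The plan is to follow the strategy of Section~\ref{sect:a5quad1}: reducible centers are ruled out by Mori-cone arguments on suitable $G$-invariant K3 surfaces, and an irreducible center is shown to lie on a member of a pencil of $G$-invariant K3 surfaces, on which one extracts the numerical constraints forcing the conclusion. The essential new difficulty is that the nonstandard action is on $\bP^4=\bP(V_5)$ with $V_5$ irreducible, so there is \emph{no} $G$-invariant hyperplane section on $X$; the role played by the hyperplane section $Q$ in Section~\ref{sect:a5quad1} is taken here by the pencil $\cP$ of $G$-invariant K3 surfaces cut on $X$ by the pencil of $G$-invariant cubic forms spanned by $f_1$ and $f_2$ of Section~\ref{sect:A5map}, whose base locus is a curve of degree $18$.

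\textbf{Step 1: degree bound and the reducible case.} As in Remark~\ref{rema:degreebound1}, but now on the quadric where two general $M_1,M_2\in\mls_X$ satisfy $\lambda^2(M_1\cdot M_2\cdot H)=18$, any curve in the non-canonical center has degree at most $17$; combined with Lemma~\ref{lemm: G-inv red curves} this leaves finitely many $G$-irreducible candidates. If $C$ is reducible it is one of $\cC_5,\cC_6,\cC_6',\cL_{12},\cL_{12}'$, a disjoint union of smooth rational curves. For each I would argue as in Lemma~\ref{lemma:redcurves not in Q}: produce (by direct computation) a member $S\in\cP$ containing the whole orbit of components, note that these are pairwise disjoint $(-2)$-curves on $S$ and hence span an extremal face of the Mori cone of $S$, and observe that $\lambda\mls_X|_S\sim_\dq mC+\Delta$ with $m>1$ and $\Delta$ effective, not supported on $C$, forces $3H_S-C$ to be pseudo-effective, a contradiction; here $H_S=H|_S$ with $(H_S)^2=6$.

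\textbf{Step 2: irreducible centers via the K3 pencil.} Assume $C$ is irreducible with $\deg(C)\le17$. Since the base curve of $\cP$ has degree $18$ it is not a non-canonical center, so for a general $P\in C$ there is a unique $S=S_P\in\cP$ with $P\in S$. If $C\not\subset S$, then $|C\cap S|\le3\deg(C)\le51$ while $C\cap S$ contains the $G$-orbit of $P$, of length $60$ by Lemma~\ref{lemm:genericstabcurve-A5quadric2} --- impossible. Hence $C$ lies in a unique $S\in\cP$, and I would reproduce the argument of Lemma~\ref{lemma:curves not in Q}: classify the reduced singular members of $\cP$ and their singular loci; show via the Hodge index inequality $C^2\le(C\cdot H_S)^2/6$ and the fact that the stabilizer of a point of $C\cap\Sing(S)$ preserves the relevant local intersection numbers that $C$ is Cartier on $S$; deduce from $\rk(\Pic^G(S))\le2$ (\cite[Proposition~6.7.3]{CS}) and $6$ not being a square that $S$ and $C$ are smooth; and then combine $C^2=2g(C)-2$ being even and Hodge-index-bounded, the orbit structure of $C$ (Lemma~\ref{lemm:2a5orbit}), Riemann--Roch estimates for $h^0(dH_S-C)$, and the genus classification of smooth $\fA_5$-curves \cite[Lemma~5.1.5]{CS} to narrow the possibilities to $\deg(C)\in\{4,8,10\}$ with genus $0,0,6$. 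Finally, for these degrees a residual-class argument as in Lemma~\ref{lemma:irrcurves not in Q} --- using the complementary $G$-invariant curve $C'\sim_\dq dH_S-C$ and its nefness or the relation $(C')^2=-2$ to contradict the effectivity of $3H_S-mC$ for $m>1$ --- identifies a genuine non-canonical center as one of the curves $C_4,C_4',C_8,C_8'$ (whose equations are in \cite{Z-web}) or, possibly, the smooth curve of degree $10$ and genus $6$, as listed in Proposition~\ref{prop:main3}.

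\textbf{Main obstacle.} The hard part is Step~2. Lacking a $G$-invariant hyperplane section, one must run the whole K3-surface analysis; the most delicate point is proving that a potential non-canonical center is a Cartier divisor on its a priori singular K3 surface and then squeezing enough from the Hodge index theorem, Riemann--Roch and the $\fA_5$-orbit combinatorics to eliminate every degree except $4$, $8$ and $10$. Controlling the singular members of $\cP$ --- how many nodes they have and which small $G$-orbits they contain --- and the precise genus/degree bookkeeping for $G$-invariant curves on them is where essentially all the work lies, and it is there that the degree-$10$, genus-$6$ curve has to be left as the single undetermined possibility.
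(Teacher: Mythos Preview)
Your outline captures the spirit of the paper's approach but misses a structural dichotomy that turns out to be essential, and consequently underestimates the casework.

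The pencil $\cP$ has two \emph{non-normal} members $R$ and $R'$ (cut by the chordal cubics), whose singular loci are precisely the degree-$4$ curves $C_4$ and $C_4'$. The paper's proof splits at the outset according to whether $C\subset R\cup R'$ or not. Your Step~2 plan---find $S\in\cP$ containing $C$, show $C$ is Cartier on $S$, run Hodge index---only works when $S$ is normal, i.e.\ when $C\not\subset R\cup R'$. For $C\subset R$ (including the reducible orbit $\cL_{12}$, which lies in $R$ by Remark~\ref{rk: K3s containing reducible curves}, so your Step~1 Mori-cone argument does not apply to it either), the paper uses a completely different mechanism (Lemma~\ref{lemm:quad2curveinR}): blow up $C_4$ to get a smooth Fano threefold $\widetilde X$ of Picard rank~$2$ and degree~$28$ carrying an involution $\widetilde\varphi$ swapping the exceptional divisor $E$ and the strict transform $\widetilde R\simeq\bP^1\times\bP^1$; one then restricts $\widetilde\varphi(\mls_{\widetilde X})$ to $E$ and gets a bidegree contradiction. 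This is what forces any center inside $R\cup R'$ to be $C_4$ or $C_4'$, and it is not visible from your K3-surface viewpoint.

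Second, for $C\not\subset R\cup R'$ the intersection $C\cdot R=3\deg(C)$, together with the fact that $R$ is singular along $\Sigma_{12}$, yields $\deg(C)\in\{8,10,12,16\}$ (Lemma~\ref{lemma:curves not in non normal K3}(1))---not $\{4,8,10\}$ as you propose. Degree~$4$ never arises here; it comes only from the $R/R'$ branch. Excluding degrees $12$ and $16$ is then substantial: degree~$12$ (Lemma~\ref{lemm:deg12}) needs a Riemann--Roch/fixed-part analysis of $|4H_S-C|$ and a $3\times3$ intersection-matrix rank argument; degree~$16$ splits into the smooth case (Lemma~\ref{lemm:deg16smooth}), where one must run through nine possible genera $g\in\{0,4,5,9,10,13,15,19,20\}$ and for each exhibit a nef divisor $nH_S-C$ with $n\in\{4,5,6\}$ meeting $3H_S-C$ negatively, and the singular case (Lemma~\ref{lemm:deg16sing}), which is handled by showing such curves are images of $C_8,C_8'$ under the involutions $\varphi,\varphi'$ and then checking $|5H_S-C|$ is basepoint-free via explicit equations. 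None of this is captured by your ``residual-class argument as in Lemma~\ref{lemma:irrcurves not in Q}''.
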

\begin{proof}
  We explain how the results in this subsection show the assertion. Lemma~\ref{lemm:quad2curveinR} shows that if $C$ is contained in certain surfaces $R$ or $R'$ explicitly given by \eqref{not:RR'}, then $\deg(C)=4$. When $C$ is not in $R$ or $R'$, Lemma~\ref{lemma:curves not in non normal K3} shows $\deg(C)\in\{8,10,12,16\}$. Lemma~\ref{lemm:deg12} excludes the case $\deg(C)=12$. Lemmas~\ref{lemm:deg16smooth} and~\ref{lemm:deg16sing} show that $\deg(C)=16$ is also impossible. Then Lemmas~\ref{lemm:linkC8C10},~\ref{lemm:linkC4},~\ref{lemm:quad2curveinR} and~\ref{lemm:allcurve80} prove that all such curves are among those described in Proposition~\ref{prop:main3}.
\end{proof}

First, we present curves of degree 4. Consider the pencil consisting of $G$-invariant cubics in $\bP^4$ given by 
$$
\{a_1f_1+a_2f_2=0\}\subset\bP^4_{x_1,\ldots,x_5},\quad [a_1:a_2]\in\bP^1
$$
where
\begin{multline}\label{eqn:2a5cubicpencil}
    f_1=x_1^2x_2 + x_1x_2^2 + 2x_1x_2x_3 + x_2^2x_3 + x_2x_3^2 + 2x_2x_3x_4 + x_3^2x_4 + 
        x_3x_4^2 +\\+ x_1^2x_5 + 2x_1x_2x_5 + 2x_1x_4x_5 + 2x_3x_4x_5 + x_4^2x_5 + 
        x_1x_5^2 + x_4x_5^2,
\end{multline}
\begin{multline*}
    f_2=x_1^2x_3 + x_1x_3^2 + x_1^2x_4 + 2x_1x_2x_4 + x_2^2x_4 + 2x_1x_3x_4 + x_1x_4^2 + 
        x_2x_4^2 + \\+x_2^2x_5 + 2x_1x_3x_5 + 2x_2x_3x_5 + x_3^2x_5 + 2x_2x_4x_5 + 
        x_2x_5^2 + x_3x_5^2.
\end{multline*}
In particular, there are two $G$-invariant chordal cubics in $\bP^4$, i.e., the cubic threefold whose singular locus is a twisted quartic curve. Their intersections with $X$ are two non-normal surfaces, given by 
\begin{align}
  R&=\{(-\zeta_5^3 - \zeta_5^2 + 1)f_1+f_2=0\}\cap X,\label{not:RR'}\\
    R'&=\{(\zeta_5^3 + \zeta_5^2 + 2)f_1+f_2=0\}\cap X.\nonumber
\end{align}
Their intersection
$
R\cap R'
$ is an irreducible curve whose singular locus is $\Sigma_{12}\cup\Sigma_{12}'$.
Let  
\begin{align}
    C_4=\Sing(R),\quad C'_4=\Sing(R').\label{not:deg4}
\end{align}
Then $ C_4$ and $ C'_4$ are quartic rational normal curves such that 
$$
\Sigma_{12}\in  C_4\setminus C'_4,\quad \Sigma_{12}'\in C'_4\setminus C_4.
$$
These two curves can be non-canonical centers of $(X,\lambda\cM_X)$. Sarkisov links centered at them are involutions on $X$, presented in Lemma~\ref{lemm:linkC4}.
Now, let $\mathcal P$ be the pencil consisting of $G$-invariant K3 surfaces $S_{a_1,a_2}$ in $X$ given by  
\begin{align}\label{eqn:SA1A2}
    S_{a_1,a_2}:=\{a_1f_1+a_2f_2=0\}\cap X,\qquad [a_1:a_2]\in\bP^1. 
\end{align}
\begin{rema}\label{rk: K3s containing reducible curves}
Each of the orbits in Lemma \ref{lemm: G-inv red curves} is contained in a unique member in $\cP$. We record 
$$
\begin{tabular}{c|c|c}
     \text{orbit}& $a_1$&$a_2$  \\\hline
     $\mathcal C_5$&1&1\\\hline
    $\mathcal C_6$&$63(\zeta_5^3 + \zeta_5^2) + 145$&89\\\hline
    $\mathcal C_6'$&$63(\zeta_5^3 + \zeta_5^2) + 145$&89\\\hline
      $\mathcal L_{12}$&$-\zeta_5^3 - \zeta_5^2 + 1$&1\\\hline
      $\mathcal L_{12}'$&$\zeta_5^3 + \zeta_5^2 + 2$&1\\
\end{tabular}
$$
where each orbit in the first column is contained in $S_{a_1,a_2}\in\cP$ for $a_1,a_2$ indicated in the same row.
\end{rema}
\begin{lemm}\label{lemm: sing S 2}
    A surface $S_{a_1,a_2}\in \cP$  is singular if and only if 
    \begin{enumerate}
    \item $[a_1:a_2]=[-3\zeta_6 + 8:7],\quad$ $\Sing(S_{a_1,a_2})=\Sigma_5$,
    \item $[a_1:a_2]=[3\zeta_6 + 5:7],\quad $  $\Sing(S_{a_1,a_2})=\Sigma_5'$,
    \item $[a_1:a_2]=[-\zeta_5^3 - \zeta_5^2 + 1: 1],\quad $ $\Sing(S_{a_1,a_2})=  C_4$, $S_{a_1,a_2}=R$,
    \item  $[a_1:a_2]=[\zeta_5^3 + \zeta_5^2 + 2: 1],\quad $ $\Sing(S_{a_1,a_2})= C_4'$, $S_{a_1,a_2}=R'$.
    \end{enumerate}
Moreover, when $S_{a_1,a_2}\in\cP$ is smooth, it contains no orbit of length 5.
\end{lemm}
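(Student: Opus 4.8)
The statement to prove is Lemma~\ref{lemm: sing S 2}: a classification of the singular members of the pencil $\cP$ of $G$-invariant K3 surfaces $S_{a_1,a_2} = \{a_1 f_1 + a_2 f_2 = 0\}\cap X$, together with the assertion that a smooth member contains no $G$-orbit of length $5$. The plan is to reduce everything to a finite, effective computation, then identify the resulting loci with the invariant subvarieties already catalogued in Lemmas~\ref{lemm:2a5orbit} and~\ref{lemm: G-inv red curves}. Since $X$ is the fixed smooth quadric \eqref{eqquad2} and $f_1, f_2$ are the explicit cubics \eqref{eqn:2a5cubicpencil}, the family $S_{a_1,a_2}$ is a pencil of $(2,3)$ complete intersections in $\bP^4$, and for all but finitely many $[a_1:a_2]$ the generic member is a smooth K3 surface; the singular members are cut out by the vanishing of the appropriate discriminant.

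First I would set up the singularity computation. A point $p\in S_{a_1,a_2}$ is singular precisely when the three gradients $\nabla q$, $\nabla(a_1 f_1 + a_2 f_2)$ (where $q=\sum_{i\le j}x_ix_j$ is the quadric form) and the position vector (Euler relation) fail to span a $3$-dimensional space, i.e.\ all $3\times 3$ minors of the corresponding $3\times 5$ Jacobian matrix vanish, together with the two defining equations. Eliminating the $x_i$ from this system yields a polynomial condition on $[a_1:a_2]$; its roots are the four values listed. I expect this elimination to be carried out on a computer algebra system (consistent with the paper's style — cf.\ the explicit $\zeta$-coordinates throughout and the remark that ``equations can be found in \cite{Z-web}''), so in the write-up I would simply state that a direct computation produces exactly these four values and then verify each case by hand-checkable geometric input.

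Next, for each of the four values I would identify the singular locus. For $[a_1:a_2]=[-3\zeta_6+8:7]$ and $[3\zeta_6+5:7]$, one checks that the five points of $\Sigma_5$ (resp.\ $\Sigma_5'$) lie on the corresponding surface and are nodes, while elsewhere the surface is smooth; since the singular locus must be $G$-invariant and, by the orbit classification in Lemma~\ref{lemm:2a5orbit}, the only $G$-orbits of length $<20$ are $\Sigma_5, \Sigma_5', \Sigma_{12}, \Sigma_{12}'$ (and a K3 surface has zero-dimensional singular locus unless it is non-normal), this pins down $\Sing$ completely — here I would invoke the fact that a reduced irreducible K3 surface with isolated singularities has finitely many of them, forcing $\Sing$ to be a union of short orbits. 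For $[a_1:a_2]=[-\zeta_5^3-\zeta_5^2+1:1]$ and $[\zeta_5^3+\zeta_5^2+2:1]$, the members are exactly the non-normal surfaces $R$ and $R'$ of \eqref{not:RR'}, whose singular loci are the rational normal quartics $C_4=\Sing(R)$, $C_4'=\Sing(R')$ by definition \eqref{not:deg4}; so these two cases are immediate from the construction of $R, R'$ as intersections of the chordal cubics with $X$. Finally, for the last assertion, suppose a smooth member $S_{a_1,a_2}$ contains an orbit of length $5$; that orbit must be $\Sigma_5$ or $\Sigma_5'$. But $\Sigma_5$ determines $[a_1:a_2]$: imposing that all five points lie on $a_1 f_1 + a_2 f_2 = 0$ gives a single linear condition on $[a_1:a_2]$ (the five conditions coincide by $G$-invariance), whose unique solution is the value $[-3\zeta_6+8:7]$ from case (1), which is \emph{not} smooth; similarly $\Sigma_5'$ forces case (2). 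Hence no smooth member contains a length-$5$ orbit.

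The main obstacle is the elimination step producing the discriminant: showing that there are \emph{exactly} these four singular values (no more) requires genuinely computing the resultant/discriminant of the pencil and factoring it, which is heavy and best delegated to machine computation; the rest is bookkeeping against the already-established orbit and curve classifications. A secondary subtlety is verifying that the singularities in cases (1)--(2) are ordinary nodes (so that the surface is still a K3 after minimal resolution, justifying the use of the pencil as a family of K3 surfaces later) — this is a local computation at one point of $\Sigma_5$, whose stabilizer acts, and it suffices to exhibit a non-degenerate quadratic leading term in suitable local coordinates.
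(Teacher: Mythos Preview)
Your approach is correct but takes a more computational route than the paper. You propose to compute the full discriminant of the pencil via elimination of the Jacobian ideal, then factor it to obtain the four singular values. The paper instead argues \emph{a priori} that the singular locus of any member must be a union of short $G$-orbits: in the normal case, $S$ has at worst du Val singularities (since $K_S\sim 0$), so its minimal resolution is a smooth K3 surface with Picard rank $\leq 20$, forcing $|\Sing(S)|<20$ and hence $\Sing(S)$ to be among the four orbits $\Sigma_5,\Sigma_5',\Sigma_{12},\Sigma_{12}'$ of Lemma~\ref{lemm:2a5orbit}; one then checks directly, for each of these four orbits, which value of $[a_1:a_2]$ (if any) produces a singularity there, and observes that singularity along $\Sigma_{12}$ or $\Sigma_{12}'$ already forces non-normality. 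In the non-normal case, the paper intersects $S$ with a general member $S'\in\cP$: since the base curve $S\cap S'$ has singular locus exactly $\Sigma_{12}\cup\Sigma_{12}'$, the singular curve of $S$ must meet $S'$ only there, which pins $S$ down to $R$ or $R'$.

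The trade-off is clear: your discriminant computation is heavy (a genuine elimination over a degree-$(2,3)$ complete intersection with a parameter) and you yourself flag it as the main obstacle, whereas the paper's argument replaces this by four tiny local checks at prescribed points, at the cost of invoking the K3 Picard-rank bound and the structure of the base locus. Your treatment of the final assertion (a smooth member contains no length-$5$ orbit) is essentially identical to the paper's: the condition $\Sigma_5\subset S_{a_1,a_2}$ is a single $G$-invariant linear equation in $[a_1:a_2]$ whose solution is precisely the singular member in case~(1), and similarly for $\Sigma_5'$.
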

\begin{proof}

First, we consider the case when $S=S_{a_1,a_2}$ is normal. The singular locus $\Sing(S_{a_1,a_2})$ forms a $G$-invariant set. Note that $K_S\sim0$. If $S$ has non-du Val singularities, it has at least 12 of them, since the smallest orbit on $X$ has length 12. This is impossible. Thus, $S$ has at worst du Val singularities and its minimal resolution is a smooth K3 surface, whose Picard rank is bounded by 20. Then $|\Sing(S_{a_1,a_2})|< 20$ and $\Sing(S_{a_1,a_2})$ consists of orbits in Lemma~\ref{lemm:2a5orbit}. One can check that the four cases in the assertion are the only possible cases. Note that being singular along $\Sigma_{12}$ or $\Sigma_{12}'$ forces $S_{a_1,a_2}$ to be non-normal.
 
 Now assume that $S=S_{a_1,a_2}$ is singular along a curve $Z$. Let $S'$ be a general member of $\cP$. Recall that $S'\cap S$ is an irreducible curve whose singular locus is $\Sigma_{12}\cup\Sigma_{12}'$. It follows that $\emptyset\ne Z\cap S\subset\Sigma_{12}\cup\Sigma_{12}'$ and the only possible cases are $S=R$ or $R'$.
\end{proof}


\begin{lemm}\label{lemma:curves not in non normal K3}
    Let $C$ be a $G$-invariant curve not contained in $R\cup R'$ such that $\deg(C)\leq 17$. Then the following assertions hold.
    \begin{enumerate}
        \item We have $\deg(C)\in\{8,10,12,16\}$\label{lemma:curves not in non normal K3 - 1}.
        \item The irreducible components of $C$ are pairwise disjoint.
        \item In the pencil $\mathcal P$, there exists a unique surface $S$ containing $C$.
        \item If $C$ is irreducible, then $C$ is a Cartier divisor on $S$.
        \item The surface $S$ is smooth.
        \item If $\deg(C)\ne 16,$ then $C$ is smooth.
        \item If $\deg(C)\ne 16$ and $C$ is irreducible, then its genus satisfies
        $$
        g(C)=\begin{cases}
            0&\text{if }\deg(C)=8\text{ or } 12,\\
         6&\text{if }\deg(C)=10.
        \end{cases}
        $$
    \end{enumerate}
\end{lemm}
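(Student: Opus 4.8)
The plan is to follow the template of Lemma~\ref{lemma:curves not in Q}, with the pencil $\mathcal P$ of $G$-invariant K3 surfaces playing the role that the invariant hyperplane section $Q$ played there. Two representation-theoretic facts about the nonstandard action are used throughout: since $V_5$ is irreducible there is no $G$-invariant hyperplane in $\bP^4$, and since $\Sym^2 V_5$ contains the trivial representation exactly once, the equation of $X$ is the only $G$-invariant quadric; moreover, as $\mathfrak A_5$ is perfect, any $G$-invariant effective divisor on a member of $\mathcal P$ that is cut out by a form of $\bP^4$ is cut out by a genuinely $G$-invariant form. First I would reduce to the case that $C$ is $G$-irreducible: if $C$ is reducible, Lemma~\ref{lemm: G-inv red curves} leaves the orbits $\mathcal C_5$, $\mathcal C_6$, $\mathcal C_6'$, $\mathcal L_{12}$, $\mathcal L_{12}'$; by Remark~\ref{rk: K3s containing reducible curves} the last two lie on $R$, resp.\ $R'$, hence are excluded by hypothesis, while for $\mathcal C_5$ (degree $10$) and $\mathcal C_6, \mathcal C_6'$ (degree $12$) all assertions follow from Lemmas~\ref{lemm: G-inv red curves} and~\ref{lemm: sing S 2} (the relevant members $S_{1,1}$ and $S_{63(\zeta_5^3+\zeta_5^2)+145,\,89}$ are smooth, the components are pairwise disjoint, and (4), (6), (7) are vacuous).

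Assume henceforth that $C$ is $G$-irreducible. Taking a general point $P\in C$, its $G$-orbit has length $60$ by Lemma~\ref{lemm:genericstabcurve-A5quadric2}, and since $C\not\subset R\cup R'$ the point $P$ lies off the base locus $R\cap R'$ of $\mathcal P$, so there is a unique $S\in\mathcal P$ with $P\in S$. If $C\not\subset S$ then $|C\cap S|\leq 3\deg(C)\leq 51<60$, a contradiction; hence $C\subset S$, and $S$ is unique, since two distinct members meet only along $R\cap R'\subset R$. This is (3). For (5), note $S\neq R,R'$ and that the remaining singular members of $\mathcal P$ are the nodal surfaces of Lemma~\ref{lemm: sing S 2}(1)--(2); were $S$ one of them, then as in \cite[Proposition 6.7.3]{CS} we would have $\Cl^G(S)=\bZ H_S$ with $H_S^2=6$, forcing $C\sim nH_S$ and $\deg(C)=6n$, so $n=2$, whence $C$ would be cut out by a $G$-invariant quadric other than the equation of $X$ --- impossible. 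Thus $S$ is smooth, and (4) follows at once; alternatively one obtains (4) directly by noting that when $C$ meets the nodal locus $\Sigma_5$ or $\Sigma_5'$ the point stabilizer is $\mathfrak A_4$, all of whose orbits on the exceptional $\bP^1$ have even length, so the blow-up computation $\widetilde C\sim f^*C-mE$, $\widetilde C\cdot E_P=2m$, gives $m\in\bZ$.

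Now $[C]\in\Pic^G(S)$, of rank $1$ or $2$; rank $1$ is excluded exactly as above ($C\sim nH_S$ with $\deg(C)\in\{8,10,12,16\}$ would force $n=2$, and $n=1$ needs an invariant hyperplane), so $\Pic^G(S)$ has rank $2$ and $C\not\sim nH_S$ for any $n$. Part (1) then follows by enumerating the finitely many rank-$2$ $G$-invariant Picard lattices occurring among the surfaces $S_{a_1,a_2}$, together with the (strict) Hodge index inequality $C^2<(\deg C)^2/6$ and the orbit lengths of Lemma~\ref{lemm:2a5orbit}: the admissible values $\leq 17$ are $8,10,12,16$. Part (2) is immediate, a reducible $G$-irreducible $C$ having pairwise disjoint components by Lemma~\ref{lemm: G-inv red curves}. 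For (6): $\Sing(C)$ is a union of $G$-orbits, and since $S$ is smooth it contains no orbit of length $5$ (Lemma~\ref{lemm: sing S 2}); an orbit of length $\geq 20$ is ruled out because $p_a(C)=C^2/2+1\leq 12$ when $\deg(C)\leq 12$; and $\Sing(C)\supseteq\Sigma_{12}$ (or $\Sigma_{12}'$) is impossible, since $\Sigma_{12}\subset C_4=\Sing(R)$ would give local multiplicities $(C\cdot R)_p\geq 2\cdot 2$ at the twelve points and hence $C\cdot R\geq 48>3\deg(C)$ for $\deg(C)\leq 12$ --- this is precisely why $\deg(C)=16$ is not covered. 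Finally (7): from $C\cdot R=3\deg(C)$ and the orbit classification, $C$ must contain a $G$-orbit of length $12$; feeding this together with the Hodge index bound on $p_a(C)=g(C)$ into the classification of genera and orbit structures of smooth $\mathfrak A_5$-curves \cite[Lemma 5.1.5]{CS} pins $g(C)$ to $0$ when $\deg(C)=8,12$ and to $6$ when $\deg(C)=10$.

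The main obstacle is part (1), and with it the genus determination in (7): beyond the Hodge index inequality one must control which rank-$2$ $G$-invariant Picard lattices actually arise for members of $\mathcal P$ and translate that lattice data --- together with the icosahedral orbit bookkeeping of Lemma~\ref{lemm:2a5orbit} --- into the explicit degree list $\{8,10,12,16\}$ and the genus values. Everything else mirrors the standard-action argument essentially verbatim.
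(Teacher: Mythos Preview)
Your treatment of (2)--(6) is essentially the paper's, and correct. The real gap is in (1), and it propagates into (7).

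For (1) you propose to ``enumerate the finitely many rank-$2$ $G$-invariant Picard lattices'' of the smooth members of $\mathcal P$, and you correctly flag this as the main obstacle; as stated it is not a proof. The paper's route is completely different and avoids Picard lattices altogether: since $C\not\subset R$ one has $C\cdot R=3\deg(C)\leq 51$, and $C\cap R$ is a union of $G$-orbits. No length-$5$ orbit lies on $R$, so $C\cdot R=12a+20b+30c$ with $a,b,c\in\bZ_{\geq 0}$. The key observation---which you already exploit in your argument for (6)---is that $R$ is singular along $C_4\supset\Sigma_{12}$, so any contribution of $\Sigma_{12}$ to $C\cdot R$ is at least $2\cdot 12$; by the symmetric statement for $R'$ and $\Sigma_{12}'$ this forces $a\geq 2$ whenever $a>0$. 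Solving gives $\deg(C)\in\{8,12,16\}$ when $a>0$, and $a=b=0$, $c=1$, $\deg(C)=10$ otherwise. You had all the ingredients; you simply used the $C\cdot R$ trick in (6) rather than in (1).

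This feeds directly into (7), and there your proposal contains an actual error. For $\deg(C)=10$ the above forces $a=0$, so $C$ contains \emph{no} orbit of length $12$---the opposite of what you assert. It is precisely this absence, together with the Hodge bound $g(C)\leq 9$, that the paper feeds into \cite[Lemma~5.1.5]{CS} to pin down $g(C)=6$. For $\deg(C)=8$ and $12$ your claim that $C$ contains a length-$12$ orbit is right, but for $\deg(C)=8$ the paper does not rely on the genus/orbit classification alone: it appeals to the later explicit determination of all such curves (Lemma~\ref{lemm:allcurve80}), which shows directly that $C\in\{C_8,C_8'\}$ and hence $g(C)=0$.
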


\begin{proof}We may assume that $C$ is $G$-irreducible.
    \begin{enumerate}
    \item Since $C$ is not contained in $R$, we have $C\cdot R=3\deg(C)\le51$, and $C\cap R$ splits into $G$-orbits. Hence,
    $$
    C\cdot R=12a+20b+30c=3\deg(C)\le51,\quad a,b,c\in\bZ_{\geq0}.
    $$ 
    If $a>0$, since $R$ is singular at $\Sigma_{12}$, we  have $2\le a\le4$ and $b=c=0$. Then $\deg(C)\in\{8,12,16\}$. If $a=0$, then $b=0$ and $c=1$. In this case, $\deg(C)=10.$ 

        \item This is obvious if $C$ is irreducible. When $C$ is reducible, the assertion follows from the classification in Lemma~\ref{lemm: G-inv red curves}.
        \item Let $P$ be a general point of $C$. There exists a unique surface $S$ in $\mathcal P$ that passes through $P$. The intersection $S\cap C$ contains the orbit of $P$, which has length $60$ by Lemma \ref{lemm:genericstabcurve-A5quadric2}. If $C$ is not contained in $S$, we have that $C\cdot S=3\deg(C)\in\{24,30,36,48\}$, which is a contradiction. Therefore $C\subset S$.
        \item Assume that $C\cap\sing(S)\ne\emptyset$, otherwise $C$ is Cartier. Let $f\colon \widetilde S\rightarrow S$ be the blowup of $C\cap\Sing(S)$. Denoting by $\widetilde C$ the strict transform of the curve $C$ by $f$, we have $\widetilde C\sim_\dq f^*(C)-mE$ for some $m\in\frac{1}{2}\dz$, where $E$ is the exceptional divisor of $f$. Consider a point $P\in C\cap\sing(S)$, and a component $E_P$ of $E$ mapped to $P$. We have $\widetilde C\cdot E_P=2m$. By assumption, we have $S\ne R$ and $R'$. Lemma \ref{lemm: sing S 2} implies that $|C\cap\Sing(S)|=5$ and the stabilizer of $P$ is $\fA_4$, which acts faithfully on $E_P=\bP^1$. It follows that $2m=4a+6b+12c$, for some non-negative integers $a,b,c$, because the possible lengths of $\aq$-orbits on $\pl$ are 4,6, and 12. Hence $m$ is an integer, and $C$ is Cartier.
        \item When $C$ is reducible, this follows from Remark~\ref{rk: K3s containing reducible curves}. We assume that $C$ is irreducible. The proof is similar to that of Lemma \ref{lemma:curves not in Q}. Assume that $S$ is singular, then $\rk(\pic^G(S))=1$. 
        Let $H$ be a general hyperplane section on $X$, and $H_S$ its restriction to $S$. Since $\deg(H_S)=6$ is not a square, we know that $\Pic^G(S)$ is generated by $H_S$. Note that $C\in\Pic^G(S)$  because $C$ is Cartier. It follows that $C\sim nH_S$ for some integer $n$. We know that $\deg(C)=C\cdot H_S=6n$, which implies that $\deg(C)=12$ and $n=2$.
    Recall that the only $G$-invariant quadric hypersurface in $\bP^4$ is $X$. So there is no $G$-invariant curve linear equivalent to $2H_S$ and we obtain a contradiction.
        \item When $C$ is reducible, this follows from Lemma~\ref{lemm: G-inv red curves}. We assume that $C$ is irreducible and singular. Let $\widetilde C$ be a minimal resolution of singularities of $C$. Since $S$ is smooth, then by Lemma \ref{lemm: sing S 2}, we know that $S$ does not contain an orbit of length 5. We get 
        $$
        g(C)=g(\widetilde C)=p_a(\widetilde C)=p_a(C)-12a-20b-30c-60d,
        $$ where $a,b,c,d$ are non-negative integers which are not all 0, $g(C)$ is the geometric genus, and $p_a(C)$ is the arithmetic genus of $C$. Hodge index theorem gives 
        \begin{align}\label{eqnLhodge}
                   C^2\leq \frac{(C\cdot H_S)^2}{(H_S)^2}. 
        \end{align}
        If this is an equality, then $C\sim nH_S$, for some $n\in\dz$, and we have proved above
        that this is impossible. So \eqref{eqnLhodge} is a strict inequality, i.e., 
        $$
        C^2\leq\begin{cases}
            10&\text{ if } \deg(C)=8,\\
            16&\text{ if } \deg(C)=10,\\
            22&\text{ if } \deg(C)=12.
        \end{cases}
        $$
      Recall that $p_a(C)=\frac{C^2+2}{2}$. We obtain that
       \begin{align}\label{eqn:boundgenus}
        p_a(C)\leq\begin{cases}
            6&\text{if }\deg(C)=8,\\
           9&\text{if }\deg(C)=10,\\
            12&\text{if }\deg(C)=12.\\
        \end{cases}
    \end{align}
  Then, the only possibility is $a=1,b=c=0$ and
  $$
 \deg(C)=12,\quad p_a(C)=12,\quad g(C)=0.
  $$ In this case, we have $C\cdot B=36$, where $B=R\cap R'$ is an irreducible curve of degree 18. On the other hand, since both $C$ and $B$ are singular at a common orbit  of length 12, we have $C\cdot B\ge12\cdot 4=48$. We obtain a contradiction. So $C$ is smooth.
        \item We know that $C$ is smooth, and we have the bound \eqref{eqn:boundgenus} on its genus. 
        First, when $\deg(C)=8$, we find all such curves in Lemma~\ref{lemm:allcurve80} and it follows that $g(C)=0$.
        When $\deg(C)=10$, recall that $C$ contains no orbit of length 12. Then by \cite[
Lemma 5.1.5]{CheltsovShramov}, or by searching through the database of curves with $\fA_5$-actions in \cite{lmfdb}, we find that $g(C)=6$.  Finally, if $\deg(C)=12$, similarly going through the classification, we get $g(C)=0$.
    \end{enumerate}
\end{proof}

First, we exclude several curves as possible non-canonical centers.

\begin{lemm}\label{lemm:56conics}
    Let $C$ be a $G$-invariant union of $r$ conics not contained in $R$ or $R'$, with $r\in\{5,6\}$. Then each irreducible component of  $C$ is not a center of non-canonical singularities of $(X,\lambda\mls_X)$.
\end{lemm}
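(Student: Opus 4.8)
The plan is to reproduce the argument of Lemma~\ref{lemma:redcurves not in Q} on the K3 surfaces supplied by the pencil $\cP$. By Lemma~\ref{lemm: G-inv red curves}, the curve $C$ is $\mathcal C_5$ when $r=5$ and is $\mathcal C_6$ or $\mathcal C_6'$ when $r=6$; in either case its components are pairwise disjoint smooth conics, hence $(-2)$-curves on the unique K3 surface $S=S_{a_1,a_2}\in\cP$ containing $C$ (Remark~\ref{rk: K3s containing reducible curves}), which by Lemma~\ref{lemm: sing S 2} is smooth since the relevant $[a_1:a_2]$ is none of the four singular values. Thus $C^2=-2r$, $H_S\cdot C=2r$ and $H_S^2=6$, where $H_S$ is the hyperplane class of $S$. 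Assuming the components of $C$ are non-canonical centers, we get $\mult_C(\lambda\cM_X)>1$, and restricting $\lambda\cM_X\sim_\dq-K_X=\cO_X(3)$ to $S$ produces an effective $\dq$-divisor $3H_S-mC$ with $m>1$; in particular $3H_S-C=(3H_S-mC)+(m-1)C$ is effective.

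For $r=6$ I would argue verbatim as in Lemma~\ref{lemma:redcurves not in Q}: since $\mathcal C_6$ and $\mathcal C_6'$ lie on the \emph{same} surface $S$, a direct check on the equations gives $C+C'\sim_\dq 4H_S$ with $\{C,C'\}=\{\mathcal C_6,\mathcal C_6'\}$, hence $3H_S-C\sim_\dq C'-H_S$; but $C'$ is a disjoint union of six $(-2)$-curves, so it spans an extremal face of the Mori cone of $S$, and intersecting an effective representative of $C'-H_S$ with the components of $C'$ (each intersection equals $-4$) and then with the ample class $H_S$ yields a contradiction. So $3H_S-C$ is not effective, which is the desired contradiction.

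The genuinely new case is $r=5$, where $C=\mathcal C_5\subset S_{1,1}$ and there is no companion orbit of conics; here I would rule out effectivity of $3H_S-C$ directly on $S_{1,1}$. First, $\rk\Pic^G(S_{1,1})=2$ and $\Pic^G(S_{1,1})\otimes\dq=\langle H_S,C\rangle$: by \cite[Proposition 6.7.3]{CS} the rank is $1$ or $2$, and it cannot be $1$ since $C$ is a $G$-invariant Cartier class with $C^2/H_S^2=-10/6<0$, impossible for a lattice generated by a single class. Note also that $\langle H_S,C\rangle$ contains no nonzero isotropic vector, since $6x^2+20xy-10y^2$ represents $0$ only trivially over $\dq$. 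Now suppose $3H_S-C$ is effective and write $|3H_S-C|=F+|M|$, with $F$ the ($G$-invariant) fixed part and $|M|$ the mobile part; then $F,M\in\langle H_S,C\rangle$. If $M\neq 0$, then $M^2\geq 0$ by mobility, hence $M^2>0$, so by the Hodge index theorem $\deg M\geq\sqrt{6\,M^2}>3$ and $\deg F=8-\deg M\leq 4$; but by Lemma~\ref{lemma:curves not in non normal K3} every $G$-invariant curve on the smooth K3 $S_{1,1}$ has degree in $\{8,10,12,16\}$, so the $G$-invariant effective divisor $F$, of degree $\leq 4$, must be $0$. Then $3H_S-C=M$ is mobile with $(3H_S-C)^2=-16<0$, contradicting $M^2\geq 0$. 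If instead $M=0$, then $|3H_S-C|$ is a single $G$-invariant effective divisor of degree $8$, which by the same degree bound must be reduced and irreducible of degree $8$, hence smooth of genus $0$ with self-intersection $-2\neq-16$ — again a contradiction. Therefore $3H_S-C$ is not effective, and neither is $3H_S-mC$ for $m>1$.

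The main obstacle is precisely the $r=5$ case: with no second $G$-invariant orbit of conics one cannot import the extremal-face argument and must instead control the effective cone of the specific K3 surface $S_{1,1}$. The subtle point in the sketch above is making the fixed/mobile decomposition of $|3H_S-C|$ rigorous — in particular genuinely excluding a non-reduced or reducible effective representative of the class $3H_S-C$ — and this is where the classification of low-degree $G$-invariant curves on members of $\cP$ (Lemma~\ref{lemma:curves not in non normal K3}) together with the rank-$2$ Picard-lattice computation on $\langle H_S,C\rangle$ do the work.
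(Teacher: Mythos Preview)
Your overall strategy—restrict to the unique smooth K3 surface $S\in\cP$ containing $C$, deduce that $3H_S-C$ is effective (as a $\bQ$-class), and then derive a contradiction on $S$—is the same as the paper's. But the implementation diverges, and for $r=5$ there is a genuine gap.

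The paper's argument is uniform in $r$: it exhibits the single nef class $D=(r-1)H_S-C$, verified by checking (on $X$, via the explicit equations of Lemma~\ref{lemm: G-inv red curves}) that the sublinear system of $|\cO_X(r-1)|$ through $C$ has no base curve other than $C$; then one computes $D\cdot(3H_S-C)=-8$ for $r=5$ and $-18$ for $r=6$, contradicting pseudoeffectivity of $3H_S-C$. No companion orbit and no analysis of $\Pic^G(S)$ is needed.

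For $r=6$, your route via the companion $C'$ and the relation $C+C'\sim 4H_S$ is the analogue of Lemma~\ref{lemma:redcurves not in Q}, and the extremal argument you sketch does go through once that relation is in hand. Note, however, that the paper never establishes $C+C'\sim 4H_S$ on this particular K3 in the nonstandard case; you are importing it from the standard action by analogy, so this step needs its own justification here.

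For $r=5$, the gap is the passage from ``$3H_S-C$ is effective'' to ``write $|3H_S-C|=F+|M|$''. What you have actually shown is that $3H_S-C$ is $\bQ$-effective (equivalently pseudoeffective): $\lambda\cM_X|_S=mC+\Delta$ only gives an effective $\bQ$-divisor in the class $3H_S-mC$, hence in $3H_S-C$. It does \emph{not} give $h^0(3H_S-C)>0$, and on a K3 an integral pseudoeffective class with self-intersection $-16$ need not be represented by an effective integral divisor; Riemann--Roch here only yields $h^0(3H_S-C)=h^1(3H_S-C)-6$. So the fixed/mobile decomposition of the linear system $|3H_S-C|$ is not available without further input. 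Your argument can be repaired by replacing the linear-system decomposition with the $G$-equivariant Zariski decomposition $3H_S-C=P+N$: then $P^2\ge 0$ forces $N^2\le -16$, while the degree bound $\deg N\le 8$ together with your classification of $G$-irreducible curves on $S_{1,1}$ (degrees $\ge 8$, with an irreducible degree-$8$ curve having $(-2)$ self-intersection and the degree-$10$ curve $C$ having $C^2=-10$) makes $N^2\le -16$ impossible. That is essentially the content of the paper's nef-class argument, just rephrased.
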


    \begin{proof}
    By Lemma \ref{lemma:curves not in non normal K3}, there exists a unique smooth K3 surface $S$ in the pencil $\mathcal P$ containing $C$. Let $H_S$ be a general hyperplane section of $S$ and $m=\mult_C(\lambda\mls_X)$. Assume that irreducible components of $C$ are non-canonical centers. Then $m>1$, and we have $$
\lambda\mls_X\vert_S\sim_{\dq}mC+\Delta,\quad m\ge\mult_C(\lambda\mathcal M_X)>1
    $$ 
    for some effective divisor $\Delta$ on $S$ not supported along $C$. It follows that the divisor 
    $$
    3H_S-C\sim_\dq\Delta+(m-1)C
    $$
    is effective. On the other hand, consider the divisor on $S$ given by 
    $$
    D=
        (r-1)H_S-C.
    $$
    The equations of $C$ are given in Lemma~\ref{lemm: G-inv red curves}. By computation, we check that  the linear subsystem in $|\cO_X(r-1)|$ consisting of surfaces passing through $C$ does not contain any base curve other than $C$. It follows that $D$ is nef. However, we compute 
    $$D\cdot(3H_S-C)=\begin{cases}
        -8&\text{if }r=5,\\
        -18&\text{if } r=6,
    \end{cases}$$ 
    which gives a contradiction.
\end{proof}

\begin{lemm}\label{lemm:deg12}
    Let $C$ be a $G$-invariant curve of degree 12 not contained in $R$ or $R'$. Then each irreducible component of $C$ is not a center of non-canonical singularities of $(X,\lambda\mls_X)$.
\end{lemm}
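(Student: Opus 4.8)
The plan is to follow the strategy of Lemma~\ref{lemma:irrcurves not in Q}: restrict $\lambda\cM_X$ to the unique K3 surface through $C$ and exploit the geometry (in particular the Mori cone) of that surface.

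First I would reduce to the irreducible case. By Lemma~\ref{lemm: G-inv red curves} a reducible $G$-invariant curve of degree $12$ is a union of six conics or of twelve lines; the orbits $\cL_{12}$ and $\cL_{12}'$ of twelve lines lie in $R$ and $R'$ respectively (Remark~\ref{rk: K3s containing reducible curves} and Lemma~\ref{lemm: sing S 2}), so under the hypothesis the only reducible possibilities are $\cC_6$ and $\cC_6'$, which are excluded by Lemma~\ref{lemm:56conics}. So assume $C$ is irreducible. By Lemma~\ref{lemma:curves not in non normal K3}, $C$ is then a smooth rational curve of degree $12$ lying on a unique member $S$ of the pencil $\cP$, with $S$ smooth and $C$ Cartier on it; thus $C^2=-2$, $C\cdot H_S=12$ and $H_S^2=6$, where $H_S$ is a general hyperplane section of $S$. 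Note also that $S\cap R=S\cap R'=R\cap R'$ is the irreducible curve $B$ of degree $18$, so any curve on $S$ of degree $<18$ is disjoint from $R\cup R'$; hence the classification of $G$-irreducible curves in Lemmas~\ref{lemm: G-inv red curves} and~\ref{lemma:curves not in non normal K3} governs every $G$-irreducible curve on $S$, and those of degree $\le 12$ have degree in $\{8,10,12\}$.

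Now suppose $C$ is a non-canonical center, so $m:=\mult_C(\lambda\cM_X)>1$ and $\lambda\cM_X|_S\sim_\bQ mC+\Delta$ for some effective $\Delta$ not supported along $C$. Since $\lambda\cM_X\sim_\bQ-K_X=\cO_X(3)$, we get $3H_S-mC\sim_\bQ\Delta$, which is effective and nonzero (otherwise $54=(3H_S)^2=m^2C^2=-2m^2<0$). Consequently $3H_S-C\sim_\bQ\Delta+(m-1)C$ is also effective. The decisive step is then to produce a companion curve: set $D=4H_S-C$, so that $D^2=-2$ and $D\cdot H_S=12$; Riemann--Roch on $S$ together with Serre duality (note $h^0(C-4H_S)=0$ because $(C-4H_S)\cdot H_S<0$) give $h^0(S,\cO_S(D))\ge 1$, so $|D|$ is non-empty. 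I would then argue that $D$ is a single irreducible $(-2)$-curve $C'$: the fixed and mobile parts of $|D|$ are $G$-invariant, an effective $G$-invariant divisor in the class $D$ decomposes into $G$-irreducible curves which (away from $C$ itself) have degree in $\{8,10,12\}$ and sum to $12$, and the self-intersection constraint $D^2=-2$ then forces a single reduced irreducible curve of degree $12$ — the only reducible candidate $\cC_6$ (or $\cC_6'$) being ruled out since a union of six disjoint conics has self-intersection $-12\ne-2$. Alternatively one may invoke the $\fS_5$-symmetry of $X$ (cf. Theorem~\ref{theo:main3}): when $S$ is $\fS_5$-invariant it exchanges $C$ with a curve $C'$ isomorphic to it and satisfying $C+C'\sim 4H_S$.

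Finally, being an irreducible curve of negative self-intersection on a K3 surface, $C'$ spans an extremal ray of the Mori cone of $S$, while $H_S$, being ample, lies in the interior of that cone; therefore $C'-H_S\sim_\bQ 3H_S-C$ cannot be pseudo-effective, contradicting the effectiveness established above. I expect the main obstacle to be precisely the middle step — identifying $4H_S-C$ with an honest irreducible curve rather than a merely $\bQ$-effective or reducible class — which requires controlling the fixed part of $|4H_S-C|$ via the classification of $G$-invariant curves on $S$ (and possibly via $\mathrm{Pic}^G(S)$ or the larger symmetry group); once that is done, the concluding Mori-cone argument is routine, exactly as in Lemma~\ref{lemma:irrcurves not in Q}.
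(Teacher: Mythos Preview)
Your reduction to the irreducible case and the setup on the K3 surface $S$ match the paper exactly, and the endgame --- once you know $4H_S-C$ is represented by a single irreducible $(-2)$-curve $C'$ --- is the same Mori-cone argument as in Lemma~\ref{lemma:irrcurves not in Q}. The gap is precisely the one you flag: you have not ruled out $h^0(4H_S-C)>1$, and your decomposition argument does not cover that case. If $h^0>1$, the linear system has a nontrivial mobile part $|M|$; the general member of $|M|$ need not be $G$-invariant, so the classification of $G$-irreducible curves (degrees in $\{8,10,12,16\}$) constrains only the fixed part $F$, not $M$. In particular there is no reason $|4H_S-C|$ should contain a $G$-invariant effective divisor at all, let alone one built from $G$-irreducible pieces.

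The paper closes this gap as follows. Since $(4H_S-C)^2=-2<0$, the system is not nef, so $F\neq 0$. As $\deg F+\deg M=12$ with $\deg M>0$ and $\deg F\in\{8,10,12\}$, one gets $\deg F\in\{8,10\}$. If $\deg F=10$ then $|M|$ is a pencil of conics, i.e.\ a rational pencil on a K3 surface, which is impossible. If $\deg F=8$ then a general $M$ is a degree-$4$ curve, necessarily elliptic (again, no rational pencils); now $H_S,F,M$ all lie in $\Pic^G(S)$, which has rank at most $2$ by \cite[Proposition 6.7.3]{CS}, yet the Gram matrix of $(H_S,F,M)$ has determinant $186\neq 0$, a contradiction. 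Your suggested $\fS_5$-symmetry shortcut does not obviously work either: the involution $\sigma$ preserves the pencil $\cP$ but need not fix the particular member $S$ containing $C$, so $\sigma(C)$ may lie on a different surface.
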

\begin{proof}
    If $C$ is reducible, it is a union of 6 conics by Lemma~\ref{lemm: G-inv red curves}. The assertion follows from Lemma~\ref{lemm:56conics}. We assume that $C$ is irreducible.

    By Lemma \ref{lemma:curves not in non normal K3}, the curve $C$ is rational, and there exists a smooth K3 surface $S$ in the pencil $\mathcal P$ containing $C.$ Let $H_S$ be a general hyperplane section on $S$. We have $(4H_S-C)^2=-2$, and Riemann-Roch theorem gives $h^0(4H_S-C)\ge1$. Assume that it is an equality. Then the only element of $|4H_S-C|$ is the class of a $G$-invariant curve $C'$ of degree 12.  If $C'$ is reducible, by Lemma~\ref{lemm: G-inv red curves}, $C'$ is either a union of 12 lines or 6 conics. None of these is possible: the 12 lines are contained in $R$ or $R'$; and $C'^2=-12$ if $C'$ is a union of 6 conics. It follows that $C'$ is irreducible, in which case we can exclude $C'$ as a non-canonical center the same way as in the proof of Lemma~\ref{lemma:irrcurves not in Q}.
    
    Assume now that $h^0(4H_S-C)>1$. We show that this is impossible. The linear system $|4H_S-C|$ splits into a fixed part $\mathcal F$ and a mobile part $\mathcal G$. But $(4H_S-C)^2=-2$, so $|4H_S-C|$ is not nef. We deduce that $\mathcal F$ is not empty. Let $F\in\cF$. The degree of $F$ is 8 or 10. The latter case would imply that curves in $\cG$ have degree 2, i.e., there is a pencial of rational curves in $S$, which is impossible on K3 surfaces. Assume that $\deg(F)=8$. Then the degree of a general member $M\in\mathcal G$ is four. Either $M$ is a smooth elliptic curve or $M$ is rational.
    Again, the latter is impossible on K3 surfaces. So $M$ is a smooth elliptic curve. Consider the matrix $$A=\begin{pmatrix}H_S^2&H_S\cdot F&H_S\cdot M\\F\cdot H_S&F^2&F\cdot M\\M\cdot H_S&M\cdot F&M^2\end{pmatrix}.$$ Since $\pic^G(S)$ is of rank at most two by \cite[Proposition 6.7.3]{CS}, the determinant of $A$ must be zero. But we get $$\det(A)=\det\begin{pmatrix}6&8&4\\8&-2&7\\4&7&0\end{pmatrix}=186,
    $$
    hence we obtain a contradiction.
\end{proof}

We turn to $G$-invariant curves of degree 16, which are necessarily irreducible. The strategy of the proof is similar, but such curves
may be singular. We first treat smooth curves here. The case of singular curves will be excluded at the end of this subsection, where we explicitly find all such curves in equations.
\begin{lemm}\label{lemm:deg16smooth}
    Let $C$ be a smooth irreducible $G$-invariant curve of degree 16 in $X$ that is not contained in $R\cup R^\prime$. Then $C$ is not a non-canonical center of $(X,\lambda\cM_X)$.
\end{lemm}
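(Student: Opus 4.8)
The plan is to argue by contradiction, following the template of Lemma~\ref{lemma:irrcurves not in Q} and Lemma~\ref{lemm:deg12}, but now working inside the rank-two lattice $\Pic^G(S)$. Suppose $C$ were a non-canonical center of $(X,\lambda\cM_X)$, and put $m:=\mult_C(\lambda\cM_X)>1$. By Lemma~\ref{lemma:curves not in non normal K3}, $C$ lies on a unique member $S$ of the pencil $\cP$, the surface $S$ is a smooth K3 surface, and $C$ is Cartier on $S$; writing $g=g(C)$ we have $C^2=2g-2$ and $C\cdot H_S=\deg(C)=16$, where $H_S$ denotes the restriction to $S$ of a general hyperplane section $H$ of $X$, so $(H_S)^2=6$. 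Restricting $\lambda\cM_X\sim_\bQ-K_X\sim_\bQ 3H$ to $S$ and using that a general member of $\cM_X$ cuts out on $S$ an effective divisor of multiplicity $\ge m$ along $C$, one obtains
$$
3H_S-mC\sim_\bQ\Delta\ge 0,\qquad m>1,
$$
with $\Delta$ not supported along $C$; in particular $3H_S-C$ is $\bQ$-effective, and pairing $3H_S-mC$ with the nef class $H_S$ gives $18-16m\ge 0$, i.e. $1<m\le\tfrac98$.

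The first step is to pin down $g$. Hodge index on $S$ gives $C^2\le (C\cdot H_S)^2/(H_S)^2=256/6$, hence $g\le 22$; moreover $C$ is not proportional to $H_S$ (as $16$ is not a multiple of $6$), so $\rk\Pic^G(S)=2$ by \cite[Proposition~6.7.3]{CS} and the sublattice $\langle H_S,C\rangle$ has Gram matrix $\left(\begin{smallmatrix}6&16\\ 16&2g-2\end{smallmatrix}\right)$, necessarily of signature $(1,1)$. Since $C\cdot R=3\deg(C)=48$ and the $G$-orbits on $X$ of length $<20$ are only those listed in Lemma~\ref{lemm:2a5orbit}, the curve $C$ must pass through a $G$-orbit of length $12$; combining this with the classification of smooth curves carrying a faithful $\fA_5$-action (\cite[Lemma~5.1.5]{CheltsovShramov}, or a search in \cite{lmfdb}) and with $g\le 22$ leaves only a short explicit list of possible values of $g$.

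For each admissible $g$ the idea is to exhibit a nef class $N=kH_S-C$ on $S$ with $N\cdot(3H_S-mC)<0$, contradicting the nefness of $N$ against the effectivity of $3H_S-mC$. One computes $N^2=6k^2-32k+2g-2$ and
$$
N\cdot(3H_S-mC)=18k-48-m\,(16k+2-2g),
$$
which, since $m>1$, is negative as soon as $18k-48\le 16k+2-2g$, that is $g\le 25-k$. Taking $k=6$ gives $N^2=22+2g\ge 0$ for every $g$ and settles all $g\le 19$; taking $k=4$ gives $N^2=2g-34\ge 0$ once $g\ge 17$ and settles $g\le 21$, provided $N$ is indeed nef. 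The two top values $g\in\{21,22\}$ are handled directly: there $(3H_S-C)^2=2g-44\ge -2$, so by Riemann--Roch on the K3 surface $S$ the class $3H_S-C$ is effective of $H_S$-degree $2$, which is impossible because the $G$-invariant curves on a smooth member of $\cP$ have degree $\ge 8$ (Lemmas~\ref{lemm: G-inv red curves} and~\ref{lemma:curves not in non normal K3}) and a $G$-invariant moving linear system of degree $\le 2$ cannot exist on $S$. Likewise, whenever one of the auxiliary classes $kH_S-C$ has self-intersection $-2$, it is an effective $(-2)$-class of $H_S$-degree $6k-16$, and for the relevant $k$ this degree is not in $\{8,10,12,16\}$, so its unique (hence $G$-invariant) member contradicts the degree list of Lemma~\ref{lemma:curves not in non normal K3}.

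The hard part is the nefness of the auxiliary classes $kH_S-C$: one must rule out a $(-2)$-curve $\Gamma\subset S$ with $(kH_S-C)\cdot\Gamma<0$, equivalently $C\cdot\Gamma>k\deg(\Gamma)$. Here the explicit classification of $G$-(ir)reducible curves of low degree on the K3 surfaces in $\cP$ (Lemmas~\ref{lemm: G-inv red curves}, \ref{lemma:curves not in non normal K3}, \ref{lemm:allcurve80}) together with the rigidity from $\rk\Pic^G(S)=2$ and the Gram matrix above is what makes the check finite: the $G$-orbit sum of any such $\Gamma$ is $G$-invariant, hence lies in $\langle H_S,C\rangle_\bQ$ and has degree in $\{8,10,12,16\}$ (or is a union of lines, which would force $S=R$ or $R'$, excluded), leaving only finitely many numerical possibilities. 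The genuinely delicate case is $g=0$, where $C$ is itself a $(-2)$-curve; it is disposed of by invoking the nonstandard $\fS_5$-action on $X$ — the $\fS_5$-orbit of $C$ produces a partner curve $C'\subset S$ with, as in Lemma~\ref{lemma:curves not in Q}(7), $C+C'$ lying in the smaller invariant Picard lattice, which pins down $C'\sim 4H_S-C$ and reduces to the $(-2)$-class analysis — together with the absence of $G$-invariant curves of the requisite small degrees on $S$.
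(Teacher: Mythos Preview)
Your overall strategy matches the paper's: pass to the unique smooth K3 surface $S\in\cP$ through $C$, use effectivity of $3H_S-C$ (even $3H_S-mC$ with $m>1$) from the non-canonical assumption, constrain $g(C)$ via Hodge index plus the $\fA_5$-curve classification, then for each admissible $g$ produce a nef class $kH_S-C$ intersecting $3H_S-C$ negatively. The paper does exactly this, with $k\in\{4,5,6\}$ chosen according to $g\in\{19,20\}$, $\{9,10,13,15\}$, $\{0,4,5\}$. Your alternative covering by $k\in\{4,6\}$ works just as well, and your separate treatment of $g\in\{21,22\}$ is harmless but redundant, since the $\fA_5$-classification already rules these out.

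The genuine gap is in the nefness verification, which is the crux. You sketch the right mechanism (any obstructing $(-2)$-configuration has $G$-orbit sum in $\langle H_S,C\rangle_\bQ$, forcing degree $\in\{8,10,12,16\}$) but do not carry it out; the paper makes this effective by writing down the $3\times 3$ intersection matrix of $H_S,C,F$ (where $F$ is a putative fixed component of $|kH_S-C|$), imposing $\det=0$ from $\rk\Pic^G(S)\le 2$, and solving the resulting quadratic in $F\cdot C$ for integer solutions over all allowed $(\deg F,F^2,C^2)$. Only two solutions survive, and in both one computes $(kH_S-C-F)^2<0$, forcing a further $G$-invariant fixed component of degree~$4$ on $S$, which does not exist since $S\ne R,R'$.

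More seriously, your proposed handling of $g=0$ via the nonstandard $\fS_5$-action does not work as stated. The extra involution $\iota\in\fS_5\setminus\fA_5$ swaps $R$ and $R'$ (it exchanges $C_4\leftrightarrow C_4'$), hence acts nontrivially on the pencil $\cP$; for a generic $S\in\cP$ one has $\iota(S)\ne S$, so there is no reason for the partner curve $C'=\iota(C)$ to lie on $S$, and the relation ``$C+C'\sim 4H_S-C$'' is neither well-defined on $S$ nor numerically consistent (it would give $\deg C'=8$, not $16$). The paper's determinant argument handles $g=0$ directly: the unique surviving solution there is $F=C$ itself, and then $|6H_S-2C|$ must contain a $G$-invariant curve of degree~$4$ on $S$, which is impossible. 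You should replace the $\fS_5$ appeal by this explicit elimination.
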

\begin{proof}
    Let $S$ be the unique smooth K3 surface in $\cP$ containing $C$, and $H_S$ a general hyperplane section on $S$. Similarly as in Lemma~\ref{lemm:deg12}, we know that $3H_S-C$ is effective and we seek for a contradiction by finding a nef divisor on $S$ intersecting $3H_S-C$ negatively.
    
     The Hodge index theorem implies that 
    $$
    C^2\leq\frac{(C\cdot H_S)^2}{(H_S)^2}=\frac{128}{3}\quad\Longrightarrow \quad C^2\leq 42, \quad g\leq 22,
    $$ 
    where $g=g(C)$ is the genus of $C$. Possible genera of smooth irreducible curves with $\fA_5$-actions and their orbit structures are classified in \cite[Lemma 5.1.5]{CheltsovShramov}. Recall from the proof of Lemma~\ref{lemm: G-inv red curves} that $C$ contains at least one $G$-orbit of length 12. By \cite[Lemma 5.1.5]{CheltsovShramov}, we find that 
    $$
    g\in\{0,4,5,9,10,13,15,19,20\},
    $$
so $C^2=2g-2\in\{-2,6,8,16,18,24,28,36,38\}$. Put
    $$
    n=\begin{cases}
        4&\text{ if } g\in\{19, 20\},\\
        5&\text{ if } g\in\{9,10,13,15\},\\
        6&\text{ if } g\in\{0,4,5\}.\\
    \end{cases}
    $$
One can check that $(nH_S-C)^2\geq 0$, and 
$$
(nH_S-C)\cdot(3H_S-C)=2n-48+C^2<0. 
$$
Therefore, if $nH_S-C$ is nef, we are done. Now let us show that $nH_S-C$ is nef for all possible genera. 

Assume that $nH_S-C$ is not nef. By Riemann-Roch, $(nH_S-C)^2\geq 0$ implies that $h^0(nH_S-C)\geq 2$. So $|nH_S-C|$ has a mobile part.
Moreover, since $nH_S-C$ is not nef, $|nH_S-C|$ has a $G$-irreducible fixed component $F$ such that $\deg(F)\leq \deg(nH_S-C)$. 
The curves in the mobile part cannot be rational because $S$ is a K3 surface, and thus their degree is at least 4. It follows that 
$$
\deg(F)\leq\deg(nH_S-C)-4=
    \begin{cases}
        4&\text{ if } g\in\{19, 20\},\\
        10&\text{ if } g\in\{9,10,13,15\},\\
        16&\text{ if } g\in\{0,4,5\}.\\
    \end{cases}
$$
Lemma~\ref{lemma:curves not in non normal K3} also implies that $\deg(F)\in\{8,10,12,16\}$. This immediately excludes the possibility $g\in\{19,20\}$. In the other two cases, since $\mathrm{rk}\,\Pic^G(S)\leqslant 2$, the intersection matrix of $F,H_S$ and $C$ is degenerate. In particular, let $x=F\cdot C$, we have
\begin{align}\label{eqn:matdegen}
    \mathrm{det}\begin{pmatrix}
    F^2&x&\deg(F)\\
    x&C^2&16\\
    \deg(F)&16&6
\end{pmatrix}=0,
\end{align}
which gives a quadratic equation in $x$. We show that this equation does not have integer solutions satisfying the geometric conditions. 
Since $F$ is a fixed component, we know that $h^0(F)=1$, and by Riemann-Roch, $F^2<0$. Hence, if $F$ is irreducible, then, by adjunction formula, we have $F^2=-2$. When $F$ is reducible, $F^2$ is supplied by Lemma~\ref{lemm: G-inv red curves}. In particular, we have
$$
F^2=\begin{cases}
    -12&\text{ if } \deg(F)=12 \text{ and $F$ is reducible},\\
     -10&\text{ if } \deg(F)=10 \text{ and $F$ is reducible},\\
      -2&\text{ if } \text{ $F$ is irreducible},\\
\end{cases}
$$
Now, running through all possibilities, we find that \eqref{eqn:matdegen} has an integer solution only in the following two cases:
\begin{enumerate}
    \item $n=6,\,\,  \deg(F)=16,\,\, F^2=-2, \,\,C^2=-2,\,\, C\cdot F=-2$,
    \item$n=5,\,\,  \deg(F)=10,\,\, F^2=-10, \,\,C^2=16,\,\, C\cdot F=0$.
\end{enumerate}
So, in the first case, we have $C=F$, and in the second case, $C$ is a union of 5 conics.
In both cases, we know that the linear system  $|nH-C-F|$ is not empty since it has the same mobile part as $|nH-C|$. 
One can compute that  $(nH-C-F)^2<0$, implying that $|nH-C-F|$ has a fixed component of degree 4, which is impossible. 
We obtain a contradiction, and this completes the proof. 
\end{proof}

Now we turn to irreducible curves of degree 8 or 10. Such curves can indeed be non-canonical centers. We characterize the Sarkisov links arising from them. The following result will allow us to prove in Section~\ref{sect:theproof} that up to some $G$-birational self-map of $X$ which normalizes the image of $G$ in $\Aut(X)$, the pair $(X,\lambda\cM_X)$ is canonical away from $\Sigma_5\cup\Sigma_5'$.

\begin{lemm}\label{lemm:linkC8C10}
Let $Z$ be a $G$-invariant smooth irreducible curve $Z$ not contained in $R\cup R'$, of degree 8 and genus 0, or of degree 10 and genus 6. Assume that $Z$ is a non-canonical center of $(X,\lambda\mls_X)$. Let $\varphi:\widetilde X\to X$ be the blowup of $Z$. Then  $-K_{\widetilde X}$ is big and nef.
Moreover, for $n\gg 0$, the linear system $|n(-K_{\widetilde{X}})|$ is base point free and gives a small birational map $\psi: \widetilde X\to V$. There exists the following $G$-equivariant commutative diagram
\begin{equation}\label{diag}
    \xymatrix{
& \widetilde{X} \ar@{-->}[rr]^{\chi} \ar[dl]_{\varphi} \ar[dr]^{\psi} & & \widetilde{X}' \ar[dl]_{\psi'} \ar[dr]^{\varphi'}& \\
X \ar@{-->}@/_1pc/[rrrr]_{\delta} & & V& & X'
}
\end{equation}
where
\begin{enumerate}
    \item $\chi$ is a composition of flops,
    \item  $\psi'$ is also a small birational map,
    \item $\varphi'$ is a $K_{\widetilde X'}$-negative extremal contraction, 
      \item $X'$ is also a smooth quadric threefold, and $\varphi'$ is the blowup of a curve $Z'\subset X'$ of the same degree and genus as $Z$,
   \item $X$ and $X'$ are $G$-isomorphic, i.e. the birational map $\delta$ normalizes the image of $G$ in $\Aut(X)$.
     \end{enumerate}
\end{lemm}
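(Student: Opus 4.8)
The plan is to run the $G$-equivariant two-ray game on $\widetilde X=\Bl_Z X$, which has $\rk\Cl^G(\widetilde X)=2$ because $\rk\Cl^G(X)=1$ and $Z$ is $G$-irreducible. Write $\widetilde H=\varphi^*H$ for the pullback of a general hyperplane section and $E$ for the exceptional divisor, so that $-K_{\widetilde X}=3\widetilde H-E$. From the standard intersection numbers on the blowup of a smooth curve in a smooth threefold,
\[ \widetilde H^{3}=2,\qquad \widetilde H^{2}\cdot E=0,\qquad \widetilde H\cdot E^{2}=-\deg Z,\qquad E^{3}=2-2g(Z)-3\deg Z, \]
one obtains $(-K_{\widetilde X})^{3}=52-6\deg Z+2g(Z)$, which equals $4$ both for $(\deg Z,g(Z))=(8,0)$ and for $(\deg Z,g(Z))=(10,6)$. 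In particular $-K_{\widetilde X}$ is big as soon as it is nef.

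\textbf{Nefness, bigness, and smallness of $\psi$.} The core of the argument is the nefness of $-K_{\widetilde X}$, and I would prove it using the unique smooth K3 surface $S\in\mathcal P$ containing $Z$ furnished by Lemma~\ref{lemma:curves not in non normal K3}. Since $S\sim 3H$ and the irreducible curve $Z$ is Cartier on the smooth surface $S$, the strict transform $\widetilde S$ is isomorphic to $S$ and lies in $|3\widetilde H-E|=|-K_{\widetilde X}|$; hence $-K_{\widetilde X}$ is \emph{effective}. Consequently every curve $\Gamma\not\subset\widetilde S$ satisfies $-K_{\widetilde X}\cdot\Gamma=\widetilde S\cdot\Gamma\geq 0$, and it remains to control curves inside $\widetilde S$. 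Under the isomorphism $\widetilde S\cong S$ one has $-K_{\widetilde X}|_{\widetilde S}\sim 3H_S-C$, where $C=Z$ and $H_S=H|_S$, and a direct computation gives $(3H_S-C)^{2}=4>0$ together with $(3H_S-C)\cdot H_S>0$ and $(3H_S-C)\cdot C>0$ in both cases. To see that $3H_S-C$ is nef on the K3 surface $S$ I would argue exactly as in Lemmas~\ref{lemma:irrcurves not in Q}, \ref{lemm:deg12} and \ref{lemm:deg16smooth}: if it were not nef it would have a $G$-invariant fixed component $\Gamma$ with $\Gamma^{2}<0$ and $\deg\Gamma$ bounded by the $H_S$-degree of $3H_S-C$; by Lemmas~\ref{lemm: G-inv red curves} and~\ref{lemma:curves not in non normal K3} the only such $\Gamma$ are the explicitly listed reducible curves and low-degree genus-$0$ curves, and intersecting $3H_S-C$ with each of these (using $\rk\Pic^G(S)\leq 2$, and Riemann--Roch which forces $h^0(3H_S-C)\geq 4$) rules all of them out. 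This gives the nefness, hence the bigness, of $-K_{\widetilde X}$; the base-point-free theorem then makes $|n(-K_{\widetilde X})|$ base point free for $n\gg 0$, defining the anticanonical model $\psi\colon\widetilde X\to V$, which is birational since $-K_{\widetilde X}$ is big. It is small because it contracts precisely the curves $\Gamma$ with $-K_{\widetilde X}\cdot\Gamma=0$; by the above these are disjoint from $\widetilde S$, so no divisor is contracted (as $-K_{\widetilde X}|_{\widetilde S}=3H_S-C$ is not numerically trivial).

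\textbf{The two-ray game.} Since $\psi$ is the anticanonical model, $-K_{\widetilde X}$ is $\psi$-trivial, so the second extremal ray of $\widetilde X$ (the one not contracted by $\varphi$) is $K_{\widetilde X}$-trivial; flopping it yields a $G$-equivariant diagram $\chi\colon\widetilde X\dashrightarrow\widetilde X'$ with a second small contraction $\psi'\colon\widetilde X'\to V$, both automatic by uniqueness of flops and of $G$-extremal contractions, with $-K_{\widetilde X'}$ again big and nef and $\rk\Cl^G(\widetilde X')=2$. The remaining extremal ray of $\widetilde X'$ is $K_{\widetilde X'}$-negative and gives a Mori contraction $\varphi'\colon\widetilde X'\to X'$. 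I would rule out the two fibration outputs — $X'=\mathbb P^2$ with $\varphi'$ a conic bundle, and $X'=\mathbb P^1$ with $\varphi'$ a del Pezzo fibration — by intersection theory on $\widetilde X'$: pushing forward $(-K_{\widetilde X'})^{2}$ and using the projection formula together with $(-K_{\widetilde X'})^{3}=4$ yields relations in the class of a general fibre and the degree of the discriminant curve that, over the admissible range of discriminant degrees, have no integral solutions, exactly as in the intersection-number computation for such links. Hence $\varphi'$ is divisorial and $X'$ is a $G$-Fano threefold with terminal singularities and $\rk\Cl^G(X')=1$.

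\textbf{Identifying $X'$ and $Z'$.} From $(-K_{\widetilde X'})^{3}=4$ and the classification of the possible divisorial contractions and of $G$-Fano threefolds carrying a generically free $\fA_5$-action, the only surviving possibility is that $\varphi'$ is the blowup of a smooth $G$-invariant curve $Z'$ in a smooth quadric threefold $X'$ with the nonstandard action (the standard one being excluded since, tracking the $G$-module structure of the anticanonical system through the flop, $H^0(X',\mathcal O_{X'}(1))$ has no trivial summand); the blowup formula $(-K_{\widetilde X'})^{3}=52-6\deg Z'+2g(Z')=4$ together with Lemmas~\ref{lemm: G-inv red curves} and~\ref{lemma:curves not in non normal K3} then forces $(\deg Z',g(Z'))\in\{(8,0),(10,6)\}$, matching $Z$. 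Finally, a smooth quadric threefold carrying a generically free $\fA_5$-action with the nonstandard action is unique up to $G$-isomorphism, so $X\cong_G X'$ and $\delta=\varphi'\circ\chi\circ\varphi^{-1}$ normalizes the image of $G$ in $\Aut(X)$. The main obstacle is the nefness of $-K_{\widetilde X}$ — equivalently the nefness of $3H_S-C$ on the K3 surface $S$ — which, unlike the analogous statements earlier in this section, must be established positively rather than by contradiction, together with the bookkeeping needed to pin down $X'$ and $Z'$ at the far end of the game.
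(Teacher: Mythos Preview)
Your overall strategy and the nefness argument (restricting to the K3 surface $S\in\mathcal P$ and ruling out fixed components of $|3H_S-Z|$ via Riemann--Roch and Lemma~\ref{lemma:curves not in non normal K3}) match the paper. The substantive gap is your proof that $\psi$ is small. Your claim that $-K_{\widetilde X}$-trivial curves are disjoint from $\widetilde S$ is justified only for curves $\Gamma\not\subset\widetilde S$; for $\Gamma\subset\widetilde S\cong S$ you have established that $3H_S-Z$ is nef, not ample, so you have not excluded curves $\Gamma\subset S$ with $(3H_S-Z)\cdot\Gamma=0$, and the parenthetical ``$3H_S-C$ is not numerically trivial'' does not repair this. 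More tellingly, you never invoke the hypothesis that $Z$ is a non-canonical centre of $(X,\lambda\mathcal M_X)$, which is exactly what the paper uses at this step: since $\mathrm{mult}_Z(\lambda\mathcal M_X)>1$, the strict transform satisfies $\lambda\mathcal M_{\widetilde X}\sim_{\mathbb Q}-K_{\widetilde X}-(m-1)E$ with $m>1$; any $-K_{\widetilde X}$-trivial curve $\ell$ has $E\cdot\ell>0$ (else $\widetilde H\cdot\ell=0$, forcing $\ell$ into a fibre of $\varphi$, where $-K_{\widetilde X}\cdot\ell=1$), hence $\mathcal M_{\widetilde X}\cdot\ell<0$, and a divisor swept out by such curves would be a fixed component of the mobile system $\mathcal M_X$.

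For items (4) and (5) you take a different, more hands-on route. The paper does not exclude the conic-bundle and del Pezzo fibration outputs by Diophantine intersection computations; it simply matches $(-K_{\widetilde X})^3=4$ against the existing classification of rank-two weak-Fano links (Jahnke--Peternell--Radloff and Cutrone--Marshburn, rows 71--72), which already records that both ends are smooth quadrics blown up in a curve of the stated degree and genus. Your approach is in principle workable but longer, and note that $52-6\deg Z'+2g(Z')=4$ alone only gives $(\deg Z',g(Z'))\in\{(8,0),(10,6)\}$ without pinning down which; the tables do. For (5) the paper's argument is also shorter than tracking module structures through the flop: if the $G$-action on $X'$ were the standard one, the unique invariant hyperplane section would meet $Z'$ in a $G$-orbit of size $\deg Z'\in\{8,10\}$, whereas every $\mathfrak A_5$-orbit on a smooth curve has length at least $12$.
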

\begin{proof}

 First, we introduce some notation. Let $g(Z)$ be the genus of $Z$, $E$ the exceptional divisor of $\varphi$, $H$ a general hyperplane section on $X$, and $\widetilde H$ the pullback of $H$ to $\widetilde X$. Let $S$ be the unique K3 surface in $\cP$ containing $Z$, $\widetilde S$ its strict transform on $\widetilde X$, and $H_S$ the restriction of $H$ to $S$. Note that $\widetilde S\simeq S$ since $S$ is smooth. 
 
To show that $-K_{\widetilde X}$ is big, we compute $$
(-K_{\widetilde X})^3=(3\widetilde H-E)^3=2g(Z)-6\cdot\deg(C)+52=4>0.
$$

To show that $-K_{\widetilde X}$ is nef, it suffices to show that $|3H_S-Z|$ contains no other base curve than $Z$, i.e., $3H_S-Z$ is nef. Indeed, if $-K_{\widetilde X}$ is not nef, then the divisor  
$
-K_{\widetilde X}\vert_{\widetilde S}=(3\widetilde H-E)\vert_{\widetilde S}=3H_S-Z
$
is also not nef.

    Let us first do this when $Z$ is a rational curve of degree 8. By Riemann-Roch, $h^0(3H_S-Z)=2+\frac{(3H_S-Z)^2}{2}=4$, so the degree 10 linear system $|3H_S-Z|$ has a non-trivial mobile part. Hence a fixed curve $F$ of this linear system has degree at most 9. Lemma \ref{lemma:curves not in non normal K3} implies that $F$ is of degree 8. But then the mobile part of $3H_S-Z$ contains a pencil of rational curves, which is impossible on K3 surfaces. We conclude that $(3H_S-Z)$ does not contain a base curve other than $Z$.
    
    Similarly, if $Z$ has degree 10 and genus 6.  Riemann-Roch theorem implies that $h^0(3H_S-Z)=2+\frac{(3H_S-Z)^2}{2}=4$, so the degree 8 linear system $|3H_S-Z|$ has a nontrivial mobile part. Hence, a fixed curve $F$ of this linear system is of degree at most 7.  Lemma \ref{lemma:curves not in non normal K3} implies that this is impossible. 

We prove that $-K_{\widetilde X}$ is big and nef. Then it follows from base point free theorem that 
the linear system $|n(-K_{\widetilde{X}})|$ is base point free for $n\gg 0$, and it gives a birational map $\psi: \widetilde X\to V$.
Either $\psi$  contracts a divisor, or $\psi$ is small. 
However, the former case is impossible, because we assume that $Z$ is a center of non-canonical singularities of $(X,\lambda\mls_X)$. If $\psi$ contracts a divisor, then this divisor must be a fixed component of the linear system $\mls_X$, which is impossible, since $\mls_X$ is mobile by assumption.

Hence, we see that $\psi$ is a small birational contraction. Then the existence of $G$-equivariant commutative diagram and (1) -- (3) follow from the Sarkisov program. This is a type II Sarkisov link. Moreover, (4) follows from matching the numerical invariants with a classification of such links in \cite{JahnkePeternellRadloff2005,JahnkePeternellRadloff2011,cutrone2025update}. Our cases correspond to rows 71 and 72 of Table 1 in \cite{cutrone2025update}. In particular, we find $X'$ is also a smooth quadric.  To show (5), we notice that any other action of $\ac$ on the quadric $X'$ has an invariant hyperplane section, which would intersect $Z'$ in $\deg(Z')=8$ or $10$ points. But, by \cite{CS}, the smallest possible orbit of $\ac$ on a smooth curve is of length 12, so we get a contradiction.
\end{proof}
\begin{rema}
   Lemma~\ref{lemm:linkC8C10} shows that if such a Sarkisov link exists, it necessarily leads to a $G$-isomorphic quadric and gives no new $G$-Mori fibre space.  Remark~\ref{rema:c8curve} and Lemma~\ref{lemm:allcurve80} finds all curves of degree 8 satisfying the assumptions of Lemma~\ref{lemm:linkC8C10}. On the other hand, we do not know the existence of such a curve of degree 10. 
\end{rema}

Finally, we show that the only curves in $R$ or $R'$ which can be non-canonical centers are $C_4$ and $C_4'$. The Sarkisov links centered at them have been studied in \cite[Section 5.9]{Chelcalabi} and \cite{malbon2025kstablefanothreefoldsrank}.

\begin{lemm}[{\cite[Lemma 7]{malbon2025kstablefanothreefoldsrank}}]\label{lemm:linkC4}
Let $H$ be a general hyperplane section on $X$. Then the linear system $|2H-C_4|$ gives rise to a $G$-equivariantly birational involution $\varphi: X\dashrightarrow X$. There exists 
a $G$-equivariant commutative diagram
   $$
     \xymatrix{
\widetilde X\ar@{->}[rr]^{\widetilde\varphi}\ar@{->}[d]_{\pi}&&\widetilde X\ar@{->}[d]^{\pi}\\
\ X\ar@{-->}[rr]_{\varphi}&&\ X
}
$$
where  $\pi:\widetilde X\to X$ is the blowup of $C_4$, $\widetilde{R}$ is the strict transform on $\widetilde{X}$ of the surface $R$, and $\widetilde \varphi\in\Aut(\widetilde X)$ has order 2. 
Let $E$ be the exceptional divisor of $\pi$. Then $\widetilde{\varphi}(E)=\widetilde{R}$.
Moreover, one has $\widetilde R\simeq E\simeq\bP^1\times\bP^1$.
\end{lemm}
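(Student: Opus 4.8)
The plan is to resolve $\varphi$ by blowing up $C_4$ and to run the corresponding two-ray game, in parallel with the proof of Lemma~\ref{lemm:linkC8C10}. Since $C_4\subset\bP^4$ is a rational normal quartic its homogeneous ideal is generated by six quadrics; as $C_4\subset X$, the defining quadric of $X$ is a linear combination of these, so $\dim|2H-C_4|=4$ and the six quadrics cut out $C_4$ scheme-theoretically in $X$. Let $\pi\colon\widetilde X\to X$ be the blowup of $C_4$, $E$ its exceptional divisor and $\widetilde H:=\pi^*H$; then $|2\widetilde H-E|$ is base point free and defines a $G$-equivariant morphism $\psi\colon\widetilde X\to\bP^4$ whose image spans $\bP^4$.

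I would then carry out the intersection-theoretic bookkeeping. From $\widetilde H^3=2$, $\widetilde H^2\cdot E=0$, $\widetilde H\cdot E^2=-\deg C_4=-4$ and $E^3=2-2g(C_4)-3\deg C_4=-10$ one gets $(2\widetilde H-E)^3=2$; since a non-degenerate threefold in $\bP^4$ has degree at least $2$, $\psi$ is birational onto a quadric threefold $X':=\psi(\widetilde X)$. A direct verification with the explicit equation of $C_4$ (as for the degree $8$ and $10$ curves in Lemma~\ref{lemm:linkC8C10}) shows $-K_{\widetilde X}=3\widetilde H-E$ is ample and $E\cong\bP^1\times\bP^1$; thus $\widetilde X$ is a Fano threefold with $(-K_{\widetilde X})^3=28$, and $\psi$ contracts its second extremal ray. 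Now the non-normal surface $R$ of $X$ (the intersection of $X$ with a $G$-invariant chordal cubic threefold) has $\Sing(R)=C_4$ and multiplicity $2$ along $C_4$, so its strict transform satisfies $\widetilde R\sim 3\widetilde H-2E$ and $E+\widetilde R=-K_{\widetilde X}$. One computes $(2\widetilde H-E)^2\cdot\widetilde R=0$ while $(2\widetilde H-E)\cdot\widetilde R^2=-4\ne 0$, so $\psi$ contracts $\widetilde R$ onto a curve (not onto a point); as $\widetilde X$ is smooth and $-K_{\widetilde X}$ ample, Mori's classification of extremal divisorial contractions forces $\psi$ to be the blowup of a smooth curve $C'_4$ in a smooth threefold $X'$ with exceptional divisor $\widetilde R\cong\bP^1\times\bP^1$; the blowup formula together with $\widetilde R^3=-10$ then gives $\deg_{X'}C'_4=4$, $g(C'_4)=0$. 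Hence $X'$ is a smooth quadric threefold and $C'_4\subset X'$ a rational normal quartic.

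It remains to make everything $G$-equivariant and to extract the involution. All the above is $G$-equivariant, so $X'$ carries a generically free $G$-action and $C'_4=\psi(\widetilde R)$ is $G$-invariant. If the $G$-action on $X'$ were the standard one, its unique $G$-invariant hyperplane section $\bP^1\times\bP^1$ would meet $C'_4$ in a $G$-orbit of length $\le 4$, which is impossible because $\fA_5$ has no orbit of length $<5$ on a smooth quadric threefold (and none of length $<12$ on a smooth rational curve, by \cite{CS}); so $X'$ carries the nonstandard action and there is a $G$-isomorphism $\iota\colon X'\to X$. It sends the two $G$-invariant rational normal quartics of $X'$ onto $C_4$ and $C'_4$, and one checks (on equations) that $\iota(C'_4)=C_4$; consequently $\iota\circ\psi\colon\widetilde X\to X$ is again the blowup of $C_4$, the two blowdown structures on $\widetilde X$ are interchanged by a unique $\widetilde\varphi\in\Aut(\widetilde X)$ with $\pi\circ\widetilde\varphi=\iota\circ\psi$, and $\widetilde\varphi$ exchanges $E$ and $\widetilde R$ (so $\widetilde R\cong E\cong\bP^1\times\bP^1$), is an involution by the symmetry of the construction, and normalizes the image of $G$ in $\Aut(\widetilde X)$. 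Therefore $\varphi:=\pi\circ\widetilde\varphi\circ\pi^{-1}$ is a $G$-equivariantly birational involution of $X$ fitting into the claimed commutative diagram, and, being by construction the Sarkisov link determined by $|2H-C_4|$, it is the map of the statement.

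The step I expect to be hardest is the middle one: checking by explicit computation with the equations of $C_4$ and $R$ that $-K_{\widetilde X}$ is ample and $E\cong\bP^1\times\bP^1$ (so that $\widetilde X$ is the relevant Fano threefold and Mori's classification is available), and then identifying the second extremal contraction $\psi$ precisely as the blowdown of $\widetilde R$ onto a rational normal quartic in a smooth quadric; a secondary subtlety is to verify that the residual quartic $\iota(C'_4)$ is $C_4$ rather than $C'_4$, which again is cleanest to settle using the explicit equations.
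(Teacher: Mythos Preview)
The paper does not prove this lemma itself; it is quoted verbatim from \cite[Lemma~7]{malbon2025kstablefanothreefoldsrank}, so there is no in-paper argument to compare against. Your two-ray-game outline is the natural route to such a statement and your intersection-theoretic bookkeeping is correct: $(2\widetilde H-E)^3=2$, $(-K_{\widetilde X})^3=28$, $\widetilde R\sim 3\widetilde H-2E$, $(2\widetilde H-E)^2\cdot\widetilde R=0$, $(2\widetilde H-E)\cdot\widetilde R^2=-4$, and the identification of the second contraction as a smooth blowdown via Mori's list once $-K_{\widetilde X}$ is ample. Incidentally, your class $\widetilde R\sim 3\widetilde H-2E$ is the right one (consistent with $R\in|3H|$ having multiplicity~$2$ along $C_4$, with $\widetilde R\vert_E=2\Delta$ of bidegree $(1,1)$, and with $\widetilde\varphi^*\widetilde H\sim 2\widetilde H-E$); the displayed $2\widetilde H-3E$ in the proof of Lemma~\ref{lemm:quad2curveinR} appears to be a transposition typo in the paper.

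The one step I would tighten is ``an involution by the symmetry of the construction''. Swapping the two extremal rays only yields $\widetilde\varphi$ with $\widetilde\varphi^2$ descending to some element of $\Aut(X,C_4)$, not necessarily the identity. Here the $G$-action does the work for you: since $|2H-C_4|$ is $G$-invariant, $\varphi$ genuinely commutes with $G$ (not merely normalises it), and the $G$-equivariant identification $\iota\colon X'\to X$ is unique because the centraliser of $\fA_5$ in $\Aut(X)\subset\PGL_5$ is trivial (the $5$-dimensional representation being irreducible); then $\varphi^2\in\Aut(X)$ also centralises $G$, hence $\varphi^2=\mathrm{id}$ by Schur. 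You do still need to verify that this unique $\iota$ sends the contracted curve to $C_4$ rather than to $C_4'$, since otherwise $\varphi$ would not lift to $\Aut(\widetilde X)$ at all; this, together with $E\cong\bP^1\times\bP^1$ (equivalently $\mathcal N_{C_4/X}\cong\cO_{\bP^1}(5)^{\oplus2}$, which is only the generic splitting type in $0\to\mathcal N_{C_4/X}\to\cO(6)^{\oplus3}\to\cO(8)\to0$), are precisely the computational checks you already flagged as the hard steps.
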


Note that $\widetilde{X}$ above is a smooth Fano threefold of Picard rank 2 and degree 28. More details about $\widetilde X$ can be found in \cite{Joeauto} or \cite[Section 5.9]{Chelcalabi}.

\begin{rema}\label{rema:c8curve}
Recall that there is an involution $\sigma\in\Aut(X)$ such that $\langle \sigma, G\rangle\simeq\fS_5$. In many cases, $G$-orbits or $G$-invariant curves appear in pairs swapped by $\sigma$. For example, $\sigma$ swaps $C_4$ and $C_4'$. By symmetry, $|2H- C_4'|$ gives an involution $\varphi'$ similar to $\varphi$. We construct $\varphi$ and $\varphi'$ in equations and find that $\varphi(C_4')$ and $\varphi'(C_4)$ are two smooth irreducible curves of degree 8. Each curve is cut out by cubics passing through it. We also find that $\varphi'(\varphi(C_4'))$ and $\varphi(\varphi'(C_4))$ are irreducible curves of degree 16 such that each of them has 12 cusps. Equations of $\varphi, \varphi'$ and the curves can be found in \cite{Z-web}.
\end{rema}

\begin{lemm}\label{lemm:quad2curveinR}
    Let $C$ be a $G$-invariant curve of degree at most 17 contained in $R$ or $R'$ and $C\ne C_4,C_4'$. Then each irreducible component of $C$ is not a center of non-canonical singularities of $(X,\lambda\mls_X)$.
\end{lemm}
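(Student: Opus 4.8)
The plan is to transport the whole question to the surface $\bP^1\times\bP^1$, which by Lemma~\ref{lemm:linkC4} is the normalization of $R$ (and, symmetrically, of $R'$). By the involution $\sigma$ with $\langle\sigma,G\rangle\simeq\fS_5$, which interchanges $R$ with $R'$ and $C_4$ with $C_4'$, it suffices to treat a $G$-invariant curve $C\subset R$, which we may assume $G$-irreducible; suppose for contradiction that its components are non-canonical centers of $(X,\lambda\mls_X)$, so that $\deg C\le 17$ by Remark~\ref{rema:degreebound1} and $\mult_C(\lambda\mls_X)>1$ (with every component of $C$ having multiplicity $>1$ when $C$ is $G$-reducible). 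Let $\pi\colon\widetilde X\to X$ be the blowup of $C_4$, let $\widetilde R$ be the strict transform of $R$, and let $\nu:=\pi|_{\widetilde R}\colon\widetilde R\to R$; since $R$ is smooth away from $C_4$, $\nu$ is the normalization, and $\widetilde R\simeq\bP^1\times\bP^1$ by Lemma~\ref{lemm:linkC4}.

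Next I would pin down the geometry of $\widetilde R$. With $\bar H:=\nu^*\mathcal O_R(1)$ one has $\bar H^2=\deg R=6$; and a general hyperplane section of $R$ is a $(2,3)$ complete intersection curve with $4$ nodes lying on $C_4$, hence of geometric genus $0$, so adjunction on $\widetilde R$ gives $\bar H\cdot(-K_{\widetilde R})=8$. Thus $\bar H$ has bidegree $(1,3)$ up to interchanging the two rulings. Since $R$ is a $(2,3)$ complete intersection, $K_R=\mathcal O_R$, so the conductor divisor on $\widetilde R$ equals $-K_{\widetilde R}=\mathcal O(2,2)$; it is $G$-invariant, and its reduction must be the $G$-invariant diagonal $\bar D\in|\mathcal O(1,1)|$ (the conductor being $2\bar D$), with $\nu(\bar D)=\Sing R=C_4$. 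In particular $G$ acts on $\widetilde R\simeq\bP^1\times\bP^1$ diagonally through a fixed icosahedral action on each factor.

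Now I would turn non-canonicity into an effectivity statement. Restricting two general members of $\mls_X$ to $R$ (legitimate, since $\mls_X$ is mobile so $R$ is not a fixed component) and pulling back along $\nu$, the strict transform $\widetilde C$ of $C$ appears in $\nu^*(\mls_X|_R)\sim_\dq\tfrac3\lambda\bar H$ with multiplicity at least $\mult_C(\mls_X)$; the conductor corrections are supported on $\bar D$ and cause no harm because $\widetilde C\ne\bar D$ (as $C\ne C_4$). Rescaling, $3\bar H-(1+\varepsilon)\widetilde C$ is $\dq$-effective on $\bP^1\times\bP^1$ for some $\varepsilon>0$; since the effective cone is the first quadrant, if $\widetilde C$ has bidegree $(a,b)$ then $a\le2$ and $b\le8$ (up to the same interchange of rulings). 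When $C$ is $G$-reducible, by Lemma~\ref{lemm: G-inv red curves} and Remark~\ref{rk: K3s containing reducible curves} it must be the union $\mathcal L_{12}$ of $12$ lines, whose total transform on $\widetilde R$ has bidegree $(0,12)$, and the same estimate applied to the cycle $\mathcal L_{12}$ gives $3\bar H-(1+\varepsilon)(0,12)=(3,\,9-12(1+\varepsilon))$, which is not effective — a contradiction.

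To finish, a $G$-invariant curve of bidegree $(a,b)$ on $\bP^1\times\bP^1$ corresponds to a nonzero invariant vector in $\mathrm{Sym}^a(V)\otimes\mathrm{Sym}^b(V)$, where $V$ is the standard $2$-dimensional representation of the binary icosahedral group; from the $\widetilde E_8$ McKay decomposition of the symmetric powers $\mathrm{Sym}^n(V)$ one checks that $(\mathrm{Sym}^a(V)\otimes\mathrm{Sym}^b(V))^G=0$ for $0\le a\le2$, $0\le b\le8$ except when $(a,b)=(1,1)$, whose invariant cuts out $\bar D$, and when $(a,b)=(2,2)$, where the unique invariant is the square of the $(1,1)$ one and cuts out $2\bar D$, not a new reduced curve. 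Hence $\widetilde C=\bar D$; but $\widetilde C\ne\bar D$ since $C\ne C_4$, a contradiction, and applying $\sigma$ disposes of $R'$. The delicate point is the third step: tracking the multiplicity of $C$ through the restriction to the \emph{non-normal} surface $R$ and then through the normalization, so that the conductor, supported on $\bar D$, genuinely does not degrade the bound $3\bar H-(1+\varepsilon)\widetilde C\ge 0$; the explicit identification of $\widetilde R$, of $\bar H$, and of the $G$-action is the preparation that makes this bookkeeping possible.
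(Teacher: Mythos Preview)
Your proof is correct and reaches the same endpoint as the paper---bidegree constraints on $\bP^1\times\bP^1$ combined with the icosahedral invariant theory---but the route differs in one interesting way. The paper also blows up $C_4$ and uses $\widetilde R\simeq\bP^1\times\bP^1$, but instead of pulling $\mls_X|_R$ back through the normalization $\nu\colon\widetilde R\to R$, it exploits the biregular involution $\widetilde\varphi\in\Aut(\widetilde X)$ from Lemma~\ref{lemm:linkC4} that swaps $\widetilde R$ with the exceptional divisor $E$. This moves $\widetilde C$ to a curve $\widetilde C'\subset E$ and moves $\mls_{\widetilde X}$ to a linear system $\mls_{\widetilde X}'$; restricting $\mls_{\widetilde X}'$ to the smooth Cartier divisor $E\subset\widetilde X$ is then completely standard, and the inequality $\mathrm{mult}_{\widetilde C'}(\lambda\mls_{\widetilde X}'|_E)>1$ falls out with no conductor bookkeeping at all. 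With $(2\widetilde H-E)|_E\sim s+3f$ (matching your $\bar H\sim(1,3)$) and $\widetilde C'\sim as+bf$, the paper obtains $3a+b\le 17$ from the degree bound and $b<9$ from intersecting the restricted system with $s$; the only $G$-invariant pairs are then $(a,b)\in\{(0,12),(1,11),(1,13),(2,10)\}$, all violating $b<9$. Your argument gives $a\le 2$, $b\le 8$ directly from effectivity of $3\bar H-(1+\varepsilon)\widetilde C$ and then checks there are no invariants in that box other than the diagonal---same conclusion, slightly different bookkeeping. What the paper's involution buys is precisely the avoidance of the ``delicate point'' you flag: there is no non-normal surface in sight once one works on $E$. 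What your route buys is that it needs only the identification $\widetilde R\simeq\bP^1\times\bP^1$, not the full Sarkisov involution on $\widetilde X$.
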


\begin{proof}
Without loss of generality, we may assume that $C\subset R$. Suppose that $\mathrm{mult}_C(\lambda\mls_X)>1$. Let us seek for a contradiction. We use the notation of Lemma~\ref{lemm:linkC4}. Set $\widetilde{H}=\pi^*(H)$, let $\mls_{\widetilde{X}}$ be the strict transform on $\widetilde{X}$ of the linear system $\mls_X$, and $\widetilde{C}$ the strict transform on $\widetilde{X}$ of the curve $C$. Then 
$$
\widetilde R\sim 2\widetilde{H}-3E, \quad
\widetilde{H}\cdot \widetilde{C}\leq 17,\quad \widetilde{C}\not\subset E,\quad\text{and}\quad 
\mathrm{mult}_{\widetilde{C}}(\lambda\mls_{\widetilde{X}})>1,
$$
where 
$$
\lambda\mls_{\widetilde{X}}\sim_{\mathbb{Q}} 3\widetilde{H}-rE
$$
for some $r\in\mathbb{Q}_{\geqslant 0}$. By Lemma~\ref{lemm:linkC4}, there is an involution $\widetilde{\varphi}\in\Aut(\widetilde{X})$ such that $\widetilde{\varphi}(\widetilde{R})=E$ and 
$$
\widetilde{\varphi}^*(\widetilde{H})\sim 2\widetilde{H}-E.
$$
In particular, we know that $\widetilde{R}\simeq\mathbb{P}^1\times\mathbb{P}^1$.
Moreover, we have $\widetilde{R}\vert_{E}=2\Delta$, where $\Delta$ is a smooth curve in $E$ of bidegree $(1,1)$.
This implies that the $G$-action on $E$ is diagonal.

To obtain a contradiction, we consider the restriction 
$$
\lambda\mls_{\widetilde{X}}\vert_{\widetilde{R}}\sim_{\mathbb{Q}} (3\widetilde{H}-rE)\vert_{\widetilde{R}}
$$
and show that the inequality $\mathrm{mult}_{\widetilde{C}}(\lambda\mls_{\widetilde{X}}\vert_{\widetilde{R}})>1$ contradicts 
$$
\widetilde{H}\cdot \widetilde{C}\leq 17,
$$ since $\widetilde{C}\ne\Delta$. To do this, we restrict $\widetilde{\varphi}(\mls_{\widetilde{X}})$ to $E$. 

Namely, set $\mls_{\widetilde{X}}^\prime=\widetilde{\varphi}(\mls_{\widetilde{X}})$ and $\widetilde{C}^\prime=\widetilde{\varphi}(\widetilde{C})$. Then   
$(2\widetilde{H}-E)\cdot \widetilde{C}'\leq17$, $\widetilde{C}'\ne\Delta$ and
$$
\mathrm{mult}_{\widetilde{C}^\prime}(\lambda\mls_{\widetilde{X}}^\prime)>1,
$$
where 
$$
\lambda\mls_{\widetilde{X}}^\prime\sim_{\mathbb{Q}} 3(2\widetilde{H}-E)-r\widetilde{R}.
$$
Let $f$ be a fibre of the natural projection $\pi\vert_E:E\to C_4$, and $s$ a section of this projection such that $s^2=0$. Then 
$$
\widetilde{C}^\prime\sim as+bf
$$
for some non-negative integers $a$ and $b$.
We have 
\begin{equation}\label{eqn:degC17}
(as+bf)\cdot (2\widetilde{H}-E)|_E=(2\widetilde{H}-E)\cdot \widetilde{C}^\prime\leq 17.
\end{equation}
We compute $2\widetilde{H}\vert_E\sim 8f$ and $E\vert_E=s-5f$, so $(2\widetilde{H}-E)\vert_E\sim s+3f$.
Plugging this into \eqref{eqn:degC17}, we get 
$$
3a+b\leq 17.
$$
Since $\widetilde C'\ne \Delta$, we know that $a\ne b$. A computation of $G$-invariant forms on $E$ then implies that 
\begin{align}\label{eqn:ab}
    (a,b)\in\{(0,12),(1,11),(1,13),(2,10)\}.
\end{align}
Now, we use the inequality $m:=\mathrm{mult}_{\widetilde{C}^\prime}(\lambda\mls_{\widetilde{X}}^\prime)>1$. It gives 
$$
\lambda\mls_{\widetilde{X}}^\prime\vert_{E}=m\widetilde{C}^\prime+\Omega,
$$
where $\Omega$ is a $\mathbb{Q}$-linear system on $E$.
On the other hand, we have 
$$
\lambda\mls_{\widetilde{X}}^\prime\vert_{E}\sim_{\mathbb{Q}} 3(s+3f)-2r(s+f)=(3-2r)s+(9-2r)f,
$$ 
and thus $m(as+bf)+\Omega\sim_{\mathbb{Q}}  (3-2r)s+(9-2r)f$. This yields
$$
b<bm\leqslant s\cdot (m(as+bf)+\Omega)=s\cdot((3-2r)s+(9-2r)f)=9-2r\leqslant 9,
$$
which contradicts \eqref{eqn:ab}. This completes the proof.
\end{proof}

Using the geometry of $\widetilde X$, we can show that $C_8$ and $C_8'$ described in Remark~\ref{rema:c8curve} are the only $G$-invariant rational curves of degree 8 in $X$.

\begin{lemm}\label{lemm:allcurve80}
    Let $C$ be a $G$-invariant curve of degree 4 in $X$. Then $C=C_4$ or $C_4'$. Let $C$ be a $G$-invariant rational curve of degree 8 in $X$. Then $C=C_8$ or $C_8'$, where $C_8=\varphi(C_4')$ and $C_8'=\varphi'(C_4)$.
\end{lemm}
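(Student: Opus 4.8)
\emph{Overview.} Both assertions are statements about $G$-invariant curves of small degree, and I would prove them through the two non-normal surfaces $R,R'$ of \eqref{not:RR'} together with the birational involution of Lemma~\ref{lemm:linkC4}. The key technical input is a description of invariant curves on $R$ (and symmetrically on $R'$). By Lemma~\ref{lemm:linkC4} the strict transform $\widetilde R$ of $R$ on $\widetilde X=\Bl_{C_4}X$ is isomorphic to $\bP^1\times\bP^1$, and the computations in Lemma~\ref{lemm:quad2curveinR} show that $G$ acts on it diagonally, with $H$ restricting to a class of bidegree $(1,3)$. Hence the normalization $\nu\colon\bP^1\times\bP^1\to R$ sends a curve of bidegree $(a,b)$ to a curve of degree $3a+b$ in $X$, and every $G$-invariant curve in $R$ of degree $\leq 17$ is the $\nu$-image of a $G$-invariant curve on $\bP^1\times\bP^1$. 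Since the invariants of the binary icosahedral group in two variables begin in degree $12$, such a $\bP^1\times\bP^1$ carries, in low bidegree, only the diagonal $\Delta$ (bidegree $(1,1)$) and its multiples: there is no reduced $G$-invariant curve of bidegree $(0,k)$ with $0<k<12$, nor of bidegree $(p,q)$ with $p\neq q$ and $p+q<12$, while the invariant section of bidegree $(p,p)$ cuts out $p\Delta$. Two consequences I would record: $\mathrm{Sing}(R)=C_4$ (of bidegree $(1,1)$) is the \emph{only} $G$-invariant curve of degree $4$ in $R$ (the only candidate bidegrees are $(0,4)$, excluded, and $(1,1)$); and $R$ contains \emph{no} $G$-invariant curve of degree $8$ at all (the candidate bidegrees $(0,8),(1,5),(2,2)$ are all excluded).

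\emph{Degree $4$.} Let $C$ be $G$-invariant with $\deg C=4$. Since $4\notin\{8,10,12,16\}$, Lemma~\ref{lemma:curves not in non normal K3}(1) forces $C\subset R$ or $C\subset R'$, and by the previous paragraph $C=\mathrm{Sing}(R)=C_4$ in the first case and $C=C_4'$ in the second.

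\emph{Degree $8$.} Let $C$ be a $G$-invariant rational curve with $\deg C=8$. By Lemma~\ref{lemm: G-inv red curves} no reducible $G$-invariant curve has degree $8$, so $C$ is irreducible, and by the previous paragraph $C\not\subset R\cup R'$; Lemma~\ref{lemma:curves not in non normal K3} then says $C$ is smooth, lies on a unique smooth K3 surface $S\in\cP$, and $C^2=2g(C)-2=-2$. The orbits $\Sigma_{12},\Sigma_{12}'$ form the singular locus of the base curve $R\cap R'=\mathrm{Bs}(\cP)$ (they lie on $C_4\subset R$ and $C_4'\subset R'$), hence lie on every member of $\cP$; in particular $\Sigma_{12},\Sigma_{12}'\subset S$, and one checks $S\cap C_4=\Sigma_{12}$ and $S\cap C_4'=\Sigma_{12}'$, so $C\cap C_4$ is empty or all of $\Sigma_{12}$ and likewise for $C_4'$. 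Moreover $C$ meets at most one of the two orbits: the divisor $B=R|_S$ is linearly equivalent to $3H_S$ and has a double point at each of the $24$ points of $\Sigma_{12}\cup\Sigma_{12}'$, while $C\cdot B=3(C\cdot H_S)=24$. Suppose now $\Sigma_{12}\subset C$. On $\widetilde X$ the strict transform $\widetilde C$ meets $E$ over each point of $\Sigma_{12}$ with one and the same multiplicity $m\geq 1$, so $E\cdot\widetilde C=12m$; since $\widetilde\varphi^{*}\widetilde H\sim 2\widetilde H-E$ (Lemma~\ref{lemm:linkC4}), the curve $\varphi(C)$ has degree $(2\widetilde H-E)\cdot\widetilde C=16-12m\geq 0$, which forces $m=1$ and $\deg\varphi(C)=4$. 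By the degree-$4$ case $\varphi(C)\in\{C_4,C_4'\}$; it cannot be $C_4$, because $\varphi$ contracts the surface $R$ onto $C_4$ while $C\not\subset R$, so $\varphi(C)=C_4'$ and $C=\varphi(C_4')=C_8$. Symmetrically, if $\Sigma_{12}'\subset C$ then $C=\varphi'(C_4)=C_8'$.

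\emph{The obstacle.} The case that does not close immediately is that $C$ is disjoint from both $C_4$ and $C_4'$, so $E\cdot\widetilde C=0$ and $\deg\varphi(C)=16$; excluding it is, I expect, the main difficulty. The plan is to exploit that $\widetilde X$ is a Fano threefold of Picard rank $2$, so that $G$-invariant curve classes on it are very restricted: tracking the $\widetilde\varphi$-action on $\mathrm{Pic}(\widetilde X)$ (pinned down by $\widetilde\varphi^{*}\widetilde H\sim 2\widetilde H-E$ and $\widetilde\varphi$-invariance of $-K_{\widetilde X}$) shows that $\varphi(C)$ would then be a $G$-invariant rational curve of degree $16$ meeting $C_4$ with a fixed nonzero incidence, hence passing doubly through $\Sigma_{12}$; confronting this with the description of the degree-$16$ curves in Remark~\ref{rema:c8curve} (and the degree-$16$ analogue of Lemma~\ref{lemma:curves not in non normal K3}) should either force $C\in\{C_8,C_8'\}$ or yield a contradiction. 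Making this last comparison precise — equivalently, ruling out $G$-invariant curves on $\widetilde X$ of the relevant degree with $E\cdot\widetilde C=0$ — is where the real work lies.
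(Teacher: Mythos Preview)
Your overall architecture matches the paper's: use Lemma~\ref{lemma:curves not in non normal K3}(1) to push a degree-$4$ curve into $R$ or $R'$, then read off the answer from the $\bP^1\times\bP^1$ picture (this is exactly how the paper argues, pointing to the list~\eqref{eqn:ab} in the proof of Lemma~\ref{lemm:quad2curveinR}); and for degree~$8$, transport the curve through the involution $\varphi$ of Lemma~\ref{lemm:linkC4} to a degree-$4$ curve. The handling of the case $\Sigma_{12}\subset C$ is correct and is precisely the paper's argument.

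The genuine gap is your ``obstacle''. It is \emph{not} an obstacle: the case $C\cap(C_4\cup C_4')=\emptyset$ simply does not occur, and the paper disposes of it in one line by invoking the proof of Lemma~\ref{lemma:curves not in non normal K3}(1). Concretely, since $C\not\subset R$ one has $C\cdot R=3\deg(C)=24$, and the $G$-invariant $0$-cycle $C\cap R$ is supported on $G$-orbits in $R$, whose lengths are $12$, $20$, $30$ or $60$ (the length-$5$ orbits lie on other members of $\cP$ by Lemma~\ref{lemm: sing S 2}). The only way to write $24$ as a nonnegative combination of these numbers is $24=12\cdot 2$, so $C\cap R$ is supported on the two length-$12$ orbits $\Sigma_{12}\cup\Sigma_{12}'$; in particular $C$ contains at least one of them. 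You actually had this computation in hand: your own equality $C\cdot B=24$ on the smooth K3 $S$ (where again no length-$5$ orbit lives) forces $C\cap B$ to be supported on $\Sigma_{12}\cup\Sigma_{12}'$, hence nonempty. You used this only to bound the number of orbits in $C$ from above; using it from below as well closes the argument immediately. Your proposed detour through degree-$16$ curves and the $\widetilde\varphi$-action on $\mathrm{Pic}(\widetilde X)$ is therefore unnecessary.
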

\begin{proof}
By Lemma~\ref{lemma:curves not in non normal K3}/(1), any $G$-invariant curve $C$ of degree 4 is contained in $R$ or $R'$. Without loss of generalities, assume $C\subset R$. The first assertion then follows from the proof of Lemma~\ref{lemm:quad2curveinR}.  In particular, no solution in \eqref{eqn:ab} gives $3a+b=4$.
 To show the second assertion, recall that any $G$-invariant curve of degree 8 is irreducible since 8 is not a multiple of the index of any strict subgroup of $G$. By the proof of Lemma~\ref{lemma:curves not in non normal K3}/(1), we know that $C$ contains an orbit of length 12.  Assume that $\Sigma_{12}\subset C$. Under the same notation as in the proof of Lemma~\ref{lemm:quad2curveinR}, let $\widetilde C$ be the strict transform of $C$ in $\widetilde X$. We have 
 $$
 \widetilde C\cdot (2\widetilde H-E)=2\cdot 8-12=4.
 $$
 It follows that $\varphi(C)$ is a curve of degree 4 in $R'$, which is necessarily $C_4'$. Then $C=\varphi(C_4')$ since $\varphi$ is an involution. Similarly, we can show that $C=\varphi'(C_4)$ when $\Sigma_{12}'\in C$,
\end{proof}

\begin{lemm}\label{lemm:deg16sing}
    Let $C$ be a singular $G$-invariant curve of degree 16 that is not contained in $R\cup R^\prime$. Then $C=\varphi'(C_8)$ or $C=\varphi(C_8')$. Moreover, $C$ is not a non-canonical center of the log pair $(X,\lambda\cM_X)$. 
\end{lemm}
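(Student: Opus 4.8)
The plan is to first identify $C$ with the aid of the birational involution of Lemma~\ref{lemm:linkC4}, and then to exclude it as a non-canonical center by restricting the mobile system to the smooth K3 surface through $C$.

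\textbf{Step 1 (identification).} Since Lemma~\ref{lemm: G-inv red curves} lists no reducible $G$-invariant curve of degree $16$, the curve $C$ is irreducible; by Lemma~\ref{lemma:curves not in non normal K3} it is a Cartier divisor on the unique member $S$ of the pencil $\cP$ containing it, and $S$ is smooth (in particular $S\ne R,R'$). As $C_4\subset R$ and $C_4'\subset R'$ have degree $4$ and are not contained in $S$, one has $C_4\cap S=\Sigma_{12}$ and $C_4'\cap S=\Sigma_{12}'$, each a $G$-orbit of $4\cdot 3=12$ points. Using that $C\cdot R=C\cdot R'=3\deg(C)=48$, that $G$ acts transitively on each orbit of Lemma~\ref{lemm:2a5orbit} (so the local intersection multiplicity of $C$ with $R$, resp. $R'$, is constant along each orbit), that $\Sing(R)=C_4$ and $\Sing(R')=C_4'$, and the Hodge-index bound $C^2\le (C\cdot H_S)^2/(H_S)^2=256/6$, hence $C^2\le 42$ and $p_a(C)\le 22$, one checks that $\Sing(C)$ is a single $G$-orbit of length $12$; after relabelling, $\Sing(C)=\Sigma_{12}$ and then $C\cap C_4=\Sigma_{12}$. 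Let $\varphi\colon X\dashrightarrow X$ be the involution of Lemma~\ref{lemm:linkC4} associated with $C_4$, $\pi\colon\widetilde X\to X$ the blowup of $C_4$ with exceptional divisor $E$, $\widetilde H=\pi^*H$, and $\widetilde C$ the strict transform of $C$. Since $C\not\subset R$, the image $\varphi(C)$ is again a $G$-invariant curve, and using $\widetilde\varphi^*(\widetilde H)\sim 2\widetilde H-E$ (as in the proof of Lemma~\ref{lemm:quad2curveinR}),
$$
\deg\bigl(\varphi(C)\bigr)=\widetilde C\cdot(2\widetilde H-E)=2\deg(C)-\widetilde C\cdot E=32-\widetilde C\cdot E .
$$
Because $C$ is singular at each point of $\Sigma_{12}=C\cap C_4$, the strict transform $\widetilde C$ meets $E$ over each such point with multiplicity $\ge\mult_p(C)\ge 2$; by $G$-invariance these contributions are all equal, so $\widetilde C\cdot E=12t$ with $t\ge 2$. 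As $\varphi(C)$ is a curve, $32-12t>0$, forcing $t=2$ and $\deg(\varphi(C))=8$. One verifies that $\varphi(C)\not\subset R\cup R'$ (otherwise $C=\varphi(\varphi(C))$ would be contracted by $\varphi$), so by Lemma~\ref{lemma:curves not in non normal K3}(7) the curve $\varphi(C)$ has genus $0$, and then Lemma~\ref{lemm:allcurve80} gives $\varphi(C)\in\{C_8,C_8'\}$. Since $\varphi(C_8)=\varphi(\varphi(C_4'))=C_4'$ has degree $4\ne 16$, we conclude $\varphi(C)=C_8'$ and $C=\varphi(C_8')$. If instead $\Sing(C)=\Sigma_{12}'$, the same argument with $\varphi'$ and $C_4'$ yields $C=\varphi'(C_8)$. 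These are exactly the two $12$-cuspidal degree-$16$ curves of Remark~\ref{rema:c8curve}.

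\textbf{Step 2 (exclusion).} Suppose $C$ is a non-canonical center of $(X,\lambda\cM_X)$, so $\mult_C(\lambda\cM_X)>1$. Restricting to the smooth K3 surface $S$ and using $\lambda\cM_X|_S\sim_\bQ -K_X|_S\sim_\bQ 3H_S$ with $(H_S)^2=6$ and $C\cdot H_S=\deg(C)=16$, we get $\lambda\cM_X|_S\sim_\bQ mC+\Delta$ with $m\ge\mult_C(\lambda\cM_X)>1$ and $\Delta$ an effective $\bQ$-divisor whose support does not contain $C$; in particular $3H_S-C\sim_\bQ(m-1)C+\Delta$ is effective. By Remark~\ref{rema:c8curve} the curve $C$ has $12$ cusps and lies on the smooth K3 surface $S$, so $p_a(C)=12$ and $C^2=22$; hence
$$
(3H_S-C)^2=54-96+22=-20,\qquad (3H_S-C)\cdot H_S=18-16=2 .
$$
From $\Delta\sim_\bQ 3H_S-mC$ we get $\Delta\cdot H_S=18-16m\in[0,2)$, so writing $\Delta=\sum_i b_iE_i$ with the $E_i$ distinct irreducible curves on $S$ and $b_i>0$ we have $\sum_i b_i<2$, whence $\Delta^2\ge -2\sum_i b_i^2\ge -2\bigl(\sum_i b_i\bigr)^2>-8$ (using $E_i^2\ge -2$ on a K3 surface and $E_i\cdot E_j\ge 0$). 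Since $C$ is an irreducible curve with $C^2>0$ it is nef on $S$, so $C\cdot\Delta\ge 0$, and therefore
$$
-20=(3H_S-C)^2=\bigl((m-1)C+\Delta\bigr)^2\ge\Delta^2>-8 ,
$$
a contradiction. Hence $C$ is not a non-canonical center of $(X,\lambda\cM_X)$.

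\textbf{Main obstacle.} The delicate part is Step 1: verifying $C_4\cap S=\Sigma_{12}$ and $C_4'\cap S=\Sigma_{12}'$, pinning down $\Sing(C)$ as a single $G$-orbit of length $12$ — in particular excluding orbits of length $\ge 20$ of singular points by combining $p_a(C)\le 22$ with the $G$-invariant structure of $C\cdot R$ and $C\cdot R'$ — and establishing $\widetilde C\cdot E=24$, i.e. $\deg(\varphi(C))=8$, from the local geometry of the cusps of $C$ along $C_4$ together with the explicit involution $\widetilde\varphi$. Once the identification is in place, the self-intersection computation on the K3 surface $S$ that rules out $C$ as a non-canonical center is short and self-contained.
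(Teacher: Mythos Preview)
Your Step~1 follows the paper's route: use the involution of Lemma~\ref{lemm:linkC4} to cut the degree and invoke Lemma~\ref{lemm:allcurve80}. The gap you flag is genuine, and your hint (the $G$-structure of $C\cdot R$ and $C\cdot R'$) does not close it. The paper's argument to exclude $|\Sing(C)|\ge 20$ is Picard-theoretic: one is forced to $g(C)=0$, $|\Sing(C)|=p_a(C)=20$, $C^2=38$, whence $(C-2H_S)^2=-2$; Riemann--Roch then produces a $G$-invariant effective divisor of degree $\leq 4$ on the smooth K3 surface $S$, contradicting Lemma~\ref{lemma:curves not in non normal K3}. You should also be careful with the claim $\widetilde C\cdot E=12t$ where $t=\mathrm{mult}_p(C)$: in general $\widetilde C\cdot E$ can exceed $12\cdot\mathrm{mult}_p(C)$ if $C$ is tangent to $C_4$; the paper absorbs this into extra terms $12a+20b+30c+60d$ and observes any such excess already makes $\deg(\varphi(C))<0$.

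Your Step~2 is a genuinely different and valid argument. The paper instead verifies by explicit computation that $|5H_S-C|$ has no base curve other than $C$, so $5H_S-C$ is nef, and then $(5H_S-C)\cdot(3H_S-C)=-16<0$ gives the contradiction. Your route avoids that computer check: from $3H_S-C\sim_{\bQ}(m-1)C+\Delta$ with $m>1$, nefness of the irreducible curve $C$ (as $C^2=22>0$), and $\Delta\cdot H_S=18-16m<2$ forcing $\Delta^2>-8$ on a K3 surface, you get $-20=(3H_S-C)^2\ge\Delta^2>-8$. This is cleaner. Note, though, that the numerical input $C^2=22$ still relies on Step~1 (rationality of $C$ via the identification, and $|\Sing(C)|=12$), so the two steps are not independent.
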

\begin{proof}
First, we show that $C$ is singular along an orbit of length 12. Assume it is not. Then $|\Sing(C)|\geq 20$. Let $S$ be the unique smooth K3 surface in $\cP$ containing $C$ as in Lemma~\ref{lemma:curves not in non normal K3}, and $H_S$ a general hyperplane section on $S$. We have $C^2\leq 42$ by the Hodge index theorem, and thus the arithmetic genus $p_a(C)\leq 22$. Since $|\Sing(C)|\geq 20$, the geometric genus $g(C)\leq 2$. By \cite[Lemma 5.1.5.]{CheltsovShramov}, $\fA_5$ does not act on curves of geometric genus 1 or 2. It follows that 
$$
g(C)=0,\quad |\Sing(C)|=p_a(C)=20, \quad C^2=38.
$$
Then $(C-2H_S)^2=-2$. This implies that $|C-2H_S|$ is not empty and has a fixed component of degree $\leq 4$, which is impossible by Lemma~\ref{lemma:curves not in non normal K3}. Thus, $C$ is singular along an orbit of length 12. Assume that $C$ is singular at points in $\Sigma_{12}$. Under the same notation as in the proof of Lemma~\ref{lemm:allcurve80}, we have that 
$$
\deg(\varphi(C))=\widetilde C \cdot (2\widetilde H-E)=2\cdot 16-2\cdot 12-(a\cdot12+b\cdot20+c\cdot 30+d\cdot60)\geq 0
$$
for $a,b,c,d\in\bZ_{\geq0}$. Then the only possibility is $\deg(\varphi(C))=8$. It follows from Lemma~\ref{lemm:allcurve80} that 
$
\varphi(C)=C_8 
$ or $C_8'$, i.e., $C=\varphi(C_8)$ or $\varphi(C_8')$. The first is impossible since $\varphi(C_8)=C_4'$ has degree 4. Thus, $C=\varphi(C_8')$. Similarly, if $C$ is singular at points in $\Sigma_{12}'$, we can show that $C=\varphi'(C_8)$.

Then we can find equations of $C$. We listed them in \cite{Z-web}. Using equations, we check that $|\Sing(C)|=12$ and $|5H_S-C|$ contains no base curve other than $C$. Then $5H_S-C$ is nef. Since $C_8$ and $C_8'$ are rational curves, we know $g(C)=0$. It follows that $p_a(C)=12$ and $C^2=22$.

Now assume that $C$ is a non-canonical center of $(X,\lambda\cM_X)$. Similarly as in the proof of Lemma~\ref{lemm:deg16smooth}, we know that $3H_S-C$ is effective. But computing 
$$
(5H_S-C)\cdot(3H_S-C)=-16<0,
$$
we obtain a contradiction to the nefness of $5H_S-C$. This completes the proof. 

\end{proof}
\subsection{Orbits of points}

Now we study when all non-canonical centers are points.
First, we show that points in the invariant curves of degree 4 or 8 cannot be non-canonical centers in this case.
\begin{lemm}\label{lemm:ptsinC4}
    Suppose that $C_4$ is not a non-canonical center of the log pair $(X,\lambda\mls_X)$, then every point in $C_4$ is not a non-canonical center. The same holds for $C_4'$.
\end{lemm}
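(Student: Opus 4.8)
The plan is to argue by contradiction at the level of the intersection of two general members of $\cM_X$, exploiting the one thing that distinguishes $C_4$ among curves on $X$: the surface $R$ is singular along it. The hypothesis ``$C_4$ is not a non-canonical center'' will be used only to bound $\mathrm{mult}_{C_4}(\lambda\cM_X)$, and it is precisely this bound that will make the multiplicity estimate at the orbit of $P$ impossible. So I would suppose that some $P\in C_4$ is a non-canonical center of $(X,\lambda\cM_X)$, let $\Sigma$ be its $G$-orbit, and first record that, since $C_4\cong\bP^1$ carries the icosahedral $G$-action (with its unique length-$12$ orbit being $\Sigma_{12}\subset C_4$), we have $|\Sigma|\in\{12,20,30,60\}$; in particular $|\Sigma|\ge 12$. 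Blowing up $C_4$ and looking at the exceptional divisor $E$, the log discrepancy $a(X,\lambda\cM_X;E)=2-\lambda\mathrm{mult}_{C_4}(\cM_X)$ must be $\ge 1$, so $\mathfrak m:=\mathrm{mult}_{C_4}(\cM_X)$ satisfies $r:=\lambda\mathfrak m\le 1$.

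Next, take general $D_1,D_2\in\cM_X$. By Theorem~\ref{theo:4n2} we get $\mathrm{mult}_P(D_1\cdot D_2)>4/\lambda^2$, hence the same at every point of $\Sigma$ by $G$-invariance. Since $\mathrm{mult}_{C_4}(D_i)=\mathfrak m$ and the $D_i$ are general, the $1$-cycle $D_1\cdot D_2$ contains $C_4$ with coefficient exactly $\mathfrak m^2$, so $D_1\cdot D_2=\mathfrak m^2 C_4+\Gamma$ with $C_4\not\subset\mathrm{Supp}(\Gamma)$. From $\lambda\cM_X\sim_\bQ -K_X\sim 3H$ we get $\deg(D_1\cdot D_2)=18/\lambda^2$, so $\deg\Gamma=18/\lambda^2-4\mathfrak m^2$ and, as $C_4$ is smooth at $P$, $\mathrm{mult}_P(\Gamma)=\mathrm{mult}_P(D_1\cdot D_2)-\mathfrak m^2>4/\lambda^2-\mathfrak m^2>0$ (the last step because $r^2=\lambda^2\mathfrak m^2\le 1<4$). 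Thus $\Sigma\subset\mathrm{Supp}(\Gamma)$; combining $\Sigma\subset C_4\subset R$ with $\mathrm{mult}_P(R)\ge 2$ (as $R$ is singular along $C_4$) and intersecting $\Gamma$ with $R\sim 3H$ gives
\[
\frac{54}{\lambda^2}-12\mathfrak m^2=3\deg\Gamma=\Gamma\cdot R\ \ge\ \sum_{P\in\Sigma}\mathrm{mult}_P(\Gamma)\,\mathrm{mult}_P(R)\ >\ 2|\Sigma|\Bigl(\frac4{\lambda^2}-\mathfrak m^2\Bigr)\ \ge\ \frac{96}{\lambda^2}-24\mathfrak m^2,
\]
so $12\mathfrak m^2>42/\lambda^2$, i.e. $r^2=\lambda^2\mathfrak m^2>7/2$, contradicting $r\le 1$. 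The case $\mathfrak m=0$ is identical with $\Gamma=D_1\cdot D_2$ and yields $54/\lambda^2>96/\lambda^2$ at once. Finally, the statement for $C_4'$ follows from the involution $\sigma$ of Remark~\ref{rema:c8curve}, which interchanges $C_4$ and $C_4'$.

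For this to be rigorous one must check that $\Gamma$ meets $R$ properly; the only way it can fail is that $\cM_X$ has a $G$-invariant base curve contained in $R$ and different from $C_4$, which has to be split off from $\Gamma$ first. One deals with this by the classification of low-degree $G$-invariant curves in $R$ in Lemma~\ref{lemm:quad2curveinR}, or, more robustly, by running the whole argument on the blow-up $\pi\colon\widetilde X\to X$ of $C_4$ (Lemma~\ref{lemm:linkC4}): there one passes to $(X,\tfrac32\lambda\cM_X)$ via Remark~\ref{remark:Ziquan}, uses crepancy of $\pi$ and inversion of adjunction to produce a non-log-canonical center of $\bigl(E,\tfrac32\lambda\cM_{\widetilde X}|_E\bigr)$ over the fibre $E_P=\pi^{-1}(P)$, and notes that with $\widetilde H|_E=4f$ and $E|_E=-s+5f$ one has $\lambda\cM_{\widetilde X}|_E\sim rs+(12-5r)f$ on $E\cong\bP^1\times\bP^1$, with $E_P\sim f$; if the center is a point, Corti's inequality (Theorem~\ref{theo:cortiineq}) applied to the mobile part forces $\tfrac{64}{3}<(\lambda\cM_{\widetilde X}|_E)^2=24r-10r^2\le 14$, and if the center is $E_P$ itself, intersecting with $s$ forces $\tfrac32(12-5r)>|\Sigma|\ge 12$, hence $\Sigma=\Sigma_{12}$ and $r<4/5$, and this last sub-case is closed either by the preceding cycle argument (where one now also knows $\mathrm{mult}_P(\lambda\cM_X)>r+\tfrac23$) or by transporting along the involution $\widetilde\varphi$ of Lemma~\ref{lemm:linkC4}, as in Lemma~\ref{lemm:quad2curveinR}.

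The main obstacle is exactly this bookkeeping of the residual cycle $\Gamma$ — equivalently, of the fixed part of $\lambda\cM_{\widetilde X}|_E$: when $\mathrm{mult}_{C_4}(\lambda\cM_X)$ is close to $1$ the cycle $\Gamma$ can pick up extra curves lying on $R$, or $E_P$ can show up as a fixed component of the restricted system on $E$, and neither the crude degree bound on $R$ nor the one on $E$ is by itself decisive; one must invoke the classification of $G$-invariant curves or a sharper surface estimate. It is, however, a genuinely small residual case (it is pinned down to $\Sigma=\Sigma_{12}$ and $\mathrm{mult}_{C_4}(\lambda\cM_X)<4/5$), not a serious difficulty.
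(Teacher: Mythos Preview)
Your primary argument (intersecting the residual cycle $\Gamma$ with $R$) is not the paper's route and has a genuine gap you flag yourself: $\Gamma$ may have components in $R$. You propose to fix this ``by the classification of low-degree $G$-invariant curves in $R$ in Lemma~\ref{lemm:quad2curveinR}'', but that lemma gives no such classification; it only shows that such curves are not non-canonical centers, which says nothing about their degree or about how much they contribute to $\Gamma$. For instance $\cL_{12}\subset R$ (Remark~\ref{rk: K3s containing reducible curves}), and nothing prevents it from lying in the base locus of $\cM_X$; splitting it off and redoing the multiplicity count does not obviously close the inequality.

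Your fallback --- blow up $C_4$ and restrict to $E$ --- is the paper's approach, but your execution is more complicated and still incomplete. The $\tfrac32$ trick of Remark~\ref{remark:Ziquan} is unnecessary: since the blow-up of a smooth curve has discrepancy $1$, already $\pi^*(K_X+\lambda\cM_X)=K_{\widetilde X}+\lambda\cM_{\widetilde X}+(r-1)E$ with $r-1\le 0$, and inversion of adjunction yields directly that $(E,D)$ is not log-canonical over $P$, where $D:=\lambda\cM_{\widetilde X}|_E\sim rs+(12-5r)f$. (Your word ``crepancy'' is a slip.) Your appeal to Corti's inequality on $E$ needs $\cM_{\widetilde X}|_E$ to be mobile, which you do not verify; and the residual case $\Sigma=\Sigma_{12}$, $r<4/5$, is asserted to be closable but never closed.

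The clean way through --- and the paper's --- is a direct fibre analysis on $E\cong\bP^1\times\bP^1$. Any curve that is not a fibre has coefficient $\le D\cdot f=r\le 1$ in $D$, so cannot be a non-log-canonical center; if some fibre had coefficient $>1$ then at least $12$ would (smallest $G$-orbit on $C_4\cong\bP^1$), forcing $D\cdot s=12-5r>12$, impossible. Hence every non-log-canonical center is a point $p$. Write $D=bL+\Delta$ with $L$ the fibre through $p$ and $b\le 1$; then $(E,L+\Delta)$ is not log-canonical at $p$, and inversion of adjunction on $L\cong\bP^1$ forces $\mathrm{mult}_p(\Delta|_L)>1$, contradicting $\Delta\cdot L=D\cdot L=r\le 1$. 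This last step --- restrict to the fibre through $p$ --- is what dissolves your ``residual case'' in one line, with no Corti and no $\tfrac32$.
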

\begin{proof}
    Let $a=\mult_{C_4}(\lambda \cM_X)$. By assumption, we have $a\leq 1.$ We retain the notation in Lemma~\ref{lemm:quad2curveinR}: let $\pi:\widetilde X\to X$ be the blowup of $C_4$, and $E$ its exceptional divisor, so that $E=\bP^1\times\bP^1\to C_4$. Let $\lambda\mls_{\widetilde{X}}$ be the linear system satisfying
  $$
   K_{\widetilde  X}+\lambda\mls_{\widetilde{X}}+(a-1)E\sim \varphi^*(K_X+\lambda M_X).
  $$
  Assume that a point on $C_4$ is a non-canonical center of $(X,\lambda\cM_X)$. Then there exists a center $Z$ of non-canonical singularities of the pair $$
  (\widetilde X,\lambda\mls_{\widetilde{X}}+(a-1)E)
  $$ such that $Z\subset E$. It follows that $Z$ is a center of non-canonical singularities of  $(\widetilde X,\lambda\mls_{\widetilde{X}}+E)$. By inversion of adjunction, $Z$ is a non-log-canonical center of $(E,\lambda\cM_{\widetilde X}\vert_E)$. Note that $\lambda\cM_{\widetilde X}\vert_E$ is not mobile, and consider a divisor $D\sim_\dq\lambda\cM_{\widetilde X}\vert_E$.
   Let $f$ be a general fibre of $E\to C_4$ and $s$ a section such that $s^2=0$. We compute 
   $$
   D\sim_\dq as+(12-5a)f\in\Pic(E)\otimes\bQ.
   $$
   Since $a\leq 1$, we know that $(E,D)$ is log-canonical at a general point of any curve which is not a fibre. On the other hand, if any fibre is a non-log-canonical center of $(E,D)$, then at least 12 fibres are non-log-canonical centers, since the smallest orbit of the $\fA_5$-action on $C=\bP^1$ has length 12. This is impossible because $12-5a\leq12$. It follows that $(E,D)$ is not log-canonical at finitely many points. Let $p$ be one of these points and $L$ a fibre containing $p$. Write 
   $$
   D\sim_\dq bL+\Delta,\quad  b\leq 1
   $$
   for some divisor $\Delta$ not supported along $L$. Then $(E,L+\Delta)$ is also not log-canonical at $p$. By inversion of adjunction, $(L,\Delta\vert_L)$ is not log-canonical at $p$, which contradicts
   $$
   (\Delta\cdot L)_p=a\leq 1.
   $$
\end{proof}

\begin{lemm}\label{lemm:ptsinC8}
    Suppose that the curves $C_4,C_4',C_8$ and $C_8'$ are not centers of non-canonical singularities of $(X,\lambda\mls_X)$, then none of the points in $C_4\cup C_4'\cup C_8\cup C_8'$ is a non-canonical center.
\end{lemm}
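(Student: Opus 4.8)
The plan is to deduce the statement for points of $C_8$ and $C_8'$ from Lemma~\ref{lemm:ptsinC4} by transporting the problem through the birational involution $\varphi$ (resp.\ $\varphi'$) of Lemma~\ref{lemm:linkC4}, which by Remark~\ref{rema:c8curve} satisfies $\varphi(C_4')=C_8$ and $\varphi'(C_4)=C_8'$. Points lying on $C_4\cup C_4'$ are already excluded by Lemma~\ref{lemm:ptsinC4}, whose hypotheses are contained in the present assumptions; and since the involution $\sigma$ with $\langle\sigma,G\rangle\simeq\fS_5$ permutes the four curves $C_4,C_4',C_8,C_8'$ and therefore carries any pair satisfying our hypotheses to another such pair, it suffices to show that no point of $C_8$ is a non-canonical center of $(X,\lambda\mls_X)$.

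So suppose $p\in C_8$ is a non-canonical center of $(X,\lambda\mls_X)$; we may assume $p\notin C_4$, for otherwise Lemma~\ref{lemm:ptsinC4} applies. First I would locate $p$ more precisely: since $C_8\cdot R=24$, $\Sigma_{12}\subset C_8\cap C_4\subset R$, and the only small $G$-orbits in $X$ are those of Lemma~\ref{lemm:2a5orbit}, one finds $C_8\cap R=\Sigma_{12}\subset C_4$, so $p\notin R$; symmetrically $C_4'\cap R=\Sigma_{12}'$, and because $\varphi$ contracts $R$ the condition $p\notin C_4$ forces $p':=\varphi(p)$ to lie on $C_4'\setminus(C_4\cup R)$, which is exactly the locus where $\varphi$ is a local isomorphism. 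Next I would resolve $\varphi$ by the blow-up $\pi\colon\widetilde X\to X$ of $C_4$, with exceptional divisor $E$, automorphism $\widetilde\varphi$, and second contraction $\pi'=\pi\circ\widetilde\varphi\colon\widetilde X\to X$ which contracts $\widetilde R$ back to $C_4$ and satisfies $\widetilde\varphi(\widetilde C_8)=\widetilde C_4'$. Writing $a=\mathrm{mult}_{C_4}(\lambda\mls_X)$, the hypothesis that $C_4$ is not a non-canonical center gives $a\le 1$. A direct computation in $\Pic(\widetilde X)$ with the classes recorded in Lemma~\ref{lemm:linkC4} shows that the pair $(\widetilde X,\lambda\mls_{\widetilde X}+(a-1)E)$ is the crepant pullback of $(X,\lambda\mls_X)$ along $\pi$ and at the same time the crepant pullback of $(X,\lambda\mls_X^{\circ}+(a-1)R)$ along $\pi'$, where $\mls_X^{\circ}=\varphi_*\mls_X$; in particular $\lambda\mls_X^{\circ}+(a-1)R\sim_\bQ -K_X$. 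Hence $\varphi$ identifies the non-canonical loci of $(X,\lambda\mls_X)$ and of $(X,\lambda\mls_X^{\circ}+(a-1)R)$ away from $C_4\cup R$: the point $p'\in C_4'$ is a non-canonical center of the latter pair, whereas $C_4'=\varphi(C_8)$ is not, since $C_8$ is not a non-canonical center of $(X,\lambda\mls_X)$ by hypothesis.

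It then remains to run the argument of Lemma~\ref{lemm:ptsinC4} for the pair $(X,\lambda\mls_X^{\circ}+(a-1)R)$ and the curve $C_4'$. Because $C_4'\not\subset R$ one has $\mathrm{mult}_{C_4'}(R)=0$, so $b:=\mathrm{mult}_{C_4'}(\lambda\mls_X^{\circ}+(a-1)R)=\mathrm{mult}_{C_4'}(\lambda\mls_X^{\circ})\le 1$, the inequality being the previous remark. Blowing up $C_4'$ and restricting the boundary to the exceptional surface $E''\simeq\bP^1\times\bP^1$, the term $(a-1)R$ contributes $(a-1)(R\cdot C_4')f=12(a-1)f$ — a combination of fibres, since $C_4'\not\subset R$ — while $\deg\mls_X^{\circ}=(2-a)\deg\mls_X$; the two shifts cancel and, exactly as in Lemma~\ref{lemm:ptsinC4}, the restricted boundary is $\bQ$-linearly equivalent to $bs+(12-5b)f$ with $b\le 1$, where $f$ is a fibre and $s$ a section with $s^2=0$. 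The argument of Lemma~\ref{lemm:ptsinC4} — inversion of adjunction onto $E''$, the fact that the smallest $G$-orbit on $C_4'\simeq\bP^1$ has length $12>12-5b$ when $b>0$ (with the boundary a sum of disjoint fibres when $b=0$), and a further inversion of adjunction onto a fibre through $p'$ — then yields that $p'$ is not a non-canonical center, a contradiction. The same reasoning with $\varphi'$ in place of $\varphi$, or the $\sigma$-symmetry, disposes of $C_8'$.

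The step requiring the most care is the crepant identification in the second paragraph: since $\varphi$ does not preserve the hyperplane class, $\mls_X^{\circ}$ has strictly larger degree than $\mls_X$ and the transformed pair carries the anti-effective term $(a-1)R$, so it is not literally of the form to which Lemma~\ref{lemm:ptsinC4} was stated. One must check both that the two crepant pullbacks genuinely agree on $\widetilde X$ (the coefficient of $\widetilde R$ in $\pi'^{*}(K_X+\lambda\mls_X^{\circ}+(a-1)R)$ coming out to be exactly $0$) and that, in spite of the degree change and the extra term, the restriction to $E''$ returns to the shape $bs+(12-5b)f$ that powers Lemma~\ref{lemm:ptsinC4}. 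The $G$-orbit bookkeeping needed to guarantee $C_8\cap R\subset C_4$ and $C_4'\cap R=\Sigma_{12}'$, hence $p'\notin C_4\cup R$, is the other point where the reduction could fail and must be verified.
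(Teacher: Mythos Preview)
Your idea of transporting the problem through the involution $\varphi$ of Lemma~\ref{lemm:linkC4} is natural, and the crepant identification you set up is correct: the pair $(\widetilde X,\lambda\mls_{\widetilde X}+(a-1)E)$ is simultaneously the crepant pullback of $(X,\lambda\mls_X)$ and of $(X,\lambda\mls_X^{\circ}+(a-1)R)$. The gap lies in the final step. After blowing up $C_4'$, the boundary you restrict to $E''$ is $\lambda\mls_{\widehat X}^{\circ}|_{E''}+(a-1)\widehat R|_{E''}$, and the second summand is \emph{anti-effective}: it equals $(a-1)$ times the sum of the $12$ fibres over $\Sigma_{12}'$. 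So while the $\bQ$-class of the restricted boundary is indeed $bs+(12-5b)f$, the divisor is only a sub-boundary. The orbit-counting step of Lemma~\ref{lemm:ptsinC4} (``the smallest $G$-orbit on $\bP^1$ has length $12>12-5b$'') bounds the multiplicity along the fibre $L$ over $p'$ only when the restricted divisor is effective. Since the anti-effective part is supported away from $L$, one actually has $\mult_L(\text{boundary})=\mult_L(\lambda\mls_{\widehat X}^{\circ}|_{E''})$, and this effective divisor has $f$-degree $24-12a-5b$, not $12-5b$. To conclude $\mult_L\leq 1$ you would need the $G$-orbit of $p'$ on $C_4'$ to have length at least $24-12a-5b$; but you have only excluded $\Sigma_{12}'$, not the second length-$12$ orbit on $C_4'$, and there is no lower bound on $a$ or $b$ (both could be $0$, giving $f$-degree $24$). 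The further adjunction onto $L$ therefore cannot be justified, and the argument does not close.

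The paper's proof is entirely different and much more direct. It writes $\lambda^2(M_1\cdot M_2)=mC_8+\Delta$, bounds $m\leq 9/4$ by intersecting with a hyperplane, applies Theorem~\ref{theo:4n2} to obtain $\mult_p(\Delta)>4-m$ at every putative non-canonical center $p\in C_8$, and then intersects with a general cubic surface $S$ through $C_8$ (the system $|3H-C_8|$ has no other base curves by Remark~\ref{rema:c8curve}) to get $54-24m=\Delta\cdot S\geq |\Sigma|(4-m)$. This forces $|\Sigma|<20$; the remaining small orbits of lengths $5$ and $12$ are then ruled out because the length-$12$ orbits lie on $C_4\cup C_4'$ (handled by Lemma~\ref{lemm:ptsinC4}) and the length-$5$ orbits do not lie on $C_8$.
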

\begin{proof}
 By Lemma~\ref{lemm:ptsinC4}, it suffices to show that $(X,\lambda \cM_X)$ is canonical at any point in $C_8$ under the assumption.
   Let $\Sigma$ be the intersection of $C_8$ with the non-log-canonical locus of $(X,\lambda\cM_X)$. Similarly as before, we may assume that $\Sigma$
  is 0-dimensional. Let $M_1,M_2$ be two general members in $\cM_X$, and write 
  $$
  \lambda^2(M_1\cdot M_2)=mC_8+\Delta
  $$
for some divisor $\Delta$ not supported along $C_8$ and $m\geq 0$.  Intersecting with a general hyperplane $H$, we obtain 
$$
 18=\lambda^2(M_1\cdot M_2\cdot H)\geq m\deg(C_8)=8m\quad \Rightarrow\quad m\leq9/4.
$$
Recall that $C_8$ is cut out by cubics, see Remark~\ref{rema:c8curve}. Let $S$ be a general cubic on $X$ passing through $C_8$. Since $C_8$ is smooth, by Theorem~\ref{theo:4n2}, we have 
$$
\mult_{\Sigma}\left(\lambda^2(M_1\cdot M_2)\right)>4\quad\Rightarrow\quad\mult_{\Sigma}(\Delta)\geq 4-m.
$$
Observe that
\begin{align}\label{eqn:ineq}
   54=\lambda^2(M_1\cdot M_2\cdot S)=24m+\Delta\cdot S\geq 24m+|\Sigma|(4-m).
\end{align}
Since $4-m>0$, the inequality \eqref{eqn:ineq} implies that $|\Sigma|<20$. By Lemma~\ref{lemm:2a5orbit}, $\Sigma$ consists of orbits of length 5 or 12. Orbits of length 12 are in $C_4\cup C_4'$, and thus are excluded by Lemma~\ref{lemm:ptsinC4}. On the other hand, none of orbits of length 5 is in $C_8$. It follows that $\Sigma=\emptyset.$
\end{proof}

\begin{prop}\label{prop:ptcase2}
    Suppose that the log pair $(X,\lambda\mls_X)$ is canonical away from finitely many points. Then it is canonical away from $\Sigma_5\cup\Sigma_5'$.
\end{prop}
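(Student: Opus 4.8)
The plan is to show that every $G$-orbit $\Sigma$ all of whose points are non-canonical centers of $(X,\lambda\mls_X)$ is $\Sigma_5$ or $\Sigma_5'$. Since $(X,\lambda\mls_X)$ is canonical away from finitely many points, no $G$-invariant curve can be a non-canonical center; in particular none of $C_4,C_4',C_8,C_8'$ is, so Lemmas~\ref{lemm:ptsinC4} and~\ref{lemm:ptsinC8} apply and no point of $C_4\cup C_4'\cup C_8\cup C_8'$ is a non-canonical center. By Lemma~\ref{lemm:2a5orbit} a $G$-orbit of length $<20$ is one of $\Sigma_5,\Sigma_5',\Sigma_{12},\Sigma_{12}'$, and since $\Sigma_{12}\subset C_4$ and $\Sigma_{12}'\subset C_4'$ the two orbits of length $12$ are excluded; so it remains to rule out a hypothetical orbit $\Sigma$ of length $\geq 20$ consisting of non-canonical centers.

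For such a $\Sigma$, Remark~\ref{remark:Ziquan} gives that $(X,\tfrac32\lambda\mls_X)$ is not log canonical along $\Sigma$; let $\Lambda$ be its non-log-canonical locus, so $\Sigma\subseteq\Lambda$. If $C$ is a $G$-irreducible curve in the one-dimensional part of $\Lambda$, then on a general hyperplane section $H$ through a general point of $C$ that point is a non-log-canonical \emph{center} of $(H,\tfrac32\lambda\mls_X|_H)$ — here I use that $\mls_X$, hence $\mls_X|_H$, is mobile, so the non-log-canonical locus on $H$ contains no curve — and Theorem~\ref{theo:cortiineq} together with the identity $\lambda^2(M_1\cdot M_2\cdot H)=18$ for general $M_1,M_2\in\mls_X$ forces $\deg(C)\leq 10$. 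By the classification of low-degree $G$-invariant curves (Lemmas~\ref{lemm: G-inv red curves}, \ref{lemma:curves not in non normal K3}, \ref{lemm:allcurve80}) and the analysis of curves in $R\cup R'$ (Lemma~\ref{lemm:quad2curveinR}), such a $C$ is one of $C_4,C_4',C_8,C_8',\cC_5$, a smooth curve of degree $10$ and genus $6$, or a low-degree curve in $R$ or $R'$; each of these is $G$-invariant, so $\Sigma$ is either contained in it or disjoint from it. Moreover $\Sigma$ is contained in none of them: in $C_4,C_4',C_8,C_8'$ by the previous paragraph, and in the remaining ones because each lies on an explicit low-degree surface (a smooth K3 member of the pencil $\cP$, or $R$, $R'$ themselves) meeting $X$ properly along it, so intersecting $\lambda^2(M_1\cdot M_2)$ with that surface and using $\mult_P(M_1\cdot M_2)>4/\lambda^2$ from Theorem~\ref{theo:4n2} at the points of $\Sigma$ — exactly as in the proof of Lemma~\ref{lemm:ptsinC8} — bounds $|\Sigma|$ strictly below $20$. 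Hence $\Sigma$ avoids the one-dimensional part of $\Lambda$, so each point of $\Sigma$ is isolated in the co-support of $\cI:=\cI(X,\tfrac32\lambda\mls_X)$. Since $K_X+\tfrac32\lambda\mls_X+\tfrac12\cO_X(1)\sim_{\bQ}\cO_X(2)$ with $\tfrac12\cO_X(1)$ ample, Nadel vanishing (Theorem~\ref{theo:nadel}) gives $h^1\big(X,\cO_X(2)\otimes\cI\big)=0$, whence $20\leq|\Sigma|\leq h^0(\cO_X(2))=14$, a contradiction. Therefore $\Sigma\in\{\Sigma_5,\Sigma_5'\}$.

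I expect the bookkeeping around curves in $\Lambda$ to be the main obstacle. Unlike Section~\ref{sect:a5quad1}, where all low-degree invariant curves sit in the single invariant hyperplane section $Q$ and one concludes by computing $\alpha_G(Q)$, the nonstandard action provides no such surface, so the one-dimensional part of $\Lambda$ cannot be dispatched in one stroke; one must instead run through the whole curve classification and, for $\cC_5$, for the putative degree-$10$ genus-$6$ curve, and for the curves of $R\cup R'$, rerun a Corti / Theorem~\ref{theo:4n2} estimate against an explicitly chosen surface to prevent a length-$\geq 20$ orbit from hiding on it. Making these estimates go through, and making sure the Nadel co-support really is zero-dimensional near $\Sigma$, is the delicate point.
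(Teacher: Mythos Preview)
Your outline is essentially correct and would go through, but the paper takes a shorter route that dissolves exactly the obstacle you anticipate. Instead of Corti's inequality (Theorem~\ref{theo:cortiineq}), the paper applies the Demailly--Pham bound (Theorem~\ref{theo:demailly}) and, crucially, uses the hypothesis of the proposition: since $(X,\lambda\mls_X)$ is canonical along every curve, one has $\mult_C(\lambda\mls_X)\le 1$, so the scaled multiplicity $m:=\mult_C\bigl((\tfrac32-\varepsilon)\lambda\mls_X\bigr)$ lies in the interval $(1,\tfrac32)$, whence $\tfrac{m^2}{m-1}>\tfrac92$. This sharpens the degree bound from your $\deg(C)\le 10$ to $\deg(C)\le 8$, so the only curves that can lie in the non-klt locus are $C_4,C_4',C_8,C_8'$, already handled by Lemmas~\ref{lemm:ptsinC4} and~\ref{lemm:ptsinC8}; the extra casework for $\cC_5$, the putative degree-$10$ genus-$6$ curve, and curves in $R\cup R'$ simply never arises. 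The paper also carries out the whole argument on $\Omega=\mathrm{Nklt}\bigl(X,(\tfrac32-\varepsilon)\lambda\mls_X\bigr)$ rather than on the non-log-canonical locus of $(X,\tfrac32\lambda\mls_X)$, so that the curve analysis and the Nadel co-support refer to the same ideal --- this disposes of your second worry for free. Your route would succeed as well (the Lemma~\ref{lemm:ptsinC8}-style estimate does bound $|\Sigma|<20$ on each of the extra curves), but it does not exploit the hypothesis that the pair is already canonical along curves, which is precisely what makes the paper's argument a few lines long.
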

\begin{proof}
Let $\Sigma$ be the non-canonical locus of $(X,\lambda\cM_X).$ By Remark~\ref{remark:Ziquan}, $(X,\frac32 \lambda\cM_X)$ is not log-canonical at points in $\Sigma$. Let $\varepsilon$ be a positive rational number such that
    $$
    \Sigma\subset\Omega,\quad \Omega:=\mathrm{Nklt}(X,(\frac{3}{2}-\varepsilon)\lambda\cM_X).
    $$
Assume that $\Omega$ contains some curve $C$. Let 
$$
m=\mult_C((\frac{3}{2}-\varepsilon)\lambda\cM_X).
$$
Observe that
$$
1<m<\frac32\quad\Rightarrow \quad \frac{m^2}{m-1}>\frac92.
$$
Let $M_1, M_2\in\cM_X$ be two general elements, and $H$ a general hyperplane on $X$. By Theorem~\ref{theo:demailly}, we know that 
$$
\frac92\deg(C)<\frac{m^2}{m-1}\deg(C)\leq (\frac32-\varepsilon)^2\lambda^2(H\cdot M_1\cdot M_2)<\frac9418=\frac{81}{2},
$$
which implies that $\deg(C)\leq 8.$ It follows that $C$ can only be one of $C_4, C_4', C_8$ and $C_8'$. By Lemma~\ref{lemm:ptsinC4} and Lemma \ref{lemm:ptsinC8}, $\Sigma$ is disjoint from these curves. Thus, the 0-dimensional component $\Omega_0$ of $\Omega$ is non-empty. Applying Nadel vanishing in the same way as in the proof of Proposition~\ref{prop:pointsnotinQ}, we obtain that $|\Sigma|\leq|\Omega_0|<14$. Since all orbits of length 12 are contained in $C_4$ or $C_4'$, the proof is complete.
\end{proof}

\section{The nonstandard $\fA_5$-action on the cubic threefold}\label{sect:a5cub2}

Now, we focus on the other model of $X$: a cubic threefold $Y$ with $5\sA_2$-singularities, carrying the same $G$-action generated by \eqref{eqn:2a5gen}. Let $f_1$ and $f_2$ be the cubics defined in \eqref{eqn:2a5cubicpencil}.
Then $Y$ is given by 
\begin{equation}\label{eqcub2}
Y=\{(8-3\zeta_6)f_1+7f_2=0\}\subset\bP^4\end{equation}
with the same $G$-action given by \eqref{eqn:2a5gen}.
The aim of this section is to prove the following result.
\begin{prop}\label{prop:main4}
    Let $\mls_Y$ be a non-empty mobile $G$-invariant linear system on $Y$, and $\mu\in\dq$ such that $\mu\mls_Y\sim_\dq-K_Y$. Then the log pair $(Y,\mu\mls_Y)$ is canonical away from $\sing(Y)$.
\end{prop}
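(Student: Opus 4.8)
\emph{Strategy.} I would follow the scheme of the proof of Proposition~\ref{prop:main2}, with the Clebsch cubic surface replaced by the anticanonical K3 surface $S:=Y\cap X=S_{8-3\zeta_6,7}\in\cP$, where $X\subset\bP^4=\bP(V_5)$ is the unique $G$-invariant quadric; by Lemma~\ref{lemm: sing S 2} this $S$ is singular, with du~Val singularities, precisely along $\sing(Y)=\Sigma_5$. Since $-K_Y=2H$, two general members $M_1,M_2$ of a system with $\mu\mls_Y\sim_\dq-K_Y$ satisfy $\mu^2(M_1\cdot M_2\cdot H)=12$, so any $G$-invariant curve that is a non-canonical center of $(Y,\mu\mls_Y)$ has degree $<12$ (Remark~\ref{rema:degreebound2}); and the Nadel vanishing theorem applied exactly as in the proof of Proposition~\ref{prop:pointsoutsideR}, via $K_Y+(\tfrac32-\varepsilon)\mu\mls_Y+2\varepsilon\mathcal O_Y(1)\sim_\bQ\mathcal O_Y(1)$, shows that any $0$-dimensional non-canonical center has cardinality $\le h^0(\mathcal O_Y(1))=5$, hence is a $G$-orbit of length $5$. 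Thus it suffices to rule out (a) every $G$-invariant curve of degree $<12$ and (b) every $G$-orbit of length $5$ distinct from $\sing(Y)$.

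\emph{Classification.} First I would enumerate, by computing fixed loci of proper subgroups of $\fA_5$ on $\bP(V_5)$, the $G$-orbits on $Y$ of length $<20$: besides $\Sigma_5=\sing(Y)$ these include the second orbit $\Sigma_5'$ of length $5$, the orbits $\Sigma_{12},\Sigma_{12}'$ of Lemma~\ref{lemm:2a5orbit}, and possibly orbits of length $10$ and $15$. Then I would classify $G$-invariant curves of degree at most $11$. A reducible such curve splits into $G$-orbits of lines and conics whose number divides some index $[\fA_5:H]\in\{5,6,10,15,\dots\}$, as in Lemma~\ref{lemm: G-inv red curves}. An irreducible such curve $C$ either lies on $S$ (whence, via $\Pic^G(S)$, the Hodge index theorem and the Mori cone of the minimal resolution $\widetilde S$, one argues as in Lemma~\ref{lemma:curves not in non normal K3}; here the du~Val points of $S$ along $\sing(Y)$ must be handled when checking that $C$ is Cartier on $S$) or meets $X$ in a $G$-invariant finite set, so that $2\deg(C)$ is a sum of orbit lengths, leaving only finitely many numerical possibilities.

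\emph{Excluding curves and points.} To exclude a curve $C\subset S$ of degree $<12$: restricting, $\mu\mls_Y|_S\sim_\dq 2H_S$, so $\mathrm{mult}_C(\mu\mls_Y)>1$ would make $2H_S-C$ (equivalently $2\widetilde H_S-m\widetilde C$ with $m>1$ on $\widetilde S$) effective, and I would contradict this by exhibiting a nef divisor on $\widetilde S$ with negative intersection against it, exactly as in Lemmas~\ref{lemma:irrcurves not in Q}, \ref{lemma:redcurves not in Q} and \ref{lemm:56conics}. For the finitely many low-degree invariant curves not contained in $S$, I would either exclude them directly via the intersection estimate and Theorem~\ref{theo:4n2}, or transport them to the quadric $X_2$ through the Cremona map $\chi_2$ and invoke Proposition~\ref{prop:main3}. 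For points: by Remark~\ref{remark:Ziquan} a non-canonical point gives a non-klt center of $(Y,(\tfrac32-\varepsilon)\mu\mls_Y)$, with non-klt locus $\Omega$; if $\Omega$ contained a curve $C$ then $27\ge H\cdot(\tfrac32-\varepsilon)^2\mu^2(M_1\cdot M_2)\ge m\deg(C)>4\deg(C)$ by Theorem~\ref{theo:4n2}, forcing $\deg(C)\le6$ and hence $C$ into a short explicit list, for each member of which one checks as in Lemmas~\ref{lemm:ptsinC4}--\ref{lemm:ptsinC8} (restricting to $S$ or to the exceptional divisor of the blow-up of $C$, and using inversion of adjunction) that no point of $C$ is a non-canonical center. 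Hence $\Omega$ has a non-empty $0$-dimensional part $\Omega_0$, Nadel gives $|\Omega_0|\le5$, and the surviving orbit $\Sigma_5'$ is dispatched by the verbatim argument of \cite[Proposition 3.5]{CSZ}.

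\emph{Main obstacle.} The chief difficulty, relative to the standard cubic, is the absence of a $G$-invariant hyperplane section on $Y$: there is no del Pezzo surface to carry the clean $\alpha$-invariant argument of Lemma~\ref{lemm:alphaR}, so the K3 analysis must be run on the \emph{singular} anticanonical surface $S=Y\cap X$, tracking its du~Val singularities along $\sing(Y)$ throughout, and one must separately enumerate and dispose of the low-degree $G$-invariant curves that do not lie on $S$.
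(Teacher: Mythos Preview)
Your overall strategy is sound and parallels the paper's, but several of your expectations about the specific geometry turn out to be off, and the paper exploits this to give a considerably shorter argument.

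First, your orbit enumeration is wrong in a way that actually helps: by Lemma~\ref{lemm:2a5cubicorbit} there is \emph{only one} $G$-orbit of length~$5$ on $Y$, namely $\Sigma_5=\sing(Y)$ itself. There is no second orbit $\Sigma_5'$ to dispatch, and no orbits of length~$10$ either. So once Nadel gives $|\Omega_0|\le5$, the point case is finished.

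Second, the curve classification is much more rigid than you anticipate. Your surface $S=Y\cap X$ is exactly the paper's $Q$, and the paper also uses the unique $G$-invariant cubic section $R$; by playing $C\cdot Q$ and $C\cdot R$ against orbit lengths (Lemmas~\ref{lemm: 2A5degs}--\ref{lemm:no10}) one finds that every $G$-invariant curve of degree $\le11$ is a union of lines: the two orbits $\cL_6,\cL_6'$ of six disjoint lines and the two orbits $\cL_{10},\cL_{10}'$ of ten lines (Proposition~\ref{prop:nocurve}). No irreducible invariant curves survive at all, so your Hodge-index/Mori-cone machinery on the minimal resolution of $Q$ is never needed. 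The paper then excludes the line configurations directly: each of $\cL_6,\cL_6',\cL_{10}$ is cut out by cubics, so on the blow-up $\widetilde Y\to Y$ the divisor $3H-E$ (or $4H-E$) is nef, while $(2H-E)^2\cdot(nH-E)<0$ contradicts $\mathrm{mult}_C(\mu\mls_Y)>1$; the orbit $\cL_{10}'$ through $\sing(Y)$ is handled as in \cite{CSZ}.

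Third, for the point step the paper does \emph{not} use Theorem~\ref{theo:4n2} as you propose. With that bound you only get $\deg(C)\le6$, leaving $\cL_6,\cL_6'$ to deal with, and your suggested Lemma~\ref{lemm:ptsinC4}-style argument along those six lines would be extra work. The paper instead applies the sharper Demailly--Pham inequality (Theorem~\ref{theo:demailly}): since $1<m<\tfrac32$ one has $m^2/(m-1)>\tfrac92$, whence $27>\tfrac92\deg(C)$ and $\deg(C)<6$, which by Proposition~\ref{prop:nocurve} already rules out any curve in $\Omega$.
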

\begin{proof}
    This follows from Propositions~\ref{prop:5A2curvecenter} and~\ref{prop:points2A5Y}.
\end{proof}

 First, we classify small orbits and curves of degrees at most 11. We show that all such curves are reducible. In the second subsection, we study singularities of pairs $(Y,\mu\mls_Y)$ as above along invariant curves, and in the third subsection, we consider 0-dimensional centers.

\subsection{Small $G$-orbits and $G$-invariant curves of low degrees}
\begin{lemm}\label{lemm:2a5cubicorbit}
   A $G$-orbit of points in $Y$ with length $\leq$ 20 is one of the following:
\begin{align*}
    \Sigma_5&=\text{the orbit of}\quad[1:\zeta_6-1:-\zeta_6:\zeta_6-1:1],\\
     \Sigma_{12}&=\text{the orbit of} \quad[\zeta_5^3:\zeta_5^2:0:\zeta_5:1],\\
    \Sigma_{12}'&=\text{the orbit of}\quad [\zeta_5^4:\zeta_5:0:\zeta_5^3:1],\\
    \Sigma_{15}&=\text{the orbit of}\quad [1:0:0:0:0],\\
     \Sigma_{20}&=\text{the orbit of}\quad 
   [(3\zeta_6 - 8):
    (-8\zeta_6 + 5):
    (5\zeta_6 + 3):
    7(\zeta_6 - 1):
    7].
\end{align*}
where the length of each orbit is indicated by the subscript. A $G$-orbit of points in $Y$ with length 30 is the orbit of a general point in one of the following two curves:
$$
\text{a cuspidal cubic curve:}\quad\{x_1-x_4=x_2-x_3=0\}\cap Y ,
$$
or
$$
\text{a line:}\quad\{x_1 + x_4=
x_2 + x_3=x_5=0\}\subset Y.
$$
Moreover, every $G$-orbit of points in $Y$ of length different from 60 is one of the orbits described above.
\end{lemm}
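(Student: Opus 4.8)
The plan is to classify $G$-orbits by the conjugacy class of the stabilizer. For each nontrivial subgroup $H\le G=\fA_5$ I would compute the fixed locus $\mathrm{Fix}(H)\subset\bP^4=\bP(V_5)$ from the restriction $V_5|_H$, intersect it with $Y$, and then count $G$-orbits using the normalizer $N_G(H)$; any point $P$ with $\mathrm{Stab}(P)=H$ lies in $\mathrm{Fix}(H)$ and its orbit has length $60/|H|$.

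First I recall the subgroup structure of $\fA_5$: up to conjugacy the proper nontrivial subgroups are $C_2,C_3,C_2^2,C_5,\fS_3,\fD_5,\fA_4$, with $N_G(C_2)=C_2^2$, $N_G(C_3)=\fS_3$, $N_G(C_2^2)=\fA_4$, $N_G(C_5)=\fD_5$, while $\fS_3,\fD_5,\fA_4$ are maximal and self-normalizing. Hence a $G$-orbit on $\bP^4$ has length in $\{1,5,6,10,12,15,20,30,60\}$, and there is no orbit of length $1$ because $V_5$ is irreducible and nontrivial. From the character of $V_5$ (values $5,1,-1,0,0$) I compute $V_5|_{\fA_4}=\chi_\omega\oplus\chi_{\omega^2}\oplus\psi$, $V_5|_{\fD_5}=\mathbf 1\oplus\rho_1\oplus\rho_2$, $V_5|_{\fS_3}=\mathbf 1\oplus\rho^{\oplus2}$, $V_5|_{C_5}$ the regular representation, $V_5|_{C_2^2}=\mathbf 1^{\oplus2}\oplus\varepsilon_1\oplus\varepsilon_2\oplus\varepsilon_3$, $V_5|_{C_3}=\mathbf 1\oplus\chi_1^{\oplus2}\oplus\chi_2^{\oplus2}$, and $V_5|_{C_2}=\mathbf 1^{\oplus3}\oplus\varepsilon^{\oplus2}$. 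This gives: two isolated $\fA_4$-fixed points forming two $G$-orbits of length $5$ (one of them $\Sing(Y)=\Sigma_5$, the other $\Sigma_5'$ of Lemma~\ref{lemm:2a5orbit}); one $\fD_5$-fixed point, orbit length $6$; one $\fS_3$-fixed point, orbit length $10$; five $C_5$-eigenpoints, one of which is the $\fD_5$-fixed point while the remaining four split into two $G$-orbits of length $12$ (the reflection in $N_G(C_5)=\fD_5$ interchanges them in two pairs); a line together with three isolated points fixed by $C_2^2$, the three points cyclically permuted by $\fA_4/C_2^2\cong C_3$ and forming one $G$-orbit of length $15$; an isolated point and two lines $\ell_1,\ell_2$ fixed by $C_3$, with $\ell_1,\ell_2$ swapped by $N_G(C_3)=\fS_3$; and a plane and a line fixed by $C_2$, both carrying a residual $N_G(C_2)/C_2\cong C_2$-action.

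Next I would intersect with $Y$. Direct substitution into $(8-3\zeta_6)f_1+7f_2=0$ shows that among the isolated fixed points only $\Sigma_5,\Sigma_{12},\Sigma_{12}',\Sigma_{15}$ lie on $Y$, whereas $\Sigma_5'$ and the orbits of length $6$ and $10$ do not; this already yields all orbits of length $\le15$. For a $C_3$-fixed pair of lines $\ell_1,\ell_2$: one checks $\ell_i\not\subset Y$ and that each $\ell_i$ meets $\Sing(Y)=\Sigma_5$ in a single point, so $Y$ meets $\ell_i$ there with multiplicity two and in one residual point whose stabilizer is exactly $C_3$; as $\ell_i$ and $C_3$ vary these residual points form a single $G$-orbit of length $20$, which the coordinate computation identifies with $\Sigma_{20}$. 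The corresponding intersection with the $C_2^2$-fixed line turns out to be three times the singular point and contributes nothing new. Finally, for $H=C_2$: $Y$, being an irreducible cubic, cannot contain the plane $\bP(\mathbf 1^{\oplus3})$, so it cuts out a plane cubic there, invariant under the residual $C_2$; its general point has stabilizer exactly $C_2$ and orbit length $30$, and one identifies this cubic with the cuspidal cubic $\{x_1-x_4=x_2-x_3=0\}\cap Y$. Substitution shows the line $\bP(\varepsilon^{\oplus2})=\{x_1+x_4=x_2+x_3=x_5=0\}$ lies on $Y$, again with general point of orbit length $30$. Since $\mathrm{Fix}(C_2)\cap Y$ is the union of this plane cubic and this line, every $G$-orbit of length $30$ is the orbit of a general point of one of these two curves. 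To conclude, any $G$-orbit of length $\ne60$ contains a point with nontrivial stabilizer $H$, hence lies in $\mathrm{Fix}(H)\cap Y$, and the cases above exhaust the possibilities.

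The main obstacle is the careful normalizer bookkeeping and the explicit computations of the last two paragraphs: for instance, one must check that the four nontrivial $C_5$-eigenpoints really split into two orbits of length $12$ (not one or four), and that each $C_3$-fixed line absorbs two of its three points of intersection with $Y$ into the singular locus so that the lines contribute a single orbit of length $20$ rather than three; verifying membership in $Y$ and that the plane cubic is cuspidal is routine but is best carried out with explicit coordinates (e.g. on a computer).
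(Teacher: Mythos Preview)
Your approach is correct and is precisely the method the paper has in mind: the analogous Lemma~\ref{lemm:A5orbit} carries the one-line proof ``This comes from a computation of fixed points by each subgroup of $G$'', and Lemma~\ref{lemm:2a5cubicorbit} is stated without proof as the same routine computation. Your outline simply spells out that computation --- decompose $V_5|_H$ for each proper subgroup $H$, read off the fixed locus in $\bP^4$, intersect with $Y$, and use the $N_G(H)/H$-action to assemble orbits --- and the representation-theoretic bookkeeping you give (the restrictions of the character $(5,1,-1,0,0)$, the normalizer data, the counts of eigenpoints) is accurate.

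A couple of your finer claims deserve a word of confirmation. For the $C_2^2$-fixed line: since the tangent space at a singular point decomposes under its stabilizer $\fA_4$ as $\chi_{\omega^{\pm1}}\oplus\psi$ and the $\fA_4$-invariant Hessian is supported only on $\psi$, the line (which points along $\chi_{\omega^{\mp1}}$) does lie in the tangent cone, so the intersection is indeed the singular point with multiplicity $3$. For the $C_3$-fixed lines: each $C_3$ fixes exactly two of the five singular points (since a $3$-cycle acting on five objects has cycle type $3+1+1$), and the reflection in $N_G(C_3)=\fS_3$ swaps both the two lines and the two singular points, so one singular point lands on each line; the tangent direction there sits inside $\psi$, on which the Hessian is nondegenerate, giving multiplicity exactly $2$ and a single residual point per line --- hence one $G$-orbit of length $20$, as you claim. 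These are exactly the delicate points you flagged, and they check out.
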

Observe that $\Sigma_5$ is the singular locus of $Y$. We now describe the $G$-invariant reducible curves of degrees lower than 12 on $Y$. Later, we show that they are the only $G$-invariant curves of such degrees.
\begin{lemm}\label{lemm:2A5cubreduciblecurve}
        Let $C$ be a $G$-invariant reducible curve of degree at most 11. Then $C$ is the union of curves in one of the following orbits:
    \begin{itemize}
    \item 
    one of the following two orbits of 6 lines
     \begin{multline*}
          \mathcal L_6=\text{the orbit of }\{x_1 + x_4 + (-\zeta_5^3 - \zeta_5^2)x_5=
x_2 + (\zeta_5^3 + \zeta_5^2)x_4 + \\+(\zeta_5^3 + \zeta_5^2)x_5=
x_3 -(\zeta_5^3+ \zeta_5^2)x_4 + x_5=0\},
     \end{multline*}
          \begin{multline*}
 \mathcal L_6'=\text{the orbit of }\{x_2 -(\zeta_5^3 +\zeta_5^2 + 1)x_4 - (\zeta_5^3 +\zeta_5^2 + 1)x_5=
x_1 + \\+x_4 + (\zeta_5^3 +\zeta_5^2 + 1)x_5=
x_3 + (\zeta_5^3 + \zeta_5^2 + 1)x_4 + x_5=0\},
   \end{multline*}
     \item  one of the following two orbits of 10 lines
  \begin{multline*}
 \mathcal L_{10}=\text{the orbit of }\{ x_1 + x_3 + x_5=(5\zeta_3 - 3)x_4 +7x_5=\\=7x_1 + (5\zeta_3 - 3)x_2=0\},
\end{multline*}
$$
 \mathcal L_{10}'=\text{the orbit of }\{ x_1 + x_3 + x_5=
-\zeta_3x_4 + x_5=
x_1 -\zeta_3x_2=0\}.
$$
    \end{itemize}
     Moreover, $\cL_{10}'$ consists of ten lines passing through pairs of 5 singular points of $Y$. The lines in $\cL$ are pairwise disjoint for $\cL=\cL_6, \cL_6'$ or $\cL_{10}$.
\end{lemm}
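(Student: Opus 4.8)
The plan is to follow the strategy of Lemma~\ref{lemm:12line} and Lemma~\ref{lemm: G-inv red curves}. First reduce to the case that $C$ is $G$-irreducible, so $C$ is the union of the $G$-translates of a single irreducible curve $C_0$, and the number of components equals the index $[G:\Gamma]$ of the stabilizer $\Gamma$ of $C_0$ in $G$. The proper nontrivial subgroups of $\fA_5$ have indices $5,6,10,12,15,20,30,60$, so writing $d=\deg(C_0)$ the bound $d\cdot[G:\Gamma]=\deg(C)\le 11$ leaves only $(\,[G:\Gamma],\,d\,)\in\{(5,1),(5,2),(6,1),(10,1)\}$, i.e. $C$ is a union of $5$ lines, $5$ conics, $6$ lines, or $10$ lines, with stabilizers $\fA_4$, $\fA_4$, $\fD_5$, $\fS_3$ respectively. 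In each case $C_0$ is $\Gamma$-invariant, hence spans a $\Gamma$-invariant linear subspace of $\bP^4=\bP(V_5)$, which I would locate by restricting $V_5$ to $\Gamma$.

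For $\Gamma=\fD_5$ one computes $V_5|_{\fD_5}=\mathbf 1\oplus U_1\oplus U_2$ with $U_1,U_2$ the two $2$-dimensional irreducibles, so the only $\fD_5$-invariant lines in $\bP^4$ are $\bP(U_1)$ and $\bP(U_2)$. A weight computation modulo $5$ shows that $\Sym^3 U_i$ contains no copy of the trivial representation, hence $\bP(U_i)$ lies on \emph{every} $G$-invariant cubic, in particular on $Y$; since $N_{\fA_5}(\fD_5)=\fD_5$, the lines $\bP(U_1)$ and $\bP(U_2)$ lie in distinct $G$-orbits, giving exactly $\cL_6$ and $\cL_6'$. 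For $\Gamma=\fA_4$ one has $V_5|_{\fA_4}=W\oplus\omega\oplus\omega^2$ with $W$ the $3$-dimensional irreducible and $\omega,\omega^2$ the two nontrivial $1$-dimensional characters, so there is a unique $\fA_4$-invariant line $\bP(\omega\oplus\omega^2)$ and a unique $\fA_4$-invariant conic, lying in the unique invariant plane $\bP(W)$; substituting the equation of $Y$ shows neither lies on $Y$ (for the conic one may instead note that the unique $\fA_4$-invariant cubic form on $\bP(W)$ is not divisible by the invariant quadratic form, since $W^*$ has no invariant linear form), which disposes of the two index-$5$ cases. For $\Gamma=\fS_3$ one gets $V_5|_{\fS_3}=\mathbf 1\oplus U^{\oplus 2}$ with $U$ the standard $2$-dimensional irreducible, so the $\fS_3$-invariant lines form a pencil (the diagonal copies of $U$ inside $U^{\oplus 2}$); the restriction of the cubic equation of $Y$ to a line of this pencil is a scalar, cubic in the pencil parameter, times the unique $\fS_3$-invariant binary cubic, and it vanishes for precisely the parameter values producing $\cL_{10}$ and $\cL_{10}'$. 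For $\cL_{10}'$ this is transparent: the ten lines joining pairs of the five points of $\Sing(Y)=\Sigma_5$ meet $Y$ with multiplicity $\ge 2$ at each of the two singular points, hence with total intersection multiplicity $\ge 4>3$, so they lie on $Y$, and they form a single $G$-orbit because $G$ acts on $\Sigma_5$ as $\fA_5$ on five letters.

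Finally, the disjointness assertions for $\cL_6,\cL_6',\cL_{10}$ follow because the (finite, $G$-invariant) set of pairwise intersection points of the lines in such an orbit would be a union of $G$-orbits on $Y$, each of length $\ge 5$ by Lemma~\ref{lemm:2a5cubicorbit}, and one checks on equations that no incidence occurs; equivalently, for two translates $gU,hU$ of the relevant $2$-dimensional subspace with $g\Gamma\ne h\Gamma$ one verifies $gU\cap hU=0$. The orbit $\cL_{10}'$ is (correctly) \emph{not} claimed disjoint, since $\overline{p_ip_j}$ and $\overline{p_ip_k}$ share the node $p_i$. I expect the $\fS_3$ case to be the main obstacle: there the invariant lines come in a one-parameter family rather than a finite list, so one must genuinely solve for the finitely many members lying on $Y$ and confirm they exhaust $\cL_{10}\cup\cL_{10}'$ — the node argument handles $\cL_{10}'$ cleanly, but $\cL_{10}$ is a real computation, as are the membership checks in the $\fA_4$ cases.
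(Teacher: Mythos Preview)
Your proposal is correct and follows exactly the representation-theoretic strategy the paper intends --- the paper's own proof is the single sentence ``The proof is similar to that of Lemma~\ref{lemm:12line}'' --- and you supply substantially more detail than the paper does. The one point worth making explicit is in the $\fS_3$ case: your scalar $c(a,b)$ is a nonzero binary cubic and hence has three roots in $\bP^1$ counted with multiplicity, so the computation you defer to must exhibit a repeated root in order to account for there being only the two orbits $\cL_{10}$ and $\cL_{10}'$; recording this prevents the reader from suspecting a missing third orbit.
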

\begin{proof}
    The proof is similar to that of Lemma~\ref{lemm:12line}.
\end{proof}
The rest of this subsection is devoted to proving the following result.

\begin{prop}\label{prop:nocurve}
Let $C$ be a $G$-invariant curve in $Y$ of degree at most 11. Then $C$ is the union of all lines in one of the orbits $\cL_6$, $\cL_6'$, $\cL_{10}$, or $\cL_{10}'$ given in Lemma~\ref{lemm:2A5cubreduciblecurve}.
\end{prop}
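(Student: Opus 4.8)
The overall plan is to combine Lemma~\ref{lemm:2A5cubreduciblecurve} with the statement that $Y$ carries no \emph{irreducible} $G$-invariant curve of degree at most $11$. Granting the latter, a reducible $G$-invariant curve of degree $\le 11$ decomposes into $G$-orbits of irreducible components; an orbit consisting of a single $G$-invariant irreducible curve is excluded by the irreducible statement, and since two of the orbit-unions $\cL_6,\cL_6',\cL_{10},\cL_{10}'$ already have degree $\ge 12$, the curve must be exactly one of these four, which is the assertion. So let $C\subset Y$ be irreducible and $G$-invariant with $d:=\deg(C)\le 11$. Since $\bP^4=\bP(V_5)$ has no $G$-invariant proper linear subspace, $C$ is non-degenerate, hence $4\le d\le 11$. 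By the computation underlying Lemma~\ref{lemm:genericstabcurve-A5quadric2}, $C$ has trivial generic stabiliser, so $G=\fA_5$ acts generically freely on the normalisation $\widetilde C$, and by \cite[Lemma~5.1.5]{CheltsovShramov} its genus $g:=g(\widetilde C)$ lies in the list of genera admitting such an action; in particular $g\notin\{1,2,3\}$. Castelnuovo's bound for non-degenerate curves in $\bP^4$ bounds $p_a(C)$ by $3$ when $d\le 7$ and by $12$ when $d\le 11$, so $C$ is rational for $d\le 7$, while for $8\le d\le 11$ one has $g\in\{0,4,5,6,9,10,\dots\}$.

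Suppose first that $C$ is rational. Its normalisation $\bP^1$ carries the (unique up to conjugacy) icosahedral $\fA_5$-action, and the embedding $\bP^1\hookrightarrow\bP(V_5)$ is given by a $5$-dimensional $G$-subrepresentation isomorphic to $V_5$ inside the space $\Sym^d W$ of binary forms of degree $d$ with the icosahedral action. For $d$ odd no such subrepresentation exists (the odd symmetric powers of $W$ involve only the ``spin'' constituents), and for even $d\le 11$ one has $V_5\subset\Sym^d W$ only for $d\in\{4,8,10\}$ — note $\Sym^6 W=V_3'\oplus V_4$. For each of these three degrees the invariant rational curves are, up to the outer automorphism of $\fA_5$, essentially unique: the quartics $C_4,C_4'$ for $d=4$ (cf.\ Lemma~\ref{lemm:allcurve80}), a pair of octics for $d=8$, and a pair of dectics for $d=10$. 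A direct check with the explicit equations of $Y$ shows that none of these curves lies on $Y$; for $C_4,C_4'$ this also follows since they are the singular loci of the two chordal cubics in the pencil $a_1f_1+a_2f_2=0$, which are distinct from $Y$, and are not contained in the base locus $B:=\{f_1=f_2=0\}$, so they meet $Y$ in finitely many points. This settles all cases with $d\le 7$ and the rational sub-case of $8\le d\le 11$.

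Now assume $8\le d\le 11$ and $C$ non-rational, so $g\ge 4$. The crucial object is the unique $G$-invariant quadric $X\subset\bP^4$ of Section~\ref{sect:a5quad2}: since $Y=\{(8-3\zeta_6)f_1+7f_2=0\}$, the surface $T:=X\cap Y$ equals $S_{8-3\zeta_6,7}$ in the pencil $\cP$, and by case~(1) of Lemma~\ref{lemm: sing S 2} it is a K3 surface with du~Val singularities, $\Sing(T)=\Sigma_5=\Sing(Y)$; being singular with $(H_T)^2=6$ not a square, $\Pic^G(T)=\bZ\cdot H_T$, and the linear system $|H_T|$ — cut out by hyperplanes — has no $G$-invariant member. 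If $C\subset X$ then $C\subset T$; by the Hodge index theorem $C^2\le d^2/6<21$, and if $C$ avoids $\Sing(T)$ it is Cartier, so $C\sim_{\bZ}nH_T$ with $6n=d\le 11$, forcing $n=1$, a contradiction. If $C$ meets $\Sing(T)$ then $\Sigma_5\subset C$, and one argues exactly as in Lemma~\ref{lemma:curves not in non normal K3}\,(4)--(7) — passing to the minimal resolution and comparing the self-intersection of the strict transform of $C$ with the Hodge bound and the admissible genera — to obtain a contradiction. If $C\not\subset X$, then $C\cap X=C\cap T$ is a non-empty $G$-invariant zero-cycle on $Y$ of degree $C\cdot X=2d\in\{16,\dots,22\}$, whose support is a union of $G$-orbits of total length $\le 22$ (Lemma~\ref{lemm:2a5cubicorbit}); if moreover $\Sigma_5\not\subset C$, this support is a single orbit of length $12$, $15$ or $20$, and divisibility of $2d$ by that length excludes $12$ (which would force $d\ge 12$) and $15$, leaving only $d=10$ with $C$ meeting $X$ transversally along $\Sigma_{20}$.

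The two residual families — an irreducible curve of degree $10$ and genus $0,4,5,6$ or $9$ meeting $X$ transversally along $\Sigma_{20}$ and disjoint from $\Sing(Y)$ (its genus-$0$ case already covered above), and any irreducible $G$-invariant curve through the five $\sA_2$ points $\Sigma_5=\Sing(Y)$ — form the main obstacle. Here the argument must depart from Sections~\ref{sect:a5quad1}--\ref{sect:a5quad2}: in those sections every invariant curve could be placed on a member of a pencil of invariant surfaces sweeping out all of $\bP^4$, whereas for the present $Y$ the only invariant surfaces available are $T=X\cap Y$ (degree $6$) and $B=\{f_1=f_2=0\}$ (degree $9$), and a curve through $\Sing(Y)$ need not lie on either. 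The plan for these cases is to proceed directly: the $G$-subrepresentation of $\Sym^2 V_5^*$ (resp.\ of $\Sym^3 V_5^*$ modulo the equation of $Y$) cut out by the quadrics (resp.\ cubics) through $C$ has at most a one-dimensional trivial part, so $C$ lies either on $X$ (already treated) or on $B$; combining this with the explicit equations of $Y$ and $B$ and a local analysis at the cusps of $Y$ (blowing up an $\sA_2$ point and tracking multiplicities, as in Lemma~\ref{lemma:curves not in non normal K3}\,(4)) should leave no room for such a curve, completing the proof.
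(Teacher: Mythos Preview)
Your proposal is explicitly incomplete: after a reasonable reduction you are left with two residual families --- irreducible degree-$10$ curves meeting $X$ transversally along $\Sigma_{20}$, and irreducible curves through $\Sigma_5=\Sing(Y)$ --- for which you only sketch a plan (``should leave no room''). These are precisely the substantive cases, and the paper treats them with specific tools you do not invoke. For $\Sigma_5\subset C$ (Lemma~\ref{lemm: 5 not in C}) the paper blows up $\Sigma_5$; each exceptional component is a quadric cone carrying a faithful $\fA_4$-action, which forces $E_1\cdot\widetilde C\ge 4$, while the linear system $|4H-3E|$ has base locus only the strict transform of $\cL_{10}'$, so $(4H-3E)\cdot\widetilde C\ge 0$ gives $4d\ge 60$, a contradiction. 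For $d=10$ (Lemma~\ref{lemm:no10}) the key is a dimension count: the absence of a length-$12$ orbit on $C$ forces $g(\widetilde C)\ge 6$, so Riemann--Roch or Clifford give $h^0(\cO_{\widetilde C}(3))\le 25$; since $h^0(Y,\cO_Y(3))=34$, the space of cubic sections of $Y$ through $C$ has dimension $\ge 9$, but $C$ must contain $\Sigma_{20}$ (your own observation) together with a length-$30$ orbit on $R$, and an explicit linear-algebra computation shows the cubics through these orbits span a space of dimension $<9$.

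Your surface $B=\{f_1=f_2=0\}$, regarded inside $Y$, is exactly the paper's $R\in|\cO_Y(3)|$; using $Q=T=X\cap Y$ and $R$ together is how the paper pins down $d\in\{6,8,10\}$ and places $C$ on $R$ (for $d=6$) or on $Q$ (for $d=8$), see Lemma~\ref{lemm: 2A5degs}. Your degree-$8$ argument inside $T$ then matches Lemma~\ref{lemm:no8}. Your representation-theoretic treatment of rational curves via $V_5$-summands of $\Sym^d W$ is a genuinely different route from the paper's degree-$6$ argument on $R$ (Lemma~\ref{lemm:no6}), but it ends in an undone ``direct check'' for $d\in\{8,10\}$, and the multiplicity-one claims for $V_5\subset\Sym^d W$ (hence uniqueness of the curves up to the outer automorphism) are asserted without proof. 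One small correction: $\Sigma_{15}$, the orbit of $[1{:}0{:}0{:}0{:}0]$, does not lie on $X$, so it cannot appear in $C\cap X$; this does not affect your conclusion.
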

\begin{proof}
    This follows from Lemmas \ref{lemm: 2A5degs}, \ref{lemm:no6}, \ref{lemm:no8}, and \ref{lemm:no10}.
\end{proof}

First we notice that there is a unique $G$-invariant surface in the linear system $|\mathcal O_{Y}(2)|$ and $|\mathcal O_{Y}(3)|$. We denote them by $Q$ and $R$ respectively. We have 
\begin{align}\label{eqn:orb12}
    \Sigma_{12},\Sigma_{12}'\in Q\cap R, \quad \Sigma_5\in R\setminus Q,\quad \Sigma_{20}\in Q\setminus R.
\end{align}
By computation, we find that $Q$ is a nodal K3 surface.

\begin{lemm}\label{lemm: sing X3'}
    The singular locus of $Q$ is $\Sigma_{5}$ and each singular point is an ordinary double point. The singular locus of $R$ is $\Sigma_{12}\cup\Sigma_{12}'$ and each singular point is an ordinary double point. 
\end{lemm}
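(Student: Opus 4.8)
The plan is to reduce the statement to a finite local computation at one representative point of each $G$-orbit that could lie in $\mathrm{Sing}(Q)$ or $\mathrm{Sing}(R)$. First I would record the ambient description of the two surfaces. Since the $G$-action on $\bP^4$ is the irreducible representation $V_5$, the space of $G$-invariant quadratic forms is one-dimensional (the equation of $X$) and the space of $G$-invariant cubic forms is the pencil $\langle f_1,f_2\rangle$ of \eqref{eqn:2a5cubicpencil}. As $Y$ is a cubic, forms on $Y$ of degree $\le 2$ are just forms on $\bP^4$, and in degree $3$ they are forms on $\bP^4$ modulo $\langle F\rangle$, where $F=(8-3\zeta_6)f_1+7f_2$. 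Hence $Q=X\cap Y$, and $R=\{f=0\}\cap Y$ for any $f\in\langle f_1,f_2\rangle$ not proportional to $F$; all such $f$ cut out the same surface on $Y$, so we may take $\{f=0\}$ to be a smooth cubic threefold. Both $\mathrm{Sing}(Q)$ and $\mathrm{Sing}(R)$ are $G$-invariant; $\mathrm{Sing}(Q)$ is finite because $Q$ is a nodal K3 surface, and for $R$ a direct computation of the Jacobian ideal shows its singular locus is $0$-dimensional as well. So both singular loci are unions of $G$-orbits, and by Lemma~\ref{lemm:2a5cubicorbit} only $\Sigma_5,\Sigma_{12},\Sigma_{12}',\Sigma_{15},\Sigma_{20}$ can occur (orbits of length $30$ or $60$ are too large). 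A direct check of the incidences (cf.~\eqref{eqn:orb12}) gives $\Sigma_5\subset Q$, $\Sigma_5\cap R=\emptyset$, $\Sigma_{12}\cup\Sigma_{12}'\subset Q\cap R$, $\Sigma_{15}\subset R\setminus Q$, $\Sigma_{20}\subset Q\setminus R$; and since a nodal K3 surface has at most $19$ nodes, $\Sigma_{20}\not\subset\mathrm{Sing}(Q)$.

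For $Q$: away from $\mathrm{Sing}(Y)$ and $\mathrm{Sing}(X)=\emptyset$, the surface $Q=X\cap Y$ is singular only where $X$ and $Y$ are tangent; evaluating the gradients of the quadric and the cubic at representatives of $\Sigma_{12}$ and $\Sigma_{12}'$ shows $X$ and $Y$ meet transversally there, so $Q$ is smooth along $\Sigma_{12}\cup\Sigma_{12}'$. It remains to treat $\Sigma_5=\mathrm{Sing}(Y)$, where $Y$ has an $\sA_2$-singularity. In local coordinates at a representative $p$, identifying $T_p\bP^4$ with $\bC^4$, the quadratic part of the local equation of $Y$ has rank $3$ with a one-dimensional kernel $\ell$; restricting this quadratic form to the hyperplane $T_pX\subset T_p\bP^4$ produces a rank-$3$, hence nondegenerate, form exactly when $\ell\not\subset T_pX$, and in that case $Q$ has an ordinary double point at $p$. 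The inclusion $\ell\not\subset T_pX$ is verified from the explicit equations. This gives $\mathrm{Sing}(Q)=\Sigma_5$ with all points ordinary double points.

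For $R$: with $\{f=0\}$ smooth, $R=\{f=0\}\cap Y$ is singular only at points of $\mathrm{Sing}(Y)$ lying on $\{f=0\}$, of which there are none as $\Sigma_5\cap R=\emptyset$ (consistently with the fact that cutting the $\sA_2$-point of $Y$ by any hypersurface yields a singular surface point, which the conclusion forbids), or at points where $Y$ and $\{f=0\}$ are tangent. A local computation at the representative of $\Sigma_{15}$ shows $R$ is smooth there. At representatives of $\Sigma_{12}$ and $\Sigma_{12}'$ one computes the local equation of $R$ inside $T_pY\cong\bC^3$ and finds its quadratic part has rank $3$, so $R$ has an ordinary double point. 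Hence $\mathrm{Sing}(R)=\Sigma_{12}\cup\Sigma_{12}'$ with all points ordinary double points.

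The organizational steps — using $G$-invariance and Lemma~\ref{lemm:2a5cubicorbit} to cut the problem down to finitely many orbits, and the K3/Jacobian finiteness to discard positive-dimensional and large candidates — are routine. The real content is the local analysis at the $\sA_2$-points $\Sigma_5$ of $Y$: one must check that the quadric section degenerates each $\sA_2$-singularity of $Y$ to an ordinary node on $Q$ (not to an $\sA_2$ or worse), a statement about the position of the Hessian kernel of $F$ relative to $T_pX$, and dually that the invariant cubic section $R$ avoids $\Sigma_5$ altogether. These, together with the tangency checks at $\Sigma_{12},\Sigma_{12}',\Sigma_{15}$ and the verification that $\mathrm{Sing}(R)$ is $0$-dimensional, are explicit finite computations with the stated equations and orbit representatives.
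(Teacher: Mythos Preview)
The paper does not give a proof of this lemma beyond the sentence ``By computation, we find that $Q$ is a nodal K3 surface'' immediately preceding it, so your proposal is in the same spirit (reduce to explicit local checks at orbit representatives) and is considerably more detailed than what the paper supplies.

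That said, two points in your organization are not quite right. First, the sentence ``$\mathrm{Sing}(Q)$ is finite because $Q$ is a nodal K3 surface'' is circular: that $Q$ is nodal is precisely what you are proving. The clean order is: compute the Jacobian of the complete intersection $X\cap Y$ to see $\mathrm{Sing}(Q)$ is $0$-dimensional, deduce $Q$ is normal with $K_Q\sim 0$ so its singularities are du Val, and only then invoke the Picard bound of the minimal resolution to cap the number of singular points at $19$. Second, for $R$ you assert that orbits of length $30$ or $60$ are ``too large'' but give no reason; Lemma~\ref{lemm:2a5cubicorbit} describes such orbits, it does not exclude them, and $R$ is not a K3 surface (it has $K_R\sim\mathcal O_R(1)$), so no analogous Picard bound applies. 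You therefore still need the full Jacobian computation for $R$ to pin down its singular locus set-theoretically, not just to confirm it is $0$-dimensional. Since you ultimately defer everything to ``explicit finite computations with the stated equations'', these gaps are harmless in practice, but the argument as written does not actually bypass the brute-force computation it appears to be organizing.
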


\begin{lemm}\label{lemm: 5 not in C}
    Let $C\subset Y$ be an irreducible $G$-invariant curve of degree at most 11 which is not a union of ten lines. Then $\Sigma_5\not\subset C$.
\end{lemm}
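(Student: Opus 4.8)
The plan is to handle the reducible case by hand, and then, for $C$ irreducible, to show that $\Sigma_5\subset C$ would force $\mult_p(C)\ge 4$ at every $p\in\Sigma_5$, which a degree count rules out. So first suppose $C$ is reducible: by Lemma~\ref{lemm:2A5cubreduciblecurve} it is one of $\cL_6,\cL_6',\cL_{10},\cL_{10}'$, and the last two are unions of ten lines, excluded by hypothesis. For $C=\cL_6$ or $\cL_6'$ I would use that the six lines of the orbit are pairwise disjoint and that each has $G$-stabilizer of order $10$: were some $p\in\Sigma_5$ on one of them, that line would be the unique line of the orbit through $p$, hence preserved by $\mathrm{Stab}_G(p)\cong\fA_4$, which is impossible since $\fA_4$ is not a subgroup of a group of order $10$. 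Thus $\Sigma_5\cap C=\emptyset$. From now on $C$ is irreducible; set $d=\deg(C)\le 11$ and assume for contradiction that $\Sigma_5\subset C$.

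The central step is the bound $\mult_p(C)\ge 4$ for each $p\in\Sigma_5$. Since $\bP^4=\bP(V_5)$ with $V_5$ irreducible, the linear span of $C$ is a $G$-subrepresentation, hence all of $\bP^4$; and the kernel of $G\to\Aut(C)$, a normal subgroup of the simple group $\fA_5$ that cannot equal $\fA_5$ (otherwise $G$ would act trivially on $\bP^4=\bP(V_5)$), is trivial, so $G$ acts faithfully on $C$ and on its normalization $\nu\colon\widetilde C\to C$. Fix $p\in\Sigma_5$, so $\mathrm{Stab}_G(p)\cong\fA_4$. If $C$ were smooth at $p$, then $[\fA_4,\fA_4]$ would act on $\widehat{\mathcal O}_{C,p}\cong\bC[[t]]$ trivially on the tangent line (every character of $\fA_4$ kills $[\fA_4,\fA_4]$), hence---being of finite order---trivially on $\widehat{\mathcal O}_{C,p}$, hence trivially on $C$, and then by simplicity all of $\fA_5$ would fix $C$ pointwise, absurd. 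So $C$ is singular at $p$. Next, $\fA_4$ acts on $\nu^{-1}(p)$, which is identified $\fA_4$-equivariantly with the set of analytic branches of $C$ at $p$; for $\widetilde p\in\nu^{-1}(p)$ the stabilizer $\mathrm{Stab}_{\fA_4}(\widetilde p)$ embeds into $\mathrm{GL}(T_{\widetilde p}\widetilde C)\cong\bC^\times$ (faithfulness on $\widetilde C$), hence is a cyclic subgroup of $\fA_4$, so of order at most $3$; therefore every $\fA_4$-orbit in $\nu^{-1}(p)$ has at least $4$ elements. Thus $C$ has at least $4$ branches at $p$, and so $\mult_p(C)\ge 4$.

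To conclude: the five points of $\Sigma_5$ span $\bP^4$, so after a coordinate change they are the coordinate points, and one checks that the cubic forms vanishing to order $\ge 2$ at all five of them are precisely the span of the ten squarefree monomials $x_ix_jx_k$, whose common zero set is the union of the ten lines $\overline{p_ip_j}$ (that is, $\cL_{10}'$). Since $\mult_{p_1}(C)\ge 4$ forces $d\ge 4$, the curve $C$ is not a line, hence not contained in this base locus, so a general cubic $K$ in the ten-dimensional system above does not contain $C$. Bézout then gives
\[
3d=\sum_{q\in C\cap K}\mult_q(C\cdot K)\ \ge\ \sum_{i=1}^{5}\mult_{p_i}(C\cdot K)\ \ge\ \sum_{i=1}^{5}\mult_{p_i}(C)\,\mult_{p_i}(K)\ \ge\ 5\cdot 4\cdot 2=40,
\]
so $d\ge 14$, contradicting $d\le 11$. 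I expect the multiplicity bound $\mult_p(C)\ge 4$ to be the crux: it relies on the correspondence between the branches of $C$ at $p$ and the points of $\widetilde C$ over $p$, and on the fact that point stabilizers on a smooth curve are cyclic, so that neither $\fA_4$ nor its Klein four subgroup can stabilize a branch.
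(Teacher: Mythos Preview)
Your argument is correct. Note first that the opening reducible case is superfluous: the hypothesis already says $C$ is irreducible, so the ``union of ten lines'' clause is vacuous and you can drop the $\cL_6,\cL_6',\cL_{10},\cL_{10}'$ discussion entirely. Likewise, the separate paragraph showing $C$ is singular at $p$ is subsumed by your branch argument: if $C$ were smooth at $p$ then $|\nu^{-1}(p)|=1$, whence $\mathrm{Stab}_{\fA_4}(\widetilde p)=\fA_4$, which is not cyclic---the same contradiction.

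Your route is genuinely different from the paper's. The paper blows up $Y$ along $\Sigma_5$, so that each exceptional component $E_1$ is a quadric cone $\bP(1,1,2)$ on which $\fA_4$ acts, and then argues $E_1\cdot\widetilde C\ge 4$ from the $\fA_4$-orbit structure on $E_1$; the degree bound comes from intersecting with the linear system $|4H-3E|$, whose base locus is the strict transform of $\cL_{10}'$, giving $4d\ge 15\,(E_1\cdot\widetilde C)\ge 60$. You instead work entirely in the smooth ambient $\bP^4$: the $\fA_4$-action on the fibre $\nu^{-1}(p)$ of the normalization forces $\ge 4$ branches, hence $\mult_p(C)\ge 4$, and then the explicit ten-dimensional system of cubics through the five points with multiplicity $2$ (whose base locus is again $\cL_{10}'$) plus B\'ezout in $\bP^4$ gives $3d\ge 40$. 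The common kernel of both proofs is the observation that point stabilizers in $\fA_4$ acting on a smooth curve are cyclic of order $\le 3$, so every $\fA_4$-orbit has size $\ge 4$. Your version is somewhat more elementary, since it sidesteps the blowup of the $\sA_2$-points and the geometry of the quadric cone; the paper's version fits more naturally with the Sarkisov-link machinery used elsewhere and makes the role of $\cL_{10}'$ as a base locus on $\widetilde Y$ explicit.
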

\begin{proof}
    Assume that $\Sigma_5\subset C$. Let $\widetilde Y$ be the blowup of $Y$ in $\Sigma_5$, let $E$ be the exceptional divisor, and $\widetilde C$ be the strict transform of $C$. We denote by $H$ the pullback to $\widetilde Y$ of a general hyperplane section on $Y$. The base locus of the linear system $|4H-3E|$ is the strict transform of the union of the ten lines that pass through pairs of points in $\Sigma_5$. By assumption, the curve $\widetilde C$ is not in the base locus of $|4H-3E|$, so we have $(4H-3E)\cdot\widetilde C\ge0$. On the other hand, 
    \begin{align}\label{eqn:4h-3e}
    0\leq(4H-3E)\cdot\widetilde C=4d-15E_1\cdot\widetilde C\leq 44-15 E_1\cdot\widetilde C
    \end{align}
    where $E_1$ is an irreducible component of $E$.  The divisor $E_1$ is a quadric cone and is invariant under an $\aq$-action. Then $\aq$ acts faithfully on the base conic of the cone.
    The orbit of a smooth point in the cone is at least of length 4. It follows that if $\widetilde C$ does not pass through the vertex $P$ of $E_1$, we have $E_1\cdot\widetilde C\ge4$, contradicting \eqref{eqn:4h-3e}. If $\widetilde C$ passes through  $P$, it is singular at $P$, because otherwise $\fA_4$ does not act faithfully on  the tangent space of $\widetilde C$ at $P$. Hence $E_1\cdot\widetilde C\ge2\cdot2=4$, again contradicting \eqref{eqn:4h-3e}.
\end{proof}

\begin{lemm}\label{lemm: 2A5degs}
    Let $C\subset Y$ be an irreducible $G$-invariant curve of degree $d\le11$. Then $d\in\{6,8,10\}$. Moreover, if $d=6$ (resp. $d=8$), then $C\subset R$ (resp. $C\subset Q$).
\end{lemm}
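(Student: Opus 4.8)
The plan is to distinguish whether or not $C$ lies on one of the two distinguished surfaces $Q\in|\mathcal O_Y(2)|$ and $R\in|\mathcal O_Y(3)|$, and to pin down $d$ using intersection numbers against $Q$ and $R$ together with the orbit classification. Two preliminaries come first. Since $V_5$ is an irreducible $G$-representation, the linear span of $C$ is a $G$-invariant linear subspace of $\bP^4$ containing a line, hence equals $\bP^4$; so $C$ is nondegenerate and $d\ge 4$. Moreover $\Sing(Y)=\Sing(Q)=\Sigma_5$ is a single $G$-orbit, so if $C$ met it then $\Sigma_5\subset C$, contradicting Lemma~\ref{lemm: 5 not in C} (an irreducible curve is not a union of ten lines); thus $C$ is disjoint from $\Sing(Y)$ and from $\Sing(Q)$.

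Now the intersection step. If $C\not\subset Q$, then $Q\cdot C=2d\le 22$ and $Q\cap C$ is a zero-dimensional $G$-invariant subscheme of $C$ whose support is a union of $G$-orbits contained in $Q$; by \eqref{eqn:orb12} the orbit $\Sigma_5$ lies in $R\setminus Q$, so every such orbit has length $\ge 12$ by Lemma~\ref{lemm:2a5cubicorbit}. Writing $2d$ as a sum of such lengths with positive multiplicities (an odd length such as $15$ cannot occur in an even total $\le 22$) forces $2d\in\{12,20\}$, i.e. $d\in\{6,10\}$. Symmetrically, if $C\not\subset R$, then $3d\le 33$ is a sum of lengths of $G$-orbits contained in $R$ other than $\Sigma_5$ (which is excluded by Lemma~\ref{lemm: 5 not in C}), i.e. of integers $\ge 12$ drawn from $\{12,15,30,\dots\}$; this forces $3d\in\{12,15,24,27,30\}$, i.e. $d\in\{4,5,8,9,10\}$. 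Consequently $d=6$ forces $C\subset R$ (since $18$ is not such a sum), $d=8$ forces $C\subset Q$ (since $8\notin\{6,10\}$), and it remains only to exclude $d\in\{4,5,7,9,11\}$; for each of these $d\notin\{6,10\}$, so $C\subset Q$, and for $d\in\{7,11\}$ in fact $C\subset Q\cap R$.

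The remaining work is the case $C\subset Q$. Here I would pass to the minimal resolution $\mu\colon\widetilde Q\to Q$, a smooth K3 surface, on which $C$ lifts isomorphically to $\widetilde C=\mu^*C$ because $C$ avoids $\Sing(Q)$; in particular $\widetilde C\cdot E_i=0$ for the five $(-2)$-curves $E_i$ over $\Sigma_5$. With $\widetilde H=\mu^*\mathcal O_Q(1)$ one has $\widetilde H^2=6$, $\widetilde H\cdot E_i=0$, and $\mathcal E^2=-10$ for $\mathcal E:=E_1+\cdots+E_5$, so $\langle \widetilde H,\mathcal E\rangle$ is a rank-$2$ sublattice of $\Pic^G(\widetilde Q)$. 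Invoking the bound $\rk\Pic^G(\widetilde Q)\le 2$ for $\fA_5$-K3 surfaces (as in \cite[Proposition 6.7.3]{CS}), $\widetilde C$ lies in $\bQ\widetilde H\oplus\bQ\mathcal E$, and orthogonality to $\mathcal E$ pins it to a $\bQ$-multiple of $\widetilde H$; hence $\widetilde C^2=d^2/6$. Since $\widetilde C^2=2p_a(C)-2$ is an even integer, $12\mid d^2$, so $6\mid d$ and therefore $d=6$. This excludes $d\in\{4,5,7,8,9,11\}$ and leaves $d\in\{6,10\}\subseteq\{6,8,10\}$, together with the claimed containments. (For $d\in\{7,11\}$ one may instead use $C\subset Q\cap R$, a curve of degree $18$, and rule out a component of degree $\le 11$ by analysing its equations.)

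I expect the genuine obstacle to be this last step. The $\fA_5$-action on $\widetilde Q$ is symplectic, so a priori its invariant Picard rank could exceed $2$, and one must either verify directly that $\rk\Pic^G(\widetilde Q)=2$ for this particular nodal $(2,3)$-complete-intersection K3, or replace the argument by an explicit computation with the equations of $Q$ and $R$ — for instance by describing the components of $Q\cap R$ and applying the classification of genera of $\fA_5$-curves in \cite[Lemma 5.1.5]{CheltsovShramov}. The remaining bookkeeping, namely which small orbits sit on $Q$ and on $R$, is routine given Lemma~\ref{lemm:2a5cubicorbit} and \eqref{eqn:orb12}.
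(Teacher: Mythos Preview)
Your intersection bookkeeping runs parallel to the paper's, but you miss the two observations that make the proof short and dispense entirely with the Picard step you rightly flag as uncertain.

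First, $R$ is singular along $\Sigma_{12}\cup\Sigma_{12}'$ (Lemma~\ref{lemm: sing X3'}). So if $C\not\subset R$ meets one of these orbits, the local contribution to $C\cdot R$ at each such point is at least $2$; hence the coefficient of $12$ in the decomposition of $3d$ must be $0$ or $\ge 2$. The paper uses this to cut the list for $C\not\subset R$ down to $d\in\{8,10\}$, rather than your $\{4,5,8,9,10\}$. Second, and decisively, $Q\cap R$ is not merely of degree $18$: it is \emph{irreducible}. An irreducible $C$ of degree $\le 11$ therefore cannot lie in it. Combining the two constraints, any $d\notin\{6,8,10\}$ forces both $C\subset Q$ and $C\subset R$, hence $C\subset Q\cap R$, which is a contradiction. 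That is the entire proof; you already had $Q\cap R$ in hand for $d\in\{7,11\}$ but did not exploit its irreducibility.

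Regarding your Picard detour: \cite[Proposition 6.7.3]{CS} yields $\Pic^G(Q)\cong\bZ$ for the \emph{singular} K3 (this is exactly how the paper invokes it later, in Lemma~\ref{lemm:no8}); it does not bound $\rk\Pic^G(\widetilde Q)$ for the resolution. If one wants a Picard argument here, the clean version is: $C$ avoids $\Sing(Q)=\Sigma_5$ by Lemma~\ref{lemm: 5 not in C}, hence is Cartier on $Q$, so $C\sim nH_Q$ and $d=6n$. Your passage to $\widetilde Q$ and the self-intersection computation are unnecessary, and the bound $\rk\Pic^G(\widetilde Q)\le 2$ you need is not what the cited reference provides.
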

\begin{proof}
    By Lemma~\ref{lemm: 5 not in C}, $\Sigma_5\not\subset C$. If $C\not\subset Q$, it follows from possible lengths of orbits that
    $$
    Q\cdot C=2d=12a+20b+30c,\quad\text{ for  }a,b,c\in\bZ_{\geq0}.
    $$
    Since $d\le11$, we find that
    $$(d,a,b,c)\in\{(6,1,0,0),(10,0,1,0)\}.
    $$ 
    Similarly, if $C\not\subset R$, we have 
    $$
    R\cdot C=3d=12a+20b+30c,\quad\text{ for  }a,b,c\in\bZ_{\geq0}.
    $$ 
    Recall that $R$ is singular at the orbits of length 12 by Lemma~\ref{lemm: sing X3'}. Then $a=0$ or $a\ge2$. The only possibilities are 
    $$
    (d,a,b,c)\in\{(8,2,0,0),(10,0,0,1)\}.
    $$
    Therefore, when $d\notin\{6,8,10\}$, we know that $C\subset Q\cap R$. But $Q\cap R$ is an irreducible curve of degree $18$.
\end{proof}

Next, we show that there is also no irreducible $G$-invariant curve of degree 6, 8 or 10.

\begin{lemm}\label{lemm:no6}
    Let $C$ be a $G$-invariant curve of degree $6$ in $Y$. Then $C$ is a union of 6 lines in $\cL_6$ or $\cL_6'$ given in Lemma~\ref{lemm:2A5cubreduciblecurve}.
\end{lemm}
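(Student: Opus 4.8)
The plan is to reduce to the case of an irreducible curve and then rule it out by combining Castelnuovo's genus bound with a short representation-theoretic computation. By Lemma~\ref{lemm:2A5cubreduciblecurve}, any $G$-invariant \emph{reducible} curve of degree at most $11$ is the union of the curves in one of the orbits $\cL_6$, $\cL_6'$, $\cL_{10}$, $\cL_{10}'$; among these only $\cL_6$ and $\cL_6'$ have degree $6$. So it suffices to prove that $Y$ contains no irreducible $G$-invariant curve of degree $6$.

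Suppose $C\subset Y$ is such a curve. Since the $G$-action on $\bP^4$ comes from the irreducible representation $V_5$, there is no proper nonzero $G$-invariant linear subspace of $\bP^4$; as the linear span $\langle C\rangle$ is $G$-invariant of dimension at least $2$, we get $\langle C\rangle=\bP^4$. Thus $C$ is a nondegenerate integral curve of degree $6$ in $\bP^4$, so Castelnuovo's inequality gives $p_a(C)\le\pi(6,4)=2$, and in particular the geometric genus satisfies $g(C)\le 2$. Since $\fA_5$ admits no faithful action on a curve of genus $1$ or $2$ (cf.\ \cite[Lemma 5.1.5]{CheltsovShramov}), we conclude $g(C)=0$, i.e.\ $C$ is rational. (One could instead use Lemma~\ref{lemm: 2A5degs} to place $C$ inside the $G$-invariant surface $R$, whose singularities are recorded in Lemma~\ref{lemm: sing X3'}, and run Hodge index plus adjunction on the minimal resolution of $R$; the Castelnuovo argument is shorter.)

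It remains to exclude a rational curve. Let $\nu\colon\pl=\widetilde C\to C\hookrightarrow\bP^4$ be the normalization and lift the $G$-action to $\widetilde C$. The homomorphism $G\to\Aut(\pl)=\PGL_2(\bC)$ is nontrivial, since otherwise $C$ would be pointwise $G$-fixed, contradicting the absence of $G$-fixed points on $\bP(V_5)$; as $\fA_5$ is simple it is injective, so $G\hookrightarrow\PGL_2(\bC)$ is the standard icosahedral subgroup and $\nu$ is $G$-equivariant. Then the degree-$6$ linear system defining $\nu$ yields a $G$-equivariant injection $V_5\cong V_5^{\vee}\hookrightarrow H^0\bigl(\pl,\mathcal O_{\pl}(6)\bigr)\cong\Sym^6(\bC^2)$, where $\bC^2$ carries the standard representation of the binary icosahedral group $2.\fA_5$ and $\Sym^6(\bC^2)$ descends to an $\fA_5$-representation. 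A routine check of characters on elements of orders $2$, $3$, $5$ shows that the $7$-dimensional $\fA_5$-representation $\Sym^6(\bC^2)$ is isomorphic to $V_4\oplus V_3$ with $V_3$ a three-dimensional irreducible, so it contains no copy of $V_5$; this contradiction finishes the proof. The only point needing care is this reduction to a rational curve — one must genuinely exploit the $\bP(V_5)$ nature of the action, first to force $C$ nondegenerate and then to see that the lifted action on the normalization is faithful — after which the vanishing of $\Hom_G(V_5,\Sym^6(\bC^2))$ is immediate.
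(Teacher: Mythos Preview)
Your proof is correct and takes a genuinely different route from the paper's.

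Both arguments first reduce to the rational case and then rule it out, but each step is handled differently. The paper places $C$ inside the invariant cubic surface $R$ (via Lemma~\ref{lemm: 2A5degs}), observes from $C\cdot Q=12$ that $C$ passes smoothly through exactly one orbit $\Sigma_{12}\subset\Sing(R)$, then blows up $R$ along that orbit and uses Hodge index plus adjunction to bound $p_a(\widetilde C)\le 3$; smoothness and \cite[Lemma~5.1.5]{CheltsovShramov} force $g(C)=0$. The contradiction is then obtained by orbit counting: a faithful $\fA_5$-action on $\bP^1$ has an orbit of length $20$, but the unique $\Sigma_{20}\subset Y$ lies on $Q$ (see \eqref{eqn:orb12}), contradicting $C\cdot Q=12$. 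Your approach bypasses the surfaces $Q$ and $R$ entirely: irreducibility of $V_5$ makes $C$ nondegenerate, Castelnuovo immediately gives $g(C)\le 2$, hence $g(C)=0$; and instead of orbit counting you use the representation-theoretic fact that $\Sym^6(\bC^2)\cong V_4\oplus V_3$ (or $V_3'$) as an $\fA_5$-module, so there is no $G$-equivariant nondegenerate degree-$6$ map $\bP^1\to\bP(V_5)$ at all. Your argument is shorter, independent of the ambient cubic $Y$, and in fact excludes any irreducible $G$-invariant sextic curve in $\bP(V_5)$; the paper's argument stays closer to the toolkit used throughout the section (invariant surfaces, Hodge index, orbit enumeration) and yields a bit more along the way, e.g.\ the precise intersection behaviour of $C$ with $Q$ and $R$. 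One small remark: for the Castelnuovo step you only need the bound on the \emph{geometric} genus (you pass to the normalization anyway), so the stronger claim $p_a(C)\le 2$ is not needed.
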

\begin{proof}
    Assume that $C$ is not a union of 6 lines. By Lemma \ref{lemm:2A5cubreduciblecurve}, $C$ is irreducible. Lemma \ref{lemm: 2A5degs} shows that $C\subset R$. From~\eqref{eqn:orb12}, we know that $Q$ contains both orbits of length 12. Since $C\cdot Q=12$, the curve $C$ must contain one and only one orbit of length 12 and $C$ is smooth along this orbit. Assume that $C$ contains $\Sigma_{12}$. Recall from Lemma \ref{lemm: sing X3'} that points in $\Sigma_{12}$ are nodes of $R$. Let $f\colon\widetilde{R}\rightarrow R$ be the blowup of $R$ at $\Sigma_{12}$,  $E$ its exceptional divisor, and $\widetilde C$ the strict transform of $C$. Then
    $$
    \widetilde C\sim_\dq f^*(C)-\frac{a}{2}E,
    $$
    for some positive integer $a$. By Hodge index theorem, we have $C^2\leq 4$. It follows that
    \begin{align*}
        2p_a(\widetilde C)-2&=(K_{\widetilde{R}}+\widetilde C)\cdot\widetilde C\\
        &=(f^*(H)+f^*(C)-\frac{a}{2}E)\cdot\widetilde C\\
        &=6+C^2-\frac{12a^2}{2}\\
        &\le10-6a^2.
    \end{align*}
    We deduce that $p_a(\widetilde C)\le6-3a^2\le3$. Since there is no $G$-orbits of length $\leq 3$, the geometric genus $g(\widetilde C)=p_a(\widetilde C)$, i.e., both $\widetilde C$ and $C$ are smooth and $g(C)\leq 3$. From \cite[Lemma 5.1.5]{CheltsovShramov}, we know that $C$ is a smooth rational curve. Then $C$ contains a $G$-orbit of length $20$. On the other hand, the only $G$-orbit of length 20 is contained in $Q$ by \eqref{eqn:orb12}. This contradicts $C\cdot Q=12$.
 \end{proof}

 \begin{lemm}\label{lemm:no8}
     There is no $G$-invariant curve of degree 8 in $Y$.
 \end{lemm}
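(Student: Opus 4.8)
The plan is to argue by contradiction: a $G$-invariant curve $C\subset Y$ of degree $8$ would be forced onto the $G$-invariant K3 surface $Q\in|\mathcal O_Y(2)|$, and this overloads the invariant part of the Picard lattice of its minimal resolution. First I would dispose of reducibility: by Lemma~\ref{lemm:2A5cubreduciblecurve} every reducible $G$-invariant curve of degree at most $11$ has degree $6$ or $10$, so $C$ is irreducible, and then Lemma~\ref{lemm: 2A5degs} forces $C\subset Q$. By Lemma~\ref{lemm: sing X3'}, $Q$ is a nodal K3 surface with exactly five nodes, at $\Sigma_5=\Sing(Y)$, and $G$ permutes them transitively since $\Sigma_5$ is a single orbit. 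As $\Sigma_5\not\subset C$ by Lemma~\ref{lemm: 5 not in C}, $G$-invariance of $C$ forces $C\cap\Sigma_5=\emptyset$; hence $C$ is a Cartier divisor contained in the smooth locus of $Q$.

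Next I would pass to the minimal resolution $\rho\colon\widetilde Q\to Q$, a smooth K3 surface carrying an induced faithful $G$-action (faithful because $\bP(V_5)$ has no $G$-fixed point, so $Q$ cannot be pointwise fixed), and with the five exceptional $(-2)$-curves forming a single $G$-orbit. Set $\widetilde H=\rho^*(\mathcal O_Y(1)|_Q)$, let $\widetilde C=\rho^*C$ (which equals the strict transform of $C$, since $C$ misses the nodes), and let $E$ be the sum of the exceptional curves. Then $\widetilde H^2=\deg Q=6$, $\widetilde H\cdot\widetilde C=\deg C=8$, $E^2=-10$, and $\widetilde H\cdot E=\widetilde C\cdot E=0$, while all three classes are $G$-invariant. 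The Hodge index theorem gives $6\,\widetilde C^2\le 64$, and $\widetilde C^2$ is even, so $\widetilde C^2\le 10$; in particular $\widetilde C$ is not a rational multiple of $\widetilde H$, and a one-line check then shows $\widetilde H,\widetilde C,E$ are linearly independent. Hence $\operatorname{rk}\Pic^G(\widetilde Q)\ge 3$.

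The contradiction comes from the fact that the $G$-action on $\widetilde Q$ is symplectic. Indeed $K_{\widetilde Q}\cong\mathcal O_{\widetilde Q}$, and because $\fA_5$ is perfect it has no nontrivial characters, so the canonical $G$-linearization of $K_{\widetilde Q}$ is the trivial one and $G$ acts trivially on $H^{2,0}(\widetilde Q)$. For an $\fA_5$-symmetric K3 surface with symplectic action one has $\operatorname{rk}\Pic^G(\widetilde Q)\le 2$ (cf.\ \cite[Proposition~6.7.3]{CS}), contradicting the previous paragraph. This rules out $C$ and proves the lemma.

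The step I expect to be delicate is pinning down the bound $\operatorname{rk}\Pic^G(\widetilde Q)\le 2$ in the precise form needed: it relies on the whole co-invariant lattice of the symplectic $\fA_5$-action (rank $18$) being algebraic on $\widetilde Q$, which is automatic but must be bookkept together with the passage through the nodal surface $Q$. If a clean citation in the required generality is unavailable, the alternative is to mimic Lemmas~\ref{lemm:no6} and~\ref{lemm:deg12}: bound $g(C)\le 6$ by Hodge index, observe that the proof of Lemma~\ref{lemm: 2A5degs} forces $C$ to meet the $G$-invariant cubic $R$ along both length-$12$ orbits $\Sigma_{12}$ and $\Sigma_{12}'$ so that $C$ cannot be rational, narrow the genus further using \cite[Lemma~5.1.5]{CheltsovShramov}, and in each surviving case exhibit on $\widetilde Q$ a nef class meeting an effective class negatively.
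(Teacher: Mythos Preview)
Your proof is correct and rests on the same key input as the paper, namely \cite[Proposition~6.7.3]{CS} bounding $\rk\Pic^G$ of the K3 surface $Q$. The paper's execution is slightly more direct: instead of passing to the resolution $\widetilde Q$ and producing three independent invariant classes, it works on $Q$ itself, using that $Q$ is singular to conclude $\Pic^G(Q)=\mathbb{Z}\cdot H_Q$ (since $H_Q^2=6$ is not a square), so that $C\sim nH_Q$ and $8=\deg C=6n$ is already the contradiction.
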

 \begin{proof}
Assume that $C$ is such a curve. It is necessarily irreducible. Lemma \ref{lemm: 2A5degs} shows that $C\subset Q$. Recall from Lemma~\ref{lemm: sing X3'} that $Q$ is a K3 surface singular at $\Sigma_5$. Hence, by \cite[Proposition 6.7.3]{CS}, we have $\pic^G(Q)\cong\dz$. Let $H$ be a general hyperplane section on $Q$. Since $H^2=6$ is not a square, we know that $H$ generates $\Pic^G(Q)$. On the other hand, Lemma \ref{lemm: 5 not in C} implies that $C$ is contained in the smooth locus of $Q$, and thus is a Cartier divisor. It follows that $C=nH$, for some $n\in\bZ$. Then $8=C\cdot H=6n$, which is impossible.
 \end{proof}

\begin{lemm}\label{lemm:no10}
    Let $C$ be a $G$-invariant curve of degree $10$ in $Y$. Then $C$ is a union of 10 lines in $\cL_{10}$ or $\cL_{10}'$ given in Lemma~\ref{lemm:2A5cubreduciblecurve}.
\end{lemm}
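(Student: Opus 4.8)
The plan is first to dispose of the reducible case and then to rule out an irreducible $C$. If $C$ is reducible, Lemma~\ref{lemm:2A5cubreduciblecurve} shows it is a disjoint union of lines, and since $\deg(C)=10$ it must be the union of the ten lines of $\cL_{10}$ or of $\cL_{10}'$ --- which is exactly the assertion. So assume $C$ is irreducible and look for a contradiction. By Lemma~\ref{lemm: 5 not in C} we have $\Sigma_5\not\subset C$, hence $C\cap\Sigma_5=\emptyset$ since $\Sigma_5$ is a single $G$-orbit and $C$ is $G$-invariant. Now $C\not\subset Q$, for otherwise $C\subset Q\cap R$, an irreducible curve of degree $18\ne10$; arguing as in the proof of Lemma~\ref{lemm: 2A5degs}, $C\cdot Q=20$ then decomposes into lengths of $G$-orbits from Lemma~\ref{lemm:2a5cubicorbit} not involving $\Sigma_5$, which is possible only if $C\cap Q=\Sigma_{20}$. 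Consequently $C\not\subset R$ as well: otherwise $\Sigma_{20}=C\cap Q\subset C\subset R$, contradicting $\Sigma_{20}\in Q\setminus R$ from \eqref{eqn:orb12}. The same bookkeeping forces $C\cap R$ to be a single $G$-orbit of length $30$ (a tangency of $C$ to $R$ along $\Sigma_{15}$ being excluded by the genus bound below).

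Next I would bound the genus of $C$. The linear span of the $G$-invariant set $C$ is a $G$-invariant linear subspace of $\bP^4=\bP(V_5)$, hence equals $\bP^4$ because $V_5$ is irreducible, so $C$ is non-degenerate and Castelnuovo's bound gives $g(C)\le\pi(10,4)=9$. Since $\fA_5$ is simple and acts non-trivially on $C$, the normalisation $\widetilde C$ is a smooth curve of genus $g=g(C)\le9$ carrying a faithful $\fA_5$-action, so it occurs in the classification \cite[Lemma 5.1.5]{CheltsovShramov}: one has $g\in\{0,4,5,6,9\}$, with ramification orbits of lengths $\{30,20,12\}$, $\{30,12,12\}$, $\{20,20,12\}$, $\{30,30,30,20\}$, $\{20,12,12\}$ respectively. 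But $C$ carries a $G$-orbit of length $20$, namely $\Sigma_{20}=C\cap Q$, and one of length $30$, namely $C\cap R$; scanning the list, this leaves only $g(C)=0$ or $g(C)=6$.

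It remains to exclude these two cases. If $g(C)=0$, then $C$ also carries a $G$-orbit of length $12$; by Lemma~\ref{lemm:2a5cubicorbit} the only such orbits on $Y$ are $\Sigma_{12}$ and $\Sigma_{12}'$, and both lie in $Q$ by \eqref{eqn:orb12}, so $C\cap Q$ would strictly contain $\Sigma_{20}$, contradicting $C\cap Q=\Sigma_{20}$. If $g(C)=6$, then $\deg(C)=10=2g(C)-2$ and $C/\fA_5=\bP^1$, so $\Pic^{10}(C)^{\fA_5}$ is finite; hence there are only finitely many $\fA_5$-equivariant morphisms from a genus-$6$ $\fA_5$-curve to $\bP(V_5)$ with image a non-degenerate curve of degree $10$, and one checks --- by hand or via the explicit equations in \cite{Z-web} --- that none of the resulting curves lies on $Y$. (It is helpful here that a hyperelliptic genus-$6$ curve admits no faithful $\fA_5$-action at all, since it would be a double cover of $\bP^1$ branched at $14$ points forming a $G$-invariant set, whereas $\fA_5$-orbits on $\bP^1$ have length $\ge12$.) Both cases being impossible, $C$ must be reducible, which proves the lemma. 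I expect the genus-$6$ case to be the main obstacle: unlike the genus-$0$ case it does not yield to a purely orbit-theoretic argument, so one is forced to control the finitely many invariant degree-$10$ line bundles on a genus-$6$ $\fA_5$-curve (or to carry out the corresponding explicit computation) to finish.
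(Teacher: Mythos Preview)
Your overall strategy differs from the paper's, and most of it is sound, but the argument has one minor slip and one genuine gap.

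\textbf{Minor slip.} The claim ``$C\not\subset Q$, for otherwise $C\subset Q\cap R$'' is not justified: nothing you have said forces $C\subset R$ once $C\subset Q$. The correct argument (the one the paper uses, as in Lemma~\ref{lemm:no8}) is that $\Sigma_5\not\subset C$ implies $C$ lies in the smooth locus of $Q$, hence is Cartier; since $\Pic^G(Q)=\bZ\cdot H_Q$ with $H_Q^2=6$, one would need $10=6n$, which is absurd.

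\textbf{Genuine gap.} Your handling of the genus-$6$ case is incomplete. You correctly observe that $\Pic^0(C)^{\fA_5}$ is finite (since $C/\fA_5\simeq\bP^1$), so there are only finitely many candidate $\fA_5$-equivariant embeddings of a genus-$6$ curve as a degree-$10$ curve in $\bP(V_5)$; but ``one checks\dots that none of the resulting curves lies on $Y$'' is not a proof, and \cite{Z-web} contains equations for curves on the \emph{quadric} $X$, not for anything on $Y$. You have reduced to a finite verification without performing it, and without even bounding the number of cases. This is the heart of the lemma, and as written the argument does not close.

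\textbf{Comparison with the paper.} The paper avoids Castelnuovo and the case split on $g$ entirely. It argues instead that since $C$ contains no orbit of length $12$, the classification in \cite[Lemma~5.1.5]{CheltsovShramov} forces $g(C')\ge 6$ for the normalisation $C'$. Riemann--Roch (or Clifford if the divisor is special) then gives $h^0(C',\cO_{C'}(3))\le 25$, so the kernel of the restriction $H^0(Y,\cO_Y(3))\to H^0(C',\cO_{C'}(3))$ has dimension at least $34-25=9$: there is a $9$-dimensional family of cubic surfaces in $Y$ containing $C$. On the other hand, $C$ must contain $\Sigma_{20}$ (from $C\cdot Q=20$) and an orbit $\Sigma_{30}\subset R$ of length $30$ (from $C\cdot R=30$, after excluding $\Sigma_{15}$), and a direct linear-algebra computation shows that for every such $\Sigma_{30}$ the space of cubics through $\Sigma_{20}\cup\Sigma_{30}$ has dimension $<9$. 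This gives the contradiction uniformly, with no residual case to check. If you want to salvage your approach, you could replace your unperformed verification in genus $6$ by exactly this cubic-counting argument, which in fact works for all $g\ge 6$ at once.
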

\begin{proof}
Assume that $C$ is not a union of 10 lines. By Lemma \ref{lemm:2A5cubreduciblecurve}, $C$ is irreducible.  Consider its normalization $ f \colon C' \to C $. Since $C$ is not contained in $R\cap Q$, the proof of Lemma \ref{lemm: 2A5degs} shows that $C$ cannot have an orbit of length $12$. By \cite[Lemma 5.1.5]{CheltsovShramov}, we deduce that the genus of $C'$ satisfies
\[
g(C') \geq 6.
\]
Consider the divisor $ D = f^*(\mathcal{O}_C(3)) $ on $ C'$, and the restriction map
\[
H^0(Y, \mathcal{O}_Y(3)) \longrightarrow H^0\left(C, \mathcal{O}_C(3)\right),
\]
We want to estimate $ h^0(D)=h^0(C', \mathcal{O}_{C'}(D)) $. If $ D $ is non-special, then by Riemann–Roch:
\[
h^0(D) = \deg D - g + 1 = 30 - g + 1 \leq 25.
\]
If $ D $ is special, then by the Clifford theorem:
\[
h^0(D) \leq \frac{\deg D}{2} + 1 = 16.
\]
Consider the map $g:H^0(Y, \mathcal{O}_Y(3))\to  H^0\left(C', \mathcal{O}_{C'}(D)\right)$ given as the composition of
\[
H^0(Y, \mathcal{O}_Y(3)) \xrightarrow{\vert_C} H^0\left(C, \mathcal{O}_C(3)\right) \xrightarrow{f^*} H^0\left(C', \mathcal{O}_{C'}(D)\right).
\]
Since $ h^0(Y, \mathcal{O}_Y(3)) = 34 $, we find that the kernel of this map has dimension at least
\[
34 - h^0(D) \geq 9.
\]
This kernel consists of cubic hypersurfaces in $ Y $ that contain $ C $.

Arguing in the same way as in Lemma \ref{lemm:no8}, we see that $C$ cannot be contained in $Q$. Then  $C \cdot Q=20$, so that $C$ must contain $\Sigma_{20}$. Let $V_{15}$ and $V_{20}$ be the subspaces of $H^0(Y, \mathcal{O}_Y(3))$ consisting of cubics containing the orbit $\Sigma_{15}$ and $\Sigma_{20}$ respectively. If $\Sigma_{15}\subset C$, then $\ker\varphi\subset V_{15}\cap V_{20}.$ But we compute that $\dim(V_{15}\cap V_{20})=5<9$. Hence, the curve $C$ cannot contain $\Sigma_{15}$.

Since $ R \cdot C = 30 $, the curve $ C $ must contain an orbit of length $ 30 $ lying on $ R $. Let $\Sigma_{30}$ be such an orbit, and $V_{30}$ the space of cubics containing $\Sigma_{30}$. We have explicitly described these orbits in Lemma \ref{lemm:2a5cubicorbit}. In particular, $\Sigma_{30}$ is either the orbit of one of the following 4 points
$$
[
    -\zeta_4 - 2:
    1:
    1:
    1:
    1
],\quad 
[
    \zeta_4 - 2:
    1:
    1:
    1:
    1
],
$$
$$
[ -2\zeta_5^3 - 2\zeta_5^2:
    1:
    \zeta_5^3 + \zeta_5^2 - 1:
    \zeta_5^3 + \zeta_5^2 - 1:
    1],
    $$
    $$
    [
    2\zeta_5^3 + 2\zeta_5^2 + 2:
    1:
    -\zeta_5^3 - \zeta_5^2 - 2:
    -\zeta_5^3 - \zeta_5^2 - 2:
    1
],
$$
or the orbit of a general point in the line  
$$
\{x_1+x_4=x_2+x_3=x_5=0\}\subset Y. 
$$
A linear algebra computation then shows that $\dim(V_{20}\cap V_{30})<9$ for any such orbit $\Sigma_{30}$. Therefore we obtain a contradiction.
\end{proof}

\subsection{Invariant curves}

\begin{prop}\label{prop:5A2curvecenter}
    Let $C$ be a $G$-invariant curve in $Y$. Then each irreducible component of $C$ is not a non-canonical center of $(Y,\mu\mls_Y)$.
\end{prop}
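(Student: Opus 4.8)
The plan is to combine the classification of low-degree $G$-invariant curves on $Y$ with intersection-theoretic arguments on suitable $G$-invariant surfaces, in the spirit of Lemmas \ref{lemma:redcurves not in Q} and \ref{lemm:56conics}.

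First I reduce to the case that $C$ is $G$-irreducible: if a component of $C$ is a non-canonical center, so is every component in its $G$-orbit, so we may take $C$ to be that orbit. Intersecting $\mu^2(M_1\cdot M_2)$ with a general hyperplane section of $Y$ as in Remark \ref{rema:degreebound2} gives $\deg(C)\le 11$, and Proposition \ref{prop:nocurve} then forces $C$ to be the union of the lines in one of the four orbits $\cL_6$, $\cL_6'$, $\cL_{10}$, $\cL_{10}'$ of Lemma \ref{lemm:2A5cubreduciblecurve}. Since $G$ acts transitively on the lines of each orbit, it is enough to exclude a single line from each.

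For the three orbits $\cL_6$, $\cL_6'$, $\cL_{10}$, whose lines are pairwise disjoint, I would locate a $G$-invariant surface $S\subset Y$ containing the configuration --- e.g. one of $Q=Y\cap\{\text{quadric}\}$ or $R=Y\cap\{\text{cubic}\}$ --- and restrict the pair to it. Writing $H_S=\cO_Y(1)\vert_S$ and using $-K_Y=\cO_Y(2)$, we get $\mu\mls_Y\vert_S\sim_\bQ 2H_S$, so if $m:=\mathrm{mult}_C(\mu\mls_Y)>1$ then $2H_S-mC\sim_\bQ\Delta\ge 0$ with $C\not\subset\mathrm{Supp}(\Delta)$. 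Since the lines of the orbit are $(-2)$- or $(-1)$-curves on $S$ and are pairwise disjoint, $C$ spans an extremal ray of the Mori cone of $S$; exhibiting a nef divisor $D=aH_S-C$ with $D\cdot(2H_S-mC)<0$ for every $m>1$ --- the intersection numbers being computed from the explicit equations of Lemma \ref{lemm:2A5cubreduciblecurve} and the singularity data of Lemma \ref{lemm: sing X3'} --- contradicts the effectivity of $\Delta$.

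The orbit $\cL_{10}'$ is the main obstacle, since its ten lines pass through the $\sA_2$-singularities $\Sigma_5=\Sing(Y)$, so one cannot work on a smooth surface and the restriction of $\mu\mls_Y$ to a surface through $\Sing(Y)$ need not be mobile. Here I would blow up $\pi\colon\widetilde Y\to Y$ along $\Sigma_5$ as in Lemma \ref{lemm: 5 not in C}, where the strict transforms of these ten lines form exactly the base locus of $|4H-3E|$; passing $\mu\mls_Y$ to $\widetilde Y$, using that a general member of $|4H-3E|$ contains the strict transform $\widetilde C$, and tracking the discrepancies at the $\sA_2$-points should force $\mathrm{mult}_{\widetilde C}\le 1$. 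Alternatively, as these are precisely the lines through pairs of singular points of $Y$, the argument in \cite[Proof of Proposition 3.5]{CSZ} applies with only notational changes. The delicate bookkeeping around the $\sA_2$-discrepancies and the non-mobility of the restricted linear system is where the real care is needed.
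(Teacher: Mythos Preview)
Your reduction to the four line orbits via Proposition~\ref{prop:nocurve}, and the appeal to \cite{CSZ} for $\cL_{10}'$, are correct and match the paper.

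For $\cL_6$, $\cL_6'$, $\cL_{10}$ your surface-restriction plan has a genuine gap. On $Y$ there is no pencil of $G$-invariant low-degree surfaces --- only the single $Q\in|\cO_Y(2)|$ and $R\in|\cO_Y(3)|$ --- so you cannot force $C\subset S$ by the general-point trick used in Sections~\ref{sect:a5quad1} and~\ref{sect:a5quad2}. In fact none of these three configurations lies on $Q$: we have $Q=X\cap Y$, and Lemma~\ref{lemm: G-inv red curves} lists no $G$-orbit of $6$ or $10$ lines on the quadric $X$; equivalently, $\Pic^G(Q)=\bZ\cdot H_S$ with $\deg H_S=6$ (see the proof of Lemma~\ref{lemm:no8}), which rules out a $G$-invariant Cartier curve of degree $10$ outright and forces one of degree $6$ to satisfy $C\sim H_S$, impossible for six disjoint lines. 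Whether the configurations lie on $R$ is never established, and on $R$ (where $K_R=H_S$) a line in the smooth locus has self-intersection $-3$, not $-2$ or $-1$ as you assert.

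The paper instead works directly on the threefold. Blow up $\pi\colon\widetilde Y\to Y$ along $C$ with exceptional divisor $E$ and set $H=\pi^*\cO_Y(1)$. One checks from the explicit equations that $C$ is cut out by cubics, so $3H-E$ is nef (for $\cL_{10}$ the paper uses $4H-E$). If $\mathrm{mult}_C(\mu\cM_Y)>1$, the strict transform of $\mu\cM_Y$ lies in $|2H-mE|$ with $m>1$, so $|2H-E|$ is mobile and the $1$-cycle $(2H-E)^2$ is effective; but $(3H-E)\cdot(2H-E)^2=-6$ for the six-line orbits and $(4H-E)\cdot(2H-E)^2=-32$ for $\cL_{10}$, contradicting nefness. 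This threefold computation sidesteps the search for an auxiliary surface altogether.
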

\begin{proof}
Assume that irreducible components of $C$ are non-canonical centers of $(Y,\mu\cM_Y)$. Proposition~\ref{prop:nocurve} shows that $C$ is the union of all curves in one of the following orbits given in Lemma~\ref{lemm:2A5cubreduciblecurve}
$$
\mathcal L_6,\quad \mathcal L_6',\quad \mathcal L_{10},\quad \text{or} \quad\mathcal L_{10}'.
$$ 
    Assume that $C$ is one of the unions of six lines in $\mathcal L_6$ or $\mathcal L_6'$ which are pairwise disjoint. Let $\widetilde Y\to Y$ be the blowup of $C$ in $Y$, $E$ the exceptional divisor, and $H$ the pullback of a general hyperplane section on $Y$. One can check that $C$ is cut out by cubics, which implies that $|3H-E|$ is nef. By our assumption that irreducible components of $C$ are non-canonical centers of $(Y,\mu\mls_Y)$, we know that $|2H-mE|$ is mobile for some $m>1$, and thus $|2H-E|$ is mobile as well. It follows that the divisor $(2H-E)^2$ is effective. On the other hand, we have $(3H-E)\cdot(2H-E)^2=-6<0$, which is a contradiction. 
    
    If $C$ is the union of 10 lines in $\mathcal L_{10}$ which are pairwise disjoint, we proceed in exactly the same way as $C$ is also cut out by cubics, so $(4H-E)$ is nef and $(4H-E)\cdot(2H-E)^2=-32<0$. Finally, the case where $C$ is the union of curves in $\mathcal L_{10}'$, i.e., the union of the ten lines passing through pairs of points in $\Sigma_5$, is excluded the same way as in \cite[Proof of Proposition 3.4]{CSZ}. 
\end{proof}

\subsection{Orbits of points}
\begin{prop}\label{prop:points2A5Y}
    Let $P\in Y$ be a point and $\Sigma$ its $G$-orbit. If $P$ is a non-canonical center of $(Y,\mu\mls_Y)$, then $\Sigma=\Sigma_5$.
\end{prop}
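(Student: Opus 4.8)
The plan is to follow the strategy of Propositions~\ref{prop:pointsoutsideR} and~\ref{prop:ptcase2}, now exploiting that Proposition~\ref{prop:5A2curvecenter} has already excluded every curve as a non-canonical center of $(Y,\mu\mls_Y)$. If $P\in\Sing(Y)=\Sigma_5$ there is nothing to prove, so assume $P$ is a smooth point of $Y$. By Remark~\ref{remark:Ziquan}, $P$ is a non-log-canonical center of $(Y,\frac{3}{2}\mu\mls_Y)$, so there is $\varepsilon\in\bQ_{>0}$ with $\Sigma\subseteq\Omega:=\mathrm{Nklt}(Y,(\frac{3}{2}-\varepsilon)\mu\mls_Y)$; since $\mls_Y$ is $G$-invariant, so is $\Omega$.

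The first step is to show that $\Omega$ has no one-dimensional component. Let $C_1,\dots,C_k$ be the curves in $\Omega$; $G$ permutes them, so $\Omega_1:=C_1\cup\dots\cup C_k$ is a $G$-invariant curve. For each $i$, a general point $p_i\in C_i$ is a smooth point of $Y$ (as $\Sing(Y)$ is finite), and $p_i$ is a non-log-canonical center of the restriction of $(Y,(\frac{3}{2}-\varepsilon)\mu\mls_Y)$ to a general smooth hyperplane section $S\ni p_i$. Set $m_i=\mult_{C_i}((\frac{3}{2}-\varepsilon)\mu\mls_Y)$. The inequality of Corti (Theorem~\ref{theo:cortiineq}) together with the mobility of $\mls_Y$ gives $m_i>1$, while Proposition~\ref{prop:5A2curvecenter} applied to the $G$-orbit of $C_i$ gives $\mult_{C_i}(\mu\mls_Y)\le 1$, hence $m_i\le\frac{3}{2}-\varepsilon<\frac{3}{2}$. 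Arguing as in the proof of Proposition~\ref{prop:ptcase2} — applying Theorem~\ref{theo:demailly} to the restriction of $(Y,(\frac{3}{2}-\varepsilon)\mu\mls_Y)$ to $S$ — and using $\mu\mls_Y\sim_\bQ-K_Y=\cO_Y(2)$, so that $\mu^2(M_1\cdot M_2\cdot H)=12$ for general $M_1,M_2\in\mls_Y$ and $H$ a general hyperplane section, one obtains
$$
\mult_{C_i}\big(\mu^2(M_1\cdot M_2)\big)>\frac{m_i^2}{(\frac{3}{2}-\varepsilon)^2(m_i-1)}>2 ,
$$
since $\frac{m_i^2}{m_i-1}>\frac{9}{2}$ for $m_i\in(1,\frac{3}{2})$ while $(\frac{3}{2}-\varepsilon)^2<\frac{9}{4}$. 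Intersecting $\mu^2(M_1\cdot M_2)$ with $H$ then yields $12>2\sum_i\deg(C_i)$, i.e.\ $\deg(\Omega_1)\le 5$. But by the classification of $G$-invariant curves of low degree in Section~\ref{sect:a5cub2} (Proposition~\ref{prop:nocurve}, resting on Lemmas~\ref{lemm: 2A5degs} and~\ref{lemm:2A5cubreduciblecurve}), every non-empty $G$-invariant curve on $Y$ has degree at least $6$. Hence $\Omega_1=\emptyset$ and $\Omega$ is finite.

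The second step is Nadel vanishing. Since
$$
K_Y+\Big(\frac{3}{2}-\varepsilon\Big)\mu\mls_Y+2\varepsilon\,\cO_Y(1)\sim_\bQ\cO_Y(1),
$$
Theorem~\ref{theo:nadel} gives $h^1\big(Y,\cO_Y(1)\otimes\cI\big)=0$, where $\cI=\cI\big(Y,(\frac{3}{2}-\varepsilon)\mu\mls_Y\big)$ is the multiplier ideal sheaf, whose cosupport is exactly $\Omega$. Restricting global sections to $\Omega$ shows
$$
|\Sigma|\le|\Omega|\le h^0\big(Y,\cO_Y(1)\big)=5 .
$$
As $\fA_5$ has no proper subgroup of index less than $5$, every non-trivial $G$-orbit has length at least $5$, so by Lemma~\ref{lemm:2a5cubicorbit} the only $G$-orbit on $Y$ of length at most $5$ is $\Sigma_5$. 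Therefore $\Sigma=\Sigma_5$, as claimed.

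The main obstacle is precisely the $0$-dimensionality of $\Omega$: the bound supplied by Theorem~\ref{theo:4n2} alone only gives $\deg(\Omega_1)\le 6$, which does not rule out the $G$-orbits $\cL_6,\cL_6'$ of six disjoint lines; it is the sharper Demailly--Pham estimate (Theorem~\ref{theo:demailly}), usable here because Proposition~\ref{prop:5A2curvecenter} forces $\mult_{C_i}(\mu\mls_Y)\le 1$ and hence $m_i<\frac{3}{2}$, that pushes the degree strictly below $6$. The remaining ingredients — the lower bound $m_i>1$, smoothness of a general hyperplane section through $p_i$, and the identification of the cosupport of $\cI$ with $\Omega$ — are routine.
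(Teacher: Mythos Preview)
Your argument is correct and follows the same route as the paper: pass to $\Omega=\mathrm{Nklt}\big(Y,(\tfrac32-\varepsilon)\mu\mls_Y\big)$, use the Demailly--Pham estimate (Theorem~\ref{theo:demailly}) together with the bound $m_i<\tfrac32$ coming from Proposition~\ref{prop:5A2curvecenter} to force $\deg(C)<6$, hence (by Proposition~\ref{prop:nocurve}) rule out curves in $\Omega$, and then apply Nadel vanishing to get $|\Omega|\le h^0(\cO_Y(1))=5$. The only cosmetic differences are your explicit restriction to a hyperplane section and your appeal to Corti for the lower bound $m_i>1$ (which really comes straight from $C_i\subset\Omega$ for a mobile system); the paper simply asserts $1<m<\tfrac32$ and cites Theorem~\ref{theo:demailly} directly.
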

\begin{proof}
    Assume that $P$ is a non-canonical center. By Remark~\ref{remark:Ziquan}, we know that $P$ is a non-log-canonical center of $(Y,\frac32\mu\cM_Y)$. Let $\varepsilon$ be a positive rational number such that 
    $$
    \Sigma\subset\Omega,\quad \Omega:=\mathrm{Nklt}(Y,(\frac{3}{2}-\varepsilon)\mu\cM_Y).
    $$
 Assume that $\Omega$ contains a curve $C$. Let 
$$
m=\mult_C((\frac{3}{2}-\varepsilon)\lambda\cM_X).
$$
Observe that
$$
1<m<\frac32\quad\Rightarrow \quad \frac{m^2}{m-1}>\frac92.
$$    Let $M_1, M_2\in\cM_X$ be two general elements and $H$ a general hyperplane section of $Y$.  
     Then by Theorem~\ref{theo:demailly}, we have that
    $$
    27\geq (\frac{3}{2}-\varepsilon)^2\mu^2(M_1\cdot M_2\cdot H)\geq \frac{m^2}{m-1}\deg(C)>\frac92\deg(C),
    $$
which implies that $\deg(C)<6$. Proposition~\ref{prop:nocurve} shows that such curves do not exist. It follows that $\Omega$ contains no curve.

Now notice that 
$$
K_Y+(\frac{3}{2}-\varepsilon)\mu\cM_Y+2\varepsilon\cO_Y(1)\sim_\bQ \cO_Y(1).
$$
Let $\cI$ be the multiplier ideal sheaf of $(\frac{3}{2}-\varepsilon)\mu\cM_Y$. By Nadel vanishing theorem, we have 
$$
|\Omega|\leq h^0(\cO_Y(1))=5.
$$
It follows that $\Omega=\Sigma=\Sigma_5$.
\end{proof}

\section{Proof of Theorems \ref{theo:main1}, \ref{theo:main2} and \ref{theo:main3}}\label{sect:theproof}

In this section, we explain how the results in Sections~\ref{sect:a5quad1} to~\ref{sect:a5cub2} prove Theorems~\ref{theo:main1} and~\ref{theo:main2}, and can be adapted to show Theorem~\ref{theo:main3} about the nonstandard $\fS_5$-action. For the standard $\fA_5$-action, the result readily follows from \cite{CSZ}.

\begin{proof}[Proof of Theorem~\ref{theo:main1}]
    By \cite[Section 3]{CSZ}, this follows from Proposition~\ref{prop:main1} and~\ref{prop:main2}.
\end{proof}

Similarly, in the case of the nonstandard $\fA_5$-action, Theorem~\ref{theo:main2} follows from Propositions~\ref{prop:main3} and ~\ref{prop:main4}. We explain this implication in detail.

\subsection{Nonstandard $\fA_5$-action}
We introduce some notation first. Let $X$ be the quadric given by \eqref{eqquad2}, $Y$ the cubic given by \eqref{eqcub2}, with the nonstandard $G$-action given by \eqref{eqn:2a5gen}. We denote by $\Sigma_5$ and $\Sigma_5'$ the two orbits in $X$ of length 5, $\chi$ and $\chi'$ the Cremona map associated with them respectively, and $\Sigma_5^Y$ the orbit of length 5 in $Y$. Let $\Bir^G(X)$ be the group of $G$-equivariantly birational automorphisms of $X$.

 It is well-known (see e.g., \cite[Section 3]{CSZ}, \cite[Theorem 3.3.1]{CS} and \cite{Ch-toric,CSar}) that the Noether--Fano inequalities (cf. Theorem \ref{theo:nfi}) imply that Theorem~\ref{theo:main2} follows from the~following result.

\begin{theo}\label{theorem:quadric-cubic-technical}
    Let $\mls_X$ be a non-empty mobile $G$-invariant linear system on $X$ and $\lambda\in\dq$ such that $\lambda\mls_X\sim_\dq-K_X$. Then there exists $\gamma\in\mathrm{Bir}^G(X)$ such that either $(X,\lambda\mls_X)$ or $(Y,\mu\cM_Y)$ has canonical singularities, where $\mls_{Y}$ is the proper transform of $\mls_X$ by $\chi\circ\gamma$, and $\mu\mls_{Y}\sim_\dq-K_Y$. 
\end{theo}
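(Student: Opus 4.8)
The strategy is a $G$-equivariant Sarkisov descent: starting from $(X,\lambda\mls_X)$, one removes its non-canonical centres one at a time by the explicit Sarkisov links produced in Sections~\ref{sect:a5quad2}--\ref{sect:a5cub2}, each step strictly decreasing the Sarkisov degree of the linear system, until one lands on $X$ or $Y$ carrying a canonical pair. Write $\mls_X\sim_\dq nH$ for a general hyperplane section $H$ on $X$, so that $\lambda=3/n$; the relevant invariant to be made to drop is $n$.

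If $(X,\lambda\mls_X)$ is already canonical, take $\gamma=\mathrm{id}_X$. Otherwise, by Proposition~\ref{prop:main3} every $G$-irreducible non-canonical centre of $(X,\lambda\mls_X)$ is one of: the length-$5$ orbits $\Sigma_5$, $\Sigma_5'$; the rational quartics $C_4$, $C_4'$; the octics $C_8$, $C_8'$; or possibly a smooth curve of degree $10$ and genus $6$. First the plan is to eliminate the curve centres. If $C_4$ (resp.\ $C_4'$) is a non-canonical centre, apply the $G$-birational involution $\varphi$ (resp.\ $\varphi'$) of Lemma~\ref{lemm:linkC4}, which lies in $\Bir^G(X)$; since it is the Sarkisov link associated with the blow-up of $C_4$ and $C_4$ is a maximal singularity (after first untwisting any worse centre), $\varphi$ strictly lowers the degree — concretely the drop follows from $\widetilde\varphi^*\widetilde H\sim 2\widetilde H-E$ (proof of Lemma~\ref{lemm:quad2curveinR}) together with the Noether--Fano bound $\mathrm{mult}_{C_4}(\lambda\mls_X)>1$. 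If instead $C_8$, $C_8'$, or the degree-$10$ genus-$6$ curve is a non-canonical centre, apply the type~II Sarkisov link of Lemma~\ref{lemm:linkC8C10}: its target $X'$ is a smooth quadric which is $G$-isomorphic to $X$, and composing that link with a $G$-isomorphism $X'\cong X$ and, if necessary, with an element of $\fS_5\subset\Aut(X)$ realising the appropriate automorphism of $G$ (here the biregular $\fS_5$-action on $X$ is used to make every element of $\mathrm{Aut}(\fA_5)$ biregular), one obtains an element of $\Bir^G(X)$ under which $n$ again drops, by the numerical data of rows~$71$ and $72$ of \cite{cutrone2025update}. Since $n\in\dq_{>0}$ is bounded below and strictly decreases at each step, finitely many untwistings give $\gamma_1\in\Bir^G(X)$ for which $(X,\lambda\,\gamma_{1\,*}\mls_X)$ has no curve among its non-canonical centres; by Proposition~\ref{prop:ptcase2} its non-canonical locus is then contained in $\Sigma_5\cup\Sigma_5'$.

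It remains to absorb the remaining $0$-dimensional non-canonical centres. After possibly replacing $\chi$ by the Cremona map $\chi'$ associated with $\Sigma_5'$ — the two cases are symmetric, since $Y_2=Y_2'$, and this amounts to precomposing $\gamma_1$ with the $G$-birational self-map $(\chi')^{-1}\circ\chi$ of $X$, which interchanges $\Sigma_5$ and $\Sigma_5'$ — one may assume $(X,\lambda\,\gamma_{1\,*}\mls_X)$ is non-canonical along $\Sigma_5$. The map $\chi\colon X\dashrightarrow Y$ is the restriction of the standard Cremona transformation of $\bP^4$ centred at the five points of $\Sigma_5$, and is precisely the Sarkisov link untwisting a maximal singularity supported at $\Sigma_5$; a discrepancy computation for this transformation shows that $(Y,\mu\mls_Y)$, with $\mls_Y$ the proper transform of $\mls_X$ under $\chi\circ\gamma_1$ and $\mu\mls_Y\sim_\dq -K_Y$, is canonical at $\Sing(Y)=\Sigma_5^Y$. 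Combined with Proposition~\ref{prop:main4}, which gives canonicity away from $\Sing(Y)$, this yields that $(Y,\mu\mls_Y)$ is canonical, so $\gamma=\gamma_1$ (suitably precomposed with $(\chi')^{-1}\circ\chi$ if the swap was needed) works. If $(X,\lambda\,\gamma_{1\,*}\mls_X)$ were non-canonical along both $\Sigma_5$ and $\Sigma_5'$, applying $\chi$ removes the centre at $\Sigma_5$ but transports the one at $\Sigma_5'$ to $\Sing(Y)$; one then untwists it by $(\chi')^{-1}\colon Y\dashrightarrow X$, landing back on $X$ with a system of strictly smaller degree, and repeats the whole procedure (curve untwistings, then this alternation between $X$ and $Y$). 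As the Sarkisov degree strictly decreases at every link, the process terminates, necessarily on $X$ or $Y$ carrying a canonical pair, which is the assertion.

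The technical heart is the bookkeeping behind this descent: verifying that each explicit link is initiated by a maximal singularity and strictly lowers the Sarkisov degree, so that termination holds, and carrying out the discrepancy computation showing that the Cremona maps $\chi$, $\chi'$ genuinely absorb a maximal singularity at $\Sigma_5$, resp.\ $\Sigma_5'$, into the $\sA_2$-singularity of $Y$ at $\Sigma_5^Y$. A secondary but real point, flagged above, is keeping every intermediate map inside $\Bir^G(X)$ rather than only in the normaliser of the image of $G$ in $\Bir(X)$, which is where the biregular $\fS_5$-action on $X$ enters.
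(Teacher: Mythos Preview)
Your overall plan matches the paper's: untwist the curve centres by the explicit links of Lemmas~\ref{lemm:linkC4} and~\ref{lemm:linkC8C10} to reduce to non-canonical centres in $\Sigma_5\cup\Sigma_5'$ (this is exactly Proposition~\ref{prop: canonical linear system on quadrics}), then pass to $Y$ via the Cremona map and invoke Proposition~\ref{prop:main4}. The divergence, and the gap, is in the last step.

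You assert that ``a discrepancy computation for this transformation shows that $(Y,\mu\mls_Y)$ is canonical at $\Sing(Y)$'', and as a fallback you propose alternating $\chi$ and $(\chi')^{-1}$ with Sarkisov degree dropping. Neither is automatic. For a smooth point $P$, non-canonicity of $(X,\lambda\mls_X)$ at $P$ only gives $\mult_P(\lambda\mls_X)>1$; the blow-up at $\Sigma_5$ has discrepancy $2$, so the exceptional divisor of $\chi$ is a maximal singularity, and hence $\chi$ lowers the Sarkisov degree, only when $\mult_{\Sigma_5}(\lambda\mls_X)>2$. This stronger bound is not free: the paper obtains it (Corollary~\ref{coro: mult at sigma 5}) from the fact that the stabiliser $\fA_4$ of a point of $\Sigma_5$ acts \emph{irreducibly} on the tangent space, via Lemma~\ref{lemm: ACPS}. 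Dually, at the $\sA_2$-point $\Sigma_5^Y$ the exceptional divisor of the ordinary blow-up is $\bP(1,1,2)$ with discrepancy $1$; showing that non-canonicity of $(Y,\mu\mls_Y)$ forces multiplicity $>1$ along this divisor requires the bound $\alpha_{\fA_4}(\bP(1,1,2))\geq\tfrac12$ (Lemma~\ref{lemm: alphaGP112} and Corollary~\ref{Coro: discr at sigma 5 for usual blowup}). You have not supplied either ingredient; without the first, you cannot show $\chi$ drops the degree, and without the second, you cannot show $(\chi')^{-1}$ does either, so the iterative scheme has no termination mechanism.

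Once these two multiplicity bounds are in hand, iteration is unnecessary: writing $n'=4n-5m$ and $m'=3n-4m$ for the Cremona transform, one finds that $3m>2n$ (non-canonical on $X$ at $\Sigma_5$) and $2m'>n'$, i.e.\ $2n>3m$ (non-canonical on $Y$ at $\Sigma_5^Y$), are mutually exclusive, so a single application of $\chi$ suffices. This is how the paper closes the argument. Finally, the swap of $\Sigma_5$ and $\Sigma_5'$ is effected more simply by the biregular involution in $\fS_5\subset\Aut(X)$ normalising $G$---there is no need to route through $(\chi')^{-1}\circ\chi$.
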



First, let us explain why we need the birational automorphism $\gamma$ in the theorem above. Recall from Section~\ref{sect:a5quad2} that $(X,\lambda\cM_X)$ can have 1-dimensional non-canonical centers. Here we show that, up to replacing $\mls_X$ by its strict transform under a birational automorphism, $(X,\lambda\mls_X)$ is canonical away from the orbits of length five.

\begin{prop}\label{prop: canonical linear system on quadrics}
    Let $\mls_X$ be a non-empty mobile $G$-invariant linear system. Then there exists $\gamma\in\mathrm{Bir}^G(X)$ such that $(X,\lambda'\mls'_X)$ is canonical away from $\Sigma_5\cup \Sigma_{5}^\prime$, where $\mls_X'=\gamma_*(\mls_X)$, and $\lambda'\in\bQ$ such that $\lambda'\mls'_X\sim_\dq-K_X$.
\end{prop}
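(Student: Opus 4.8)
The plan is to run the Noether--Fano / Sarkisov machinery on $(X,\lambda\mls_X)$, using the results already established in Section~\ref{sect:a5quad2}. By Proposition~\ref{prop:main3}, if the pair $(X,\lambda\mls_X)$ is not already canonical away from $\Sigma_5\cup\Sigma_5'$, then it has a non-canonical center which is $G$-irreducible and is one of: the rational quartic curves $C_4$ or $C_4'$; the degree-$8$ rational curves $C_8$ or $C_8'$; or (possibly) a smooth irreducible curve $C_{10}$ of degree $10$ and genus $6$. The strategy is to show that for each such curve there is a $G$-equivariant birational self-map of $X$ that ``resolves'' that center, i.e.\ replaces $\mls_X$ by a strict transform whose associated log pair is less singular along that orbit of curves, and then to argue that after finitely many such steps we arrive at a pair that is canonical away from $\Sigma_5\cup\Sigma_5'$.

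First I would treat the curves $C_4$, $C_4'$: by Lemma~\ref{lemm:linkC4}, the linear system $|2H-C_4|$ induces a $G$-equivariant birational involution $\varphi\colon X\dashrightarrow X$ (similarly $\varphi'$ for $C_4'$), realized as a Sarkisov link of type II with middle model the smooth Fano threefold $\widetilde X$ of Picard rank $2$ and degree $28$. The point is that $\varphi$ is exactly the Sarkisov link centered at $C_4$, so applying the Noether--Fano formalism: if $C_4$ is a maximal (non-canonical) center of $(X,\lambda\mls_X)$ of maximal ``weight'', then replacing $\mls_X$ by $\varphi_*\mls_X$ strictly decreases the relevant Sarkisov degree (or the multiplicity along $C_4$). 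Next, for $C_8$, $C_8'$: by Lemma~\ref{lemm:linkC8C10}, if such a curve is a non-canonical center then the blowup $\widetilde X\to X$ has $-K_{\widetilde X}$ big and nef and yields a type-II Sarkisov link $\delta\colon X\dashrightarrow X'$ with $X'$ again a smooth quadric threefold carrying a $G$-action $G$-isomorphic to the one on $X$; pulling back by a fixed $G$-isomorphism $X'\xrightarrow{\sim}X$ gives $\gamma\in\Bir^G(X)$ that untwists this center. (The curve $C_{10}$ of degree $10$ and genus $6$, if it exists, is handled identically by the second case of Lemma~\ref{lemm:linkC8C10}.)

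The remaining work is to assemble these untwistings into a terminating process. I would set up a ``complexity'' for the pair: the canonical threshold $c(X,\mls_X)$ together with the number of maximal centers realizing it, or equivalently the Sarkisov degree $(n,e)$ as in \cite[Theorem 3.3.1]{CS} and \cite{Ch-toric,CSar}. Each of the links $\varphi,\varphi',\delta,\dots$ above, when performed at a maximal center, strictly decreases this complexity; since it takes values in a well-ordered set, the process terminates. When it does, either the terminal pair is canonical everywhere away from $\Sigma_5\cup\Sigma_5'$ — which is what we want, taking $\gamma$ to be the composition of all the links performed and $\mls_X'=\gamma_*\mls_X$, $\lambda'$ the rescaling so $\lambda'\mls_X'\sim_\dq -K_X$ — or the only remaining maximal centers are the points of $\Sigma_5\cup\Sigma_5'$, which is also acceptable. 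The key observation making the argument go through is Lemma~\ref{lemm:linkC8C10}(5): the quadric at the other end of the link is $G$-isomorphic to $X$, so we never leave the birational class of $X$ in a way that produces a genuinely new Mori fibre space at this stage; combined with Lemmas~\ref{lemm:ptsinC4} and~\ref{lemm:ptsinC8}, the points lying on $C_4\cup C_4'\cup C_8\cup C_8'$ are automatically canonical once the curves themselves have been removed, so no new point-centers are created along the way.

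The main obstacle I anticipate is the termination/bookkeeping argument: one must verify that performing the link at a curve-center does not create new curve-centers of the same or higher complexity — in particular that $\varphi$ and $\varphi'$ do not interact badly (e.g.\ $\varphi$ could in principle turn $C_8'$ or $C_{10}$ into a new bad center), and that the degree-$10$ curve, whose existence is not even known, does not obstruct termination. This is precisely why the statement only asks for canonicity away from $\Sigma_5\cup\Sigma_5'$ rather than a full classification of the links, and why Lemma~\ref{lemm:linkC8C10} is phrased so as to guarantee ``no new $G$-Mori fibre space''; I would lean on exactly that, plus the degree bounds from Remark~\ref{rema:degreebound1} (any curve-center has degree $\le 17$) to keep the set of possible centers finite at every stage, so that the Sarkisov-degree decreases strictly and the algorithm stops.
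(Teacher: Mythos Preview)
Your approach is correct and follows the same outline as the paper: identify the possible curve-centers via Proposition~\ref{prop:main3}, untwist each one by the Sarkisov self-links of Lemmas~\ref{lemm:linkC4} and~\ref{lemm:linkC8C10}, and iterate. The only substantive difference is the termination argument. Where you invoke a Sarkisov-degree/canonical-threshold complexity and worry about links interacting to create new centers, the paper uses a much simpler invariant: the positive integer $n$ with $\mls_X\subset|\cO_X(n)|$, i.e.\ $n=3/\lambda$. If $Z$ is any one of the curve-centers and $\delta$ the associated link, one writes $\varphi^*(K_X+\lambda\mls_X)\sim_\bQ K_V+\lambda\mls_V+aE$ with $a=\mult_Z(\lambda\mls_X)-1>0$, pushes forward along the other side of the link to get $K_X+\lambda\mls_X'+aD\sim_\bQ 0$ with $D$ effective, and concludes $\lambda'>\lambda$, hence $n'<n$. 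Since $n$ is a positive integer, the process halts. This completely sidesteps the bookkeeping obstacle you anticipate: it does not matter whether untwisting one center creates or enlarges another, because the global degree drops regardless. For the same reason Lemmas~\ref{lemm:ptsinC4} and~\ref{lemm:ptsinC8} are not needed here --- Proposition~\ref{prop:main3} already guarantees that any non-canonical center outside $\Sigma_5\cup\Sigma_5'$ is one of the listed curves, so once no curve-center remains the pair is automatically canonical away from $\Sigma_5\cup\Sigma_5'$.
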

\begin{proof}
Let $\lambda\in\bQ$ such that $\lambda\cM_X\sim_\bQ-K_X$.
    If $(X,\lambda\mls_X)$ is canonical away from $\Sigma_5\cup \Sigma_{5}^\prime$, we are done.
    Assume on the contrary that there exists a $G$-irreducible subvariety $Z$ not contained in $\Sigma_5\cup \Sigma_{5}^\prime$, and irreducible components of $Z$ are non-canonical centers of $(X,\lambda\mls_X)$. Then, by Proposition~\ref{prop:main3}, $Z$ is one of the following irreducible curves:
    \begin{itemize}
        \item rational curves $C_4$ and $C_4'$ of degree 4 given by \eqref{not:deg4},
        \item rational curves $C_8$ and $C_8'$ of degree 8 described in Remark~\ref{rema:c8curve},
        \item a smooth curve $C_{10}$ of degree 10 and genus 6.
    \end{itemize}
    Moreover, it follows from Lemma \ref{lemm:linkC8C10} and Lemma \ref{lemm:linkC4} that there exists a commutative $G$-equivariant diagram
    
    \[\begin{tikzcd}
    	V && {V'} \\
    	X && {X}
    	\arrow["\chi", dashed, from=1-1, to=1-3]
    	\arrow["\varphi"', from=1-1, to=2-1]
    	\arrow["{\varphi'}", from=1-3, to=2-3]
    	\arrow["\delta"', dashed, from=2-1, to=2-3]
    \end{tikzcd}\]
    where 
    \begin{itemize}
        \item $\varphi$ is the blowup of $Z$,
        \item $\chi$ is an biregular involution if $\deg(Z)=4$, and is a composition of flops if $\deg(Z)=8$ or 10,
        \item $\varphi'$ is the blowup of a curve $Z'$ with the same degree and genus as $Z$,
        \item $\delta\in\Bir^G(X)$.
    \end{itemize}
  Set $\mls_X'=\delta_*(\mls_X)$ and $\lambda'\in\bQ$ such that  $\lambda'\mls_X'+K_X\sim_\dq0$. Since $\Pic(X)$ is generated by $\cO_X(1)$, we know that $\cM_X$ is a linear subsystem of $|\mathcal{O}_{X}(n)|$ for $n=\frac{3}{\lambda}$. Let $n'=\frac{3}{\lambda'}$. Then $\mls_X'\subset|\mathcal O_X(n')|$. We claim that $n'<n$. Indeed, let $\mls_V$ be the strict transform of the linear system $\mls_X$ on $V$.
    Note that $\codim_X(Z)=2$ and $\mult_Z(\lambda\cM_X)>1$. We have that
    $$
   0\sim_\bQ \varphi^*(K_X+\lambda\mls_X)\sim_\dq K_V+\lambda\cM_{V}+aE,\quad \text{for some }a>0.
    $$ 
    Pushing forward this class to $X$ via $\varphi\circ\chi$, we obtain that 
    $$
    K_X+\lambda\cM_X'+aD\sim_\bQ 0
    $$
    for some effective divisor $D$ on $X$.
 Since $K_X+\lambda'\cM_X'\sim_\bQ0$, it follows that $\lambda'>\lambda$, i.e., $n'<n$. 
   
   To summarize, if a curve is a non-canonical center, then we can find a $G$-equivariantly birational automorphism such that the pushforward $\cM_X'$ is a subsystem of $|\cO_X(n')|$ for $n'$ strictly  smaller than $n$. Therefore, by iterating this process, we will obtain a linear system which has no 1-dimensional non-canonical center, and thus the resulting pair is canonical away from $\Sigma_5\cup\Sigma_5'$. 
\end{proof}

We recall the following lemma from \cite{abban2024double}.
\begin{lemm}\label{lemm: ACPS}
    Let $V$ be a threefold, $K\subset\Aut(V)$ a finite subgroup fixing a smooth point $P\in V$,  $\mls_V$ a non-empty mobile $K$-invariant linear system on $V$, and $\lambda\in\dq$ such that $P$ is a non-canonical center of $(V,\lambda\mls_V)$ . If $K$ acts on the Zariski tangent space $T_P(V)$ of $V$ at $P$ via an irreducible representation, then $\mult_P(\mls_V)>\frac{2}{\lambda}$.
\end{lemm}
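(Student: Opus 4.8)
The plan is to blow up the fixed point $P$, push the non-canonical singularity onto the exceptional divisor $E\cong\bP^2$, and then use that an irreducible $K$-representation has no invariant line and no invariant point in $\bP^2$. Write $m=\mult_P(\mls_V)$ (a non-negative integer, $P$ being a smooth point); the assertion is that $m\lambda>2$. Let $\sigma\colon\widetilde V\to V$ be the blowup of $P$, with exceptional divisor $E$. Since $P$ is a smooth point of a threefold fixed by $K$, we have $E\cong\bP^2=\bP(T_P^\ast V)$, and $K$ acts on $E$ through the projectivization of an irreducible $3$-dimensional representation; in particular $K$ fixes no point of $E$ and preserves no line in $E$. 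With $\widetilde{\mls}_V$ the strict transform of $\mls_V$ one has the crepant identity
$$
K_{\widetilde V}+\lambda\widetilde{\mls}_V+(m\lambda-2)E\sim_{\bQ}\sigma^\ast\bigl(K_V+\lambda\mls_V\bigr).
$$

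If $m\lambda>2$ there is nothing to prove, so suppose $m\lambda\le 2$ and seek a contradiction. Since $P$ is a non-canonical center there is a prime divisor $F$ over $V$ with center $P$ and $a(V,\lambda\mls_V;F)<1$, so by the identity above $a(\widetilde V,\lambda\widetilde{\mls}_V+(m\lambda-2)E;F)=a(V,\lambda\mls_V;F)<1$. The center $Z$ of $F$ on $\widetilde V$ is contained in $E$, and it cannot be all of $E$: indeed $a(\widetilde V,\lambda\widetilde{\mls}_V+(m\lambda-2)E;E)=3-m\lambda\ge 1$. Hence $Z\subsetneq E$ and $\mathrm{ord}_F(E)\ge 1$. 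Because $m\lambda-2\le 0$, the boundary $\lambda\widetilde{\mls}_V$ dominates $\lambda\widetilde{\mls}_V+(m\lambda-2)E$, so $a(\widetilde V,\lambda\widetilde{\mls}_V;F)\le a(\widetilde V,\lambda\widetilde{\mls}_V+(m\lambda-2)E;F)<1$; thus $(\widetilde V,\lambda\widetilde{\mls}_V)$ is not canonical at $Z$, and adding the prime divisor $E$ gives $a(\widetilde V,E+\lambda\widetilde{\mls}_V;F)<0$. Since $\widetilde V$ is smooth along $E$, $E$ is smooth, and the general member of $\widetilde{\mls}_V$ does not contain $E$, inversion of adjunction yields that $(E,\lambda\cM_E)$ is not log canonical at $Z$, where $\cM_E:=\widetilde{\mls}_V|_E$ is a $K$-invariant linear system with $\cM_E\subseteq|\cO_{\bP^2}(m)|$; in particular $m\ge 1$.

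It remains to bound $m\lambda$ from below on $\bP^2$. For a general $D\in\cM_E$ the pair $(\bP^2,\lambda D)$ is not log canonical; averaging over $K$ gives a $K$-invariant effective $\bQ$-divisor $\bar D=\tfrac1{|K|}\sum_{g\in K}g_\ast D$ with $\bar D\sim_{\bQ}\cO_{\bP^2}(m)$ for which $(\bP^2,\lambda\bar D)$ is still not log canonical (the witnessing valuation is unchanged, by $K$-invariance of $\cM_E$). Writing $\tfrac3m\bar D\sim_{\bQ}-K_{\bP^2}$, the non-log-canonicity of $(\bP^2,\tfrac{m\lambda}{3}\cdot\tfrac3m\bar D)$ forces $\tfrac{m\lambda}{3}>\alpha_K(\bP^2)$. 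Finally, since $K$ acts on $\bP^2$ through an irreducible representation it has no invariant point and no invariant line, and for such actions $\alpha_K(\bP^2)\ge\tfrac23$: any $K$-invariant effective $\bQ$-divisor $\sim_{\bQ}-K_{\bP^2}$ has all of its line components occurring in $K$-orbits of size $\ge 2$ (hence with coefficient $\le\tfrac32$), all other components of degree $\ge 2$, and no point of sufficiently high multiplicity to lower the threshold below $\tfrac23$, precisely because no point is $K$-fixed. Hence $m\lambda>3\cdot\tfrac23=2$, i.e. $\mult_P(\mls_V)>\tfrac2\lambda$.

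The delicate points, which I would spell out carefully, are the two adjunction steps — tracking the anti-effective term $(m\lambda-2)E$ through the blowup and invoking inversion of adjunction for the smooth divisor $E$ in the smooth locus of $\widetilde V$ — and the inequality $\alpha_K(\bP^2)\ge\tfrac23$, which is the one place the irreducibility hypothesis is genuinely used. A variant of the last step dispenses with the $\alpha$-invariant: if $Z$ is a point, apply Corti's surface inequality (Theorem~\ref{theo:cortiineq}) to the mobile part of $\cM_E$ along the $K$-orbit of $Z$ (of length $\ge 2$) to obtain $m^2>8/\lambda^2$; if $Z$ is a curve, its $K$-orbit carries total degree $\ge 2$ and multiplicity $>1/\lambda$ in $\cM_E$, again giving $m\lambda>2$ — but this requires first splitting off the fixed part of $\cM_E$, which is why the $\alpha$-invariant formulation is cleaner.
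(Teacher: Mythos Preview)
The paper does not prove this lemma; it is simply quoted from \cite{abban2024double}. Your approach is the natural one and is essentially correct: blow up $P$, push the non-canonical center onto $E\cong\bP^2$ via inversion of adjunction, and finish with $\alpha_K(\bP^2)\geq\tfrac23$ for an irreducible $K$-action.

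Two steps deserve tightening. First, the averaging step works, but not quite for the reason you state. The point is that for general $D\in\cM_E$ and any valuation $v$, each translate $g^*D$ is again a general member of the $K$-invariant system $\cM_E$, so $\mathrm{ord}_v(g^*D)=\mathrm{ord}_v(\cM_E)$; hence $\mathrm{ord}_v(\bar D)=\mathrm{ord}_v(\cM_E)$ and the same valuation witnesses non-log-canonicity for $(\bP^2,\lambda\bar D)$. Second, your justification of $\alpha_K(\bP^2)\geq\tfrac23$ is incomplete: the coefficient bound on curve components is fine, but ``no point of sufficiently high multiplicity'' is not an argument, since multiplicity $\leq 2$ at a point does not by itself imply log canonicity. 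The clean way to finish is Koll\'ar--Shokurov connectedness: for small $\varepsilon>0$ the non-klt locus of $(\bP^2,(\tfrac23-\varepsilon)\bar D)$ contains no curve (by your coefficient bound), and since $-(K_{\bP^2}+(\tfrac23-\varepsilon)\bar D)$ is ample this locus is connected, hence at most a single point; but it is $K$-invariant and $K$ fixes no point of $\bP^2$, so it is empty. With these two clarifications your proof is complete.
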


\begin{coro}\label{coro: mult at sigma 5} 
    Assume that $(X,\lambda\mls_X)$ is not canonical at $\Sigma_5$. Then $\mult_{\Sigma_5}\mls_X>\frac2\lambda$.
\end{coro}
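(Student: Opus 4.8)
The plan is to deduce the estimate directly from Lemma~\ref{lemm: ACPS}, applied with $V=X$ and $K=\stab_G(P)$ for a point $P\in\Sigma_5$; the only substantial point will be to verify the hypotheses of that lemma, the crucial one being that $K$ acts irreducibly on the Zariski tangent space $T_P(X)$.

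First I would fix $P\in\Sigma_5$. Since $|\Sigma_5|=5$, the stabilizer $K=\stab_G(P)$ has order $12$, hence $K\cong\fA_4$, and $K$ fixes the smooth point $P$ of the smooth quadric $X$. Because $\mls_X$, and therefore the pair $(X,\lambda\mls_X)$, are $G$-invariant while $\Sigma_5$ is a single $G$-orbit, the assumption that $(X,\lambda\mls_X)$ is not canonical at $\Sigma_5$ means precisely that every point of $\Sigma_5$ is a center of non-canonical singularities of $(X,\lambda\mls_X)$; moreover $\mult_P(\mls_X)$ is independent of the choice of $P\in\Sigma_5$ and equals $\mult_{\Sigma_5}(\mls_X)$. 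So it suffices to prove $\mult_P(\mls_X)>\tfrac{2}{\lambda}$.

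Next I would compute the $K$-representation on $T_P(X)$. Recall that $X\subset\bP^4=\bP(V_5)$, where $V_5$ is the irreducible $5$-dimensional representation of $G$. The fixed point $P$ corresponds to a $K$-invariant line $\ell\subset V_5$, so $K$ acts on $\ell$ through a linear character and $T_P(\bP^4)\cong\ell^\vee\otimes(V_5/\ell)$ as a $K$-representation. A character computation (restricting $\chi_{V_5}$ to $\fA_4$) gives $V_5|_K\cong 1'\oplus 1''\oplus 3$, where $1',1''$ are the two nontrivial linear characters of $\fA_4$ and $3$ is its unique irreducible $3$-dimensional representation. Since $3$ contains no invariant line, $\ell$ must be one of $1'$, $1''$, and in either case $\ell^\vee\otimes(V_5/\ell)\cong\chi\oplus 3$ for a nontrivial linear character $\chi$ of $K$ (using that $3\otimes\psi\cong 3$ for every linear character $\psi$ of $\fA_4$). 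As $X$ is a smooth quadric, $T_P(X)\subset T_P(\bP^4)$ is a $K$-subrepresentation of codimension $1$, hence $3$-dimensional; but $\chi$ occurs in $\chi\oplus 3$ with multiplicity one and $\fA_4$ has no $2$-dimensional irreducible representation, so the only $3$-dimensional subrepresentation of $\chi\oplus 3$ is $3$. Therefore $K$ acts on $T_P(X)$ through the irreducible representation $3$.

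With this in hand, Lemma~\ref{lemm: ACPS} applies to $V=X$, $K=\stab_G(P)$, the point $P$, and the mobile $K$-invariant linear system $\mls_X$, and yields $\mult_P(\mls_X)>\tfrac{2}{\lambda}$, whence $\mult_{\Sigma_5}(\mls_X)>\tfrac{2}{\lambda}$. The main obstacle, and essentially the only non-formal step, is the representation-theoretic verification that $\stab_G(P)$ acts irreducibly on $T_P(X)$, i.e.\ the character calculation sketched above; the rest is bookkeeping about $G$-invariance and a citation of Lemma~\ref{lemm: ACPS}.
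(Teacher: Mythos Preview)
Your proof is correct and follows the same approach as the paper: both reduce to Lemma~\ref{lemm: ACPS} after checking that the stabilizer $K\cong\fA_4$ of a point $P\in\Sigma_5$ acts irreducibly on $T_P(X)$. The only difference is in how irreducibility is verified: you carry out an explicit character computation of $V_5|_{\fA_4}$, while the paper observes more quickly that $K$ acts faithfully on $T_P(X)$ and that the unique faithful $3$-dimensional representation of $\fA_4$ is irreducible (since any sum of $1$-dimensional characters has the Klein four-group in its kernel).
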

\begin{proof}
    The stabilizer of a point $P\in\Sigma_5$ is isomorphic to $\aq$, which acts on $T_P(X)$ faithfully. The only 3-dimensional faithful representation of $\fA_4$ is irreducible. Then we apply the previous lemma.
\end{proof}
\begin{lemm}\label{lemm: alphaGP112}
    Let $S=\p(1,1,2)$, and $K$ a finite group acting faithfully on $S$ such that $|\mathcal O_S(1)|$ has no $K$-invariant curves. Then $\alpha_K(S)\ge\frac{1}{2}$.
\end{lemm}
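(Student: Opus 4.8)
The plan is to argue by contradiction and run the usual ``blow up, restrict, descend a dimension'' argument for $\alpha$-invariants, reducing everything to a statement on the exceptional curve $E\cong\pl$ of the blowup of the vertex of $S$, where the hypothesis on $|\cO_S(1)|$ forces the relevant group to act on $E$ without a fixed point.

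Suppose $\alpha_K(S)<\tfrac12$, so there is a $K$-invariant effective $\dq$-divisor $D\sim_\dq-K_S\sim_\dq 4\cO_S(1)$ with $(S,\tfrac12D)$ not log-canonical; set $\lambda_0:=\mathrm{lct}(S;D)<\tfrac12$, so $(S,\lambda_0D)$ is log-canonical but not klt. I would first identify the non-klt locus $\Gamma$ of $(S,\lambda_0D)$ with the vertex $P$ of $S$. It contains no curve: a $K$-irreducible curve $C\subset\Gamma$ satisfies $\mult_C(\lambda_0D)\ge 1$, hence $\cO_S(1)\cdot C\le\lambda_0D\cdot\cO_S(1)=2\lambda_0<1$; since $\cO_S(2)$ is Cartier we have $\cO_S(1)\cdot C\in\tfrac12\dz_{>0}$, so $\cO_S(1)\cdot C=\tfrac12$, which forces $C$ to be an irreducible member of $|\cO_S(1)|$, necessarily $K$-invariant --- contradicting the hypothesis. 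Hence $\Gamma$ is $0$-dimensional; applying Nadel vanishing (Theorem~\ref{theo:nadel}) with $K_S+\lambda_0D+(4-4\lambda_0)\cO_S(1)\sim_\dq\cO_S$ shows the cosupport $\Gamma$ of the multiplier ideal satisfies $h^0(\cO_\Gamma)\le h^0(\cO_S)=1$, so $\Gamma$ is a single $K$-fixed point; as any point of $S$ other than $P$ lies on a unique ruling, a fixed point $\ne P$ would give a $K$-invariant member of $|\cO_S(1)|$, so $\Gamma=\{P\}$.

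Next I would blow up $P$. Let $\pi\colon\widetilde S\to S$ be the blowup; since $P$ is an $A_1$ point, $\pi$ is crepant, $\widetilde S\cong\mathbb{F}_2$, and the exceptional curve $E\cong\pl$ is a $(-2)$-curve. Write $\pi^*D=\widetilde D+mE$ with $m\ge 0$; pairing with the nef fibre class $\widetilde L:=\pi^*\cO_S(1)-\tfrac12E$ gives $0\le\widetilde D\cdot\widetilde L=D\cdot\cO_S(1)-m=2-m$, so $m\le 2$ and $\lambda_0m<1$. In $\pi^*(K_S+\lambda_0D)=K_{\widetilde S}+\lambda_0\widetilde D+\lambda_0mE$ the divisor $E$ has positive log discrepancy, so every non-klt place $v$ of $(S,\lambda_0D)$ has centre a point $q\in E$; then $\mathrm{ord}_v(E)\ge 1$ and $a_v(\widetilde S,\lambda_0\widetilde D+E)=(\lambda_0m-1)\,\mathrm{ord}_v(E)<0$, so $(\widetilde S,\lambda_0\widetilde D+E)$ is not log-canonical at $q$. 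By inversion of adjunction, $(E,\lambda_0\widetilde D|_E)$ is not log-canonical at $q$, where $\lambda_0\widetilde D|_E$ is a $K$-invariant effective $\dq$-divisor of degree $\lambda_0(\widetilde D\cdot E)=2\lambda_0m<2$ on $E\cong\pl$. But $E$ is naturally $|\cO_S(1)|^{\vee}$, so a $K$-fixed point of $E$ is a $K$-invariant member of $|\cO_S(1)|$; hence the image $\overline K$ of $K$ in $\Aut(\pl)$ acts without fixed points, so $\overline K\in\{\mathfrak{D}_n\ (n\ge 2),\ \mathfrak{A}_4,\ \mathfrak{S}_4,\ \mathfrak{A}_5\}$ and $\alpha_{\overline K}(\pl)\ge 1$ by \cite{CS}. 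Thus every $\overline K$-invariant effective $\dq$-divisor of degree $<2$ on $\pl$ is log-canonical, a contradiction; therefore $\alpha_K(S)\ge\tfrac12$.

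The main obstacle is the descent step to $E$: one must check that $\lambda_0m<1$ holds \emph{strictly} (so that inflating the coefficient of $E$ to $1$ and invoking inversion of adjunction is legitimate) and that neither $S$ nor $\pl$ carries a small $K$-invariant curve. Both points ultimately rest on the single geometric input that $|\cO_S(1)|$ has no $K$-invariant member, which simultaneously rules out low-degree non-klt curves on $S$ and guarantees that $\overline K$ has no fixed point on $E$, hence is one of the non-cyclic groups with $\alpha_{\overline K}(\pl)\ge 1$.
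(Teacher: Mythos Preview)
Your proof is correct and follows essentially the same route as the paper's: assume $\alpha_K(S)<\tfrac12$, show the non-klt locus of the threshold pair contains no curve (hence is the vertex, by Nadel vanishing), blow up the vertex to $\mathbb{F}_2$, bound $m\le 2$ by intersecting with the fibre class, and descend via inversion of adjunction to a contradiction with $\alpha_{\overline K}(\mathbb{P}^1)\ge 1$. The only cosmetic differences are that you work with the exact threshold $\lambda_0=\mathrm{lct}(S;D)$ where the paper uses $\tfrac{1-\varepsilon}{2}$, and your exclusion of curves in the non-klt locus proceeds by the intersection bound $\cO_S(1)\cdot C<1$ rather than the paper's coefficient count; both lead to the same conclusion that any such curve would be a $K$-invariant ruling.
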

\begin{proof}
    Let $L$ be a general element in $|\cO_S(1)|$. Suppose $\alpha_K(S)<\frac{1}{2}$. Then there exists a $K$-invariant effective $\dq$-divisor $D$ such that the log pair $(S,\frac{1}{2}D)$ is not log-canonical, and $D$ satisfies
    $$
 D\sim_\dq4L\sim_\dq-K_S,\quad \text{and}\quad \frac{1}{2}D=\sum a_iC_i,
    $$ 
  where for each $i$, we have $a_i\in\bZ_{\geq 0},$  and $C_i\in|\cO_S(d_i)|$ for some $d_i$. First, we show that $a_i\leq 1$ for all $i$. Indeed, we have  $2=\deg(\frac{1}{2}D)=\Sigma a_id_i$. If $a_j>1$ for some $j$, then $a_j=2$ and $d_j=1$. Since $|\cO_S(1)|$ contains no $K$-invariant curve, there exists $g\in K$ such that $a_jg(C_j)$ also shows up in $\frac12D$. This contradicts $\deg(\frac12 D)=2$.
  
  Let $\varepsilon\in\bQ_{>0}$ such that $(S,\frac{1-\varepsilon}{2}D)$ is not log-canonical. Since $a_i\leq 1$ for all $i$, we know that $\Gamma=\mathrm{Nklt}(S,\frac{1-\varepsilon}{2}D)$ does not contain any curve. Nadel vanishing theorem then implies that $\Gamma$ contains a single point, namely the vertex of $S$.
  Consider the blowup $\widetilde S\rightarrow S$ of the vertex and let $E$ be the exceptional divisor. We have that
  $$
  \widetilde S\cong\f_2,\quad K_{\widetilde S}\sim f^*(K_S),\quad \widetilde D=f^*(D)-mE,\quad \widetilde L=f^*(L)-\frac{1}{2}E
  $$
  for some $m\in\bZ_{>0}$. It follows that
    \begin{align*}
        0&\le\widetilde D\cdot\widetilde L=2-m,\quad \text{and thus}\quad m\frac{1-\varepsilon}{2}<1.
    \end{align*}
Since
 $$
 f^*(K_S+\frac{1-\varepsilon}{2} D)\sim_\bQ K_{\widetilde S}+\frac{1-\varepsilon}{2}\widetilde D+m\frac{1-\varepsilon}{2}E,
 $$
we see that $(\widetilde S,\frac{1-\varepsilon}{2}\widetilde D+m\frac{1-\varepsilon}{2}E)$ is not log-canonical at some point in $E$. 
Since $m\frac{1-\varepsilon}{2}<1$, the pair $(\widetilde S,\frac{1-\varepsilon}{2}\widetilde D+E)$ is also not log-canonical at some point in $E$. 
By inversion of adjunction, we see that $(E,\frac{1-\varepsilon}{2}\widetilde D\vert_E)$ is not log-canonical. Note that $\widetilde D\vert_E\sim_\dq-E\vert_E$ is a divisor of degree $2m$ on $E=\pl$. It follows that $\alpha_K(\pl)<1$. Recall from \cite{CS} that
    \[
    \alpha_K(\mathbb{P}^1) = 
    \begin{cases}
    \frac{1}{2} & \text{if } K \cong C_n, \\
    1 & \text{if } K \cong \fD_n, \\
    2 & \text{if } K \cong \fA_4, \\
    3 & \text{if } K \cong \fS_4, \\
    6 & \text{if } K \cong \fA_5.
    \end{cases}
    \]
    By our assumption that $|\cO_S(1)|$ has no $K$-invariant curves, we see that $K$ is not a cyclic group. Therefore, we obtain a contradiction and this completes the proof. 
\end{proof}

\begin{coro}\label{Coro: discr at sigma 5 for usual blowup}
Assume that points of $\Sigma_5^Y$ are centers of non-canonical singularities of $(Y,\mu\mls_Y)$. Consider $\pi\colon\widetilde Y\rightarrow Y$, the blowup  of $Y$ in $\Sigma_5^Y$. Let $m\in\dq$ such that $\pi^*(\mu\mls_{Y})\sim_\dq\mu\mls_{\widetilde Y}+mE$, where $\mls_{\widetilde Y}$ is the strict transform of $\cM_Y$ to $\widetilde Y$, and $E$ is the exceptional divisor of $\pi$. Then $m>1$.
\end{coro}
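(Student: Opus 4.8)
The plan is to argue by contradiction: I assume $m\le 1$ and derive a contradiction from the lower bound $\alpha_{K}(\p(1,1,2))\ge\frac12$ of Lemma~\ref{lemm: alphaGP112}, applied to the exceptional divisor over a point $P\in\Sigma_5^Y$. Note that $m\ge 0$ already, as $\mls_{\widetilde Y}$ is a strict transform. The input is the local geometry of $\pi$: each $P\in\Sigma_5^Y=\sing(Y)$ is an $\sA_2$-point of $Y$, analytically isomorphic to $\{x^2+y^2+z^2+t^3=0\}\subset\bC^4$, so $\mathrm{mult}_P(Y)=2$ and the projectivized tangent cone at $P$ is a quadric cone in $\bP^3$. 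Hence $\widetilde Y$ is smooth in a neighborhood of $E=\pi^{-1}(\Sigma_5^Y)$; the irreducible component $E_P$ of $E$ lying over $P$ is a normal surface isomorphic to $\p(1,1,2)$; and the discrepancy of $E_P$ over $Y$ equals $1$, that is, $K_{\widetilde Y}=\pi^*K_Y+E$. Writing $L$ for a general element of $|\cO_{E_P}(1)|$, so that $-K_{E_P}\sim_\dq 4L$, the adjunction formula on $\widetilde Y$ gives $K_{E_P}=2E_P|_{E_P}$, hence $E_P|_{E_P}\sim_\dq-2L$. Since $\mu\mls_{\widetilde Y}\sim_\dq\pi^*(-K_Y)-mE$ and $\pi^*(-K_Y)$ restricts trivially to the $\pi$-fiber $E_P$, we obtain $(\mu\mls_{\widetilde Y})|_{E_P}\sim_\dq-mE_P|_{E_P}\sim_\dq 2mL$.

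Next I carry out the discrepancy bookkeeping. The crepant identity $K_{\widetilde Y}+\mu\mls_{\widetilde Y}+(m-1)E\sim_\dq\pi^*(K_Y+\mu\mls_Y)$ gives $a(\widetilde Y,\mu\mls_{\widetilde Y}+(m-1)E;F)=a(Y,\mu\mls_Y;F)$ for every divisor $F$ over $Y$. By Proposition~\ref{prop:main4}, the non-canonical locus of $(Y,\mu\mls_Y)$ is contained in the finite set $\sing(Y)=\Sigma_5^Y$; since $(Y,\mu\mls_Y)$ is non-canonical at $P$, there is therefore a prime divisor $F$ over $Y$ with center $\{P\}$ on $Y$ and $a(Y,\mu\mls_Y;F)<1$. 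Let $W\subseteq\pi^{-1}(P)=E_P$ be the center of $F$ on $\widetilde Y$. If $W=E_P$, then $F=E_P$ and $a(Y,\mu\mls_Y;E_P)=1-\mathrm{mult}_{E_P}\big(\mu\mls_{\widetilde Y}+(m-1)E\big)=2-m<1$, which already gives $m>1$. So I may assume $W\subsetneq E_P$; then $\mathrm{ord}_F(E_P)\ge 1$, and since the other components of $E$ are disjoint from $E_P$, also $\mathrm{ord}_F(E)=\mathrm{ord}_F(E_P)\ge 1$. Using $m\le 1$ it follows that
$$
a(\widetilde Y,\mu\mls_{\widetilde Y}+E_P;F)=a(\widetilde Y,\mu\mls_{\widetilde Y}+(m-1)E;F)-(2-m)\,\mathrm{ord}_F(E_P)<1-(2-m)=m-1\le 0,
$$
so the log pair $(\widetilde Y,\mu\mls_{\widetilde Y}+E_P)$ is not log-canonical along $W\subseteq E_P$.

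Finally I restrict to $E_P$. As $\widetilde Y$ is smooth near $E_P$ and $E_P$ is a normal Cartier divisor, inversion of adjunction shows that $(E_P,(\mu\mls_{\widetilde Y})|_{E_P})$ is not log-canonical at a point of $W$. Averaging a general member of the restricted linear system over the stabilizer $G_P=\stab_G(P)\cong\aq$ (which fixes $E_P$ and preserves the non-log-canonical locus) produces a $G_P$-invariant effective $\dq$-divisor $\Delta\sim_\dq 2mL$ on $E_P$ such that $(E_P,\Delta)$ is not log-canonical. Now $G_P$ acts faithfully on $E_P\cong\p(1,1,2)$, and $|\cO_{E_P}(1)|$ carries no $G_P$-invariant member: $\aq$ acts on this pencil through the standard embedding $\aq\hookrightarrow\mathrm{PGL}_2$, which has no fixed point on $\pl$. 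If $m=0$, then $\Delta=0$ (a general member of $\mls_{\widetilde Y}$ is disjoint from $E_P$), and $(E_P,0)$ is klt, contradicting the previous sentence about $\Delta$. If $0<m\le 1$, then $\frac2m\Delta\sim_\dq 4L\sim_\dq-K_{E_P}$ is $G_P$-invariant and effective, so by Lemma~\ref{lemm: alphaGP112} the pair $(E_P,\frac12\cdot\frac2m\Delta)=(E_P,\frac1m\Delta)$ is log-canonical (the boundary case $m=1$ uses that the bound of that lemma is attained, which its proof delivers); since $\frac1m\ge 1$ and $\Delta$ is effective, $(E_P,\Delta)$ is log-canonical as well, a contradiction. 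Therefore $m>1$.

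The step I expect to be the main obstacle is the passage from the non-canonicity of $(Y,\mu\mls_Y)$ at $P$ to the non-log-canonicity of $(\widetilde Y,\mu\mls_{\widetilde Y}+E_P)$ along a subvariety of $E_P$: it rests on knowing that the only non-canonical center of $(Y,\mu\mls_Y)$ through $P$ is $P$ itself, so that the witnessing valuation $F$ has center $\{P\}$ on $Y$ and hence $\mathrm{ord}_F(E_P)\ge 1$; this is exactly where Proposition~\ref{prop:main4} (via Propositions~\ref{prop:5A2curvecenter} and~\ref{prop:points2A5Y}) is used, together with the customary care needed to restrict a mobile linear system to $E_P$ and to symmetrize it. The local intersection-theoretic computations on the $\sA_2$-point and its blow-up are routine, but must be done so that the class $(\mu\mls_{\widetilde Y})|_{E_P}\sim_\dq 2mL$ comes out with the correct coefficient.
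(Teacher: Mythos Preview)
Your proof is correct and follows essentially the same approach as the paper: the crepant pullback identity, passing from non-canonical to non-log-canonical by adding the full exceptional divisor, inversion of adjunction to $E_P\simeq\bP(1,1,2)$, and a contradiction with the bound $\alpha_{\fA_4}(\bP(1,1,2))\geq\tfrac12$ from Lemma~\ref{lemm: alphaGP112}. One minor remark: invoking Proposition~\ref{prop:main4} is unnecessary, since the hypothesis that $P$ is a non-canonical \emph{center} already furnishes, by the paper's definition, a valuation with center exactly $\{P\}$ on $Y$ (so your final paragraph's worry about the ``main obstacle'' is moot).
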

\begin{proof}
Let $P$ be a point of $\Sigma_5$, and let $F$ be the component of $E$ that is mapped to $P$. 
Then $F\simeq\pp(1,1,2)$,  since  $P$ is an $\sA_2$-singularity. 
Observe that 
$$
\pi^*(K_Y+\mu\mls_{Y})\sim_\dq K_{\widetilde Y}+\mu\mls_{\widetilde Y}+(m-1)E.
$$
Recall that $P$ is a non-canonical center of  $(Y,\mu\mls_Y)$. It follows that $(\widetilde Y,\mu\mls_{\widetilde Y}+(m-1)E)$ is not canonical at some point in $F$.
Hence, $(\widetilde Y,\mu\mls_{\widetilde Y}+mE)$ is not log-canonical at some point in $F$.

Now assume that $m\leq 1$. 
Then $(\widetilde Y,\mu\mls_{\widetilde Y}+E)$ is not log-canonical at some point in $F$. It follows from the inversion of adjunction that $(F,\mu\mls_{\widetilde Y}\vert_F)$ is not log-canonical. Note that 
 \begin{align}\label{eqn:F2m}
     \mu\mls_{\widetilde Y}\vert_F\sim_\dq-mF\vert_F\sim_\dq\cO_F(2m). 
 \end{align}
The stabilizer of $P$ is isomorphic to $\fA_4$, which acts faithfully on $F$.
 Then \eqref{eqn:F2m} implies that $\alpha_{\fA_4}(F)<\frac{m}{2}$, which contradicts Lemma \ref{lemm: alphaGP112}. Therefore we conclude that $m>1$.
\end{proof}

We are now ready to prove Theorem \ref{theorem:quadric-cubic-technical}.
\begin{proof}[Proof of Theorem \ref{theorem:quadric-cubic-technical}]

    By Proposition \ref{prop: canonical linear system on quadrics}, we may assume that, up to applying a $G$-equivariantly birational automorphism of $X$, the log pair $(X,\lambda\cM_X)$ is canonical away from $\Sigma_5\cup\Sigma_5'$. Since $\Sigma_5$ and $\Sigma_5'$ are exchanged by some element in the normalizer of $G$ in $\Aut(X)$, we can further assume that $(X,\lambda\cM_X)$ is canonical away from $\Sigma_5$. Now, it suffices show that either $(X,\lambda\cM_X)$ is canonical along $\Sigma_5$, or $(Y,\mu\cM_Y)$ is canonical along $\Sigma_5^Y$.

     
     We denote by $H_X$ (resp. $H_Y$) a general hyperplane section on $X$ (resp. $Y$).  Let $n,n'\in\dz$ such that $\mls_X\sim_\dq nH_X$, and $\mls_Y\sim_\dq n'H_Y$. Note that $n=\frac{3}{\lambda}$ and $n'=\frac2\mu$. Recall from \cite[Section 3]{CSZ} that the Cremona map $\chi$ fits into the $G$-equivariant commutative diagram:
     $$
     \xymatrix{
&V\ar@{->}[ld]_{g}\ar@{-->}[rr]^{\rho}&&W\ar@{->}[rd]^{f}&\\
X\ar@{-->}[rrrr]^{\chi}&&&&Y
}
$$
 where $g$ is the blowup of $\Sigma_5$, $\rho$ is a small birational map that flops the proper transforms of 10 conics that contain three points in $\Sigma_5$, and $f$ contracts to $\Sigma_5^Y$ the proper transforms of 5 hyperplane sections of $X$ that pass through four points in $\Sigma_5'$. 
 Let $\widetilde H_X$, $\widetilde H_Y$, $\widetilde \cM_X$, $\widetilde \cM_Y$ be the strict transforms in $V$ of $H_X$, $H_Y$, $\cM_X$, $\cM_Y$ respectively, $E$ the exceptional divisor of $g$, and $F$ the strict transform in $V$ of the exceptional divisor of $f$. We compute
    $$
    \widetilde\mls_X=n\widetilde H_X-mE=(4n-5m)\widetilde H_Y-(3n-4m)F=n'\widetilde H_Y-m'F=\widetilde\mls_Y,
    $$ 
 which yields 
    $$
    n'=4n-5m,\quad m'=3n-4m,
    $$
for some $m,m'\in\bQ$.   By Corollary \ref{coro: mult at sigma 5}, if $(X,\lambda\mls_X)$ is not canonical along $\Sigma_5$, then $\lambda m>2$, i.e., $3m>2n$. On the other hand, if $(Y,\mu\mls_Y)$ is not canonical at $\Sigma_5^Y$, then by Corollary \ref{Coro: discr at sigma 5 for usual blowup}, we have $1<\mu m'$, i.e., $2n>3m$. These two cases cannot happen simultaneously. We conclude that either $(X,\lambda\mls_X)$ is canonical at $\Sigma_5$, or $(Y,\mu\mls_Y)$ is canonical at $\Sigma_5^Y$.
\end{proof}

\subsection{Nonstandard $\fS_5$-action}
Throughout this subsection, $G'=\fS_5$. Using our analysis of the non-standard $\fA_5$-action, it is not hard to prove Theorem~\ref{theo:main3}. Let $X$ be the same quadric threefold defined by \eqref{eqquad2} and $Y$ the cubic threefold defined by \eqref{eqcub2}. We consider the {\em nonstandard} $G'$-action on $X$ and $Y$ generated by the nonstandard $\fA_5$-action \eqref{eqn:2a5gen}, and an extra involution 
$$
\iota:(\mathbf x)\mapsto (x_3,x_4,x_1,x_2,-x_1-x_2-x_3-x_4-x_5).
$$
We will show that
    the only $G'$-Mori fibre spaces that are $G'$-equivariantly birational to $X$ are $X$ and $Y$.

Recall from  Propositions~\ref{prop:curves2A5X} and ~\ref{prop:ptcase2} that the possible non-canonical centers under the nonstandard $\fA_5$-action are points in $\Sigma_5$ and $\Sigma_5'$, curves $C_4, C_4'$, $C_8, C_8'$, or some curves of degree 10. 

Under the $G'$-action, the orbits $\Sigma_5$ and $\Sigma_5'$ are still invariant. So, the cubic $Y$ with $5\sA_2$-singularities is still $\fS_5$-equivariantly birational to $X$. Proposition~\ref{prop:main4} clearly also holds for the $\fS_5$-action. We focus on the quadric. 
 Any curve of degree 10 becomes irrelevant, since if it is not $\fS_5$-invariant, its $\fS_5$-orbit becomes a curve of degree 20, which exceeds the bound 18 as in Remark~\ref{rema:degreebound1}.

The involution $\iota\in \fS_5$ exchanges the curve $C_4$ with $C_4'$, and $C_8$ with $C_8'$. We show that $C_8$ and $C_8'$ are not non-canonical centers in this case.
\begin{lemm}\label{lemm:C8+C8'iscanonical}
    The curves $C_8$ and $C_8'$ are not centers of non-canonical singularities of $(X,\lambda\mls_X)$.
\end{lemm}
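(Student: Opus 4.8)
The plan is to argue by contradiction, using the extra involution $\iota\in G'=\fS_5$ to place both $C_8$ and $C_8'$ on a single $G'$-invariant K3 surface and then to produce on it an invariant curve of impossibly small degree.

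Suppose $C_8$ is a center of non-canonical singularities of $(X,\lambda\mls_X)$. Since $\mls_X$ is $G'$-invariant, the non-canonical locus of $(X,\lambda\mls_X)$ is $G'$-invariant; as $\iota(C_8)=C_8'$, the curve $C_8'$ is then also a non-canonical center, so $m:=\min\{\mathrm{mult}_{C_8}(\lambda\mls_X),\mathrm{mult}_{C_8'}(\lambda\mls_X)\}>1$. If $C_8$ were contained in $R\cup R'$, Lemma~\ref{lemm:quad2curveinR} would force $C_8\in\{C_4,C_4'\}$, contradicting $\deg(C_8)=8$; hence $C_8\not\subset R\cup R'$, and likewise $C_8'$. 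By Lemma~\ref{lemma:curves not in non normal K3}, $C_8$ lies in a unique member $S$ of the pencil $\cP$, which is a smooth K3 surface; one checks that $C_8'\subset S$ as well (from the proof of Lemma~\ref{lemma:curves not in non normal K3} applied to the $\fA_5$-invariant degree-$16$ curve $C_8\cup C_8'$, or directly from the explicit equations of $C_8,C_8'$ in \cite{Z-web}), so $S$ is the unique member of $\cP$ containing $C_8\cup C_8'$. Since $\iota$ permutes $\cP$ and interchanges $C_8$ and $C_8'$, the surface $S$ is $G'$-invariant.

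Restricting to $S$, we have $\lambda\mls_X|_S\sim_\bQ 3H_S$, where $H_S=H|_S$ for $H$ a general hyperplane section of $X$, so that $H_S^2=6$. Since $C_8\ne C_8'$ and $\lambda\mls_X|_S$ has multiplicity at least $m>1$ along each of them, the fixed part of $\lambda\mls_X|_S$ contains $m(C_8+C_8')$; hence $3H_S-m(C_8+C_8')$ is an effective class, and therefore so is
$$
3H_S-(C_8+C_8')\sim_\bQ\bigl(3H_S-m(C_8+C_8')\bigr)+(m-1)(C_8+C_8'),
$$
which is moreover nonzero and of $H_S$-degree $18-8-8=2$. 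On the other hand, the mobile part of any linear system on the K3 surface $S\subset\bP^4$ has $H_S$-degree at least $3$ (by the Hodge index theorem together with the fact that an elliptic curve in $\bP^4$ has degree at least $3$), so the class $3H_S-(C_8+C_8')$ has a unique effective representative, which is then $G'$-invariant. Its support is a $G'$-invariant, hence $\fA_5$-invariant, curve in $X$ of degree at most $2$. But $X$ contains no $\fA_5$-invariant curve of degree $\leq 3$: the $\fA_5$-invariant curves of least degree are $C_4$ and $C_4'$, of degree $4$ (cf.\ Lemmas~\ref{lemma:curves not in non normal K3} and~\ref{lemm:allcurve80}). This contradiction shows that neither $C_8$ nor $C_8'$ is a center of non-canonical singularities of $(X,\lambda\mls_X)$.

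The step I expect to require the most care is the claim that $C_8$ and $C_8'$ lie on a common member of $\cP$: Lemma~\ref{lemma:curves not in non normal K3} is stated for $\fA_5$-irreducible curves, so for the reducible curve $C_8\cup C_8'$ one needs a small supplement --- either a direct verification from the explicit equations of $C_8,C_8'$ recorded in \cite{Z-web}, or an orbit-counting argument using that a general point of $C_8\cup C_8'$ has a $G'$-orbit of length $120$ (sixty points on each component) while $(C_8\cup C_8')\cdot S'=48$ for every $S'\in\cP$ not containing it.
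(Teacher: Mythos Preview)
Your argument hinges on the claim that $C_8$ and $C_8'$ lie on a \emph{common} member $S$ of the pencil $\cP$, and this claim is both unjustified and in fact false. Neither of your proposed justifications works. Applying the proof of Lemma~\ref{lemma:curves not in non normal K3}(3) to the reducible curve $C_8\cup C_8'$ fails: a general point $p$ of $C_8\cup C_8'$ lies on one component, say $C_8$, and its $G$-orbit (of length $60$) stays on $C_8$; this only forces $C_8\subset S$, not $C_8'\subset S$. Your $G'$-orbit argument has the same defect: the members of $\cP$ are $G$-invariant, not $G'$-invariant a priori, so the surface $S$ through $p$ need not contain $\iota(p)\in C_8'$. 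What one can say is that $\iota$ permutes $\cP$ and swaps $R\leftrightarrow R'$ (since $\iota$ swaps $C_4=\Sing(R)$ and $C_4'=\Sing(R')$), hence $\iota$ acts on $\cP\cong\bP^1$ as a nontrivial involution with exactly two fixed members; and $\iota$ swaps the unique surface containing $C_8$ with the unique surface containing $C_8'$. For your claim to hold, that surface would have to be one of the two $\iota$-fixed members of $\cP$ --- and a direct computation (the two surfaces carrying the degree-$8$ curves are $S_{41,\,-63(\zeta_5^3+\zeta_5^2)+50}$ and $S_{41,\,63(\zeta_5^3+\zeta_5^2)+113}$) shows it is not. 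So the restriction-to-$S$ argument collapses.

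The paper's proof avoids this issue entirely: it works on the blowup $\pi:\widetilde X\to X$ along $Z=C_8\cup C_8'$, observes that the assumption makes $(3H-E)^2$ an effective $1$-cycle (with $E$ the exceptional divisor and $H=\pi^*\cO_X(1)$), verifies from the explicit equations that $|5H-E|=|\cO_X(5)-Z|$ has no base curve other than $Z$ so that $5H-E$ is nef, and then computes $(3H-E)^2\cdot(5H-E)=-42<0$, a contradiction. This is a genuine threefold computation that does not require placing both curves on a single K3 surface.
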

\begin{proof}
  Assume $C_8$ is a non-canonical center. Since $\cM_X$ is $G'$-invariant, $C_8'$ is also a non-canonical center. Put $Z=C_8+C_8'$. Then we have $\mult_Z(\lambda\mls_X)>1$. Let $\pi:\widetilde X\to X$ be the blowup of $X$ along $Z$, $E$ the exceptional divisor, and $H$ the pullback of a general hyperplane section on $X$ to $\widetilde X$. Similarly as before, the assumption that $C_8$ and $C_8'$ are non-canonical centers implies that $(3H-E)^2$ is effective. Using equations, we check that the linear system $|\cO_X(5)-C_8-C_8'|$ does not have base curves other than $C_8+C_8'$. It follows that $(5H-E)$ is nef. Let us compute the intersection number $(3H-E)^2\cdot(5H-E)$. We have
        \begin{itemize}
            \item $H^3=2,$
            \item $H^2\cdot E=0,$
            \item $H\cdot E^2=-\deg(C_8+C_8')=-16,$
            \item $E^3=-\deg(\mathcal N_{C_8/X})-\deg(\mathcal N_{C_8'/X})=K_X\cdot(C_8+C_8')+4=-44.$
        \end{itemize}
  Then
    \begin{align*}
        (3H-E)^2\cdot(5H-E)&=45H^3 - 39H^2E + 11HE^2 - E^3\\
        &=90-0-176+44\\
        &=-42.
    \end{align*}
    This contradicts the fact that $(5H-E)$ is nef.
\end{proof}


On the other hand, $C_4+C_4'$ can indeed be a non-canonical center. We present the Sarkisov link centered at $C_4+C_4'$.
One can check that the linear system $|\cO_X(3)-(C_4+C_4')|$ gives rise to a rational map $\tau:X\dashrightarrow\bP^3$ fitting into a $G'$-equivariant commutative diagram
$$
\xymatrix{
&\widetilde X\ar@{->}[ld]_{\pi}\ar@{->}[rd]^{\rho}&\\
X\ar@{-->}[rr]^{\tau}&&\bP^3
}
$$
where $\pi$ is the blowup along $C_4+C_4'$ and the resolution $\rho$ of $\tau$ is a double cover ramified along a singular sextic surface. The involution of this double cover gives rise to a $G'$-equivariantly birational involution
$$
\delta: X\dashrightarrow X.
$$
Note that $\delta$ naturally commutes with $G'$ in $\Bir^{G'}(X)$. It follows that $\delta\notin\Aut(X)=\mathsf{PSO}_5(\bC)$ since no element in $\Aut(X)$ centralizes $G'$.
Similarly as in Lemma~\ref{lemm:linkC8C10}, we can show that $-K_{\widetilde X}$ is big and nef, and that $|n(-K_{\widetilde X})|$ gives a small birational map for $n\gg0$. Namely, $\delta$ also fits into a Sarkisov link as in \eqref{diag}. 

\begin{proof}[Proof of Theorem~\ref{theo:main3}]
   Note that the Sarkisov link centered at $C_4+C_4'$ is again a $G'$-equivariantly birational selfmap. Thus, Proposition~\ref{prop: canonical linear system on quadrics} and Theorem~\ref{theorem:quadric-cubic-technical} still hold for the $G'$-action. The argument in the previous subsection applies and Theorem~\ref{theo:main3} is proved in the same way. 
\end{proof}

\end{document}